\newtheorem{theorem}{Theorem}
\newtheorem{prop}{Proposition}[section]
\newtheorem{lemma}[prop]{Lemma}
\newtheorem{cor}[prop]{Corollary}
\newtheorem*{lemma*}{Lemma}
\theoremstyle{definition}
\newtheorem{Def}[prop]{Definition}
\newtheorem{example}[prop]{Example}{\bf}{\it}
\theoremstyle{remark}
\newtheorem{rem}[prop]{Remark}
\numberwithin{equation}{section}
\newcommand{\N}{\mathbb{N}}
\newcommand{\R}{\mathbb{R}}
\newcommand{\Z}{\mathbb{Z}}
\newcommand{\C}{\mathbb{C}}
\newcommand{\T}{\text{T}}
\newcommand{\X}{\mathbf{X}}
\newcommand{\M}{\mathcal{M}}
\newcommand{\W}{\mathcal{W}}
\newcommand{\g}{\mathfrak g}
\newcommand{\n}{\mathfrak n}
\renewcommand{\a}{\mathfrak a}
\renewcommand{\Re}{\operatorname{Re}}
\DeclareMathOperator{\Id}{Id}
\DeclareMathOperator{\Res}{Res}
\DeclareMathOperator{\Ell}{ell}
\DeclareMathOperator{\Op}{Op}
\DeclareMathOperator{\supp}{supp}
\DeclareMathOperator{\WF}{WF}
\DeclareMathOperator{\ran}{ran}
\DeclareMathOperator{\Index}{index}
\DeclareMathOperator{\OpDiff}{Diff}
\DeclareMathOperator{\Diffeo}{Diffeo}
\newcommand{\mc}{\mathcal}
\newcommand{\pl}{\partial}
\newcommand{\cjg}{\langle}
\newcommand{\cjd}{\rangle}
\newcommand{\la}{\lambda}
\newcommand{\eps}{\epsilon}
\newcommand{\bbar}{\overline}
\newcommand{\acs}{\mathfrak{a}_{\mathbb C}^\ast}
\newcommand{\tu}[1]{\textup{#1}}
\newcommand{\Abb}[4]{\begin{cases} #1 & \rightarrow  #2 \\ #3 &\mapsto  #4\end{cases}}
\title[Ruelle-Taylor resonances of Anosov actions]{Ruelle-Taylor resonances of Anosov actions}
\author[Y. Guedes Bonthonneau]{Yannick Guedes Bonthonneau}
\email{bonthonneau@math.univ-paris13.fr}
\address{Universit\'e Sorbonne Paris Nord, CNRS, 93430, Villetaneuse, France}
\author[C. Guillarmou]{Colin Guillarmou}
\email{colin.guillarmou@universite-paris-saclay.fr}
\address{Universit\'e Paris-Saclay, CNRS,  Laboratoire de math\'ematiques d'Orsay, 91405, Orsay, France.}
\author[J. Hilgert]{Joachim Hilgert}
\email{hilgert@math.upb.de}
\address{Universit\"at Paderborn, Warburgerstr. 100, 33098 Paderborn, Germany}
\author[T. Weich]{Tobias Weich}
\email{weich@math.upb.de}
\address{Universit\"at Paderborn, Warburgerstr. 100, 33098 Paderborn, Germany}
\date{\today}
\begin{document}

\begin{abstract}
Combining microlocal methods and a cohomological theory developed by J. Taylor, we define for $\R^\kappa$-Anosov actions a notion of joint Ruelle resonance spectrum. We prove that these Ruelle-Taylor resonances fit into a Fredholm theory, are intrinsic and form a discrete subset of $\C^\kappa$, with $\lambda=0$ being always a leading resonance. The joint resonant states at $0$ give rise to some new measures of SRB type and the mixing properties of these measures are related to the existence of purely imaginary resonances. The spectral theory developed in this article applies in particular to the case of Weyl chamber flows and provides a new way to study such flows.
\end{abstract}

\maketitle
%\tableofcontents
\sloppy
\section{Introduction}
If $P$ is a differential operator on a manifold $M$ that has purely discrete spectrum as an unbounded operator acting on $L^2(M)$ (e.g. an elliptic operator on a closed Riemannian manifold $M$), then the eigenvalues and eigenfunctions carry a huge amount of information about the dynamics generated by $P$. Furthermore,  if $P$ is a geometric differential operator (e.g. Laplace-Beltrami operator, Hodge-Laplacian or Dirac operators) the discrete spectrum encodes important topological and geometric invariants of the manifold $M$. 

Unfortunately, in many cases (e.g. if the manifold $M$ is not compact or if $P$ is non-elliptic) the $L^2$-spectrum of $P$ is not discrete anymore but consists mainly of essential spectrum. Still, there are certain cases where the essential spectrum of $P$ is non-empty, but where there is a hidden intrinsic discrete spectrum attached to $P$, called the \emph{resonance spectrum}.
To be more concrete, let us give a couple of examples of this theory: 
\begin{itemize}
\item Quantum resonances of Schr\"odinger operators $P=\Delta+V$ with $V\in C_c^\infty(\R^n)$ on $M=\R^n$ with $n$ odd  
(see for example \cite[Chapter 3]{DZ19} for a textbook account to this classical theory). 
\item Quantum resonances for the Laplacian on non-compact geometrically finite hyperbolic manifolds $M=\Gamma\backslash \mathbb{H}^{n+1}$: here $P=\Delta_M$ is the Laplace-Beltrami operator  on $M$ \cite{MM87,GZ97,GUMA}.
\item Ruelle resonances for Anosov flows  \cite{BL07,FS11,GLP13,DZ16a}: here $P=iX$ with $X$ being the vector field generating the Anosov  flow.
\end{itemize} 
The definition of the resonances can be stated in different ways (using meromorphically continued resolvents, scattering operators or discrete spectra on auxiliary function spaces), and also the mathematical techniques used to establish the existence of resonances in the above examples are quite diverse (ranging from asymptotics of special functions to microlocal analysis). 
Nevertheless, all three examples above share the common point that the existence of a discrete resonance spectrum can be proven via a parametrix construction, i.e. one constructs a meromorphic family of operators $Q(\lambda)$ (with $\lambda\in \C$) such that 
\[
(P-\lambda)Q(\lambda) = \Id+K(\lambda),
\] 
where $K(\lambda)$ is a meromorphic family of compact operators on some suitable Banach or Hilbert space. Once such a parametrix is established, the resonances are the $\lambda$ where $\Id+K(\lambda)$ is not invertible and 
the discreteness of the resonance spectrum follows directly from analytic Fredholm theory. 

In general, being able to construct such a parametrix and define a theory of resonances involve non-trivial analysis and pretty strong assumptions, but they lead to powerful results on the long time dynamics of the propagator $e^{itP}$, for example in the study of dynamical systems \cite{Liv04,NZ13,FT17a} or on evolution equations in relativity \cite{HintzVasy}. Furthermore, resonances form an important spectral invariant that can be related to a large variety of other mathematical quantities such as geometric invariants \cite{GZ97, SZ07}, topological invariants \cite{DR17b, DZ17, DGRS18, KW20} or arithmetic quantities \cite{BGS11}. They also appear in trace formulas and are the divisors of dynamical Ruelle and Selberg zeta functions \cite{BO99, PP01, GLP13, DZ16a, FT17a}.
\\

The purpose of this work is to  use analytic and microlocal methods to construct a theory of joint resonance spectrum for the generating vector fields of $\R^\kappa$-Anosov actions. 
In terms of PDE and spectral theory, this can be viewed as the construction of a good notion of joint spectrum for a family of $\kappa$ commuting vector 
fields $X_1,\dots,X_\kappa$, generating a rank-$\kappa$ subbundle $E_0\subset TM$, when their flow is transversally hyperbolic with respect to that subbundle. These operators do not form an elliptic family and finding a good notion of joint spectrum is thus highly non-trivial. Our strategy is to work on anisotropic Sobolev spaces to make the non-elliptic region ``small'' and then obtain Fredholmness properties. \\

However, this involves working in a non self-adjoint setting, even if the $X_k$'s were to preserve a Lebesgue type measure. 
We are then using Koszul complexes and a cohomological theory developed by Taylor \cite{Tay70, Tay70a} in order to define a proper notion of joint spectrum in these anisotropic spaces, and we will show that this spectrum is discrete.

We emphasize that, in terms of PDE and spectral theory, there are important new aspects to be considered and the results are far from being a direct extension of the $\kappa=1$ case  (the Anosov flows). But also outside the spectral theory of linear partial differential operator the developed theory might be helpful:  
the classical examples of such  $\R^\kappa$-Anosov actions are \emph{Weyl chamber flows} for  compact locally symmetric spaces of rank $\kappa\geq 2$, and it is conjectured by Katok-Spatzier \cite{KaSp94} that essentially all non-product $\R^\kappa$ actions are smoothly conjugate to homogeneous cases. 
Despite important recent advances \cite{SV19}, the conjecture is still widely open and it is important to extract as much information as possible on a general Anosov $\R^\kappa$ action in order to address this conjecture: 
for example, having an ergodic invariant measure with full support plays an important role in the direction of this conjecture (see e.g. \cite{KS07} where the existence of such a measure is a central assumption on which the results are base; see also the discussions in the recent preprint \cite{SV19}). Based on the spectral theory developed in this article we show in a following paper \cite{BGW21} the existence of such ergodic measures of full support for any positively transitive\footnote{See \cite[Definition 2.9]{BGW21}.} Anosov action.

Let us summarize the main novelties of this work and its first applications: 
\begin{enumerate}
 \item we construct a new theory of joint resonance spectrum for a family of commuting differential operators by combining the theory of Taylor \cite{Tay70} 
with the use of anisotropic Sobolev spaces for the study of resonances; as far as we know, this is the first result of joint spectrum in the theory of classical or quantum resonances.
\item All the Weyl chamber flows on locally symmetric spaces and the standard actions of Katok-Spatzier \cite{KaSp94} are included in our setting, and 
our results are completely new in that setting where representation theory is usually one of the main tools. This gives a new, analytic, way of studying homogeneous dynamics and spectral theory in higher rank.
\item We show that the leading joint resonance provides a construction of a new Sinai-Ruelle-Bowen (SRB) invariant measure $\mu$ for all $\R^\kappa$ actions. In a companion paper \cite{BGW21} based on this work, we show that our measure $\mu$ has all the properties of SRB measures of Anosov flows (rank $1$ case), and it has \emph{full support} if the Weyl chamber is positively transitive, an important step in the direction of the rigidity conjecture. 
\item We show in \cite{BGW21} that the periodic tori of the $\R^\kappa$ action are equidistributed in the support of $\mu$ and that $\mu$ can be written as an infinite sum over Dirac measures on the periodic tori, in a way similar to Bowen's formula in rank $1$. These results are new even in the case of locally symmetric spaces and give a new way to study periodic tori (also called flats) in higher rank.
\item Based on the present paper, the two last named authors together with L.~Wolf prove a \emph{classical-quantum correspondence} between the joint resonant states of Weyl chamber flows on compact locally symmetric spaces $\Gamma\backslash G/M$ of rank $\kappa$ and the joint eigenfunctions of the commuting algebra of invariant differential operators on the locally symmetric space $\Gamma\backslash G/K$ \cite{HWW21}. This gives a higher rank version of \cite{DFG15}. 
\end{enumerate}

Another expected consequence of this construction would be a proof of the exponential decay of correlations for the action and a gap of Ruelle-Taylor resonances under appropriate assumptions, with application to the local rigidity and the regularity of the invariant measure $\mu$. These questions will be addressed in a forthcoming work.

\subsection{Statement of the main results}
Let us now introduce the setting in more details and state the main results.
Let $\M$ be a closed manifold, $\mathbb A\simeq \R^\kappa$ be an abelian group and let $\tau: \mathbb A\to \Diffeo(\M)$ be a smooth locally free group action. 
If $\mathfrak a:= \tu{Lie}(\mathbb A)\cong \R^\kappa$, we can define a \emph{generating map} 
\[
 X:\Abb{\mathfrak a}{C^\infty(\M;T\M)}{A}{X_A:=\frac{d}{dt}_{|t=0}\tau(\exp(tA)),}
\]
so that for each basis $A_1,\dots,A_\kappa$ of $\a$, $[X_{A_j},X_{A_k}]=0$ for all $j,k$. For $A\in \a$ we denote by $\varphi_t^{X_A}$ the flow of the vector field $X_A$. Notice that, as a differential operator, we can view $X$ as a map 
\[X:C^\infty(\M)\to C^\infty(\M;\a^*),\quad (Xu)(A):=X_{A}u.\] 

It is customary to call the action \emph{Anosov} if there is an $A\in \a$ such that there is a continuous $d\varphi_t^{X_A}$-invariant splitting 
\begin{equation}\label{splittingA}
 T\M=E_0\oplus E_u\oplus E_s,
\end{equation}
where $E_0={\rm span}(X_{A_1},\dots,X_{A_\kappa})$, and there exists a $C>0,\nu>0$ such that for each 
$x\in \M$ 
\begin{align*}
&\forall w\in E_s(x),\forall t\geq 0, \,\,   \|d\varphi_t^{X_A}(x)w\|\leq Ce^{-\nu t}\|w\|, \\
&\forall w\in E_u(x),\forall t\leq 0, \,\,   \|d\varphi_t^{X_A}(x)w\|\leq Ce^{-\nu |t|}\|w\|.
\end{align*}
Here the norm on $T\M$ is fixed by choosing any smooth Riemannian metric $g$ on $\mc{M}$. We say that  such an $A$ is \emph{transversely hyperbolic}.  
It can be easily proved that the splitting is invariant by the whole action. However, we do not assume that all $A\in \a^*$ have this transversely hyperbolic behavior. In fact, there is a maximal open convex cone $\W\subset \a$ containing $A$ such that for all $A'\in\W$, $X_{A'}$ is also transversely hyperbolic with the same splitting as $A$ (see Lemma~\ref{lem:weyl_chamber}); $\W$ is called a \emph{positive Weyl chamber}. 
This name is motivated by the classical examples of such Anosov actions that are the Weyl chamber flows for locally symmetric spaces of rank $\kappa$ (see Example \ref{locsymsp}). There are also several other classes of examples (see e.g. \cite{KaSp94, SV19}). 

Since we have now a family of commuting vector fields, it is natural to consider a joint spectrum for the family $X_{A_1},\dots,X_{A_\kappa}$ of first order operators if the $A_j$'s are chosen transversely hyperbolic with the same splitting. 
Guided by the case of a single Anosov flow (done in \cite{BL07,FS11,DZ16a}), we define $E_u^*\subset T^*\mc{M}$ to be the subbundle such that $E_u^*(E_u\oplus E_0)=0$. We shall say that $\lambda=(\lambda_1\dots,\lambda_\kappa)\in \C^\kappa$ is a \emph{joint Ruelle resonance} for the Anosov action if there is a non-zero distribution $u\in C^{-\infty}(\M)$ with wavefront set ${\rm WF}(u)\subset E_u^*$  such that\footnote{See \cite[Chapter VIII.1]{Hoe03} for a definition and properties of the wavefront set.}
\begin{equation}
 \label{eq:intro_def_joint_res}
\forall j=1,\dots,\kappa, \quad  (X_{A_j}+\lambda_j)u=0.\end{equation}
The distribution $u$ is called a \emph{joint Ruelle resonant state} (from now on we will denote $C^{-\infty}_{E_u^*}(\M)$ the space of distributions $u$ with ${\rm WF}(u)\subset E_u^*$). 
In an equivalent but more invariant way (i.e. independently of the choice of basis $(A_j)_j$ of $\a$), we can define a \emph{joint Ruelle resonance} as an element $\lambda\in \a_\C^*$ of the complexified dual Lie algebra such that there is a non-zero $u\in C^{-\infty}_{E_u^*}(\M)$ with 
\[
\forall A\in\a, \quad  (X_{A}+\lambda(A))u=0.
\] 
We notice that we  also define a notion of generalized joint Ruelle resonant states and Jordan blocks in our analysis (see Proposition \ref{prop:jordan_blocks}).
It is a priori not clear that the set of joint Ruelle resonances is discrete -- or non empty for that matter -- nor that the dimension of joint resonant states is finite, but this is a consequence of our work:
\begin{theorem}\label{Theo1intro}
Let $\tau$ be a smooth abelian Anosov action on a closed manifold $\M$ with positive Weyl chamber $\W$. Then the set of joint Ruelle resonances $\lambda \in\a^*_\C$ is a discrete set
contained in 
\begin{equation}\label{notaylorintro}
 \bigcap_{A\in \mathcal W} \{\lambda\in\a_\C^*~|~\Re(\lambda(A))\leq 0\}.
 \end{equation}
Moreover, for each joint Ruelle resonance $\lambda\in \a^*_\C$ the space of joint Ruelle resonant states is finite dimensional.
\end{theorem}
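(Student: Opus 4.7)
The plan is to realize joint Ruelle resonances as the degenerate locus of a parametrized Koszul complex on anisotropic Sobolev spaces tailored simultaneously to every element of the positive Weyl chamber $\W$, and then to apply an analytic Fredholm theory for complexes in the spirit of Taylor. Pick a basis $A_1,\dots,A_\kappa$ of $\a$ with dual $A_j^*$; for $\lambda\in\acs$ define on $C^\infty(\M)\otimes\Lambda^\bullet\a^*$ the differential
\[
d_\lambda(u\otimes\omega)=\sum_{j=1}^\kappa(X_{A_j}+\lambda(A_j))\,u\otimes A_j^*\wedge\omega,
\]
which squares to zero by commutativity of the $X_{A_j}$. Joint Ruelle resonant states at $\lambda$ are exactly $\ker d_\lambda|_{C^{-\infty}_{E_u^*}(\M)}$ in degree $0$, and more generally the joint spectrum is read off from the cohomology of this complex in all degrees.

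The next step is to build, following Faure--Sj\"ostrand and Dyatlov--Zworski, an escape function $G\in C^\infty(T^*\M)$ homogeneous near infinity, with prescribed signs on conic neighborhoods of $E_u^*$ and $E_s^*$, such that the Hamilton derivative $H_{\sigma(-iX_A)}G\le 0$ is strictly negative outside small conic neighborhoods of $E_u^*\cup E_s^*\cup E_0^*$, \emph{uniformly for $A$ in any compact subset $\W_0\subset\W$}. This uniform construction is made possible by the fact that the splitting \eqref{splittingA} is preserved by the whole action and that expansion/contraction rates extend uniformly across $\W$ (Lemma~\ref{lem:weyl_chamber}); concretely one takes $G$ as a convex combination of individual Faure--Sj\"ostrand escape functions attached to a finite generating family of $\W_0$. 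The resulting anisotropic Sobolev space $\mathcal H^{NG}$, for $N\gg 1$, then satisfies radial-source/sink and hyperbolic propagation estimates making $X_A+\lambda(A)$ a Fredholm operator of index $0$ on $\mathcal H^{NG}$ for every $A\in\W_0$, throughout the half-space $\Re\lambda(A)>-cN$ with $c>0$ independent of $A\in\W_0$.

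Third, the Koszul differential $d_\lambda$ becomes a holomorphic family of Fredholm complexes on $\mathcal H^{NG}\otimes\Lambda^\bullet\a^*$ over the region $\bigcap_{A\in\W_0}\{\Re\lambda(A)>-cN\}$. For $\Re\lambda(A_0)$ very large with $A_0\in\W_0$, the absolutely convergent formula
\[
R_{A_0}(\lambda)f=\int_0^\infty e^{-t\lambda(A_0)}\,(\varphi_{-t}^{X_{A_0}})^*f\,dt
\]
defines a bounded two-sided inverse of $X_{A_0}+\lambda(A_0)$ on $\mathcal H^{NG}$; by commutativity of the flows, $R_{A_0}(\lambda)$ commutes with every $X_{A_j}+\lambda(A_j)$, and a direct check shows that $h_\lambda:=R_{A_0}(\lambda)\,\iota_{A_0}$ is a contracting homotopy $d_\lambda h_\lambda+h_\lambda d_\lambda=\Id$ for the complex, which is therefore exact there. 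Analytic Fredholm theory applied to this holomorphic family of Fredholm complexes (Taylor \cite{Tay70}) then gives that the set of $\lambda$ where cohomology does not vanish is discrete in $\acs$, and that each cohomology group is finite-dimensional; intrinsicness with respect to the auxiliary choices $(G,N)$ and the identification with resonant states in $C^{-\infty}_{E_u^*}(\M)$ follows from a standard wavefront-tracking argument as in the single-flow case.

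It remains to locate the resonances in the cone \eqref{notaylorintro}. If $u\in C^{-\infty}_{E_u^*}(\M)$ is a non-zero joint resonant state at $\lambda$ and $A\in\W$, the eigenvalue equations yield $(\varphi_{-t}^{X_A})^*u=e^{t\lambda(A)}u$ for all $t\in\R$. Since the escape function $G$ was designed so that the backward pullback $(\varphi_{-t}^{X_A})^*$ is uniformly bounded on $\mathcal H^{NG}$ for $t\ge 0$, letting $t\to+\infty$ forces $\Re\lambda(A)\le 0$. The main technical obstacle of the whole proof is precisely the simultaneous microlocal setup in the second step: one must produce a single escape function $G$ giving radial-point and propagation estimates for the entire family $\{X_A:A\in\W_0\}$ at once, rather than one $G$ per direction, and Lemma~\ref{lem:weyl_chamber} provides the geometric uniformity that makes such a $G$ exist. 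Once this is in hand, the passage from single-operator Fredholmness to Fredholmness of the Koszul complex, and thence to discreteness and finite-dimensionality, follows formally from Taylor's cohomological framework.
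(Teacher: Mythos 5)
Your overall architecture (Koszul complex, anisotropic spaces built from an escape function uniform over a compact subcone of $\W$, exactness of the complex for $\Re\lambda(A_0)\gg 0$ via the contracting homotopy $R_{A_0}(\lambda)\iota_{A_0}$) matches the paper's, but the step you dispose of in one sentence — ``analytic Fredholm theory applied to this holomorphic family of Fredholm complexes then gives that the set of $\lambda$ where cohomology does not vanish is discrete'' — is precisely where the argument breaks for $\kappa\geq 2$, and it is the main new difficulty of the whole theorem. Analytic Fredholm theory for a holomorphic family of Fredholm complexes in several complex variables only yields that the singular set is a proper complex analytic subvariety of the Fredholm region (Proposition~\ref{pr:complex-analytic}); in $\C^\kappa$ with $\kappa\geq 2$ such a subvariety can be a positive-dimensional hypersurface, since holomorphic functions of several variables do not have isolated zeros. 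Knowing the complex is exact on a nonempty open set rules out the whole space but nothing more. The paper closes this gap by a separate mechanism (Lemmas~\ref{trick}, \ref{lemma:discrete} and \ref{lemmafordiscreteness}): the parametrix is built in divergence form $Q(\lambda)=\delta_{{\bf Q}(\lambda)}$ for a Lie algebra morphism ${\bf Q}(\lambda)$ commuting with $\X+\lambda$, so that $F(\lambda)=d_{\X+\lambda}Q(\lambda)+Q(\lambda)d_{\X+\lambda}$ is $\Lambda$-scalar and \emph{affine} in $\lambda$; a sandwiching argument then shows that for $\lambda$ near a fixed point the cohomology is computed on the fixed finite-dimensional space $\ran\Pi_0'$, where the Taylor spectrum reduces to the finitely many joint eigenvalues of commuting matrices (Lemma~\ref{lem:taylor_finite_dim}). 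Without an argument of this type your proof does not establish discreteness.

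A secondary gap is in your localization step. You assert that $(\varphi_{-t}^{X_A})^*$ is uniformly bounded on $\mathcal H^{NG}$ for $t\geq 0$, but the escape function only controls the anisotropic weight; the $L^2$ operator norm of the pullback grows like $e^{tC_{L^2}(A)}$ when the action does not preserve a smooth measure (the Jacobian of $\varphi_t^{X_A}$ enters), and this constant can be positive. Your argument therefore only yields $\Re\lambda(A)\leq C_{L^2}(A)$ (the paper's Corollary~\ref{cor:location}); to reach $\Re\lambda(A)\leq 0$ one must exploit that the scalar pullback is an $L^\infty$-contraction, as in Corollary~\ref{noresonancesRe>0}, showing that the relevant spectral projector vanishes when $\Re\lambda(A)>0$.
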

\begin{figure}
\begin{centering}
\includegraphics[width=0.8\textwidth]{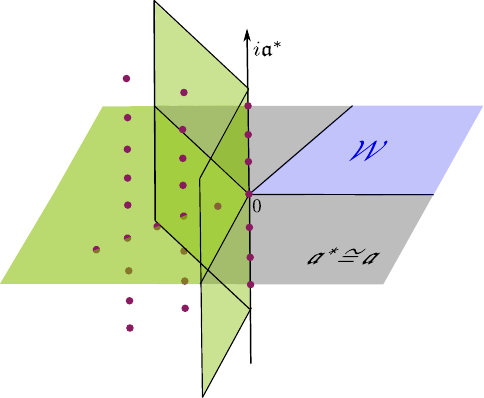}
\end{centering}
\caption{\label{fig:resonances} Schematic sketch of the location of the resonances in $\a^*_\C$. Note that in order to draw the at least 4 dimensional space $\a_\C^*$ the imaginary direction $i\a^*$ has been reduced in the drawing to a one dimensional line.
The blue cone depicts the positive Weyl chamber $\W$ 
and the green region illustrates the region \eqref{notaylorintro} in which the resonances can occur. The leading resonances discussed in Theorem~\ref{Theo3intro} are located at the tip of this region. } 
\end{figure}
We remark that this spectrum always contains $\lambda=0$ (with $u=1$ being the joint eigenfunction) and that for locally symmetric 
spaces it contains infinitely many joint Ruelle resonances, as is shown in \cite[Theorem 1.1]{HWW21}.

We also emphasize that this theorem is definitely not a straightforward extension of the case of a single Anosov flow. 
It relies on a deeper result based on the theory of joint spectrum and joint functional calculus developed by  Taylor \cite{Tay70, Tay70a}. 
This theory allows us to set up a good Fredholm problem on certain functional spaces by using Koszul complexes, as we now explain. 

Let us define $X+\lambda$, for $\lambda\in \a_\C^*$,  as an operator 
\[X+\lambda :C^\infty(\M)\to C^\infty(\M;\a_\C^*),\quad ((X+\lambda)u)(A):=(X_{A}+\lambda(A))u.\] 
We can then define for each $\lambda\in\a_\C^*$ the differential operators 
$d_{(X+\lambda)}: C^\infty(\M;\Lambda^j\a_\C^*)\to C^\infty(\M;\Lambda^{j+1}\a_\C^*)$ by setting
\[
d_{(X+\lambda)}(u\otimes\omega):= ((X+\lambda)u)\wedge\omega \textrm{ for }u\in C^\infty(\M),\, \omega\in \Lambda^j\a^*_\C.
\] 
Due to the commutativity of the family of vector fields $X_{A}$ for $A\in \a$, it can be easily checked that $d_{(X+\lambda)}\circ d_{(X+\lambda)}=0$ (see Lemma~\ref{lem:d_X_properties}). Moreover, as a differential operator, it extends to a continuous map
\[
d_{(X+\lambda)}: C^{-\infty}_{E_u^*}(\M;\Lambda^j\a_\C^*)\to C^{-\infty}_{E_u^*}(\M;\Lambda^{j+1}\a_\C^*)
\]
and defines an \emph{associated Koszul complex}
\begin{equation}\label{introtaylorcomplex}
  0\longrightarrow C^{-\infty}_{E_u^*}(\mc{M})\overset{d_{(X+\lambda)}}{\longrightarrow}C^{-\infty}_{E_u^*}\otimes \Lambda^1\a_\C^* \overset{d_{(X+\lambda)}}{\longrightarrow} \dots \overset{d_{(X+\lambda)}}{\longrightarrow} C^{-\infty}_{E_u^*}(\mc{M})\otimes \Lambda^\kappa \a_\C^*\longrightarrow 0,
\end{equation}
We prove the following results on the cohomologies of this complex: 
\begin{theorem}\label{Theo2intro}
Let $\tau$ be a smooth abelian Anosov action\footnote{We actually prove Theorem \ref{Theo1intro} and Theorem \ref{Theo2intro} in the more general setting of admissible lifts to vector bundles, as defined in Section \ref{admissiblelifts}.}
 on a closed manifold $\mc M$ with generating map $X$. Then for each $\lambda\in\a_\C^*$ and $j=0,\dots,\kappa$, the cohomology 
\[\ker d_{(X+\lambda)}|_{C^{-\infty}_{E_u^*}(\M)\otimes \Lambda^j\a^*_\C}/
\ran\, d_{(X+\lambda)}|_{C^{-\infty}_{E_u^*}(\M)\otimes \Lambda^{j-1}\a^*_\C}\]
is finite dimensional. It is non-trivial only at a discrete subset of $\{\lambda\in \a^*_\C\,|\, {\rm Re}(\lambda(A))\leq 0, \forall A\in \mc{W}\}$.
\end{theorem}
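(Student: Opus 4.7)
The plan is to reduce the statement to a Fredholm problem for the Koszul complex on an anisotropic Sobolev space, using Taylor's cohomological characterization of joint spectrum as the bridge from individual resolvents of the $X_A$ to the full cohomological claim. I would (i) construct a single anisotropic Sobolev scale $\mathcal{H}_N\subset \mathcal{D}'(\M)$ adapted to the common splitting \eqref{splittingA}, (ii) use a Cartan homotopy identity to promote parametrices of $X_A+\lambda(A)$ to parametrices of $d_{(X+\lambda)}$, and (iii) appeal to analytic Fredholm theory and a wavefront propagation argument to transfer finite-dimensionality and discreteness from $\mathcal{H}_N$ to the cohomology on $C^{-\infty}_{E_u^*}(\M)$ stated in the theorem.

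For (i), I would adapt the Faure--Sj\"ostrand / Dyatlov--Zworski construction to the higher rank setting: since the stable/unstable splitting is independent of $A\in \W$, a single order function (decaying on $E_u^*$, growing on $E_s^*$) produces one anisotropic space $\mathcal{H}_N$ on which every closed operator $X_A+z$, $A\in\W$, is Fredholm of index zero for $\Re z>-c_AN$, with resolvent obtained by meromorphic extension of the Laplace transform of the pullback semigroup $(\varphi_{-t}^{X_A})^*$ from $\Re z\gg 0$. In particular, after possibly enlarging $N$, the resolvent $(X_A+\lambda(A))^{-1}$ exists on $\mathcal{H}_N$ whenever $\Re(\lambda(A))>0$ for some $A\in\W$. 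For (ii), I would verify the identity
\[
d_{(X+\lambda)}\,\iota_A + \iota_A\, d_{(X+\lambda)} = X_A+\lambda(A), \qquad A\in \a,
\]
where $\iota_A:\Lambda^j\a_\C^*\to\Lambda^{j-1}\a_\C^*$ is contraction by $A$; this is a direct consequence of the commutativity of the $X_A$ and the Leibniz rule, and it immediately converts a parametrix (resp.\ inverse) of $X_A+\lambda(A)$ into a parametrix (resp.\ contracting homotopy) for the Koszul differential.

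With these ingredients in hand, finite-dimensionality of every cohomology group at every $\lambda\in\a_\C^*$ reduces to Fredholmness of $X_A+\lambda(A)$ on $\mathcal{H}_N$ for one single $A\in\W$, which is always achievable by choosing $N$ large. When $\Re(\lambda(A))>0$ for some $A\in\W$, the parametrix is a genuine inverse, so $h:=(X_A+\lambda(A))^{-1}\iota_A$ satisfies $dh+hd=\Id$ and the complex is contractible, which yields the location claim \eqref{notaylorintro}. Discreteness of the exceptional $\lambda$ then follows by applying analytic Fredholm theory to the holomorphic family $\lambda\mapsto d_{(X+\lambda)}$. The identification with cohomology on $C^{-\infty}_{E_u^*}(\M)$ rests on a standard propagation-of-singularities argument: every cohomology class on $\mathcal{H}_N$ admits a representative with wavefront in $E_u^*$, and every coboundary in $C^{-\infty}_{E_u^*}(\M)$ is already a coboundary in $\mathcal{H}_N$. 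The main obstacle is to make these steps compatible with the first-order regularity loss of $d_{(X+\lambda)}$ (which maps $\mathcal{H}_N\otimes \Lambda^j$ only into $\mathcal{H}_{N-1}\otimes \Lambda^{j+1}$) while preserving holomorphy in $\lambda$; this is precisely the point where Taylor's abstract formalism for joint Fredholmness of Koszul complexes becomes essential, as it supplies a notion of parametrix designed to bypass the need for a single ambient Banach space stable under all differentials.
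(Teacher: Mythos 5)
Your outline follows the paper's general architecture (anisotropic spaces, a homotopy identity for the Koszul differential, transfer to $C^{-\infty}_{E_u^*}$), but it contains two genuine gaps. The first is in step (i): for $\kappa\geq 2$ the individual operators $X_A+z$, $A\in\W$, are \emph{not} Fredholm on the anisotropic spaces, so the reduction of finite-dimensionality to ``Fredholmness of $X_A+\lambda(A)$ for one single $A$'' starts from a false premise. The symbol of $X_A$ is $i\xi(X_A)$, and its characteristic set contains, besides $E_u^*\oplus E_s^*$, a whole codimension-one subcone of $E_0^*$ on which the lifted flow $\Phi_t^{X_A}$ is neutral; this trapping in the neutral directions transverse to $A$ inside $E_0$ produces essential spectrum in vertical strips (think of a single generator of a rank-two Weyl chamber flow: as a partially hyperbolic flow with two-dimensional center it has no discrete Ruelle spectrum). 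Only the \emph{joint} complex is Fredholm, and the parametrix near $E_0^*$ must use all $\kappa$ operators at once: in the paper this is done via the $\Lambda$-scalar operator $\Delta_{\mathbf X+\lambda}=d_{\mathbf X+\lambda}\delta_{\mathbf X+\lambda}+\delta_{\mathbf X+\lambda}d_{\mathbf X+\lambda}=-\sum_k(\mathbf X_{A_k}+\lambda_k)^2$, which is elliptic exactly off $E_u^*\oplus E_s^*$ (Lemma~\ref{ellipticregion}), combined with the finite-time propagator $\int_0^T e^{-t\mathbf X_{A_0}(\lambda)}dt$ in the hyperbolic region (Proposition~\ref{prop:Fredholm}). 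Your Cartan identity $d\iota_A+\iota_A d=X_A+\lambda(A)$ is correct and is indeed the mechanism used (Lemma~\ref{trick}), but the operator you compose with $\iota_A$ must commute with all the $X_{A_j}$; a microlocal parametrix of the single non-elliptic operator $X_A+\lambda(A)$ neither exists in the required form nor commutes.

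The second gap is the discreteness claim in step (iii). Analytic Fredholm theory for a holomorphic family of Fredholm Koszul complexes in $\kappa\geq 2$ variables only yields that the spectrum is a complex analytic subvariety of the Fredholm region (Proposition~\ref{pr:complex-analytic}); such a set need not be discrete, since holomorphic functions on $\C^\kappa$ do not have isolated zeros. This is precisely the point where the higher-rank problem differs from the rank-one case, and the paper has to work for it: one needs the homotopy operator to be of the special form $Q(\lambda)=\delta_{\mathbf Q(\lambda)}$ for a Lie algebra morphism $\mathbf Q(\lambda)$ commuting with $\mathbf X$ (Lemma~\ref{lemmafordiscreteness}, built from propagators in $\kappa$ independent directions of $\W$, which in turn requires an escape function decaying uniformly for a whole neighbourhood of $A_1$ in $\W$). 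This forces $F(\lambda)$ to be $\Lambda$-scalar and affine in $\lambda$, and a sandwiching argument (Lemma~\ref{lemma:discrete}) then localizes the cohomology onto a \emph{fixed} finite-dimensional space $\ran\Pi_0'$ for all $\lambda$ near a given resonance, where discreteness is elementary linear algebra. Without some replacement for this step your argument only shows that the resonance set is an analytic subvariety of $\a_\C^*$. (A more minor point: the sharp location $\Re(\lambda(A))\leq 0$ rather than $\leq C_{L^2}(A)$ does not follow from invertibility of the Laplace transform on the Hilbert scale; it requires the $L^\infty$-contraction argument of Corollary~\ref{noresonancesRe>0}.)
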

We want to remark that the statement about the cohomologies in Theorem~\ref{Theo2intro} is not only a stronger statement than Theorem~\ref{Theo1intro}, but that the cohomological setting is in fact a fundamental ingredient in proving the discreteness of the resonance spectrum and its finite multiplicity. Our proof relies on the theory of joint \emph{Taylor spectrum} (developed by J. Taylor in \cite{Tay70, Tay70a}), defined using such Koszul complexes carrying a suitable notion of Fredholmness. In our proof of Theorem~\ref{Theo2intro} we show that the Koszul complex furthermore provides a good framework for a parametrix construction via microlocal methods. More precisely, the parametrix construction is not done on the topological vector spaces $C^{-\infty}_{E_u^*}(\mc M)$ but on a scale of  Hilbert spaces $\mc H_{NG}$, depending on the choice of an escape function $G\in C^\infty(T^*\mc{M})$ and a parameter $N\in \R^+$, by which one can in some sense approximate $C^{-\infty}_{E_u^*}(\mc{M})$. 
The spaces $\mc H_{NG}$ are \emph{anisotropic Sobolev spaces} which roughly speaking allow $H^{N}(\mc{M})$ Sobolev regularity 
in all directions except in $E_u^*$ where we allow for $H^{-N}(\mc{M})$ Sobolev regularity. They can be rigorously defined using microlocal analysis, following the techniques of Faure-Sj\"ostrand \cite{FS11}. 
By further use of pseudodifferential and Fourier integral operator theory we can then construct a parametrix $Q(\lambda)$, which is a family of bounded operators on $\mathcal H_{NG}\otimes\Lambda \a_\C^*$ depending holomorphically on $\lambda\in\a^*_\C$ and fulfilling
\begin{equation}
 \label{eq:parametrix_intro}
d_{(X+\lambda)}Q(\lambda) + Q(\lambda)d_{(X+\lambda)} = \Id + K(\lambda).
\end{equation}
Here $K(\lambda)$ is a holomorphic family of compact operators on $\mc H_{NG}\otimes\Lambda\a_\C^*$ for $\lambda$ in a suitable domain of $\a_\C^*$ that can be made arbitrarily large letting $N\to\infty$. 
Even after having this parametrix construction, the fact that the joint spectrum is discrete and intrinsic (i.e. independent of the precise construction of the Sobolev spaces) is more difficult than for an Anosov flow (the rank $1$ case): 
this is because holomorphic  functions in $\C^\kappa$ do not have discrete zeros when $\kappa\geq 2$ and we are lacking a good notion of resolvent, while for one operator the resolvent is an important tool. 
Due to the link with the theory of the Taylor spectrum, we call $\lambda\in \a^*_\C$ a \emph{Ruelle-Taylor resonance} for the Anosov action if for some $j=0,\ldots,\kappa$ the $j$-th cohomology is non-trivial
\[
\ker d_{(X+\lambda)}|_{C^{-\infty}_{E_u^*}(\M)\otimes \Lambda^j\a^*_\C}/
\ran\, d_{(X+\lambda)}|_{C^{-\infty}_{E_u^*}\otimes \Lambda^{j-1}\a^*_\C}\not=0,
\]
and we call the non-trivial cohomology classes \emph{Ruelle-Taylor resonant states}. Note that the definition of joint Ruelle resonances precisely means that the $0$-th cohomology is non-trivial. Thus, any joint Ruelle resonance is a Ruelle-Taylor resonance. The converse statement is not obvious but turns out to be true, as we will prove in Proposition~\ref{prop:zero_cohomology_nonempty}: if the cohomology of degree $j>0$ is not $0$, 
then the cohomology of degree $0$ is not trivial.

We continue with the discussion of the \emph{leading resonances}. In view of \eqref{notaylorintro} and Figure~\ref{fig:resonances}, a resonance is called a leading resonance when its real part vanishes. We show that this spectrum carries important information about the dynamics: it is related to a special type of invariant measures as well as to mixing properties of these measures.

First, let $v_g$ be the Riemannian measure of a fixed metric $g$ on $\mc{M}$.
We call a $\tau$-invariant probability measure $\mu$ on $\M$, a \emph{physical measure} if there is $v\in C^\infty(\mc{M})$ non-negative such that for any continuous function $f$ and any  open cone $\mathcal{C}\subset \W$,
\begin{equation}\label{eq:Cone-averaging-physical-measure}
\mu(f) =\lim_{T\to \infty} \frac{1}{{\rm Vol}(\mc{C}_T)} \int_{A\in \mc{C}_T} \int_\M f(\varphi^{-X_A}_1(x)) v(x) dv_g(x) dA
\end{equation}
where $\mc{C}_T:=\{A\in \mc{C}\, |\, |A|\leq T\}$, and here $|\cdot|$ denotes a fixed Euclidean norm on $\a$. 
In other words, $\mu$ is the weak Cesaro limit of a Lebesgue type measure under the dynamics.
We prove the following result:
\begin{theorem}\label{Theo3intro}
Let $\tau$ be a smooth abelian Anosov action with generating map $X$ and let $\W$ be a positive  Weyl chamber. 
\begin{enumerate}[(i)]
\item \label{it:physM_are_res_states} The linear span over $\C$ of the physical measures is isomorphic (as a $\C$ vector space) to $\ker d_X|_{C^{-\infty}_{E_u^*}}$, the space of joint Ruelle resonant states at $\lambda=0\in \a^*_\C$; in particular, it is finite dimensional.\footnote{The dimension can more concretely be expressed in dynamical terms, see\cite[Theorem 3]{BGW21}.}
\item \label{it:WF_phys_meas} A probability measure $\mu$ is a physical measure if and only if it is $\tau$-invariant and $\mu$ has wavefront set
${\rm WF}(\mu)\subset E_s^*$, where $E_s^*\subset T^*\M$ is defined by $E_s^*(E_s\oplus E_0)=0$.
\item Assume that there is a unique physical measure $\mu$ (or by (\ref{it:physM_are_res_states}) equivalently that the space of joint resonant states at 0 is one dimensional). Then the following are equivalent:
\begin{itemize} 
 \item The only Ruelle-Taylor resonance on $i\a^*$ is zero.
 \item There exists $A\in \a$ such that $\varphi_t^{X_A}$ is \emph{weakly} mixing with respect to $\mu$.
 \item For any $A\in \mc{W}$, $\varphi_t^{X_A}$ is \emph{strongly} mixing with respect to $\mu$.
\end{itemize}
\item \label{it:complex_measures} $\lambda\in i\a^*$ is a joint Ruelle resonance if and only if there is a complex measure $\mu_\lambda$ with 
${\rm WF}(\mu_\lambda)\subset E_s^*$ satisfying for all $A\in \W, t\in \R$ the following equivariance under push-forwards of the action: $(\varphi^{X_{A}}_t)_*\mu_\lambda=e^{-\lambda(A)t}\mu_\lambda$. Moreover, such measures are absolutely continuous with respect to the physical measure obtained by taking $v=1$ in \eqref{eq:Cone-averaging-physical-measure}.
\item \label{it:res_at_zero} If $\mc M$ is connected and if there exists a smooth invariant measure $\mu$ with $\supp(\mu)=\mc M$, we have for any $j=0,\ldots,\kappa$
\[
\dim \left(\ker d_{X}|_{C^{-\infty}_{E_u^*}(\M)\otimes \Lambda^j\a^*_\C}/
\ran\, d_{X}|_{C^{-\infty}_{E_u^*}\otimes \Lambda^{j-1}\a^*_\C}\right)={\binom{\kappa}{j}}.
\] 
\end{enumerate}
\end{theorem}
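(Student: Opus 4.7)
The target is to identify $H^j(V_\bullet, d_X)\cong\Lambda^j\a^*_\C$ with $V_j:=C^{-\infty}_{E_u^*}(\M)\otimes\Lambda^j\a^*_\C$ via an explicit chain homotopy. The setup is a split retraction onto the constants: since $X_A 1=0$, the inclusion $\iota\colon(\Lambda^\bullet\a^*_\C,0)\to(V_\bullet,d_X)$, $\omega\mapsto 1\otimes\omega$, is a chain map. Using the smooth invariant measure $\mu$ normalized so $\mu(\M)=1$, define $\Pi\colon V_\bullet\to\Lambda^\bullet\a^*_\C$ by $\Pi(u\otimes\omega):=\bigl(\int u\,d\mu\bigr)\omega$. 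Invariance $X_A\mu=0$ gives $\int(X_Au)\,d\mu=-\int u\,(X_A\mu)=0$, so $\Pi\circ d_X=0$ and $\Pi$ is a chain map to the zero-differential complex. Since $\Pi\iota=\mathrm{id}$, the induced map $\iota_*\colon\Lambda^j\a^*_\C\hookrightarrow H^j$ is split injective and $\dim H^j\geq\binom{\kappa}{j}$; it remains to show it is surjective.

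For the upper bound I build a chain homotopy $H$ between $\mathrm{id}$ and $\iota\Pi$ on $V_\bullet$. Fix $A_0\in\W$. The key analytic input is the rank-one uniqueness $\ker X_{A_0}|_{V_0}=\C\cdot 1$: given $u\in C^{-\infty}_{E_u^*}(\M)$ with $X_{A_0}u=0$, the $\varphi_t^{X_{A_0}}$-invariance of $u$ transfers to invariance of $\mathrm{WF}(u)\subset E_u^*$ under the symplectic lift $\Phi_t$, which contracts $E_u^*$ radially toward the zero section as $t\to+\infty$; the radial-sink estimates for Anosov flows (Faure-Sj\"ostrand, as employed in the microlocal framework of the body of the paper) then promote $\mathrm{WF}(u)\subset E_u^*$ to $\mathrm{WF}(u)=\emptyset$, so $u\in C^\infty(\M)$. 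The Hopf ergodicity argument applied to the volume-preserving Anosov flow $\varphi_t^{X_{A_0}}$ with full-support smooth invariant measure $\mu$ on connected $\M$ then forces $u$ to be constant. Combined with the rank-one Faure-Sj\"ostrand Fredholm theorem (index zero on a suitable anisotropic Sobolev space), this gives $\dim\mathrm{coker}\,X_{A_0}=1$. Invariance of $\mu$ gives $X_{A_0}(V_0')\subset V_0'$ for $V_0':=\ker\Pi|_{V_0}$, and a dimension count forces $X_{A_0}\colon V_0'\to V_0'$ to be bijective; let $R$ be its inverse, extended by zero to $\C\cdot 1$, so $RX_{A_0}=X_{A_0}R=\mathrm{id}-\iota\Pi$ on $V_0$.

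Define $H\colon V_\bullet\to V_{\bullet-1}$ by $H(u\otimes\omega):=Ru\otimes\iota_{A_0}\omega$, where $\iota_{A_0}$ is interior product with $A_0\in\a$. Using $[R,X_i]=0$ (every $X_i$ commutes with $X_{A_0}$, hence with $R$), the graded Leibniz identity $\iota_{A_0}(e^i\wedge\omega)=e^i(A_0)\omega-e^i\wedge\iota_{A_0}\omega$ for a basis $e^i$ of $\a^*$ dual to $A_i$, and $\sum_i e^i(A_0)X_i=X_{A_0}$, a direct computation yields
\[
d_XH+Hd_X=(RX_{A_0})\otimes\mathrm{id}_{\Lambda^\bullet\a^*_\C}=\mathrm{id}_{V_\bullet}-\iota\Pi.
\]
On cohomology this forces $\iota_*\Pi_*=\mathrm{id}$, so $\iota_*$ is surjective. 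Combined with the first paragraph, $H^j\cong\Lambda^j\a^*_\C$ has dimension $\binom{\kappa}{j}$, as desired.

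The main obstacle is the rank-one uniqueness step $\ker X_{A_0}|_{V_0}=\C\cdot 1$: the radial-estimate regularization is standard in the microlocal Anosov framework of this paper but requires care, and the Hopf/Anosov ergodicity of the single flow $\varphi_t^{X_{A_0}}$ under the full-support smooth invariant measure is the essential dynamical input. Once this and the rank-one index-zero Fredholm statement are secured, the chain homotopy argument is essentially formal and pins down all $j$ simultaneously, without any separate handling of intermediate degrees.
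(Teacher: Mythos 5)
Your proposal only addresses item (\ref{it:res_at_zero}) of the theorem: the identification of the cohomology at $\lambda=0$ with $\Lambda^j\a^*_\C$ under the hypothesis of a smooth, fully supported invariant measure. Items (i)--(iv) (physical measures and their wavefront characterization, the mixing equivalences, and the equivariant complex measures) are not touched at all, and they require separate arguments (in the paper: the explicit spectral projector $\Pi(i\lambda)=\lim_k R(i\lambda)^k$, the Birkhoff--Ces\`aro limit \eqref{Birkhoff_sum}, and the FBI/spectral-measure argument for mixing). So at best you have a partial proof.

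For item (\ref{it:res_at_zero}) itself there is a genuine gap at the step you yourself flag as ``the main obstacle'': the claim that $\ker X_{A_0}|_{C^{-\infty}_{E_u^*}}=\C\cdot 1$ for a \emph{single} $A_0\in\W$. This is false in general. The vector field $X_{A_0}$ of an Anosov action is only transversely hyperbolic --- its neutral bundle $E_0$ has dimension $\kappa$ --- so $\varphi_t^{X_{A_0}}$ is not an Anosov flow and need not be ergodic for $\mu$, and the Hopf argument does not apply. Concretely, take $\M=\M_1\times\M_2$ with each $\M_i$ the (constant roof) suspension of a volume-preserving Anosov diffeomorphism, with the product $\R^2$-action and $A_0=(1,1)$. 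The smooth non-constant function $u=e^{2\pi i\theta_1}e^{-2\pi i\theta_2}$ satisfies $(X_1+X_2)u=X_{A_0}u=0$, so $\ker X_{A_0}$ on $C^{-\infty}_{E_u^*}$ strictly contains the constants. Consequently $X_{A_0}$ is not injective on $\ker\Pi$, the operator $R$ does not exist, and your chain homotopy $d_XH+Hd_X=\mathrm{id}-\iota\Pi$ cannot be constructed. (The lower bound via the split injection $\iota_*$ is fine, and the algebra of the homotopy would be correct if $R$ existed; the failure is precisely the existence of $R$.) A secondary issue: the assertion that radial-sink estimates alone upgrade $\WF(u)\subset E_u^*$ to $\WF(u)=\emptyset$ is not justified; in the paper this smoothness (Proposition~\ref{smoothnessat0}) relies essentially on the $L^2$-unitarity coming from the invariant measure, via the averaged operator $F(\lambda)$ and a propagation argument, not on radial estimates for $X_{A_0}$ alone.

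The paper's route avoids the single-direction reduction entirely: one sandwiches the cohomology onto the finite-dimensional range of the spectral projector $\Pi_0(0)$ of $F(0)$ (Lemma~\ref{lem:critereFredholm} and \eqref{eq:intrinsic_isom2}), shows $\ran\Pi_0(0)\subset C^\infty$, and then uses the \emph{joint} invariance $X_Au=0$ for all $A\in\a$ together with the Katok--Spatzier closing lemma and Poincar\'e recurrence (density of periodic tori) to conclude that $\ran\Pi_0(0)$ consists of constants; the complex restricted to constants has zero differential, giving $\binom{\kappa}{j}$ in every degree. If you want to salvage a homotopy-style argument, it must use all $\kappa$ directions simultaneously (as in Lemma~\ref{lemmafordiscreteness}), not an inverse of a single $X_{A_0}$.
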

We show that the isomorphism stated in (\ref{it:physM_are_res_states}) and the existence of the complex measures in (\ref{it:complex_measures}) can  be constructed explicitly in terms of spectral projectors built from the parametrix \eqref{eq:parametrix_intro}. We refer to Propositions~\ref{prop:SRBmeasures} and \ref{prop:mixing} for these constructions and for slightly more complete statements.

In the case of a single Anosov flow, physical measures are known to coincide with SRB measures (see e.g. \cite{You02} and references therein). The latter are usually defined as invariant measures that can locally be disintegrated along the stable or unstable foliation of the flow with absolutely continuous conditional densities.

We prove in a subsequent article \cite{BGW21} that the microlocal characterization Theorem~\ref{Theo3intro}(\ref{it:WF_phys_meas}) of physical measures via their wavefront set implies that the physical measures of an Anosov action are exactly those invariant measures that allow an absolutely continuous disintegration along the stable manifolds. We show in \cite[Theorem 2]{BGW21} that for each physical/SRB measure, there is a basin $B\subset \mc{M}$ of positive Lebesgue measure such that for all $f\in C^0(\M)$, all proper open subcones $\mathcal{C}\subset\W$ and all $x\in B$, we have the convergence
\begin{equation}\label{eq:Cone-averaging-ergodic-SRB-measure}
\mu(f) =\lim_{T\to \infty} \frac{1}{{\rm Vol}(\mc{C}_T)} \int_{A\in \mc{C}_T}f(\varphi^{-X_A}_1(x)) dA.
\end{equation}
Moreover, we prove in \cite[Theorem 3]{BGW21} that the measure $\mu$ can be written as an infinite weighted sums over Dirac measures on the periodic tori of the action, showing an equidistribution of periodic tori in the support of $\mu$.  
Finally, we show that this measure has full support in $\mc{M}$ if the action is positively transitive in the sense that there is a dense orbit $\cup_{A\in \mc{W}} \varphi_1^{X_A}(x)$ for some $x\in \mc{M}$. As mentioned before, the existence of such a measure is considered as one important step towards the resolution of the \cite{KaSp94} rigidity conjecture.

\subsection{Relation to previous results}
The notion of resonances for certain particular Anosov flows appeared in the work of Ruelle \cite{Rue76}, and was later extended by Pollicott \cite{Pol85}. The introduction of a spectral approach based on anisotropic Banach and Hilbert spaces came later and allowed to the definition of resonances in the general setting, first for Anosov/Axiom A diffeomorphisms \cite{BKL02,GL06,BT07,FRS08}, then for general Anosov/Axiom A flows \cite{Liv04,BL07,FS11,GLP13, DZ16a,DG16,Me21}. It was also applied to the case of pseudo Anosov maps \cite{FGL19}, Morse-Smale flows \cite{DR18}, geodesic flows for manifolds with cusps \cite{BW21} and billiards \cite{BDL18}. This spectral approach has been used to study SRB measures \cite{BKL02, BL07} but it led also to several important consequences on dynamical zeta function \cite{GLP13, DZ16a, FT17, DG16} of flows, and links with topological invariants \cite{DZ17, DR17b, DGRS18}.

Concerning the notion of joint spectrum in dynamics, there are several cases that have been considered but they correspond to a different context of systems with symmetries (e.g. \cite{BR01}).

Higher rank $\R^\kappa$-Anosov actions have in particular been studied mostly for their rigidity: they are conjectured to be always smoothly conjugated to several models, mostly of algebraic nature (see e.g. the introduction of \cite{SV19} for a precise statement and a state of the art on this question). 
The local rigidity of $\R^\kappa$-Anosov actions near \emph{standard Anosov actions}\footnote{This class, defined in \cite{KaSp94}, consists of Weyl chamber flows associated to rank $\kappa$ locally symmetric spaces and variations of those.} was proved in \cite{KaSp94}, and an important step of the proof relies on showing
\[
\ker d_X|_{C^\infty(\M)\otimes \Lambda^1\a_\C^*}/\ran\, d_X|_{C^\infty(\M)}=\C^\kappa.
\]
The main tools are based on representation theory to prove fast mixing with respect to the canonical invariant (Haar) measure.
It is also conjectured in \cite{KaKa} that, more generally, for such standard actions, one has  for $j=1,\dots,\kappa-1$
\[
\ker d_X|_{C^\infty(\M)\otimes \Lambda^j\a_\C^*}/\ran\, d_X|_{C^\infty(\M)\otimes \Lambda^{j-1}\a_\C^*}=\C^{\binom{\kappa}{j}}.
\]
This can be compared to (\ref{it:res_at_zero}) in Theorem \ref{Theo3intro}, except that there the functional space is different. 
Having a notion of Ruelle-Taylor resonances provides an approach to obtain exponential mixing for more general Anosov actions by generalizing microlocal techniques for spectral gaps \cite{NZ13, Tsu10} to a suitable class of higher rank Anosov action, and by using the functional calculus of Taylor \cite{Tay70a, Vas79}. We believe that such tools might be very useful to obtain new results on the rigidity conjecture.

We would like to conclude by pointing out a different direction: on rank $\kappa>1$ locally symmetric spaces $\Gamma\backslash G/K$, there is a commuting algebra of invariant differential operators that can be considered as a quantum analog to the Weyl chamber flows. If the locally symmetric space is compact, this algebra always has a discrete joint spectrum of $L^2$-eigenvalues. Its joint spectrum and relations to trace formulae have been studied in \cite{DKV79}. In \cite{HWW21}, it is shown that a subset of the Ruelle-Taylor resonances for the Weyl chamber flow are in correspondence with the joint discrete spectrum of the invariant differential operators on $\Gamma\backslash G/K$, giving a generalization of the classical/quantum correspondence of \cite{DFG15, GHW21, KW21} to higher rank.

\subsection{Outline of the article} In Section~\ref{sec:geom_prelim} we introduce the geometric setting of Anosov actions and the admissible lifts that we study. In Section~\ref{sec:Taylor-spectrum} we explain how to define the Taylor spectrum for a certain class of unbounded operators and discuss some properties of this Taylor spectrum. In Section~\ref{sec:Taylor-pseudors} we prove Theorem~\ref{Theo1intro} and Theorem \ref{Theo2intro}, using microlocal analysis. A sketch of the central techniques is given at the beginning of Section~\ref{sec:Taylor-pseudors}. The last Section~\ref{sec:resonance-at-zero} is devoted to the proof of Theorem \ref{Theo3intro}. In Appendix~\ref{sec:microlocal}, we recall some classical results of microlocal analysis needed in the paper.\\

\textbf{Acknowledgements.}
This project has received funding from the European Research Council (ERC) under the European Union’s Horizon 2020 research and innovation programme (grant agreement No. 725967) and from the Deutsche Forschungsgemeinschaft (DFG) through the Emmy Noether group ``Microlocal Methods for Hyperbolic Dynamics''(Grant No. WE 6173/1-1). 
This material is based upon work supported by the National Science Foundation under Grant No. DMS-1440140 while C.G. was in residence at the Mathematical Sciences Research Institute in Berkeley, California, during the Fall 2019 semester. 
We thank F. Rodriguez Hertz, A. Brown  and R. Spatzier for useful discussions, as well as B. K\"uster and L. Wolf for valuable feedback on an earlier version of the manuscript.

\section{Geometric preliminaries}\label{sec:geom_prelim}
\subsection{Anosov actions}
We first want to explain the geometric setting of Anosov actions and the admissible lifts that we will study. 

Let $(\M, g)$ be a closed, smooth Riemannian manifold (normalized with volume $1$) equipped with a smooth locally 
free action $\tau : \mathbb A \to \Diffeo(\M)$ for an abelian Lie group 
$\mathbb A\cong \R^\kappa$. Let $\mathfrak a:= \tu{Lie}(\mathbb A)\cong \R^\kappa$ be the associated commutative Lie algebra and $\exp:\mathfrak a\to\mathbb A$ the Lie group exponential map.
After identifying $\mathbb A\cong \mathfrak a\cong \R^\kappa$, this exponential map is simply the identity, but it will be  quite useful to have a notation that distinguishes between transformations and infinitesimal transformations. 
Taking the derivative of the $\mathbb A$-action one obtains the infinitesimal action, called an \emph{$\a$ action}, which is an injective Lie algebra homomorphism 
\begin{equation}\label{XAnosov}
 X:\Abb{\mathfrak a}{C^\infty(\M;T\M)}{A}{X_A:=\frac{d}{dt}_{|t=0}\tau(\exp(At)).}
\end{equation}
Note that $X$ can alternatively be seen as a Lie algebra morphism into the space ${\rm Diff}^1(\M)$ of first order differential operators.
By commutativity of $\mathfrak a$, $\ran (X)\subset C^\infty(\M;T\mc M)$ is a $\kappa$-dimensional subspace of commuting vector fields which span a $\kappa$-dimensional smooth subbundle which we call the \emph{neutral subbundle} $E_0\subset T\mc M$. 
Note that this subbundle is tangent to the $\mathbb A$-orbits on $\mc M$. It is often useful to study the one-parameter flow generated by a vector field $X_A$ which we denote by $\varphi^{X_A}_t$. One has the obvious identity $\varphi^{X_A}_t = \tau(\exp(At))$ for $t\in \R$. 
The Riemannian metric on $\M$ induces norms  on $T\M$ and $T^*\M$, both denoted by $\|\cdot\|$.
\begin{Def}\label{transhyp}
An element $A\in \mathfrak a$ and its corresponding vector field $X_A$ are called \emph{transversely hyperbolic} if there is a continuous splitting
\begin{equation}\label{eq:invariant-splitting}
 T\M = E_0\oplus E_u\oplus E_s,
\end{equation}
that is invariant under the flow $\varphi^{X_A}_t$ and such that there are $\nu>0, C>0$ with 
\begin{equation}
 \label{eq:stable}
 \|d\varphi_t^{X_A}v\|\leq Ce^{-\nu |t|}\|v\|, \quad  \forall v\in E_s,~ \forall t\geq 0,
\end{equation}
\begin{equation}\label{eq:unstable}
 \|d\varphi_t^{X_A}v\|\leq Ce^{-\nu |t|}\|v\|, \quad  \forall v\in E_u,~ \forall t\leq 0.
\end{equation}
We say that the $\mathbb A$-action is \emph{Anosov} if there exists an $A_0\in \mathfrak a$ such that $X_{A_0}$ is transversely hyperbolic.
\end{Def}
Given a transversely hyperbolic element $A_0\in \mathfrak a$ we define the \emph{positive Weyl chamber} $\mathcal W\subset \mathfrak a$
to be the set of  $A \in \mathfrak a$  which are transversely hyperbolic with the same stable/unstable bundle as $A_0$. 
\begin{lemma}\label{lem:weyl_chamber}
Given an Anosov action and a transversely hyperbolic element $A_0\in \a$, the positive Weyl chamber $\mathcal W\subset \mathfrak a$ is an open convex cone.
\end{lemma}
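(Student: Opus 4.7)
The plan is to verify the three properties of $\mc W$ separately, leveraging (i) the linearity of the generating map $A\mapsto X_A$, (ii) the commutativity of the flows, and (iii) the fact (invoked just before the lemma) that the splitting $T\M=E_0\oplus E_u\oplus E_s$ is invariant under the \emph{entire} $\mathbb A$-action, not just under $\varphi_t^{X_{A_0}}$. Throughout, the neutral subbundle $E_0$ is intrinsic (it is the tangent bundle to the orbits) and does not depend on the choice of $A$.

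\textbf{Cone property.} Since $X_{sA}=sX_A$ for $s>0$, we have $\varphi_t^{X_{sA}}=\varphi_{st}^{X_A}$. Applied to the estimates \eqref{eq:stable}--\eqref{eq:unstable} for $A$ with constants $(C,\nu)$, this yields the same estimates for $sA$ with constants $(C,s\nu)$ and the same stable/unstable splitting. Hence $sA\in\mc W$.

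\textbf{Convexity.} For $A,B\in\mc W$, commutativity of the $\mathbb A$-action gives $\varphi_t^{X_{A+B}}=\varphi_t^{X_A}\circ\varphi_t^{X_B}$. By the invariance of the splitting, $d\varphi_t^{X_B}$ preserves $E_s$, so for $v\in E_s$ and $t\geq 0$,
\[
\|d\varphi_t^{X_{A+B}}v\|\leq C_A e^{-\nu_A t}\|d\varphi_t^{X_B}v\|\leq C_A C_B e^{-(\nu_A+\nu_B)t}\|v\|.
\]
The argument on $E_u$ for $t\leq 0$ is symmetric. Hence $A+B$ satisfies \eqref{eq:stable}--\eqref{eq:unstable} with the same splitting, and so $A+B\in\mc W$.

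\textbf{Openness.} Let $A\in\mc W$ with constants $(C_A,\nu_A)$, and consider $B=A+\epsilon$ with $\epsilon\in\a$ small. By commutativity, $\varphi_t^{X_B}=\varphi_t^{X_A}\circ\varphi_t^{X_\epsilon}$. The map $\a\to C^1(\M;T\M)$, $A\mapsto X_A$ is linear, so on the compact manifold $\M$ there is a constant $K>0$ (depending on the action and the metric) such that the Lipschitz/$C^1$-norm of $X_\epsilon$ is bounded by $K|\epsilon|$. A standard Gronwall estimate applied to the variational equation along $\varphi_t^{X_\epsilon}$ then yields
\[
\|d\varphi_t^{X_\epsilon}\|_{\mathrm{op}}\leq e^{K|\epsilon|\,|t|},\qquad t\in\R.
\]
Combining with the estimate for $\varphi_t^{X_A}$ on $E_s$ (which is $\varphi_t^{X_\epsilon}$-invariant by invariance of the splitting under the whole action) gives, for $v\in E_s$ and $t\geq 0$,
\[
\|d\varphi_t^{X_B}v\|\leq C_A e^{-\nu_A t}\|d\varphi_t^{X_\epsilon}v\|\leq C_A e^{(K|\epsilon|-\nu_A)t}\|v\|.
\]
Taking $|\epsilon|<\nu_A/(2K)$ gives exponential contraction with rate $\nu_A/2$; the analogous argument on $E_u$ for $t\leq 0$ works identically. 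Hence a whole neighborhood of $A$ lies in $\mc W$.

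\textbf{Main obstacle.} The cone and convexity parts are almost immediate from the algebraic structure; the only subtle point is to correctly invoke that the common splitting is preserved under the entire action (used crucially so that the bundles $E_s,E_u$ are \emph{the same} for all elements we consider). The openness statement is the technical core: the Gronwall bound $e^{K|\epsilon|\,|t|}$ allows the small perturbation $X_\epsilon$ to deteriorate the estimates, but only at a rate proportional to $|\epsilon|$, so it can be absorbed into the hyperbolic rate $\nu_A$. No Hartman--Grobman or structural-stability argument is needed, because commutativity of the action gives us the factorization $\varphi_t^{X_B}=\varphi_t^{X_A}\circ\varphi_t^{X_\epsilon}$ for free and the splitting is fixed a priori.
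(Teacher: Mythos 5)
Your proof is correct and follows essentially the same route as the paper: the cone property by reparametrization, convexity via the factorization $d\varphi_t^{X_{A+B}}=d\varphi_t^{X_A}d\varphi_t^{X_B}$, and openness by absorbing the Gronwall bound $e^{K|\epsilon||t|}$ for the perturbation into the hyperbolic rate. The one ingredient you cite rather than prove --- invariance of the splitting under the whole action --- is actually established as the first step of the paper's proof (for $v\in E_u$, commutativity gives $d\varphi_{-t}^{X_{A_0}}d\varphi_{t_0}^{X_A}v=d\varphi_{t_0}^{X_A}d\varphi_{-t}^{X_{A_0}}v\to 0$ exponentially, so $d\varphi_{t_0}^{X_A}v\in E_u$), so you should include that short argument rather than treat it as given.
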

\begin{proof}
Let us first take the $\varphi^{X_{A_0}}_t$-invariant splitting $E_0\oplus E_u\oplus E_s$ and show that it is in fact invariant under the Anosov action $\tau$: let $v \in E_u$ and  $A\in \mathfrak a$ . Using $[X_{A_0},X_{A}]=0$, for each $t_0\in \R$ fixed and all $t\in\R$ we find
\begin{equation}
d\varphi_{-t}^{X_{A_0}}d\varphi_{t_0}^{X_A}v = d\varphi_{t_0}^{X_A}d\varphi_{-t}^{X_{A_0}}v.
\end{equation}
In particular, $\|d\varphi_{-t}^{X_{A_0}}d\varphi_{t_0}^{X_A}v\|$ decays exponentially fast as $t\to + \infty$. This implies that $d\varphi_{t_0}^{X_A}v\in E_u$ and the same argument works with $E_s$. 
Next, we choose an arbitrary norm on $\mathfrak a$. There exist $C,C'>0$ such that for each $v\in E_u$ we have for $t\geq 0$
\[
\|d\varphi_{-t}^{X_A}v\| \leq \|d\varphi_{-t}^{X_{A-A_0}}d\varphi_{-t}^{X_{A_0}}v\|\leq 
C\|v\|e^{-\nu t}\|d\varphi_{-t}^{X_{A-A_0}}\|\leq C\|v\|e^{-\nu t}e^{C't\|A-A_0\|}.
\]
This implies that by choosing $\|A-A_0\|$ small enough, $E_u$ is an unstable 
 bundle for $A$ as well. The same construction works for $E_s$ and we have thus shown that $\mathcal W$ is open.

By re-parametrization, it is clear that $\W$ is a cone, so that only the convexity is left to be proved.
Now, take $A_1, A_2 \in \W$ and let $C_1,\nu_1,C_2,\nu_2$ be the corresponding constants for the transversal hyperbolicity estimates \eqref{eq:stable} and \eqref{eq:unstable}. Then for $s\in [0,1]$ and $v\in E_u$ we can again use the commutativity and obtain,
\begin{equation}
\| d\varphi_{-t}^{X_{s A_1 + (1-s) A_2}} v \| \leq C_1 C_2 e^{-\nu_1 s t - \nu_2 (1-s) t} \|v\|
\end{equation}
and this shows that $sA_1 + (1-s) A_2 \in \mathcal W$.
\end{proof}
Here we emphasize that the Weyl chamber $\mc{W}$ only depends on the Anosov splitting 
associated to $A_0$ but not on $A_0$ itself. Notice also that in general there are other Weyl chambers $\mc{W}'$ associated to a different Anosov 
splitting. In the standard example of Weyl chamber flows they are images of $\mc{W}$ by the Weyl group of the higher rank locally symmetric space, explaining the terminology Weyl chambers (see for example \cite{HWW21} for details). In general the structure of Weyl chambers can be quite complicated (see for example the example of non total Anosov actions given in \cite[Section 6.3.4.]{SV19}). In that case, the Ruelle-Taylor spectrum that we shall define has no reason to be the same for $\mc{W}$ and for $\mc{W}'$.

There is an important class of examples given by the Weyl chamber flow on Riemannian locally symmetric spaces.
\begin{example}\label{locsymsp}
Consider a real semi-simple Lie group $\mathbb G$, connected and of non-compact type, and let $\mathbb G=\mathbb K\mathbb A\mathbb N$ be an Iwasawa decomposition with $\mathbb A$ abelian, 
$\mathbb K$ the compact maximal subgroup and $\mathbb N$ nilpotent. Then $\mathbb A\cong \R^\kappa$ and $\kappa$ is called the \emph{real rank} of $\mathbb G$. Let $\a$ be the Lie algebra of $\mathbb A$ and consider the adjoint action of $\mathfrak a$ on $\mathfrak g$ which leads to the definition of a finite set of \emph{restricted roots} $\Delta\subset \a^*$. For $\alpha \in \Delta$ let $\mathfrak g_\alpha$ be the associated root space. It is then possible to choose a set of positive roots $\Delta_+\subset\Delta$ and with respect to this choice there is an algebraic definition of a positive Weyl chamber 
\[
\mathcal W:= \{A\in\a\, |\, \alpha(A)>0 \text{ for all } \alpha\in\Delta_+\}.
\]
If one now considers $\Gamma < \mathbb{G}$ a torsion free, discrete, co-compact subgroup one can define the biquotient  $\M := \Gamma \backslash \mathbb G / \mathbb M$ where $\mathbb M\subset \mathbb K$ is the centralizer of $\mathbb{A}$ in $\mathbb K$. As $\mathbb A$ commutes with $\mathbb M$, the space $\mathcal M$ carries a right $\mathbb A$-action. Using the definition of roots, it is direct to see that this is an Anosov action: all elements of the positive Weyl chamber $\mathcal W$ are transversely hyperbolic elements sharing the same stable/unstable distributions given by the associated vector bundles:
 \[
  E_0 = \mathbb{G}\times_{\mathbb{M}}\a,~~E_s=\mathbb{G}\times_\mathbb{M}\n,~~E_u = \mathbb{G}\times_\mathbb{M} \bbar{\n}.
 \]
Here $\n:=\sum_{\alpha\in\Delta_+}\g_\alpha$ and $\bbar\n:=\sum_{-\alpha\in\Delta_+}\g_\alpha$ are the sums of all positive, respectively negative root spaces, and $\n$ coincides with the Lie algebra of the nilpotent group $\N$. 
\end{example}
Note that there are various other constructions of Anosov actions and we refer to \cite[Section 2.2]{KaSp94} for further examples.

\subsection{Admissible lifts}\label{admissiblelifts}
We want to establish the spectral theory not only for the commuting vector fields $X_A$ that act as first order differential operators on $C^\infty(\mc M)$ but also for first order differential operators on Riemannian vector bundles $E\to\mc M$ which lift the Anosov action.
\begin{Def}\label{def:admissible_lift}
 Let $\mc M$ be a closed manifold with an Anosov action of $\mathbb A\cong \R^\kappa$ and generating map $X$. Let $E\to\mc M$ be the complexification of a smooth Riemannian vector bundle over $\mc M$. Denote by ${\rm Diff}^1(\M;E)$ the Lie-algebra of first order differential operators with smooth coefficients and scalar principal symbol, acting on sections of $E$. Then we call a Lie algebra homomorphism 
 \[
  \X:\a\to {\rm Diff}^1(\M;E),
 \]
an \emph{admissible lift} of the Anosov action if it satisfies the following Leibniz rule: for any section $s\in C^\infty(\M;E)$ and any function $f\in C^\infty (\mc M)$ one has  for all $A\in \mathfrak{a}$
\begin{equation}\label{eq:admissible_lift_assumption}
 {\bf  X}_A (fs) = (X_Af)s + f\X_A s.
\end{equation}
\end{Def}
A typical example to have in mind would be when $E$ is a tensor bundle, (e.g.  exterior power of the cotangent bundle $E=\Lambda^{m}T^*\M$ or symmetric tensors $E=\otimes^m_ST^*\M$), and 
\[
\X_A s:= \mc{L}_{X_A}s
\] 
where $\mc{L}$ denotes the Lie derivative. This admissible lift can be restricted to any subbundle that is invariant under the differentials $d\varphi^{X_A}_t$ for all $A\in \a, t>0$. Another class of examples comes from flat connections.
More generally, the above examples can be seen as a special case where the $\mathbb A$-action $\tau$ on $\mc M$ lifts to an action $\widetilde{\tau}$ on $E$ which is fiberwise linear. Then one can define an infinitesimal action
 \begin{equation}\label{liftedaction}
  \X_As(x) := \partial_t\tilde{\tau}(\exp(-At))s(\tau(\exp(At))x)|_{t=0}
 \end{equation}
 which is an admissible lift. 

\section{Taylor spectrum and Fredholm complex}
\label{sec:Taylor-spectrum}

The Taylor spectrum was introduced by Taylor in \cite{Tay70, Tay70a} as a joint spectrum for commuting bounded operators, using the theory of Koszul complexes. 
While there are different competing notions of joint spectra (see e.g. the lecture notes \cite{Cur88}), the Taylor spectrum is from many perspectives the most natural notion. 
Its attractive feature is that it is defined in terms of operators acting on Hilbert spaces 
and does not depend on a choice of an ambient commutative Banach algebra.
Furthermore, it comes with a satisfactory analytic functional calculus developed by Taylor and Vasilescu \cite{Tay70a, Vas79}.

\subsection{Taylor spectrum for unbounded operators}\label{sec:Taylor_commuting}

Most references introduce the Taylor spectrum for tuples of bounded operators. In our case, we need to deal with unbounded operators. Additionally, working with a tuple implies choosing a basis, which should not be necessary. 
Let us thus explain how the notion of Taylor spectrum can easily be extended to an important class of abelian actions by unbounded operators.

We start with $E\to \mc M$ a smooth complex vector bundle over a smooth manifold $\mc M$ (not necessarily compact), $\a\cong \R^\kappa$ an abelian Lie algebra
and $\X:\a\to \OpDiff^1(\mc M;E)$ a Lie algebra morphism.  
For the moment we do not have to assume that $\mc M$ possesses an Anosov action. 
Note that $\X$ extends by linearity to  $\X:\a_\C\to \OpDiff^1(\mc M;E)$ and for the definition of the spectra we will need to work with this complexified version. Using the map $\X$ we define
\[
 d_{\X}:\Abb{C_c^\infty(\M;E)}{ C_c^\infty(\M;E)\otimes \a^*_\C}{u}{\X u}
\]
where we have set $(\X u)(A):=\X_{A} u$ for each $A\in \a_\C$. 
This will be the central ingredient to define the Koszul complex which will lead to the definition of the Taylor spectrum. In order to do this we need some more notation: we denote by $\Lambda\acs :=\bigoplus_{\ell=0}^\kappa \Lambda^\ell\acs$  the exterior algebra of $\acs$ --- this is just a coordinate-free version of $\Lambda \C^\kappa$. 
Given a topological vector space $V$ we use the shorthand notation $V\Lambda^\ell:= V\otimes \Lambda^\ell \a_\C^*$ and $V\Lambda:=  V \otimes \Lambda\acs$. As $\Lambda\acs$ is finite dimensional $V\Lambda$ is again a topological vector space. 
We notice that since $\Lambda\acs$ is a finite dimensional vector space, we can view it as a trivial bundle $\mc{M}\times \Lambda\acs\to \mc{M}$, and when $V=C_c^\infty(\mc{M};E)$, $V=L^p(\mc{M};E)$ or $V=\mc{D}'(\mc{M};E)$,  elements in $V\otimes \Lambda^\ell \a_\C^*$ can be identified respectively to sections of $V=C_c^\infty(\mc{M};E\otimes \Lambda \a_\C^*)$, $L^p(\mc{M};E\otimes \Lambda \a_\C^*)$ 
or $\mc{D}'(\mc{M};E\otimes \Lambda \a_\C^*)$. We shall freely make this identitifcation as this will  sometime be useful when we use pseudo-differential operators.

We have the  \emph{contraction} and \emph{exterior product} maps
\[
 \iota:\Abb{\a_\C\times V\Lambda^\ell }{V\Lambda^{\ell-1}}{(A,v\otimes \omega)}{\iota_A(v\otimes \omega) := v\otimes (\iota_A \omega)} \text{ and }
 ~\wedge:\Abb{V\Lambda^\ell \times\Lambda^r\acs}{V\Lambda^{\ell+r}}{(v\otimes \omega,\eta)}{v\otimes(\omega\wedge\eta).}
\]
We can then extend $d_\X$ to a continuous map on the spaces $C_c^\infty \Lambda:=C_c^\infty(\M;E)\otimes \Lambda\a^*_\C$ (resp. $C^{-\infty}\Lambda:=C^{-\infty}(\M;E)\otimes \Lambda\a^*_\C$) 
by setting for each $u\in C_c^\infty(\M;E)$ (resp. $u\in C^{-\infty}(\M;E)$) and  
$\omega \in \Lambda^\ell\acs$
\[
 d_\X:u\otimes \omega\mapsto (d_\X u)\wedge \omega.
\]
Similarly, for each $A\in\a$ we will also extend $\X_A$ on these spaces by setting
\[
\X_A(u\otimes \omega):=\X_Au\otimes \omega=(\iota_{A}d_{\X}u)\otimes \omega.
\]
\begin{rem}\label{rem:coordinate_free_and_corrdinates}
Choosing a basis $A_1,\ldots,A_\kappa \in\a$ provides an isomorphism 
$\Lambda \a^* \cong \Lambda \R^\kappa$. One checks that under this isomorphism the coordinate free version $d_\X: V\otimes \Lambda^\ell\a^*\to V\otimes \Lambda^{\ell+1}\a^*$ of the Taylor differential transforms to the Taylor differential $d_X: V\otimes \Lambda^\ell\R^\kappa
\to V\otimes \Lambda^{\ell+1}\R^\kappa$ of the operator tuple $X=(\mathbf X_{A_1},\ldots,\mathbf X_{A_\kappa})$ defined as 
\begin{equation}\label{eq:def-d_Y-coordinates}
d_X ( u\otimes e_{i_1}\wedge\dots\wedge e_{i_j}) := \sum_{k=1}^\kappa (\mathbf X_{A_k}u)\otimes e_k \wedge e_{i_1}\wedge\dots\wedge e_{i_j}
\end{equation}
if the basis $(e_j)_j$ of $\R^\kappa$ is identified to the dual basis of $(A_j)_j$ in $\a^*$. 
\end{rem}

\begin{lemma}\label{lem:d_X_properties}
 For each $A\in \a_\C$ one has the following identities as continuous operators on 
 $C_c^\infty \Lambda$ and $C^{-\infty}\Lambda$:
 \begin{enumerate}[(i)]
  \item $\iota_A d_\X+d_\X\iota_A= \X_A,$
  \item $\X_A d_\X = d_\X \X_A,$
  \item $d_\X d_\X =0.$
 \end{enumerate}
\end{lemma}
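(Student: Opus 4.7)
The cleanest route is to verify all three identities on simple tensors $u\otimes\omega$ with $u\in C^\infty_c(\mc M;E)$ and $\omega\in\Lambda^\ell\acs$, and then extend to distributions by the continuity of $d_\X$, $\iota_A$, and $\X_A$ as differential (or algebraic) operators. To make the computations concrete, I would fix a basis $A_1,\dots,A_\kappa$ of $\a$ with dual basis $e_1,\dots,e_\kappa\in\a^*$, so that, following Remark~\ref{rem:coordinate_free_and_corrdinates},
\[
 d_\X(u\otimes\omega)=\sum_{k=1}^\kappa (\X_{A_k}u)\otimes e_k\wedge\omega.
\]
Since the three identities are multilinear and independent of the choice of basis, proving them in coordinates suffices.

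For (iii), apply $d_\X$ twice:
\[
 d_\X d_\X(u\otimes\omega)=\sum_{j,k=1}^\kappa \X_{A_j}\X_{A_k}u\otimes e_j\wedge e_k\wedge \omega.
\]
The coefficients $\X_{A_j}\X_{A_k}$ are symmetric in $j,k$ because $\X$ is a Lie algebra morphism from the abelian algebra $\a_\C$, while $e_j\wedge e_k$ is antisymmetric, so the sum vanishes. For (i), I would use the Leibniz-type identity $\iota_A(e_k\wedge\omega)=e_k(A)\omega-e_k\wedge\iota_A\omega$ to compute
\[
 \iota_A d_\X(u\otimes\omega)=\sum_k e_k(A)(\X_{A_k}u)\otimes\omega-\sum_k(\X_{A_k}u)\otimes e_k\wedge\iota_A\omega,
\]
while $d_\X\iota_A(u\otimes\omega)=\sum_k(\X_{A_k}u)\otimes e_k\wedge\iota_A\omega$. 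Adding the two, the second terms cancel, and the first collapses to $\X_{\sum_k e_k(A)A_k}u\otimes\omega=\X_Au\otimes\omega$, which is exactly $\X_A(u\otimes\omega)$.

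For (ii) there is no real work left: combining (i) and (iii) gives
\[
 \X_A d_\X=\iota_A d_\X d_\X+d_\X\iota_Ad_\X=d_\X\iota_Ad_\X=d_\X\iota_Ad_\X+d_\Xd_\X\iota_A=d_\X\X_A.
\]
Alternatively, (ii) follows directly from the fact that in coordinates $d_\X\X_A=\sum_k \X_{A_k}\X_A u\otimes e_k\wedge\omega$, which equals $\X_A d_\X(u\otimes\omega)$ by commutativity of the $\X_{A_j}$'s. Finally, the extension to $C^{-\infty}\Lambda$ is immediate because each of $d_\X$, $\iota_A$, $\X_A$ is continuous on distributions (the first two are differential/algebraic, and $\X_A$ is a first-order differential operator with smooth coefficients). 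There is no genuine obstacle here; the only thing to be careful about is getting the sign in $\iota_A(e_k\wedge\omega)$ right, which is what makes (i) work out to the Cartan magic formula $\iota_A d_\X+d_\X\iota_A=\X_A$.
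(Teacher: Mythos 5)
Your proof is correct and, modulo presentation, is essentially the paper's argument: (i) is the same Cartan-type computation using the anti-derivation property of $\iota_A$, and (ii) and (iii) both reduce to the commutativity of the $\X_{A_j}$'s. The only (cosmetic) differences are that you work in a fixed basis and verify (iii) directly via the symmetry of $\X_{A_j}\X_{A_k}$ against the antisymmetry of $e_j\wedge e_k$, whereas the paper stays coordinate-free and instead deduces (iii) from (i)--(ii) by showing $\iota_A$ commutes with $d_\X d_\X$ and contracting sufficiently many times; both routes are equally valid.
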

\begin{proof}
 Let $u\otimes \omega\in C_c^\infty\Lambda$ or $u\otimes\omega\in C^{-\infty}\Lambda$. Then by definition
 \[
 \iota_A d_\X(u\otimes \omega) = \iota_A ((d_\X u)\wedge \omega) = 
 (\X_Au)\otimes \omega - d_\X u\wedge (\iota_A \omega) = (\X_A - d_\X\iota_A)(u\otimes \omega)
 \]
 which yields (i). In order to prove (ii) it suffices, by definition of $d_\X$, to prove the identity as a map $C_c^\infty(\M;E) \to C_c^\infty(\M;E)\otimes \a^*_\C$. 
 Take arbitrary $A, A'\in \a_\C$ and note that by definition $\iota_{A'}\mathbf X_A = \mathbf X_A\iota_{A'}$. Then for $u\in C_c^\infty(\M;E)$ we get 
 \[
  \iota_{A'}(\X_A d_\X - d_\X\X_A)u = (\X_{A} \X_{A'} - \X_{A'}
  \X_A) u = 0
 \]
 which proves the statement. Note that we crucially use the commutativity of the differential operators $\X_A$ in this step. 
 
 For (iii) we first conclude from (i) and (ii) that $\iota_A d_\X d_\X= d_\X d_\X\iota_A$. Using this identity we deduce that for $u\in C_c^\infty \Lambda^\ell$ and arbitrary $A_1,\ldots A_{\ell + 1}\in \a_\C$
 \[
  \iota_{A_1}\ldots\iota_{A_{\ell+1}} d_\X d_\X  u =0,
 \]
which implies $d_\X d_\X u=0$.
\end{proof}
As a direct consequence of Lemma~\ref{lem:d_X_properties}(iii) we conclude that 
\begin{equation}\label{eq:def_smooth_complex}
  0\longrightarrow C_c^\infty\Lambda^0\overset{d_\X}{\longrightarrow}C_c^\infty\Lambda^1\overset{d_\X}{\longrightarrow} \dots \overset{d_\X}{\longrightarrow} C_c^\infty\Lambda^\kappa\longrightarrow 0
\end{equation}
and 
\begin{equation}\label{eq:def_distributional_complex}
  0\longrightarrow C^{-\infty}\Lambda^0\overset{d_\X}{\longrightarrow} C^{-\infty}\Lambda^1\overset{d_\X}{\longrightarrow} \dots \overset{d_\X}{\longrightarrow}  C^{-\infty}\Lambda^\kappa\longrightarrow 0
\end{equation}
are complexes. 

We now want to construct a complex of bounded operators on Hilbert spaces which lies between the complexes on $C_c^\infty\Lambda$ and $C^{-\infty}\Lambda$. For this, we consider $\mathcal{H}$ a Hilbert space with continuous embeddings $C_c^\infty(\M; E) \subset \mc H \subset C^{-\infty}(\mc M; E)$ such that $C_c^\infty(\M; E)$ is a dense subspace of $\mc H$. 
If we fix a non-degenerate Hermitian inner product $\langle \cdot,\cdot\rangle_{\acs}$, then this induces a scalar product $\langle\cdot,\cdot\rangle_{\mc H\Lambda}$ and gives a Hilbert space structure on $\mc H \Lambda$. 
While the precise value of $\langle\cdot,\cdot\rangle_{\mc H\Lambda}$ obviously depends on the choice of the Hermitian product on $\acs$, the finite dimensionality of $\acs$ implies that all Hilbert space structures on $\mc H\Lambda$ obtained in this way are equivalent. 
Note that on the Hilbert spaces $\mc H\Lambda^\ell$ the operators $d_\X$ will in general be unbounded operators. However, we have:
\begin{lemma} 
For any choice of a non-degenerate Hermitian product on $\acs$, the vector space $\mc{D}(d_\X):= \{u \in\mc H\Lambda\, |\, d_\X u\in\mc H\Lambda\}$ becomes a Hilbert space when endowed with the scalar product
 \begin{equation}\label{eq:scalar_prod_dom_dX}
  \langle \cdot,\cdot \rangle_{\mc{D}(d_\X)} := \langle \cdot,\cdot \rangle_{\mc H\Lambda}
  + \langle d_\X \cdot,d_\X \cdot\rangle_{\mc H\Lambda}.
 \end{equation}
 Furthermore, all scalar products obtained this way are equivalent and induce the same topology on $\mc{D}(d_\X)$. Finally, $d_\X$ is \emph{bounded} on $\mathcal{D}(d_\X)$.
\end{lemma}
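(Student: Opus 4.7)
The plan is to recognise the scalar product \eqref{eq:scalar_prod_dom_dX} as the graph scalar product associated to $d_\X$ viewed as an unbounded operator on $\mc H\Lambda$ with domain $\mc D(d_\X)$, so that the whole statement reduces to showing that this unbounded operator is \emph{closed}. Closedness is the only non-formal ingredient; the remaining claims are essentially book-keeping.

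First, I would verify that $d_\X$, seen as an unbounded operator $d_\X:\mc H\Lambda \supset \mc D(d_\X)\to \mc H\Lambda$, is closed. Let $(u_n)\subset \mc D(d_\X)$ with $u_n\to u$ and $d_\X u_n\to v$ in $\mc H\Lambda$. By hypothesis, the inclusion $\mc H\hookrightarrow C^{-\infty}(\mc M;E)$ is continuous, hence $u_n\to u$ and $d_\X u_n\to v$ also hold in $C^{-\infty}\Lambda$. On the other hand $d_\X$ is a (matrix of) first order differential operator(s), so it is sequentially continuous on $C^{-\infty}\Lambda$; therefore $d_\X u_n\to d_\X u$ in $C^{-\infty}\Lambda$. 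Uniqueness of distributional limits gives $d_\X u=v\in\mc H\Lambda$, so $u\in\mc D(d_\X)$ and $d_\X u=v$. This is the closedness of $d_\X$.

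Once closedness is known, the standard graph-norm argument yields that $\mc D(d_\X)$ endowed with \eqref{eq:scalar_prod_dom_dX} is a Hilbert space: the map $u\mapsto (u,d_\X u)$ is a linear isometry from $\mc D(d_\X)$ onto the graph of $d_\X$ inside the Hilbert space $\mc H\Lambda\oplus \mc H\Lambda$, and this graph is closed precisely by the previous paragraph. Boundedness of $d_\X$ on $\mc D(d_\X)$ follows at once from Lemma~\ref{lem:d_X_properties}(iii): for $u\in\mc D(d_\X)$ one has $d_\X(d_\X u)=0$, in particular $d_\X u\in\mc D(d_\X)$, and
\[
\|d_\X u\|_{\mc D(d_\X)}^2 = \|d_\X u\|_{\mc H\Lambda}^2 + \|d_\X d_\X u\|_{\mc H\Lambda}^2 = \|d_\X u\|_{\mc H\Lambda}^2 \le \|u\|_{\mc D(d_\X)}^2.
\]

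Finally, the equivalence of the scalar products obtained from different Hermitian inner products on $\acs$ reduces to finite-dimensional linear algebra: any two Hermitian inner products on the finite-dimensional space $\Lambda\acs$ induce equivalent norms on $\mc H\Lambda \cong \mc H\otimes \Lambda\acs$, with equivalence constants independent of the element of $\mc H$ involved. Applying this observation twice (once to the summand $\langle\cdot,\cdot\rangle_{\mc H\Lambda}$ and once to $\langle d_\X\cdot,d_\X\cdot\rangle_{\mc H\Lambda}$) shows that the two terms in \eqref{eq:scalar_prod_dom_dX}, and hence the whole graph norm, are all equivalent, so the underlying topology on $\mc D(d_\X)$ is intrinsic. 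The only genuinely analytic point in the whole argument is the closedness step, and even this is essentially automatic once one exploits that $\mc H$ sits continuously inside the distribution space on which $d_\X$ is a continuous differential operator.
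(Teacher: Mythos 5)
Your proposal is correct and follows essentially the same route as the paper: the completeness/closedness step is proved exactly as in the paper's argument, by passing to the distributional topology via the continuous embedding $\mc H\hookrightarrow C^{-\infty}(\mc M;E)$ and using the continuity of $d_\X$ there, and the boundedness of $d_\X$ on $\mc D(d_\X)$ is the same one-line computation using $d_\X d_\X=0$. Phrasing the completeness as closedness of the graph in $\mc H\Lambda\oplus\mc H\Lambda$ is just an equivalent packaging of the paper's Cauchy-sequence argument, and the finite-dimensional equivalence of the Hermitian products on $\Lambda\acs$ is handled identically.
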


\begin{proof}
We have to check that $\mathcal D(d_\X)$ is complete with respect to the topology of 
$\langle\cdot,\cdot\rangle_{\mathcal{D}(d_\X)}$: suppose $u_n$ is a Cauchy sequence in 
$\mc D(d_\X)$, then $u_n$ and $d_\X u_n$ are Cauchy sequences in $\mc H$ and we denote by
$v_0,v_1 \in \mathcal H\Lambda$ their respective limits. By the continuous embedding 
$\mathcal H\subset  C^{-\infty}(\mc M;E)$ and the continuity of $d_\X$ on $C^{-\infty}(\mc M;E)$ we deduce 
\[
v_{1} = \lim_{n\to\infty} d_\X u_n = d_\X \lim_{n\to\infty}u_n = d_\X v_0,
\]
in $C^{-\infty}(\M;E)$ which proves the completeness. For the boundedness, we take $u\in \mathcal{D}(d_\X)$ and we compute
\[
\| d_\X u\|^2_{\mathcal{D}(d_\X)} = \| d_\X u \|^2_{\mathcal{H}\Lambda} + \| d_\X d_\X u\|^2_{\mathcal{H}\Lambda} \leq \| u \|^2_{\mathcal{D}(d_\X)}.
\qedhere\]
\end{proof}
 
To be able to use the usual techniques, it is crucial that $C_c^\infty(\mathcal{M};E)$ is not only dense in $\mathcal{H}$ but also in $\mathcal{D}(d_\X)$ --- on this level of generality, this is not a priori guaranteed. For this reason, we say the $\a$-action $\X$ has a \emph{unique extension} to $\mc H$ if 
\begin{equation}
 \label{eq:unique_extension_cf}
 \bbar{C_c^\infty(\mc M; E) \Lambda}^{\mc D(d_\X)} = 
\mc D(d_\X).
\end{equation}
 We note that by \cite[Lemma A.1]{FS11}, if $\M$ is a closed manifold and if 
 $\mc{H}=\mc{A}(L^2(\M, E) \Lambda)$ for some invertible pseudo-differential operator $\mc{A}$ on $\M$ so that $\mc{A}^{-1}d_\X \mc{A}\in \Psi^1(\M;E\otimes \Lambda)$ (see Appendix~\ref{sec:microlocal} for the notation), then  $C^\infty(\M;E\Lambda)\Lambda$ is dense in the domain $\mc{D}(d_\X)$ and there is only one closed extension for $d_\X$.

In order to finally define the Taylor spectrum in an invariant way, 
we consider $\lambda\in\acs$ as a Lie algebra morphism
\[
 \lambda:\a_\C\to \OpDiff^0(\mc M; E)\subset \OpDiff^1(\mc M; E), 
 \quad \lambda(A)(u):= \lambda(A)u.
\]
In this way we can define $\X-\lambda:\a_\C\to \OpDiff^1(\mc M; E)$ and the associated operator $d_{\X-\lambda}$ on $C_c^\infty \Lambda$ and $C^{-\infty}\Lambda$. Since $d_{\X - \lambda} = d_\X - d_\lambda$, and $d_\lambda$ is bounded on $\mathcal{H}\Lambda$,  $\mathcal{D}(d_{\X-\lambda})$ does not depend on $\lambda$. Furthermore, note that from Lemma~\ref{lem:d_X_properties} we know that $d_{\mathbf X-\lambda}^2 = 0$ on $C_c^\infty \Lambda$ and by density of $C_c^\infty(\mc M, E) \Lambda\subset\mc D(d_\mathbf{X}) $ and boundedness of $d_{\mathbf X-\lambda}:\mc D(d_\mathbf{X}) \to \mc D(d_\mathbf{X})$  we deduce $d_{\X-\lambda}^2 = 0$ on $\mc D(d_\X)$. For $k=0,\ldots,\kappa$, we write $\mc{D}^k(d_\X):= \mc{D}(d_\X)\cap \mc H\Lambda^k$ and we gather the results above in the following:
\begin{lemma}
For an $\a$-action $\mathbf X$ with unique closed extension to $\mc H$, for any $\lambda\in\acs$
 \begin{equation}\label{eq:def_complex_coordinatefree}
  0\longrightarrow \mc{D}^0(d_\X)\overset{d_{\X-\lambda}}{\longrightarrow}\mc{D}^1(d_\X)\overset{d_{\X-\lambda}}{\longrightarrow}\dots \overset{d_{\X-\lambda}}{\longrightarrow} \mc{D}^\kappa(d_\X)\longrightarrow 0
 \end{equation}
 defines a complex of bounded operators, and the operators $d_{\X-\lambda}$ depend holomorphically on $\lambda\in \acs$.
\end{lemma}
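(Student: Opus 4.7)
The plan is to verify the three claims in turn: the complex identity $d_{\X-\lambda}^2=0$, boundedness on $\mc{D}(d_\X)$, and holomorphic dependence on $\lambda$. None of these should require substantial new ideas since the heavy lifting has already been done in Lemma~\ref{lem:d_X_properties} and in the preceding discussion of $\mc{D}(d_\X)$.

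First, I would observe that $\X - \lambda : \a_\C \to \OpDiff^1(\mc M; E)$ is again a Lie algebra morphism. Indeed, $\lambda(A)$ acts as a scalar, so for any $A,B\in\a_\C$ one has $[\X_A - \lambda(A),\X_B - \lambda(B)] = [\X_A,\X_B] = 0$. Lemma~\ref{lem:d_X_properties}(iii) applied to $\X - \lambda$ therefore yields $d_{\X-\lambda}\circ d_{\X-\lambda} = 0$ on $C_c^\infty\Lambda$ and $C^{-\infty}\Lambda$. In particular it holds on the intermediate space $\mc{D}(d_\X)$.

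Next, for boundedness, I would decompose $d_{\X-\lambda} = d_\X - d_\lambda$, where $d_\lambda$ is the operator $u\otimes\omega \mapsto u\otimes (\lambda \wedge \omega)$. Since $\Lambda\acs$ is finite dimensional, left exterior multiplication by $\lambda$ is a bounded endomorphism of $\mc H\Lambda$. In particular $\mc{D}(d_{\X-\lambda}) = \mc{D}(d_\X)$ as topological vector spaces. To see that $d_{\X-\lambda}$ sends $\mc{D}(d_\X)$ to itself continuously, I would use the anticommutation relation obtained by expanding $0 = d_{\X-\lambda}^2 = d_\X^2 - d_\X d_\lambda - d_\lambda d_\X + d_\lambda^2$ and noting $d_\X^2 = 0 = d_\lambda^2$ (the latter because $\lambda\wedge\lambda = 0$); hence $d_\X d_\lambda = - d_\lambda d_\X$. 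For $u \in \mc{D}(d_\X)$ this gives $d_\X(d_{\X-\lambda} u) = -d_\X d_\lambda u = d_\lambda d_\X u \in \mc H\Lambda$, so $d_{\X-\lambda} u \in \mc{D}(d_\X)$. Combining this with the $\mc H\Lambda$-boundedness of $d_\lambda$ and the already established boundedness of $d_\X$ on $\mc{D}(d_\X)$ (from the preceding lemma) shows that $d_{\X-\lambda}$ is bounded from $\mc{D}(d_\X)$ to itself, uniformly on compact subsets of $\acs$.

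Finally, the holomorphicity is immediate once the boundedness is in place: the map $\acs \ni \lambda \mapsto d_\lambda \in \mathcal{B}(\mc{H}\Lambda)$ is complex-linear in $\lambda$, hence entire, so $\lambda \mapsto d_{\X-\lambda} = d_\X - d_\lambda$ is an affine (in particular holomorphic) family of bounded operators on $\mc{D}(d_\X)$. I do not anticipate any genuine obstacle here — the only subtle point is the verification that $d_\lambda$ preserves $\mc{D}(d_\X)$, which is handled cleanly by the anticommutation identity above, so that the complex \eqref{eq:def_complex_coordinatefree} is truly defined on the $\lambda$-independent Hilbert space $\mc{D}(d_\X)$.
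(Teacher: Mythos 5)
Your proof is correct and follows essentially the same route as the paper, which states the lemma as a summary of the preceding discussion and in particular already uses the decomposition $d_{\X-\lambda}=d_\X-d_\lambda$ with $d_\lambda$ bounded on $\mc H\Lambda$. Your explicit verification via the anticommutation $d_\X d_\lambda=-d_\lambda d_\X$ that $d_{\X-\lambda}$ maps $\mc{D}^j(d_\X)$ into $\mc{D}^{j+1}(d_\X)$ is a welcome detail the paper leaves implicit.
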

Recall from the discussion above that the unique extension property was crucially used to get $d_{\mathbf X-\lambda}\circ d_{\mathbf X-\lambda} =0$, thus to have a well defined complex of bounded operators. 

We introduce the notation
\begin{equation}\label{notationcoho}
\begin{split}
\ker_{\mathcal H\Lambda} d_{\X-\lambda} := \ker_{\mathcal D(d_\X)\to \mathcal D(d_\X)}d_{\X-\lambda} &,\quad   \ran_{\mc H\Lambda}d_{\X-\lambda}:= \ran_{\mc D(d_\X)\to\mc D(d_\X)} d_{\X-\lambda}\\
\ker_{\mathcal H\Lambda^j} d_{\X-\lambda} := \ker_{\mathcal D^j(d_\X)\to \mathcal D^{j+1}(d_\X)}d_{\X-\lambda} &,\quad  \ran_{\mc H\Lambda^j}d_{\X-\lambda}:= \ran_{\mc D^{j-1}(d_\X)\to\mc D^j(d_\X)} d_{\X-\lambda}.
\end{split}
\end{equation}
Now, following the previous discussion of the Taylor spectrum, we can define
\begin{Def}
Let $\X:\a\to \OpDiff^1(\mc M;E)$ be a Lie algebra morphism and $\mc H$ a Hilbert space such that the $\mathfrak a$ action $\mathbf X$ has a unique extension to $\mc H$. 
Then we define the \emph{Taylor spectrum} $\sigma_{\T, \mc H}(\X)\subset \acs$ by
\[
 \lambda\in\sigma_{\T, \mc H}(\X) \Longleftrightarrow \ran_{\mc H\Lambda}(d_{\X-\lambda}) \not= \ker_{\mc H\Lambda}(d_{\X-\lambda}).
\]
This is equivalent to saying that the sequence \eqref{eq:def_complex_coordinatefree} is not an exact sequence. The complex is said to be \emph{Fredholm} if $\ran_{\mc H\Lambda}(d_{\mathbf X-\lambda})$ is closed and the cohomology $\ker_{\mc H\Lambda}(d_{\X-\lambda})/\ran_{\mc H\Lambda}(d_{\X-\lambda})$ has finite dimension. In this case we say that $\lambda$ is not in the \emph{essential Taylor spectrum} $\sigma_{{\rm T},\mc{H}}^{\rm ess}(\X)$ of $\X$ and define the \emph{index} by
\begin{equation}\label{indexX}
\Index(\X-\lambda)  := \sum_{\ell=0}^\kappa (-1)^\ell \dim (\ker_{\mc H\Lambda^{\ell}} d_{\X-\lambda}/\ran_{\mc H\Lambda^\ell} 
d_{\X-\lambda}).
\end{equation}
\end{Def}
As the usual Fredholm index for a single operator, the Fredholm index in the Taylor complex is also a locally constant function of $\lambda$ (see Theorem 6.6 in \cite{Cur88}).

Note that the non-vanishing of the $0$-th cohomology 
$\ker_{\mc H\Lambda^0} d_{{\bf X}-\lambda}$ of the complex is equivalent to 
\[ 
\exists u\in \mc{D}^0(d_{\bf X})\setminus\{0\},\,\quad ({\bf X}_{A_j}-\lambda_j)u=0,
\]
which corresponds to $(\lambda_1,\dots,\lambda_\kappa)$ being a joint eigenvalue of $({\bf X}_{A_1},\dots,{\bf X}_{A_\kappa})$. Obviously, on infinite dimensional vector spaces the joint eigenvalues do not provide a satisfactory notion of joint spectrum. Recall that for a single operator, $\lambda\in \C$ is in its spectrum if ${\bf X}-\lambda$ is either not injective or not surjective. In terms of the Taylor complex for a single operator $(\kappa=1)$ the non-injectivity corresponds to the vanishing of the zeroth cohomology group whereas the surjectivity corresponds to the vanishing of the first cohomology group.

\begin{rem}\label{rem:Taylorcomplex} 
So far we always started with a Lie algebra morphism $\X:\a\to {\rm Diff}^1(\mc{M};E)$, then considered the action of 
${\rm Diff}^1(\mc{M};E)$ on some topological vector space $V$ (e.g. $C_c^\infty(\mc{M})$) in order to define Taylor complex and Taylor spectrum. This will also be our main case of interest. However we notice that the construction of the operator $d_{\X}$ and the complex associated to $d_\X$ works exactly the same if we take instead any Lie algebra morphism
\[ \X: \a \to \mc{L}(V)\]
where $V$ is a topological vector space and $\mc{L}(V)$ denotes the Lie algebra of continuous linear operators on $V$ 
with Lie bracket $[A,B]:=AB-BA$. We shall call the complex induced by $d_{\X}$ on $V\Lambda$ the \emph{Taylor complex} of $\X$ on $V$. 
If $V$ is a Hilbert space we define the Taylor spectrum of $\X$ on $V$ by 
\[
 \lambda\in\sigma_{\T, V}(\X) \Longleftrightarrow \ran_{V\Lambda}(d_{\X-\lambda}) \not= \ker_{V\Lambda}(d_{\X-\lambda}).
\]
Such Lie algebra morphism that are not directly coming from differential operators will occasionally show up within the parametrix constructions in Sections~\ref{sec:Taylor-pseudors} and \ref{sec:resonance-at-zero}.  
\end{rem}

\subsection{Useful observations}

For the reader not familiar with the Taylor spectrum, and for our own use, we have gathered in this section several observations that are helpful when manipulating these objects.
First, we shall say that an operator $P:C^{-\infty}(\M;E)\otimes \Lambda\a^*_\C\to C^{-\infty}(\M;E)\otimes \Lambda\a^*_\C$ is 
\emph{$\Lambda$-scalar} if there is an operator 
$P': C^{-\infty}(\M;E)\to C^{-\infty}(\M;E)$ such that 
\[\forall u \in  C^{-\infty}(\M;E), \omega \in \Lambda\a^*_\C, \quad P(u\otimes \omega)=(P'u)\otimes \omega.\]

As usual with differential complexes, we have a dual notion of divergence complex. 
For this, we need a way to identify $\a$ with $\a^\ast$, i.e. a scalar product $\langle\cdot,\cdot\rangle$ on $\a$, extended to a $\C$-bilinear two form. If one chooses a basis, the implicit scalar product is given by the standard one in that basis. In any case, $A \mapsto A':= \langle A, \cdot\rangle$ is an isomorphism between $\a$ and $\a^\ast$. If 
\[{\bf Y} : \a \to \mc{L}(C_c^\infty(\M;E))\]
satisfies $[{\bf Y}_{A_1},{\bf Y}_{A_2}]=0$ for any $A_1,A_2\in \a$, then we can define the action ${\bf Y}':\a^*\mapsto \mc{L}(C_c^\infty(\M;E))$ 
by: for $u\in C_c^\infty(\M;E)$ and $A'\in \a^*$, if $A$ is dual to $A'$ we set 
\[
{\bf Y}'_{A'}u := {\bf Y}_A u.
\]
In this fashion, $d_{{\bf Y}'} u := {\bf Y}' u$ is an element of $C_c^\infty(\M;E)\otimes\a$ while 
$d_{\bf Y} u$ is an element of $C_c^\infty(\M;E)\otimes\a^\ast$. We can thus define the divergence operator associated to ${\bf Y}$
\begin{equation}\label{divergencedef}
\delta_{\bf Y} : \left\{\begin{array}{ccc}
C_c^\infty(\M;E)\otimes \Lambda^j\a^\ast_\C & \to & C_c^\infty(\M;E)\otimes \Lambda^{j-1}\a^\ast_\C \\
u\otimes \omega & \mapsto & - \imath_{{\bf Y}' u} \omega.
\end{array}\right.\end{equation} 
In an orthonormal basis $(e_j)_j$ of $\a$ for $\langle\cdot,\cdot\rangle$ and $(e_j')_j$ the dual basis in $\a^*$, we get for $u\in C_c^\infty(\M;E)$ and $\omega=e'_{i_1}\wedge\dots \wedge e'_{i_\ell}$ 
\[
\delta_{\bf Y} (u \otimes \omega)= \sum_{j=1}^\ell (-1)^{j}({\bf Y}_{e_{i_j}}u) e'_{i_1}\wedge\dots\wedge \widehat{e'_{i_j}}\wedge \dots\wedge e'_{i_\ell}.
\]
We get directly that for $A'\in \a^*$
\[
A'\wedge\delta_{\bf Y}(u\otimes\omega) +  \delta_{\bf Y} (A'\wedge (u\otimes\omega)) = -A'\wedge \imath_{{\bf Y}' u} \omega - \imath_{{\bf Y}'u}(A'\wedge \omega) =- (A'({\bf Y}'u))\otimes \omega.
\]
It follows from similar arguments as before that
\[
{\bf Y}_A \delta_{\bf Y} = \delta_{\bf Y} {\bf Y}_A,\quad \delta_{\bf Y} \delta_{\bf Y} = 0.
\]
We have the following
\begin{lemma}\label{trick}
Let ${\bf X}:\a\to {\rm Diff}^1(\M;E)$ be an admissible lift and ${\bf Y}:\a\to \mc{L}(C_c^\infty(\M;E))$ satisfying 
${\bf Y}_{B_1}{\bf Y}_{B_2}={\bf Y}_{B_2}{\bf Y}_{B_1}$ for any $B_1,B_2\in\a$, 
such that  ${\bf X}_A {\bf Y}_B= {\bf Y}_B {\bf X}_A$ for any $A,B\in\a$. If we fix an inner product $\langle\cdot,\cdot\rangle$ on $\a$ and a corresponding orthonormal basis $(e_j)_j$ and if ${\bf X}_{i}:={\bf X}_{e_i}$ and ${\bf Y}_j:={\bf Y}_{e_j}$, we then have as continuous 
operators on $C_c^\infty(\mc{M};E)\Lambda$
\[
\delta_{\bf Y} d_{\bf X} + d_{\bf X} \delta_{{\bf Y}} = - \Big(\sum_{k=1}^\kappa {\bf X}_k{\bf Y}_k\Big) \otimes \Id.
\]
The sum does not depend on the choice of basis, because it is the trace of the matrix representing ${\bf X}{\bf Y}$ with $\langle\cdot,\cdot\rangle$.
\end{lemma}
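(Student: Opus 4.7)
The plan is to reduce the identity to a computation in a fixed orthonormal basis and then invoke the standard Cartan anti-commutation relation between wedge products and contractions.

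First, I would fix the orthonormal basis $(e_k)_k$ of $\a$ with dual basis $(e'_k)_k$, and rewrite both operators in coordinates. Using the definition of $d_{\mathbf X}$ in Remark~\ref{rem:coordinate_free_and_corrdinates} and the expression for $\delta_{\mathbf Y}$ already derived in the preamble, one sees
\[
d_{\mathbf X} = \sum_{k=1}^\kappa (e'_k \wedge \cdot)\circ (\mathbf X_k \otimes \Id), \qquad \delta_{\mathbf Y} = -\sum_{l=1}^\kappa \iota_{e_l}\circ (\mathbf Y_l \otimes \Id),
\]
where the operators $\mathbf X_k \otimes \Id$ and $\mathbf Y_l \otimes \Id$ act only on the $C_c^\infty(\M;E)$ factor and therefore commute with wedges and contractions on $\Lambda\a_\C^*$.

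Composing these expressions and using the commutation hypothesis $\mathbf X_k\mathbf Y_l = \mathbf Y_l\mathbf X_k$, I would then obtain
\[
d_{\mathbf X}\delta_{\mathbf Y}+\delta_{\mathbf Y}d_{\mathbf X} = -\sum_{k,l=1}^\kappa \bigl(e'_k\wedge \iota_{e_l} + \iota_{e_l}\,e'_k\wedge\bigr)\circ (\mathbf X_k\mathbf Y_l\otimes\Id).
\]
The purely algebraic Cartan identity $e'_k\wedge \iota_{e_l} + \iota_{e_l}(e'_k\wedge\cdot) = e'_k(e_l)\,\Id = \delta_{kl}\,\Id$ on $\Lambda\a_\C^*$ (which is orthonormality of the basis) then collapses the double sum to the diagonal and yields the claimed formula
\[
d_{\mathbf X}\delta_{\mathbf Y}+\delta_{\mathbf Y}d_{\mathbf X} = -\Bigl(\sum_{k=1}^\kappa \mathbf X_k\mathbf Y_k\Bigr)\otimes \Id.
\]

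Finally, for the basis-independence assertion I would check that if $\tilde e_k = \sum_j O_{kj} e_j$ for an orthogonal matrix $O$, then by linearity of $A\mapsto \mathbf X_A$ and $A\mapsto \mathbf Y_A$, $\sum_k \mathbf X_{\tilde e_k}\mathbf Y_{\tilde e_k} = \sum_{j,m}\bigl(\sum_k O_{kj}O_{km}\bigr)\mathbf X_{e_j}\mathbf Y_{e_m}$, and $\sum_k O_{kj}O_{km}=\delta_{jm}$ gives back $\sum_j \mathbf X_{e_j}\mathbf Y_{e_j}$; equivalently, this sum is the trace of the matrix of $\mathbf X\mathbf Y$ in any $\langle\cdot,\cdot\rangle$-orthonormal basis. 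I do not anticipate a genuine obstacle here: the only delicate point is bookkeeping the signs in the Cartan identity and making sure the commutativity hypothesis is used exactly where $\mathbf X_k$ crosses $\mathbf Y_l$, which is legitimate since it is an assumption.
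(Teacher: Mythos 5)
Your proof is correct and follows essentially the same route as the paper's: a direct computation in an orthonormal basis that exploits the commutation $\mathbf X_k\mathbf Y_l=\mathbf Y_l\mathbf X_k$ to reduce the anticommutator to the diagonal terms. The only difference is presentational — you package the combinatorics via the Cartan identity $e'_k\wedge\iota_{e_l}+\iota_{e_l}(e'_k\wedge\cdot)=\delta_{kl}\Id$ on $\Lambda\a_\C^*$, whereas the paper expands $d_{\mathbf X}\delta_{\mathbf Y}$ and $\delta_{\mathbf Y}d_{\mathbf X}$ explicitly on monomials $u\otimes e'_{i_1}\wedge\dots\wedge e'_{i_\ell}$ and cancels the off-diagonal terms by tracking signs by hand; both are the same calculation.
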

\begin{proof} Let $e_i'$ be the dual basis to the chosen orthogonal basis $e_i$. For $I=(i_1,\dots,i_\ell)$ let $e'_I:=e'_{i_1}\wedge\dots \wedge e'_{i_\ell}$, then for $u\in C_c^\infty(\M;E)$ we compute
\begin{align*}
d_{{\bf X}}\delta_{{\bf Y}}(u\otimes e_I')&=\\
	-&\Big(\sum_{k\in I}({\bf X}_k{\bf Y}_ku)\otimes e_I'+\sum_{k\notin I,j}(-1)^{j-1}({\bf X}_k{\bf Y}_{i_j}u)\otimes e'_{k}\wedge e'_{i_1}\wedge \dots\wedge \widehat{e'_{i_j}}\wedge \dots\wedge e'_{i_\ell} \Big) ,\\
\delta_{{\bf Y}}d_{{\bf X}}(u\otimes e_I')&=\\
	-&\Big(\sum_{k\notin I}({\bf Y}_k{\bf X}_ku)\otimes e_I'+
\sum_{k\notin I,j}(-1)^{j}({\bf Y}_{i_j}{\bf X}_ku)\otimes
e_{k}'\wedge e_{i_1}'\wedge \dots\wedge \widehat{e}_{i_j}'\wedge \dots\wedge e_{i_\ell}'\Big).
\end{align*}
Using the commutation of $[{\bf X}_i,{\bf Y}_j]=0$, we obtain the result.
\end{proof}
As an illustration, let us recall the following classical fact:
\begin{lemma}\label{lem:taylor_finite_dim}
 Let $X_1,\ldots,X_\kappa$ be commuting operators on a finite dimensional vector space $V$. Then
 $\sigma_{\T, V}(X) = \{ {\rm joint}\, {\rm eigenvalues}\, {\rm of}\, X_1,\dots,X_\kappa \}\subset \C^\kappa$.
\end{lemma}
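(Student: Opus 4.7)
The plan is to verify both inclusions using the simultaneous generalized eigenspace decomposition of commuting operators on a finite-dimensional complex vector space. Recall that since $X_1,\ldots,X_\kappa$ commute, we can decompose $V=\bigoplus_{\mu\in\C^\kappa}V_\mu$ where $V_\mu=\{v\in V \mid \forall i,\,(X_i-\mu_i)^{\dim V}v=0\}$ and $\mu$ runs over the (finite) set of \emph{joint generalized eigenvalues}. Each $V_\mu$ is $X_i$-invariant, so the Koszul complex $(V\otimes\Lambda\a_\C^*,d_{X-\lambda})$ splits as the direct sum of subcomplexes $(V_\mu\otimes\Lambda\a_\C^*,d_{X-\lambda})$, and thus $\sigma_{\mathrm T,V}(X)=\bigcup_\mu\sigma_{\mathrm T,V_\mu}(X)$.

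For the forward inclusion, suppose $\lambda$ is a joint eigenvalue, so there exists $0\neq v\in V$ with $X_iv=\lambda_iv$ for all $i$. Then $v\in\ker_{V\Lambda^0}d_{X-\lambda}$, while $\ran\, d_{X-\lambda}|_{V\Lambda^{-1}}=\{0\}$, so the $0$-th cohomology is nontrivial and $\lambda\in\sigma_{\mathrm T,V}(X)$. Conversely, suppose $\lambda$ is \emph{not} a joint eigenvalue. I claim $\lambda$ is also not a joint generalized eigenvalue: indeed, on any $V_\mu$, the commuting nilpotents $X_i-\mu_i$ share a common kernel vector (iteratively intersect $\ker(X_1-\mu_1)\cap V_\mu$ with $\ker(X_2-\mu_2)$, use nilpotency of the restriction, etc.), producing an actual joint eigenvector for $\mu$; so any joint generalized eigenvalue is a joint eigenvalue. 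Hence it suffices to show the complex on each $V_\mu$ is exact when $\mu\neq\lambda$.

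Fix $\mu$ with $\mu\neq\lambda$ and pick an index $i$ with $\mu_i\neq\lambda_i$. On $V_\mu$, the operator $X_i-\lambda_i=(X_i-\mu_i)+(\mu_i-\lambda_i)\,\Id$ is a sum of a nilpotent and a nonzero scalar that commute, hence it is invertible. Fix an inner product on $\a$ and an orthonormal basis $(e_1,\dots,e_\kappa)$ with $e_1$ chosen to be a unit vector with $(X_{e_1}-\lambda(e_1))$ invertible on $V_\mu$ (we may do this after a suitable linear change of basis — equivalently, work with $\mathbf X-\lambda$ and chose any direction in $\a$ on which the combination is invertible). Define the commuting Lie algebra morphism $\mathbf Y:\a\to\mc L(V_\mu)$ by $\mathbf Y_{e_1}:=-(X_{e_1}-\lambda(e_1))^{-1}$ and $\mathbf Y_{e_j}:=0$ for $j\geq 2$; these commute with all $X_k-\lambda_k$ by commutativity of the family. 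Applying Lemma \ref{trick} to $\mathbf X-\lambda$ and $\mathbf Y$ yields
\[
d_{X-\lambda}\,\delta_{\mathbf Y}+\delta_{\mathbf Y}\,d_{X-\lambda}=-\sum_{k=1}^\kappa(X_{e_k}-\lambda(e_k))\mathbf Y_{e_k}\otimes\Id=\Id_{V_\mu\otimes\Lambda}.
\]
Thus $\delta_{\mathbf Y}$ is a contracting homotopy, and the Koszul complex on $V_\mu$ is exact, i.e.\ $\lambda\notin\sigma_{\mathrm T,V_\mu}(\mathbf X)$. Summing over $\mu$, we conclude that the Taylor spectrum is exactly the set of joint eigenvalues.

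The only nontrivial point is producing the contracting homotopy, and this is handled cleanly by Lemma \ref{trick}; the main conceptual step is the reduction to the generalized eigenspaces via the splitting of the complex. I do not anticipate a serious obstacle beyond correctly invoking the decomposition and noting that joint generalized eigenvalues coincide with joint eigenvalues on $V$.
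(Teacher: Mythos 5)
Your proof is correct and takes essentially the same route as the paper's: decomposition into simultaneous generalized eigenspaces, invertibility of $X_i-\lambda_i$ on $V_\mu$ when $\mu_i\neq\lambda_i$, and a contracting homotopy built from Lemma~\ref{trick}. The only (immaterial) difference is that you construct one homotopy per block $V_\mu$ using a single direction, whereas the paper groups the blocks by a working coordinate $k$ and assembles a single global homotopy $Y_k=(X_k-\mu_k)^{-1}\Pi_k$; your write-up also spells out the forward inclusion and the identification of generalized joint eigenvalues with joint eigenvalues, which the paper leaves implicit.
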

\begin{proof}
 By the basic theory of weight spaces (see e.g. \cite{Kna02}[Proposition 2.4]) $V$ can be decomposed into generalized weight spaces, i.e. there are finitely many $\lambda^{(j)}=(\lambda^ {(j)}_1,\ldots \lambda^{(j)}_\kappa)\in\C^\kappa$ and a direct sum decomposition $V=\oplus_jV_j$ which is invariant under all $X_1,\ldots,X_\kappa$ and there are $n_j$ such that 
 \[
  (X_i - \lambda^{(j)}_i)^{n_j}_{|V_j} =0,\quad \forall i=1,\ldots,\kappa, \,\, \forall j
 \]
 Commutativity and the Jordan normal form then imply that the $\lambda^{(j)}$ are precisely the joint eigenvalues of the tuple $X$. 
 Now let $\mu\neq\lambda^{(j)}$ for all $j$. We have to prove that $\mu\notin \sigma_{\T, V}(X)$. 
 By $\mu\neq\lambda^{(j)}$ we deduce that for any $j$ there is at least one $1\leq k_j\leq\kappa$ such that $\mu_{k_j}\neq \lambda^{(j)}_{k_j}$ and again by Jordan normal form $(X_{k_j}-\mu_{k_j}):V_j\to V_j$ is invertible. 
 Setting $\widetilde V_k:= \oplus_{k_j=k}V_j$, we can thus find an $X$ invariant decomposition $V= \oplus_{k=1}^\kappa \widetilde V_k$ such that $X_k-\mu_k:\widetilde V_k\to \widetilde V_k$ is invertible.
Let $\Pi_k$ be the projection onto $\widetilde V_k$ w.r.t. the above direct sum decomposition. Now set $Y_k:=(X_k-\mu_k)^{-1}\Pi_k:V\to V$. Then the $Y_k$ satisfy all the assumptions of Lemma~\ref{trick} and
 \[
 \delta_Y d_{(X-\mu)} + d_{(X-\mu)}\delta_Y = -\Id .
 \]
 Consequently the Taylor complex~\eqref{eq:def_complex_coordinatefree} is exact.
\end{proof}

In the particular case that $X=(X_1, \dots, X_\kappa)$ are symmetric matrices, using the spectral theorem, we can reduce the problem to the case that $X_1, \dots, X_\kappa$ are scalars acting on some $\R^m$. From this we deduce that for $\lambda \in \sigma_{\T,\R^m}(X)$, 
\[
\dim( \ker_{\Lambda^j} d_{X-\lambda} / \ran_{\Lambda^j} d_{X-\lambda} ) = \dim (\R^m \otimes \Lambda^j \R^\kappa) = m {\binom{\kappa}{j}},
\]
and we check that
\[
\Index (X-\lambda) = m \sum_{j=1}^\kappa (-1)^j {\binom{\kappa}{j} } = 0.
\]

Our next step is to give a criterion for $d_{\X-\lambda}$ to be Fredholm. We first notice that since $\ran d_{\X-\lambda} \subset\ker d_{\X-\lambda}$, the closedness of $\ran d_{\X-\lambda}$  in ${\mc D}(d_\X)$ or in $\mc{H}\Lambda$ is equivalent.
Below, if $F\subset C^{-\infty}(\mc{M};E)\Lambda$ 
is a vector subspace, we shall denote
$\ran_{F}d_{\X}:=\{ d_{\X}u\,|\, u\in F\}$ and $\ker_F d_{\X}:=\{u\in F\, |\, d_\X u=0\}.$
We shall use the following criterion for the $d_\X$-complex to be Fredholm. 
\begin{lemma}\label{lem:critereFredholm}
Let $\X$ be an $\a$-action with unique extension to $\mc{H}$. Assume that there are bounded operators $Q$, $R$ and $K$ on  $\mc{H}\Lambda$, acting continuously on $C^{-\infty}(\M;E)\Lambda$, such that $K$ is compact, $\|R\|_{\mc{L}(\mc{H}\Lambda)}<1$, and 
\[
Qd_\X+d_\X Q=\Id + R + K.
\] 
Then the complex defined by $d_\X$ is Fredholm. Denote by $\Pi_0$ the projector on the eigenvalue $0$ of the Fredholm operator $\Id+R+K$; it is bounded on $\mathcal{D}(d_\X)$ and commutes with $d_\X$. Then the map $u \mapsto \Pi_0 u$ from $\ker d_\X\cap\mathcal{D}(d_\X)$ to $\ker d_\X \cap \ran \Pi_0$ factors to an isomorphism
\begin{equation}\label{eq:isomorphism-finite-dimension}
\Pi_0: \ker_{\mathcal{D}(d_{\X})}d_{\X} / \ran_{\mathcal{D}(d_{\X})} d_{\X} \to \ker_{\ran \Pi_0} d_{\X} / \ran_{\ran \Pi_0} d_{\X}.
\end{equation}
\end{lemma}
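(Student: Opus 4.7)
The plan is to upgrade the two-term parametrix $Qd_\X + d_\X Q=\Id + R + K$ into a parametrix with a \emph{finite-rank} error of the form $\widetilde Q d_\X + d_\X \widetilde Q = \Id - \Pi_0$, from which the Fredholm property of the complex and the isomorphism of cohomologies follow by standard algebraic manipulations. The starting observation is that $d_\X^2=0$ forces $d_\X$ to commute with $\Id+R+K$ (as operators on $\mc{D}(d_\X)$ and, using the continuity hypothesis, distributionally on $C^{-\infty}(\M;E)\Lambda$): indeed $d_\X(Qd_\X+d_\X Q) = d_\X Q d_\X = (Qd_\X+d_\X Q)d_\X$. Since $\|R\|_{\mc{L}(\mc{H}\Lambda)}<1$, the operator $\Id+R$ is invertible on $\mc{H}\Lambda$, so $\Id+R+K$ is a compact perturbation of an invertible operator, hence Fredholm of index zero, and $0$ is either in its resolvent set or an isolated eigenvalue of finite algebraic multiplicity. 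Then $\Pi_0$ is defined as the associated Riesz spectral projector, given by the contour integral of $(\Id+R+K-z)^{-1}$ over a small loop $\gamma$ around $0$.

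The first nontrivial step is to show that $\Pi_0$ preserves $\mc{D}(d_\X)$ and commutes there with $d_\X$. For $z\in\gamma$ and $u\in\mc{D}(d_\X)$, set $v:=(\Id+R+K-z)^{-1}u\in\mc{H}\Lambda$. The distributional commutation of $d_\X$ with $\Id+R+K$ gives $(\Id+R+K-z)\,d_\X v = d_\X u$ in $C^{-\infty}(\M;E)\Lambda$; since $d_\X u\in\mc{H}\Lambda$ and $\Id+R+K-z$ is invertible on $\mc{H}\Lambda$, one concludes $d_\X v=(\Id+R+K-z)^{-1}d_\X u\in\mc{H}\Lambda$. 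Thus $v\in\mc{D}(d_\X)$ and the resolvent commutes with $d_\X$ on $\mc{D}(d_\X)$. Integration over $\gamma$ transfers the commutation and the boundedness to $\Pi_0$, and the density of $\mc{D}(d_\X)$ in $\mc{H}\Lambda$ combined with the finite-dimensionality of $\ran\Pi_0$ yields $\ran\Pi_0\subset\mc{D}(d_\X)$. Applying the same argument to the partial inverse $S$ of $\Id+R+K$ (defined as $0$ on $\ran\Pi_0$ and as $(\Id+R+K)^{-1}$ on $\ran(\Id-\Pi_0)$) shows that $S$ is bounded on $\mc{H}\Lambda$ and commutes with $d_\X$ on $\mc{D}(d_\X)$, so setting $\widetilde Q:=SQ$ produces the desired identity
\[
\widetilde Q d_\X + d_\X\widetilde Q = S\bigl(Qd_\X+d_\X Q\bigr) = S(\Id+R+K) = \Id-\Pi_0.
\]

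From this finite-rank parametrix the remaining conclusions follow algebraically. For any $u\in\ker_{\mc{D}(d_\X)}d_\X$, the identity gives $u = d_\X\widetilde Q u + \Pi_0 u$, so the assignment $u\mapsto \Pi_0 u$ descends to a well-defined linear map between the stated cohomologies; its inverse is induced by the inclusion $\ran\Pi_0\hookrightarrow\mc{D}(d_\X)$, and injectivity follows from the same identity (if $\Pi_0 u = d_\X w$ with $w\in\ran\Pi_0$, then $u = d_\X(\widetilde Q u + w)\in\ran d_\X$). Since $\ran\Pi_0$ is finite dimensional, so is the cohomology. For closedness of $\ran d_\X$, suppose $u_n:=d_\X v_n\to u$ in $\mc{H}\Lambda$; closedness of $d_\X$ (which follows from its continuity on $C^{-\infty}$) puts $u$ in $\ker_{\mc{D}(d_\X)} d_\X$, so $u-\Pi_0 u = d_\X\widetilde Q u\in\ran d_\X$, while $\Pi_0 u = \lim d_\X \Pi_0 v_n$ belongs to the finite-dimensional subspace $d_\X(\ran\Pi_0)$, which is automatically closed, whence $\Pi_0 u\in\ran d_\X$ as well. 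Adding the two gives $u\in\ran d_\X$.

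The main obstacle is the second step: verifying that $\Pi_0$ (equivalently, the resolvent of $\Id+R+K$) preserves the graph space $\mc{D}(d_\X)$ and commutes there with $d_\X$. Without the hypothesis that $Q,R,K$ extend continuously to $C^{-\infty}(\M;E)\Lambda$, the commutation $d_\X(\Id+R+K)=(\Id+R+K)d_\X$ could only be asserted on a dense subspace of $\mc{H}\Lambda$, and there would be no clean route to transport it to the resolvent; the rest of the argument is then essentially the formal consequences of a parametrix with finite-rank error.
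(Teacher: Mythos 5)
Your argument is correct and follows the same overall strategy as the paper: view $F:=\Id+R+K$ as a compact perturbation of an invertible operator, transfer its resolvent (hence its Riesz projector $\Pi_0$ at $0$) to $\mathcal{D}(d_\X)$ using the distributional commutation $d_\X F=d_\X Qd_\X=Fd_\X$, show $\ran\Pi_0\subset\mathcal{D}(d_\X)$ by density and finite rank, and read the isomorphism off the finite-rank parametrix identity $u=d_\X\widetilde Q u+\Pi_0 u$ on $\ker d_\X$. The one genuinely different step is the closedness of $\ran d_\X$. The paper proves it \emph{first}, by a coercivity estimate: for $u\in(\ker d_\X)^\perp\cap\mathcal{D}(d_\X)$ one has $\langle Fu,u\rangle=\langle Qd_\X u,u\rangle$, hence $(1-\|R\|)\|u\|-\|Ku\|\le C\|d_\X u\|$, and closed range follows by the standard argument after reducing $K$ to finite rank. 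You instead derive closedness \emph{a posteriori} from $\widetilde Qd_\X+d_\X\widetilde Q=\Id-\Pi_0$, splitting a limit of $d_\X v_n$ into $d_\X\widetilde Q u$ plus an element of the finite-dimensional (hence closed) space $d_\X(\ran\Pi_0)$; this is valid and avoids the orthogonal decomposition entirely. A second, minor refinement: your partial inverse $S$ (zero on $\ran\Pi_0$, inverse of $F$ on $\ran(\Id-\Pi_0)$) satisfies $SF=\Id-\Pi_0$ even when $F|_{\ran\Pi_0}$ is merely nilpotent, whereas the paper's factorization $F=(F+\Pi_0)(\Id-\Pi_0)$ tacitly assumes $F\Pi_0=0$. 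One caveat: your passage from $(F-z)d_\X v=d_\X u$ in $C^{-\infty}(\M;E)\Lambda$ to $d_\X v=(F-z)^{-1}d_\X u$ uses injectivity of $F-z$ on the relevant space of distributions, not only on $\mc{H}\Lambda$; the paper's proof makes exactly the same leap at the corresponding point, so this is not a new gap, but it is worth being aware that the continuity hypothesis on $Q,R,K$ is doing slightly less than full work here.
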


\begin{proof}
First, since $Q$, $R$ and $K$ are continuous on distributions, it makes sense to write $d_\X Q + Q d_\X = \Id + R + K$ in the distribution sense. Further, from this relation, we deduce that $Q$ is bounded on $\mathcal{D}(d_\X)$. Additionally, without loss of generality (by modifying $R$) we can assume that $K$ is a finite rank operator. 

Let us prove that the range of $d_\X$ is closed. Consider $u \in (\ker d_\X)^\perp\cap \mc{D}(d_\X)$. Since $d_\X Q u \in \ran(d_\X) \subset \ker d_\X$
\begin{equation}
\langle ({\rm Id} + R + K) u, u \rangle_{\mc{H}\Lambda} = \langle Qd_\X u , u \rangle_{\mc{H}\Lambda}.
\end{equation}
We get that there is $C>0$ such that for each $u\in (\ker d_\X)^\perp\cap \mc{D}(d_\X)$ we have 
\begin{equation}
(1-\| R\|) \| u\|_{\mc{H}\Lambda} -\| Ku \|_{\mc{H}\Lambda}  \leq  C \| d_\X u \|_{\mc{H}\Lambda}. 
\end{equation}
Since $K$ is of finite rank, we obtain by a standard argument that $d_\X$ has closed range (both in $\mathcal{H}\Lambda$ and $\mathcal{D}(d_\X)$).

The operator $F:=\Id + R + K$ is Fredholm of index $0$ and, since $F d_\X = d_\X Q d_\X = d_\X F$ \emph{on distributions}, we deduce that 
$F: \mathcal{D}(d_\X)\to \mathcal{D}(d_\X)$ is bounded. Since $F$ is Fredholm of index $0$, we know that for $s\mapsto (F-s)^{-1}\in \mc{L}(\mc{H}\Lambda)$ is meromorphic in $\C\setminus B(1,\|R\|_{\mathcal L(\mc H\Lambda)})$ and for
 $s\in\C^\ast=\C\setminus \{0\}$ close to $0$ it is analytic. Note that 
\[ \|Fu\|_{\mc{D}(d_\X)}^2= \|Fu\|_{\mc{H}\Lambda}^2+\|d_\X Fu\|_{\mc{H}\Lambda}^2=
\|Fu\|_{\mc{H}\Lambda}^2+\|Fd_\X u\|_{\mc{H}\Lambda}^2\leq \|F\|^2_{\mc{L}(\mc{H}\Lambda)} \|u\|_{\mc{D}(d_\X)}^2 \]
thus $(F-s)$ is invertible on $\mc{D}(d_\X)$ for $|s|> \|F\|_{\mc{L}(\mc{H}\Lambda)}$.
This implies that $(F-s)^{-1}:\mc{D}(d_\X)\to \mc{D}(d_\X)$ is itself bounded in $\{|s|>\|F\|_{\mc{L}(\mc{H}\Lambda)}\}$ with $d_\X(F-s)^{-1}=(F-s)^{-1}d_\X$, and it extends meromorphically to $s\in \C\setminus B(1,\|R\|_{\mathcal L(\mc H\Lambda)})$ 
as an operator $(F-s)^{-1}:\mc{D}(d_\X)\to \mc{H}\Lambda$. By meromorphic continuation we have for all $u\in \mc{D}(d_\X)$ and 
$s\in \C\setminus B(1,\|R\|_{\mathcal L(\mc H\Lambda)})$ not a pole of $(F-s)^{-1}$,
\[ d_\X(F-s)^{-1}u=(F-s)^{-1}d_\X u  \textrm{ in }C^{-\infty}(\mc{M};E)\Lambda.\]
In particular we deduce that for all $s$ close to $0$, $d_\X(F-s)^{-1}u\in \mc{H}\Lambda$ with $\|d_\X(F-s)^{-1}u\|_{\mc{H}\Lambda}\leq \|(F-s)^{-1}\|_{\mc{L}(\mc{H}\Lambda)}\|u\|_{\mc{D}(d_\X)}$ , i.e. $(F-s)^{-1}:\mc{D}(d_\X)\to \mc{D}(d_\X)$ is bounded, and $d_\X(F-s)^{-1}=(F-s)^{-1}d_\X$ on $\mc{D}(d_\X)$.

In that case, the spectral projector $\Pi_0$ of $F$ for the eigenvalue $0$ commutes with $d_{\bf X}$, is bounded on $\mathcal{D}(d_\X)$, and since $\mathcal{D}(d_\X)$ is dense in $\mathcal{H}\Lambda$ and $\Pi_0$ has finite rank, its image is contained in $\mathcal{D}(d_\X)$. Further, we can write $F= (F+\Pi_0)({\rm Id}-\Pi_0)$, and $\widetilde{F}:=F+\Pi_0$ is invertible on $\mathcal{H}\Lambda$ and $\mathcal{D}(d_\X)$, and commuting with $d_\X$, so that on $\mc{D}(d_\X)$
\begin{equation}\label{Id-Pi0}
d_\X \widetilde{F}^{-1}Q + \widetilde{F}^{-1}Q d_\X = {\rm Id} - \Pi_0.
\end{equation}
In particular, for $u\in\ker d_\X\cap \mc{D}(d_\X)$, we have 
\begin{equation}\label{injectiviteisom}
u = d_\X \widetilde{F}^{-1}Q u + \Pi_0 u.
\end{equation}
Since $\Pi_0$ and $d_\X$ commute, $u \mapsto \Pi_0 u$ factors to a homomorphism between the cohomologies in \eqref{eq:isomorphism-finite-dimension}. This map in cohomologies is obviously surjective since $\ran \Pi_0\subset \mc{D}(d_\X)$.
To prove that the map is injective, we need to prove that if $\Pi_0 u \in d_\X \ran \Pi_0$ for $u\in \ker d_\X\cap \mathcal{D}(d_\X)$, then $u\in d_\X \mathcal{D}(d_\X)$. This fact actually follows directly from \eqref{injectiviteisom} by using that both $\widetilde{F}^{-1}$ and $Q$ are bounded on $\mc{D}(d_\X)$.
\end{proof}
We can also deduce the following:
\begin{lemma}\label{lem:spectrum-and-eigenvalues}
Under the assumptions of Lemma \ref{lem:critereFredholm}, if $F:={\rm Id}+K+R$ is of the form $F=F'\otimes \Id$ where $F'$ is an operator on $\mathcal{H}$ (i.e. $F$ is \emph{$\Lambda$-scalar}), then $0\in \sigma_{\T,\mathcal{H}}(\X)$ if and only if there exists a non-zero $u \in \mathcal{D}(d_\X)\cap \mc{H}$ such that $\X u = 0$.
\end{lemma}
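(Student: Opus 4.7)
The ``if'' direction is essentially tautological: a nonzero $u\in\mathcal{D}^0(d_\X)$ with $\X u=0$ satisfies $d_\X u=0$, while $\ran_{\mathcal{H}\Lambda^0} d_\X=\{0\}$, so $u$ represents a nontrivial class in the zeroth cohomology and $0\in\sigma_{\T,\mathcal{H}}(\X)$.

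For the converse, my plan is to use the $\Lambda$-scalar hypothesis to transfer the problem to a finite-dimensional subspace and then invoke Lemma~\ref{lem:taylor_finite_dim}. Writing $F=F'\otimes \Id_{\Lambda\acs}$ and using a Dunford contour integral, the spectral projector of $F$ at $0$ inherits the same tensorial form $\Pi_0=\Pi_0'\otimes \Id_{\Lambda\acs}$, where $\Pi_0'$ is the spectral projector of $F'$ on $\mathcal{H}$ at $0$. Because $\Pi_0$ acts trivially on the exterior factor, it commutes with every contraction $\iota_A$. Combined with the commutation $[\Pi_0,d_\X]=0$ obtained in the proof of Lemma~\ref{lem:critereFredholm} and the Cartan-type identity $\X_A=\iota_A d_\X+d_\X \iota_A$ from Lemma~\ref{lem:d_X_properties}(i), this yields $[\Pi_0,\X_A]=0$ for every $A\in\a$. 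Hence $V:=\ran\Pi_0'$ is a finite-dimensional $\X$-invariant subspace of $\mathcal{H}$ contained in $\mathcal{D}^0(d_\X)$.

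Next, the isomorphism \eqref{eq:isomorphism-finite-dimension} of Lemma~\ref{lem:critereFredholm} identifies, in each degree, the cohomology of $d_\X$ on $\mathcal{D}(d_\X)$ with that of the restricted complex on $\ran\Pi_0=V\otimes\Lambda\acs$. Under the tensorial form of $\Pi_0$, this restricted complex is precisely the Taylor complex of the commuting tuple $(\X_A|_V)_{A\in\a}$ acting on the finite-dimensional space $V$ (cf.\ Remark~\ref{rem:coordinate_free_and_corrdinates}). The hypothesis $0\in\sigma_{\T,\mathcal{H}}(\X)$ therefore forces $0\in\sigma_{\T,V}(\X|_V)$, and Lemma~\ref{lem:taylor_finite_dim} then supplies a nonzero joint eigenvector $u\in V$ with $\X_A u=0$ for every $A\in\a$. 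Since $V\subset\mathcal{D}^0(d_\X)$, this $u$ is the desired element.

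The only delicate point is establishing the full compatibility of the spectral projector with the $\a$-action: without the $\Lambda$-scalar hypothesis, $\Pi_0$ would only be guaranteed to commute with $d_\X$ itself, which is not enough to produce an $\X$-invariant finite-dimensional subspace on which the spectrum reduces to joint eigenvalues. The $\Lambda$-scalar assumption on $F$ is precisely what is needed to ensure commutativity of $\Pi_0$ with the contractions $\iota_A$, and thus with all the $\X_A$.
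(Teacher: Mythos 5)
Your proof is correct and follows essentially the same route as the paper: both reduce to the finite-dimensional space $\ran\Pi_0'$ via the $\Lambda$-scalar form $\Pi_0=\Pi_0'\otimes\Id$, identify the restricted complex as the Taylor complex of $\X$ on $\ran\Pi_0'$, and conclude with Lemma~\ref{lem:taylor_finite_dim}. Your explicit derivation of $[\Pi_0,\X_A]=0$ from the Cartan identity $\X_A=\iota_A d_\X+d_\X\iota_A$ is a detail the paper leaves implicit, but it is the same underlying argument.
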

\begin{proof}
From Lemma \ref{lem:critereFredholm}, we deduce that $0\in\sigma_{\T,\mathcal{H}}(\X)$ if and only if the complex given by $d_\X$ is not exact on $\ran \Pi_0$ (recall that $d_\X$ commutes with $\Pi_0$). However, if $F$ is $\Lambda$-scalar, then $\Pi_0=\Pi_0'\otimes {\rm Id}$ with $\Pi_0'$ the spectral projector at $0$ of $F'$ on $\mc{H}$, and $\ran \Pi_0=(\ran\Pi_0')\otimes \Lambda \a^*_\C$. It follows that $d_\X$ restricted to $\ran \Pi_0$ is exactly the Taylor complex of the operator $\X$ on $\ran \Pi_0'$ in the sense of Remark \ref{rem:Taylorcomplex}. We are thus reduced to finite dimension and we can apply Lemma \ref{lem:taylor_finite_dim}.
\end{proof}

The version of the Analytic Fredholm Theorem  for the Taylor spectrum is the following statement:

\begin{prop}\label{pr:complex-analytic}
Let $\X$ be an $\a$-action with unique extension to $\mc{H}$.
Then $\sigma_{\T, \mc H}(\X) \setminus \sigma_{{\rm T},\mc{H}}^{\rm ess}(\X)$ is a complex analytic submanifold of $\C^\kappa\setminus \sigma_{{\rm T},\mc{H}}^{\rm ess}(\X)$.
\end{prop}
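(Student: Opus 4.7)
The plan is to reduce the claim, via a Hodge-type parametrix plus a spectral projector argument, to a purely finite-dimensional statement about the rank-dropping locus of a holomorphic family of matrices.

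Fix $\lambda_0\in \C^\kappa\setminus \sigma^{\rm ess}_{\T,\mc{H}}(\X)$. Since the complex is Fredholm at $\lambda_0$, each $\ran d_{\X-\lambda_0}|_{\mc H\Lambda^{k-1}}$ is closed in $\mc H\Lambda^k$. This yields orthogonal Hodge-type decompositions
\[
\mc H\Lambda^k = \ran d_{\X-\lambda_0}\oplus \mathcal H^k \oplus (\ker d_{\X-\lambda_0})^\perp,
\]
with $\dim \mathcal H^k<\infty$. The restriction $d_{\X-\lambda_0}\colon (\ker d_{\X-\lambda_0})^\perp\cap \mc{D}(d_\X)\to \ran d_{\X-\lambda_0}$ is a continuous bijection of Banach spaces, hence admits a bounded inverse by the open mapping theorem; extending by $0$ on the orthogonal complements of the ranges and summing over degrees produces a bounded operator $Q_0$ on $\mc{D}(d_\X)$ satisfying
\[
Q_0 d_{\X-\lambda_0}+d_{\X-\lambda_0}Q_0= \Id - \Pi_{\mathcal H},
\]
where $\Pi_{\mathcal H}$ is the finite-rank projector onto $\bigoplus_k\mathcal H^k$.

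For $\lambda$ in a small neighborhood $U$ of $\lambda_0$, one has $d_{\X-\lambda}=d_{\X-\lambda_0}-d_{\lambda-\lambda_0}$, with $d_{\lambda-\lambda_0}$ being exterior multiplication by $\lambda-\lambda_0\in\acs$, which is bounded on $\mc{D}(d_\X)$ with norm $O(|\lambda-\lambda_0|)$. Consequently
\[
F(\lambda):=Q_0 d_{\X-\lambda}+d_{\X-\lambda}Q_0=\Id-\Pi_{\mathcal H}-Q_0 d_{\lambda-\lambda_0}-d_{\lambda-\lambda_0}Q_0
\]
is a holomorphic family of Fredholm operators of index $0$ that commutes with $d_{\X-\lambda}$ (using $d_{\X-\lambda}\circ d_{\X-\lambda}=0$ from Lemma~\ref{lem:d_X_properties}). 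Its Riesz spectral projector
\[
\Pi_0(\lambda):=\frac{1}{2\pi i}\oint_{|z|=r}(z-F(\lambda))^{-1}\,dz,
\]
attached to a small circle around $0$, is a holomorphic family of projectors of constant finite rank on $U$ (after possibly shrinking $U$) that commutes with $d_{\X-\lambda}$, and arguing as in Lemma~\ref{lem:critereFredholm} the cohomology of the Taylor complex of $\X-\lambda$ is canonically isomorphic to that of the finite-dimensional subcomplex $\bigl(\ran\Pi_0(\lambda), d_{\X-\lambda}|_{\ran\Pi_0(\lambda)}\bigr)$.

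Since $\Pi_0(\lambda)|_{\ran\Pi_0(\lambda_0)}\colon \ran\Pi_0(\lambda_0)\to \ran\Pi_0(\lambda)$ is an isomorphism for $\lambda$ close to $\lambda_0$ (being a holomorphic perturbation of the inclusion between finite-dimensional spaces of equal dimension), it furnishes a holomorphic graded trivialization identifying $\ran\Pi_0(\lambda)\cap \mc H\Lambda^\ell$ with the fixed finite-dimensional space $V^\ell:=\ran\Pi_0(\lambda_0)\cap \mc H\Lambda^\ell$. In this trivialization the differentials become a holomorphic family of matrices $M_\ell(\lambda)\colon V^\ell\to V^{\ell+1}$ satisfying $M_{\ell+1}(\lambda)M_\ell(\lambda)=0$, and
\[
\sigma_{\T,\mc H}(\X)\cap U=\bigcup_{\ell=0}^\kappa \{\lambda\in U\,:\, \rank M_{\ell-1}(\lambda)+\rank M_\ell(\lambda)<\dim V^\ell\}
\]
is a finite union of analytic subsets of $U$, each cut out by the simultaneous vanishing of minors of the holomorphic matrices $M_{\ell-1}(\lambda)$ and $M_\ell(\lambda)$. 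The main subtlety lies in constructing $Q_0$ while respecting the unbounded domain $\mc{D}(d_\X)$, which is precisely where the closed-range hypothesis enters essentially via the open mapping theorem applied within $\mc{D}(d_\X)$; the remainder of the argument is then routine holomorphic perturbation of a finite-dimensional Koszul complex.
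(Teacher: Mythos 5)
Your proof is correct, but it is not the paper's route: the paper disposes of this proposition in one line by observing that \eqref{eq:def_complex_coordinatefree} is an analytic Fredholm complex of \emph{bounded} operators between the Banach spaces $\mc D^k(d_\X)$ and citing the classical result \cite[Theorem 2.9]{Muller-00}. What you have written is essentially a self-contained proof of that classical theorem, adapted to the present setting: the Hodge-type homotopy $Q_0$ with $Q_0d_{\X-\lambda_0}+d_{\X-\lambda_0}Q_0=\Id-\Pi_{\mathcal H}$, the perturbation $F(\lambda)=\Id-\Pi_{\mathcal H}-Q_0d_{\lambda-\lambda_0}-d_{\lambda-\lambda_0}Q_0$, the Riesz projector, the reduction to a finite-dimensional holomorphic Koszul complex, and the rank-stratification by minors are all sound; your handling of the domain issue (working throughout in the Banach space $\mc D(d_\X)$, where $d_\X$ is bounded and $d_{\lambda-\lambda_0}$ has norm $O(|\lambda-\lambda_0|)$ because it anticommutes with $d_\X$) is exactly the reduction the paper implicitly relies on. Two remarks. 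First, when you ``argue as in Lemma~\ref{lem:critereFredholm}'', note that its hypotheses as stated (continuity of $Q,R,K$ on distributions) are not literally met by your abstract $Q_0$; this does not matter because the only role of that hypothesis is to justify that $F$ commutes with $d_\X$ and is bounded on $\mc D(d_\X)$, both of which you obtain directly, but you should say so. Second, what your argument (and M\"uller's theorem) actually yields is that $\sigma_{\T,\mc H}(\X)\setminus\sigma^{\rm ess}_{\T,\mc H}(\X)$ is a closed complex \emph{analytic subset} of $\C^\kappa\setminus\sigma^{\rm ess}_{\T,\mc H}(\X)$ --- a finite union of determinantal loci, which may be singular --- rather than a submanifold in the strict sense; the proposition's wording is looser than what is true, and your conclusion is the correct one.
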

\begin{proof}
 As the complex \eqref{eq:def_complex_coordinatefree} is an analytic Fredholm complex of bounded operators 
 on $\C^\kappa\setminus \sigma_{{\rm T},\mc{H}}^{\rm ess}(\X)$ the statement is classical and a proof can be found in \cite[Theorem 2.9]{Muller-00}.
\end{proof}

In general, the question of whether the spectrum is discrete  does not seem to have a very simple answer. For example, a characterization can be found in \cite[Corollary 2.6 and Lemma 2.7]{Ambrozie-Mueller-09}.
Such a criterion is particularly adapted to microlocal methods and it can actually be used in our setting. However, it turns out that an even simpler criterion is sufficient for us:
\begin{lemma}\label{lemma:discrete}
Under the assumptions of Lemma \ref{lem:critereFredholm}, assume in addition that $Q=\delta_{{\bf Q}}$ for some Lie algebra morphism  ${\bf Q}:\a\to \mc{L}(\mc{H})$ such that ${\bf Q}_A$ acts continuously on $C^{-\infty}(\M;E)$ and $[{\bf Q}_A,\X_B]=0$ for all $A,B\in \a$. Then, Lemma \ref{lem:spectrum-and-eigenvalues} applies, and the Taylor spectrum of $\X$ on $\mc{H}$ is discrete in a neighborhood of $0$.
\end{lemma}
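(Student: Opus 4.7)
The plan is to exploit the $\Lambda$-scalar nature of the right-hand side of the parametrix and then reduce the spectral problem near $0$ to a finite-dimensional perturbation analysis. Since $Q = \delta_{\bf Q}$ and the morphisms $\X$ and ${\bf Q}$ commute, Lemma~\ref{trick} applied to the pair $(\X,{\bf Q})$ gives
\[
Q d_\X + d_\X Q = -\Big(\sum_{k=1}^\kappa \X_k {\bf Q}_k\Big)\otimes \Id.
\]
Combined with the hypothesis of Lemma~\ref{lem:critereFredholm}, this forces $F:=\Id + R + K = F'\otimes \Id$ with $F' := -\sum_k \X_k {\bf Q}_k$ acting on $\mc H$, which is precisely the $\Lambda$-scalar assumption needed so that Lemma~\ref{lem:spectrum-and-eigenvalues} applies at $\lambda=0$.

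Next, I extend this to $\X - \lambda$ for $\lambda$ near $0$. Since each $\lambda(A)$ acts as a scalar we have $[{\bf Q}_B, \X_A - \lambda(A)] = 0$, so Lemma~\ref{trick} applied to $(\X - \lambda, {\bf Q})$ yields $Q d_{\X - \lambda} + d_{\X - \lambda} Q = F'_\lambda \otimes \Id$ with $F'_\lambda := F' + \sum_k \lambda_k {\bf Q}_k$, which I rewrite as $\Id + R_\lambda + K$ where $R_\lambda := R + (\sum_k \lambda_k {\bf Q}_k)\otimes \Id$. Boundedness of each ${\bf Q}_k$ on $\mc H\Lambda$ ensures $\|R_\lambda\|<1$ for $|\lambda|$ small enough, so Lemma~\ref{lem:critereFredholm} applies to $\X-\lambda$ and the right-hand side stays $\Lambda$-scalar. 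Hence Lemma~\ref{lem:spectrum-and-eigenvalues} applies as well, identifying $\sigma_{\T,\mc H}(\X)$ near $0$ with the set of $\lambda$ admitting a non-zero $u \in \mc D(d_\X)$ with $\X u = \lambda u$.

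To deduce discreteness, any such joint eigenvector satisfies $F'_\lambda u = -\sum_k {\bf Q}_k(\X_k - \lambda_k)u = 0$ by commutativity, hence lies in $\ker F'_\lambda \subset \ran \Pi'_\lambda$, where $\Pi'_\lambda$ is the spectral projector of the Fredholm operator $F'_\lambda$ at $0$. This is a finite-dimensional subspace of constant rank $N$ for $\lambda$ near $0$, stable under $\X$ since $[\X_A, F'_\lambda]=0$. Introducing a Kato-type holomorphic intertwining isomorphism $U_\lambda : \ran \Pi'_0 \to \ran \Pi'_\lambda$ with $U_0 = \Id$, the transferred commuting tuple $X_k(\lambda) := U_\lambda^{-1} \X_k U_\lambda$ on the fixed space $V := \ran \Pi'_0$ expands as $X_k(\lambda) = \X_k|_V + [\X_k|_V, W_\lambda] + O(|\lambda|^2)$ for a linear-in-$\lambda$ operator $W_\lambda$. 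Because the leading perturbation is a commutator with $\X_k|_V$, first-order perturbation theory forces the joint eigenvalues $\mu^{(j)}(\lambda)$ of $X(\lambda)$ to satisfy $\mu^{(j)}(\lambda) = \mu^{(j)}(0) + O(|\lambda|^2)$. The Taylor spectrum condition then reduces to $\lambda \in \{\mu^{(j)}(\lambda)\}_j$, which for $|\lambda|$ small admits only $\lambda = 0$ (when $0\in\sigma_{\T}(\X|_V)$) or no solution at all, yielding the required discreteness.

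The main obstacle is this last perturbation step: when joint eigenvalues of $\X|_V$ have nontrivial Jordan structure, perturbed eigenvalues typically develop Puiseux expansions with fractional powers, which could in principle spoil the argument. What salvages the clean $O(|\lambda|^2)$ estimate is precisely the commutator form of the leading perturbation, since $\X_k|_V + t[\X_k|_V, W]$ is conjugate to $\X_k|_V$ up to $O(t^2)$ via $(\Id + t W)^{\pm 1}$, so the eigenvalues remain stationary to first order regardless of the Jordan structure, and the implicit-function-type argument on $\lambda = \mu^{(j)}(\lambda)$ goes through.
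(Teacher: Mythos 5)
Your reduction to a $\Lambda$-scalar right-hand side via Lemma~\ref{trick}, the check that $F'(\lambda)=F'+\lambda\cdot Q$ still satisfies the hypotheses of Lemma~\ref{lem:critereFredholm} for $|\lambda|$ small, and the resulting applicability of Lemma~\ref{lem:spectrum-and-eigenvalues} all match the paper's proof. The genuine gap is in the final perturbation-theoretic step. From ``$\X_k|_V+t[\X_k|_V,W]$ is conjugate to $\X_k|_V$ up to $O(t^2)$'' you conclude $\mu^{(j)}(\lambda)=\mu^{(j)}(0)+O(|\lambda|^2)$. That does not follow: what the conjugation gives is that $X_k(\lambda)$ equals a conjugate of $\X_k|_V$ plus an $O(|\lambda|^2)$ remainder, and an $O(\epsilon)$ perturbation of a matrix with a Jordan block of size $p$ at $\mu_0$ can move the eigenvalue by $O(\epsilon^{1/p})$ (this is sharp). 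With $\epsilon=|\lambda|^2$ you only get $\mu^{(j)}(\lambda)=\mu^{(j)}(0)+O(|\lambda|^{2/p})$, which for $p=2$ is merely $O(|\lambda|)$ with an uncontrolled constant and for $p\geq 3$ dominates $|\lambda|$; in either case the fixed-point equation $\lambda=\mu^{(j)}(\lambda)$ no longer forces $\lambda=0$, and the branches $\mu^{(j)}$ are in general multivalued Puiseux series, so the ``implicit-function-type argument'' has nothing to hang on. Since nontrivial Jordan structure does occur for Ruelle--Taylor resonances (cf. Proposition~\ref{prop:jordan_blocks}), this is a real obstruction, not a technicality.

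The paper avoids perturbation theory entirely by proving that the relevant spectral subspace does not move at all: writing $\widetilde{F}'=F'+\Pi'_0$, one has $\widetilde{F}'^{-1}F'(\lambda)=\Id-\Pi'_0+\widetilde{F}'^{-1}\lambda\cdot Q$, and since $\lambda\cdot Q$ commutes with $F'$ (hence with $\Pi'_0$ and $\widetilde{F}'^{-1}$), any $u\in\ker F'(\lambda)$ satisfies $(\Id+\widetilde{F}'^{-1}\lambda\cdot Q)(\Id-\Pi'_0)u=0$; inverting by Neumann series for small $\lambda$ gives $(\Id-\Pi'_0)u=0$, whence $\ran\Pi'_0(\lambda)\subset\ran\Pi'_0$. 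A sandwiching argument then identifies the cohomology of $d_{\X-\lambda}$ with that of the Taylor complex on the \emph{fixed} finite-dimensional space $\Lambda\otimes\ran\Pi'_0$, whose Taylor spectrum is, by Lemma~\ref{lem:taylor_finite_dim}, the finite set of joint eigenvalues of $\X|_{\ran\Pi'_0}$; discreteness near $0$ follows at once. You already observed the inclusion $\ker F'_\lambda\subset\ran\Pi'_\lambda$; the missing ingredient is the exact inclusion $\ran\Pi'_\lambda\subset\ran\Pi'_0$, which renders the Kato intertwiner and all eigenvalue tracking unnecessary.
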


\begin{proof} Let $A_1,\dots,A_\kappa\in \a$ be an orthonormal basis for $\cjg\cdot,\cdot\cjd$ and let $Q_j:={\bf Q}_{A_j}$.
We observe from Lemma \ref{trick} that for $\lambda\in \a_\C^*$ the following identity holds on $\mc{D}(d_\X)$
\[
d_{\X -\lambda} Q + Q d_{\X-\lambda} = (\underset{=F'}{\underbrace{-{\bf X}_{A_1} Q_1 - \dots - {\bf X}_{A_\kappa} Q_\kappa}} + \underset{=\lambda\cdot Q}{\underbrace{\lambda_1 Q_1 + \dots + \lambda_\kappa Q_\kappa}})\otimes \Id.
\]
Thus, denoting $F'(\lambda):=F'+\lambda\cdot Q$ on $\mc{H}$ and $F(\lambda):=F'(\lambda)\otimes {\rm Id}$ on $\mc{H}\Lambda$, we see that Lemma \ref{lem:spectrum-and-eigenvalues} indeed applies.

Next, we observe two things. The first is that $F'$ and $\lambda\cdot Q$ commute. The second is that for $\lambda$ small enough, $F'(\lambda)$ can still be decomposed in the form $\Id + R(\lambda) + K(\lambda)$ with $\|R(\lambda)\|_{\mc{L}(\mc{H})}< 1$ and $K(\lambda)$ compact, because $Q$ is bounded. It follows that $d_{\X-\lambda}$ is Fredholm for $\lambda$ close enough to $0$.

From Lemma \ref{lem:critereFredholm}, we know that the cohomology of $d_{\X-\lambda}$ on $\mathcal{D}(d_\X)$ is isomorphic to
\[
\ker_{\ran \Pi_0(\lambda)} d_{\X-\lambda} / \ran_{\ran \Pi_0(\lambda)} d_{\X-\lambda},
\]
and the isomorphism is given by $[u] \mapsto [\Pi_0(\lambda)u]$ if $\Pi_0(\lambda)$ denotes the spectral projector of $F(\lambda)$ at $0$ and $[\cdot]$ denotes cohomology classes. Let us now describe a sort of \emph{sandwiching procedure}. Assume that we have  a projector $\Pi_2$ bounded on $\mathcal{D}(d_\X)$, commuting with $d_{\X-\lambda}$. Then, the mapping $[u] \mapsto [\Pi_2 u]$ is well-defined and surjective as a map 
\begin{equation}\label{isomPi2Dom}
 \ker_{\mc{D}(d_\X)} d_{\X-\lambda} / \ran_{\mc{D}(d_\X)} d_{\X-\lambda}\to \ker_{\ran \Pi_2} d_{\X-\lambda} / \ran_{\ran \Pi_2} d_{\X-\lambda}.
\end{equation}
In general, there is no reason for this map to be \emph{injective}. However, if we further assume that $\Pi_2$ and $\Pi_0(\lambda)$ commute, and that $\ran(\Pi_0(\lambda))\subset \ran(\Pi_2)$, then we can see $\Pi_0(\lambda)$ as a projector on $\ran(\Pi_2)$. The mapping $[\Pi_2 u] \mapsto [\Pi_0(\lambda)u]$ for $u\in \ker_{\mc{D}(d_\X)} d_{\X-\lambda} / \ran_{\mc{D}(d_\X)} d_{\X-\lambda}$ is well-defined as a map 
\[\ker_{\ran \Pi_2} d_{\X-\lambda} / \ran_{\ran \Pi_2} d_{\X-\lambda}\to \ker_{\ran \Pi_0(\lambda)} d_{\X-\lambda} / \ran_{\ran \Pi_0(\lambda)} d_{\X-\lambda}\]
by using $\ker \Pi_2\subset \ker \Pi_0(\lambda)$, and it has to be surjective. Using this and the surjectivity of \eqref{isomPi2Dom} we deduce the bounds 
\begin{align*}
& \dim \ker_{\mc{D}(d_\X)} d_{\X-\lambda} / \ran_{\mc{D}(d_\X)} d_{\X-\lambda} \\ 
& \qquad \leq \dim \ker_{\ran \Pi_2} d_{\X-\lambda} / \ran_{\ran \Pi_2} d_{\X-\lambda}\leq\ker_{\ran \Pi_0(\lambda)} d_{\X-\lambda} / \ran_{\ran \Pi_0(\lambda)} d_{\X-\lambda}. 
\end{align*}
Since we have proved above that the lower and upper bound are equal, then \eqref{isomPi2Dom} is actually an isomorphism.

Let us write, with $\widetilde{F}':=F'+\Pi_0'$ where $\Pi_0'$ is the spectral projector of $F'$ at $0$ and we have the following identity on $\mc H$
\[\widetilde{F}'^{-1}F'(\lambda)={\rm Id} - \Pi'_0 + \widetilde{F}'^{-1} \lambda\cdot  Q.\] 
For $u\in \ker F'(\lambda)$, we have $({\rm Id}- \Pi'_0) u =- \widetilde{F}'^{-1}\lambda \cdot  Q u$. Since $\widetilde{F}'^{-1} \lambda \cdot Q $ commutes with $\Pi'_0$ (as $F'$ does commute with $\lambda\cdot Q$), we obtain for $u\in \ker F'(\lambda) \subset\mc H$
\[
({\rm Id}-\Pi'_0)u = ({\rm Id}-\Pi'_0)^2 u = -\widetilde{F}'^{-1}\lambda\cdot Q ({\rm Id}-\Pi'_0)u.
\]
For $\lambda$ small enough ${\rm Id}+\widetilde{F}'^{-1}\lambda \cdot Q$ is invertible on $\mc{H}$, which implies that $({\rm Id}-\Pi'_0)u=0$. In particular, $u\in \ran \Pi'_0$, so that $\ker F'(\lambda) \subset \ker F'$ and $\ran \Pi'_0(\lambda)\subset \ran \Pi'_0$. But certainly, $\Pi_0'$ and $\Pi_0'(\lambda)$ commute. So we can apply the argument above with $\Pi_0'$ playing the role of $\Pi_2$, and deduce that for $\lambda$ sufficiently small,
\[
\ker_{\mathcal{D}(d_\X)} d_{\X-\lambda}/ \ran_{\mathcal{D}(d_\X)} d_{\X-\lambda} \simeq \ker_{ \ran \Pi_0'} d_{\X-\lambda}/ \ran_{ \ran \Pi_0'} d_{\X-\lambda}.
\]
Since $\ran \Pi_0'$ is a fixed finite dimensional space, the Taylor spectrum of ${\bf X}$ is discrete near $0$ by Lemma \ref{lem:taylor_finite_dim}.
\end{proof}

\section{Discrete Ruelle-Taylor resonances via microlocal analysis}
\label{sec:Taylor-pseudors}
In this section, $\mc{M}$ is a compact manifold, equipped with a vector bundle $E\to\mc M$ and an admissible lift $\X$ of an Anosov action (see Definition~\ref{def:admissible_lift}). We have seen in Section~\ref{sec:Taylor_commuting} how to define the Taylor differential $d_\X$ which acts in its coordinate free form on $C^\infty(\mc M; E)\otimes\Lambda \a^*$. We have furthermore seen how $d_\X$ can be used to define a Taylor spectrum $\sigma_{\T, \mc H}(\mathbf X)\subset \a^*_{\mathbb C}$. We take coordinates whenever it is convenient. In that case, we will use the notation $d_X$ to avoid confusion. In the sequel it will be convenient to pass back and forth between these versions and we will mostly use the shorthand notation $C^\infty\Lambda$, leaving open which version we currently consider. 

The Ruelle-Taylor resonances that we will introduce will correspond to a discrete spectrum of $-\mathbf X$ on some anisotropic Sobolev spaces. 
From a spectral theoretic point of view this sign convention might seem unnatural. However, from a dynamical point of view this convention is very natural: 
given the flow $\varphi_t^X$ of a vector field $X$, the one-parameter group that propagates probability densities with respect to an invariant measure is given by $(\varphi^ X_{-t})^*$ and thus generated by the differential operator $-X$. 
We will therefore from now on consider the holomorphic family of complexes generated by $d_{\mathbf X+\lambda}$ for $\lambda \in \a_{\mathbb C}^*$ (respectively $\lambda\in \C^\kappa$ after a choice of coordinates). Let us denote by $e^{-t\mathbf X_A}$ the $1$-parameter family generated by $\mathbf X_A$, solving $\partial_t e^{-t{\mathbf X}_A}f=-{\mathbf X}_Ae^{-t{\bf X}_A}f$ with $e^{-t{\mathbf X}_A}f|_{t=0}=f$. Since we work with spaces that are deformations of $L^2(\mc{M})$, we will compare our results with the growth rate of the action on $L^2(\mc{M})$, defined for $A\in \a$ as
\begin{equation}\label{eq:def-CL^2(A)}
C_{L^2}(A) := \limsup \frac{1}{t}\log \| e^{-t\mathbf{X}_A} \|_{\mc{L}(L^2)}. 
\end{equation}
Naturally, the spectrum of $\mathbf{X}_A$ on $L^2$ is contained in $\{s\in\C\ |\ \Re s \leq C_{L^2}(A)\}$. 

The goal of this section is to show the following:
\begin{theorem}\label{theoHn}
Let $\tau$ be a smooth abelian Anosov action with generating map $X$ and ${\bf X}$ an admissible lift. Let $A_0\in \W$ be in the positive Weyl chamber. 
There exists $c>0$, locally uniformly with respect to $A_0$, such that for each $N>0$, there is a Hilbert space $\mc{H}_{N}$ containing $C^\infty(\M)$ and contained in
$C^{-\infty}(\M)$ such that the following holds true:
\begin{enumerate}[1)]
	\item\label{it:essential} $-X$ has no essential Taylor spectrum on the Hilbert space $\mc{H}_N$ in the region 
\[
\mc{F}_N:=\{\lambda\in \a_\C^*\, |\,\Re(\lambda(A_0))>-cN + C_{L^2}(A_0)\}.
\]
	\item For each $\lambda\in \mc{F}_N$ one has an isomorphism of finite dimensional spaces
\[ 
\ker d_{{\bf X}+\lambda}|_{\mc{D}_N^j(d_{\bf X})}/
\ran\, d_{{\bf X}+\lambda}|_{\mc{D}_N^{j-1}(d_{\bf X})}=
\ker d_{{\bf X}+\lambda}|_{C^{-\infty}_{E_u^*}(\M)\otimes \Lambda^j\a^*_\C}/
\ran\, d_{{\bf X}+\lambda}|_{C^{-\infty}_{E_u^*}(\M)\otimes \Lambda^{j-1}\a^*_\C}
\]
with $\mc{D}_N^j(d_{\bf X}):=\{u\in \mc{H}_N\otimes \Lambda^j\a^*_\C\, |\, d_{\bf X}u\in \mc{H}_N\otimes \Lambda^{j+1}\a^*_\C\}$, showing that the cohomology dimension is independent of $N$ and $A_0$.
	\item The Taylor spectrum of $-{\bf X}$ contained in $\mc{F}_N$ is discrete and contained in
	\[ \bigcap_{A\in \mathcal W} \{\lambda\in\a_\C^*~|~\Re(\lambda(A))\leq C_{L^2}(A)\}.\]
	\item\label{it:always_joint_eigenvalue} An element $\lambda\in \mc{F}_N$ is in the Taylor spectrum of $-{\bf X}$ on $\mc{H}_N$ if and only if $\lambda$ is a joint Ruelle resonance of ${\bf X}$.	
\end{enumerate}
\end{theorem}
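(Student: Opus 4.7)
The plan is to construct, for each $N>0$, anisotropic Sobolev spaces $\mc{H}_N$ via an escape function on $T^*\mc{M}$, construct a parametrix for the Koszul complex using pseudodifferential calculus together with the trick of Lemma \ref{trick}, and then extract all four statements by applying the abstract Fredholm/discreteness lemmas (Lemmas \ref{lem:critereFredholm}--\ref{lemma:discrete}) combined with propagation of singularities.

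First I would build the spaces $\mc{H}_N$ following the Faure--Sj\"ostrand construction: pick an order zero escape function $G\in C^\infty(T^*\mc{M})$ which is logarithmic, equals $+m$ on a conic neighbourhood of $E_u^*$, equals $-m$ on a conic neighbourhood of $E_s^*$ (for some large $m>0$), and satisfies $X_{A}G\le 0$ along the Hamilton flow of any $A\in \mc{W}$ on $T^*\mc{M}\setminus 0$, with strict negativity outside small conic neighbourhoods of $E_u^*\cup E_s^*$. Such a $G$ exists because the transverse hyperbolicity is uniform over the Weyl chamber (Lemma~\ref{lem:weyl_chamber}). Setting $\mc{H}_N:=e^{-NG^w}L^2(\mc M;E)$ yields a Hilbert space between $C^\infty(\mc M;E)$ and $C^{-\infty}(\mc M;E)$; the relevant density and unique-extension property for $d_{\bf X}$ is standard (see the remark after \eqref{eq:unique_extension_cf}). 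After conjugation, $P_{A,N}:=e^{-NG^w}(-\X_A-\lambda(A))e^{NG^w}$ is a first-order pseudodifferential operator whose principal symbol has real part $N\,X_AG-\Re\lambda(A)$ plus a purely imaginary term. Because $X_AG\leq -c<0$ outside a small conic neighbourhood of $E_u^*\cup E_s^*$ for $A\in\mc{W}$, and strictly positive on that neighbourhood of $E_s^*$, the conjugated operator is elliptic off $E_u^*$ in the region $\Re\lambda(A_0)>-cN+C_{L^2}(A_0)$.

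Second, I would construct a family of pseudodifferential operators $\mathbf Y_{A_1},\ldots,\mathbf Y_{A_\kappa}$ of order $-1$, commuting pairwise modulo compact operators and commuting with each $\mathbf X_{A_j}$ modulo compact operators, such that
\[
\sum_{k=1}^\kappa (\mathbf X_{A_k}+\lambda(A_k))\mathbf Y_{A_k}=-\Id+R(\lambda)+K(\lambda)
\]
on $\mc{H}_N\Lambda$, with $\|R(\lambda)\|<1$ and $K(\lambda)$ compact, holomorphic in $\lambda\in\mc{F}_N$. Microlocally this is done by an elliptic parametrix (standard pseudodifferential inversion) outside a conic neighbourhood of $E_u^*$, and by the dynamical inverse $-\int_0^\infty e^{t(\mathbf X_{A_0}+\lambda(A_0))}\chi(t)\,dt$ (cut off with propagation of singularities) near $E_u^*$, exploiting that after conjugation $\Re(\X_{A_0}+\lambda(A_0))$ is strongly negative-elliptic off $E_u^*$ and that propagation along $\varphi_t^{X_{A_0}}$ for $t\to+\infty$ escapes any compact set of $T^*\mc{M}\setminus E_u^*$. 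Setting $Q(\lambda):=\delta_{\mathbf Y}$ and invoking Lemma \ref{trick} gives
\[
d_{\mathbf X+\lambda}Q(\lambda)+Q(\lambda)d_{\mathbf X+\lambda}=(\Id-R(\lambda)-K(\lambda))\otimes\Id,
\]
which by Lemma \ref{lem:critereFredholm} proves statement~\ref{it:essential}, and by Lemma \ref{lemma:discrete} proves that the Taylor spectrum in $\mc F_N$ is discrete and consists of joint eigenvalues on $\ran\Pi_0'$ after the spectral reduction. Statement~\ref{it:always_joint_eigenvalue} then follows from Lemma~\ref{lem:spectrum-and-eigenvalues} applied to the $\Lambda$-scalar error (plus the aforementioned Proposition~\ref{prop:zero_cohomology_nonempty} for the converse direction that non-vanishing cohomology in positive degree implies non-vanishing in degree $0$).

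Third, I would establish intrinsicness (statement 2) using elliptic regularity and propagation of singularities. If $u\in\mc{D}_N^j(d_{\bf X})$ satisfies $d_{\mathbf X+\lambda}u\in\mathrm{Ran}\,d_{\mathbf X+\lambda}$, the elliptic estimate off $E_u^*$ for the conjugated system forces $\mathrm{WF}(u)\subset E_u^*$, so $u\in C^{-\infty}_{E_u^*}(\mc M)\otimes\Lambda^j\a_{\mathbb C}^*$; conversely, any cohomology class in $C^{-\infty}_{E_u^*}$ can be represented in $\mc{H}_N$ for $N$ large by pushing along the $X_{A_0}$-flow (which contracts wavefronts in the unstable conormal into the allowed regularity window of $\mc{H}_N$). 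This shows that the finite-dimensional cohomologies computed on $\mc{D}_N^j(d_{\bf X})$ and on $C^{-\infty}_{E_u^*}\otimes\Lambda^j\a_{\mathbb C}^*$ are canonically isomorphic, and in particular independent of $N$ and of $A_0\in\mc W$. Finally, for the location of the spectrum (statement 3), if $\Re\lambda(A)>C_{L^2}(A)$ for some $A\in\mc W$, then $\mathbf X_A+\lambda(A)$ has bounded inverse on $L^2$; a deformation argument (increasing $N$ from $0$ controls the conjugated operator) and the contracting homotopy $\mathbf Y_A:=(\mathbf X_A+\lambda(A))^{-1}$ applied in Lemma \ref{trick} produce an exact complex, showing $\lambda\notin \sigma_{\rm T}(-\mathbf X)$.

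The main obstacle is the parametrix construction in the Koszul setting: one must produce $\mathbf Y_{A_1},\ldots,\mathbf Y_{A_\kappa}$ that simultaneously invert the sum $\sum_k(\mathbf X_{A_k}+\lambda(A_k))\mathbf Y_{A_k}$ modulo compact, commute with each $\mathbf X_{A_j}$ modulo compact (so that Lemma \ref{trick} and Lemma \ref{lemma:discrete} apply cleanly), and depend holomorphically on $\lambda$ uniformly in $\mc{F}_N$. Unlike the rank-one case where a single resolvent suffices, here the microlocal gluing between the elliptic parametrix and the dynamical inverse must be performed consistently across the whole family, and the commutativity of the $\mathbf X_{A_j}$ must be exploited to ensure the compatibility of the operators $\mathbf Y_{A_j}$.
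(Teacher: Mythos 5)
Your overall architecture (escape function, anisotropic spaces $\mc H_{NG}$, a divergence-form parametrix fed into Lemma~\ref{trick}, then Lemmas~\ref{lem:critereFredholm} and \ref{lemma:discrete}) is the right one and matches the paper's strategy. But there is a genuine gap at the step you yourself flag as ``the main obstacle'': you propose operators $\mathbf Y_{A_1},\dots,\mathbf Y_{A_\kappa}$ that commute with each other and with the $\mathbf X_{A_j}$ only \emph{modulo compact operators}, obtained by gluing an elliptic pseudodifferential parametrix to a dynamical inverse. That weakened commutation is enough for Lemma~\ref{lem:critereFredholm} (any compact, non-$\Lambda$-scalar error can be absorbed into $K(\lambda)$), so statement 1) survives. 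It is \emph{not} enough for Lemma~\ref{trick}, whose proof needs exact commutation $[\mathbf X_i,\mathbf Y_j]=0$ to make the off-diagonal terms cancel and produce a $\Lambda$-scalar right-hand side; and it is not enough for Lemma~\ref{lemma:discrete}, whose hypotheses require $\mathbf Q$ to be an honest Lie algebra morphism with $[\mathbf Q_A,\mathbf X_B]=0$ exactly (its sandwiching argument uses that $F'$ commutes with $\lambda\cdot Q$ and that the spectral projector $\Pi_0'$ commutes with $\widetilde F'^{-1}\lambda\cdot Q$). With errors that are merely compact and not $\Lambda$-scalar, you do not get the identification of the spectrum with joint eigenvalues (statement 4) nor discreteness near a point of the spectrum (statement 3).

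The paper resolves this by running \emph{two} parametrix constructions. For Fredholmness it glues the elliptic and dynamical pieces freely (Proposition~\ref{prop:Fredholm}), since no exact commutation is needed there. For discreteness it uses a purely dynamical $Q(\lambda)=\delta_{\mathbf Q(\lambda)}$ with $\mathbf Q_{A_j}(\lambda)=(-1)^jQ_j'(\lambda)\prod_{k<j}R_k(\lambda)$ built solely from the propagators $e^{-t\mathbf X_{A_j}(\lambda)}$ (Lemma~\ref{lemmafordiscreteness}); these commute exactly with the $\mathbf X_{A_i}(\lambda)$ and with each other, so Lemmas~\ref{trick} and \ref{lemma:discrete} apply verbatim. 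Compactness of the resulting error in the elliptic region is then recovered \emph{a posteriori}, not by inserting a pseudodifferential inverse into $Q$, but by composing $\prod_k R_k(\lambda)$ on the outside with microlocal cutoffs $P_k$ supported where $\mathbf X_{A_k}(\lambda)$ is elliptic and using an elliptic right-parametrix $Z_k(\lambda)$ there. This requires the escape function of Lemma~\ref{unifG}, decaying simultaneously for a whole basis $A_1,\dots,A_\kappa$ chosen in a small neighbourhood $\mc O$ of $A_1$ --- note that the paper only obtains $X_A^HG\le 0$ locally in $A$, not for all $A\in\mc W$ as you assert. Separately, your argument for statement 2) (``elliptic regularity forces $\WF(u)\subset E_u^*$ for cocycles, and the flow contracts wavefronts into $\mc H_N$'') is too loose: arbitrary elements of $\mc D_N^j(d_{\mathbf X})$ in the kernel need not have wavefront set in $E_u^*$; the paper instead passes through the finite-rank spectral projector $\Pi_0(\lambda)$ of $F(\lambda)$, shows $\ran\Pi_0(\lambda)\subset\bigcap\mc H_{N'G'}\subset C^{-\infty}_{E_u^*}$ and $C^{-\infty}_{E_u^*}\subset\bigcup\mc H_{N'G'}$, and concludes by the sandwiching argument.
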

The Hilbert space $\mc{H}_N$ will be rather written $\mc{H}_{NG}$ below, where $G$ is a certain weight function on $T^*\mc{M}$ giving the rate of Sobolev differentiability in phase space. We use this notation in order to emphasize the dependence of the space on $G$.

The central point of the proof will be a parametrix construction for the exterior differential $d_{\mathbf X+\lambda}$. We will prove in Proposition~\ref{prop:Fredholm} that there are holomorphic families of operators $Q(\lambda), F(\lambda): C^{-\infty}\Lambda \to C^{-\infty}\Lambda$ such that 
\[
 Q(\lambda)d_{\mathbf X+\lambda} + d_{\mathbf X+\lambda}Q(\lambda)=F(\lambda).
\]
The operators $Q(\lambda)$ and $F(\lambda)$ will be  Fourier integral operators and independent of any Hilbert space on which the operators act.
However, the crucial fact is that for these operators there exists a scale of Hilbert spaces $C^\infty\subset \mc H_{NG}\subset C^{-\infty}$ (with $N\geq 0$ and $G\in C^\infty(T^*\M)$ a weight function) and domains $\mathcal F_{NG}\subset \a^*_{\mathbb C}$ with $\a^*_{\mathbb C}=\cup_{N>0} \mathcal F_{NG}$ such that for $\lambda \in \mathcal F_{NG}$ the operators $Q(\lambda):\mc H_{NG}\to \mc H_{NG}$ are bounded and the operators $F(\lambda): \mc H_{NG}\to \mc H_{NG}$ are Fredholm and can be decomposed as $F(\lambda)={\rm Id}+R(\lambda)+K(\lambda)$ with $K(\lambda)$ compact and $\|R(\lambda)\|_{\mc{L}(\mc{H}_{NG}\Lambda)}<1/2$.
Then by Lemma~\ref{lem:critereFredholm} we directly conclude that the Taylor complex on $\mathcal H_{NG}\Lambda$ is Fredholm on $\lambda\in \mathcal F_{NG}$. 
The fact that the construction of the operator family $F(\lambda):C^\infty\Lambda\to C^{-\infty}\Lambda$ is independent of the specific Hilbert spaces on which they act will be the key for proving in Section~\ref{sec:intrinsic_spec} that the Taylor spectrum of $d_{\mathbf X+\lambda}$ is intrinsic to the Anosov action, i.e. independent of the constructed spaces $\mc H_{NG}$. 
The flexibility which we will have in the construction of the escape function $G$ will furthermore allow to identify this intrinsic spectrum with the spectrum of $d_{\mathbf X+\lambda}$ on the space $C^{-\infty}_{E_u^*}\Lambda$ of distributions with wavefront set contained in the annihilator $E_u^*\subset T^*\M$ of $E_u\oplus E_0$ (see Proposition~\ref{prop:D_Eu}). 
Finally, we will see that the choice of $Q(\lambda)$ can be made more \emph{geometric}, to enable the use of Lemma~\ref{lemma:discrete} and prove that this intrinsic spectrum is discrete in $\a_{\C}^*$.

The construction of the parametrix $Q(\lambda)$ and the Hilbert spaces $\mc{H}_{NG}$ 
will be done using microlocal analysis. Appendix~\ref{sec:microlocal} contains a brief summary of the necessary microlocal tools. 
Section~\ref{sec:anisotropic} will be devoted to the construction of the anisotropic Sobolev spaces. 
With these tools at hand we will construct the parametrix (Section~\ref{sec:parametrix}), and prove that the spectrum is intrinsic (Section~\ref{sec:intrinsic_spec}) as well as discrete (Section~\ref{sec:discrete_spec}).

\subsection{Escape function and anisotropic Sobolev space}\label{sec:anisotropic}
In this section we define the anisotropic Sobolev spaces. 
Their construction will be based on the choice of a so-called escape function for the given Anosov action. 
We first give a definition for such an escape function and then prove the existence of escape functions with additional useful properties. 

Given any smooth vector field $X\in C^\infty(\mc M; T\mc M)$ with flow $\varphi_t^X$ we define the \emph{ symplectic lift} of the flow and the corresponding vector field by
\begin{equation}
  \label{eq:symplectic_lift}
  \Phi^X_t:\Abb{T^*\mc M}{T^*\mc M}{(x,\xi)}{(\varphi_t^X(x),(({d\varphi^X_t})^{-1})^T\xi)}~~~\tu{ and }~~~ X^H:= \frac{d}{dt}_{|t=0}\Phi^X_t \in C^\infty(T^*\mc M; T(T^*\mc M))
\end{equation}
where $(({d\varphi^X_t})^{-1})^T$ is the transpose of the inverted differential $({d\varphi^X_t})^{-1}$. The notation $X^H$ is chosen because it is the Hamilton vector field of the principal symbol $\sigma_p^1(X)(x,\xi) =i\xi(X(x)) \in C^\infty(T^*\mc M)$ of $X$ (see Example~\ref{exmpl:principal_symbols}). Recall from Example~\ref{exmpl:principal_symbols} that for an admissible lift of an Anosov action, the principal symbols of the lifted differential operator $\mathbf X_A$ and that of the vector field $X_A$ tensorized with $\Id_E$ coincide. This will turn out to be the reason why we do not have to care about the admissible lifts for the construction of the escape function.
We will denote by $\{0\}:=\{(x,0)\in T^*\M\,|\, x\in M\}$ the zero section.
\begin{Def}\label{def:escape_fct}
 Let $c_{X}>0$, $A\in \W$, $\Gamma_{E_0^*}\subset T^*\mc M$, an open cone containing $E_0^*$ satisfying $\overline{\Gamma}_{E_0^*}\cap (E^*_u\oplus E^*_s) = \{0\}$. Then a function $G\in C^\infty(T^*\mc M,\R)$ is called an \emph{escape function} for $A$ compatible with $c_{X}$, $\Gamma_{E_0^*}$ if there is $R>0$ so that
 \begin{enumerate}
  \item\label{it:escfct_homog_cond} for $|\xi|\leq R/2$ one has $G(x,\xi)=1$ and for
  $|\xi|>1$ one can write, $G(x,\xi)=m(x,\xi)\log(1+ f(x,\xi))$. 
  Here   $m\in C^\infty(T^*\M;[-1/2,8])$ and for $|\xi|>R$, $m$ is positively homogeneous of degree $0$, with $m\leq -1/4$ in a conic neighborhood of $E_u^*$ and  $m\geq 4$ in a conic neighborhood of $E^*_s$. Furthermore, $f\in C^\infty(T^*\M,\R^+)$ is positive homogeneous of degree $1$ for $|\xi|>R$. We call $m$ the \emph{order function} of $G$. 
\item\label{it:escfct_m_decay} $X_A^H m(x,\xi)\leq 0$ for all $|\xi|>R$, 
\item\label{it:escfct_G_decay} for $\xi\notin \Gamma_{E_0^*}, |\xi|>R$ one has $X_A^H G(x,\xi)\leq -c_{X}$. 
\end{enumerate}
\end{Def}
Below (see Proposition~\ref{prop:escape_fct_exist}), we will prove the existence of escape functions for Anosov actions. Before coming to this point let us explain how we can build the anisotropic Sobolev spaces based on the escape function: given an escape function $G$, Property (\ref{it:escfct_homog_cond}) of Definition~\ref{def:escape_fct} implies that $m\in S^0_1(\mc M)$ and for any $N>0$, $e^{NG}\in 
S^{Nm}_{1-}(\mc M)$ is a real elliptic symbol. According to \cite[Lemma 12 and Corollary 4]{FRS08} there exists a pseudodifferential operator
\begin{equation}\label{defANG}
\hat{\mc{A}}_{NG} \in \Psi^{Nm}_{1-}(\mc M; E)
\end{equation} 
such that
\begin{enumerate}
 \item $\sigma_p^{Nm}(\hat{\mc{A}}_{NG}) = e^{NG}\Id_E \mod S^{Nm-1+}_{1-}$,
 \item $\hat{\mc{A}}_{NG}: C^\infty(\mc M; E) \to C^\infty(\mc M; E)$ is invertible,
 \item $\hat{\mc{A}}_{NG}^{-1} \in \Psi^{-Nm}_{1-}(\mc M; E)$ and 
 $\sigma_p^{-Nm}(\hat{\mc{A}}_{NG}^ {-1}) = e^{-NG}\Id_{E} \mod S^{-Nm-1+}_{1-}$.
 \end{enumerate}
We can now define the \emph{anisotropic Sobolev spaces} 
\[
 \mc H_{NG} := \hat{\mc{A}}_{NG}^{-1} L^2(\mc M;E) \tu{ with scalar product } \langle u ,v\rangle_{\mc H_{NG}}:=\langle\hat{\mc{A}}_{NG}u, \hat{\mc{A}}_{NG}v\rangle_{L^2}.
\]
Note that the scalar product $\langle u,v\rangle_{\mc H_{NG}}$ depends not only on the choice of the escape function but also on the choice of its quantization $\hat{\mc{A}}_{NG}$.
However, by $L^2$-continuity (Proposition~\ref{prop:l2-bound}), these different choices all yield equivalent scalar products  on the given vector space $\mc H_{NG}$. For that reason we can suppress this dependence in our notation. 

We want to study the Taylor spectrum of the admissible lift of the Anosov action on these anisotropic Sobolev spaces. Recall from Section~\ref{sec:Taylor_commuting} that due to the unboundedness of the differential operators we have to verify the unique extension property:
\begin{lemma}
For any escape function $G$ the $\a$-action of an admissible lift has a unique extension (in the sense that Equation~\eqref{eq:unique_extension_cf} holds) to the 
 anisotropic Hilbert space $\mc H_{NG}$.
\end{lemma}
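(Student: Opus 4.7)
The plan is to reduce the density statement to its $L^2$ counterpart via conjugation by $\hat{\mc A}_{NG}$, and then to apply a standard Friedrichs mollifier argument. By construction, $\hat{\mc A}_{NG}$ is an invertible pseudodifferential operator restricting to an isomorphism $C^\infty(\mc M;E)\to C^\infty(\mc M;E)$ and extending to an isometric isomorphism $\hat{\mc A}_{NG}:\mc H_{NG}\to L^2(\mc M;E)$. Set
\[
\tilde d_{\bf X}:=\hat{\mc A}_{NG}\,d_{\bf X}\,\hat{\mc A}_{NG}^{-1}.
\]
Since $\hat{\mc A}_{NG}$ sends $C^\infty\Lambda$ to itself bijectively and intertwines $\mc H_{NG}\Lambda$ with $L^2\Lambda$ while turning the graph norm of $d_{\bf X}$ into the graph norm of $\tilde d_{\bf X}$, the statement is equivalent to density of $C^\infty(\mc M;E\otimes\Lambda)$ in the $L^2$-graph-norm domain of $\tilde d_{\bf X}$.

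The next step is to check that $\tilde d_{\bf X}$ is a first-order pseudodifferential operator in the variable-order class. Writing
\[
\tilde d_{\bf X}-d_{\bf X}=\hat{\mc A}_{NG}\,[d_{\bf X},\hat{\mc A}_{NG}^{-1}],
\]
the commutator $[d_{\bf X},\hat{\mc A}_{NG}^{-1}]$ lies in $\Psi^{-Nm}_{1-}(\mc M;E\otimes\Lambda)$ by the symbolic calculus recalled in Appendix~\ref{sec:microlocal}: commuting a first-order differential operator with $\hat{\mc A}_{NG}^{-1}\in\Psi^{-Nm}_{1-}$ gains one order (even with variable order, because the principal symbol of $d_{\bf X}$ is scalar on $E\otimes\Lambda$, by the admissibility condition~\eqref{eq:admissible_lift_assumption} and Example~\ref{exmpl:principal_symbols}). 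Composing on the left with $\hat{\mc A}_{NG}\in\Psi^{Nm}_{1-}$, one obtains $\tilde d_{\bf X}-d_{\bf X}\in\Psi^{0}_{1-}(\mc M;E\otimes\Lambda)$, which is bounded on $L^2$ by Calderón--Vaillancourt (Proposition~\ref{prop:l2-bound}). Hence $\tilde d_{\bf X}\in\Psi^1_{1-}(\mc M;E\otimes\Lambda)$ and the setting of \cite[Lemma A.1]{FS11} (or rather its straightforward variable-order extension) applies.

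With $\tilde d_{\bf X}$ of this form, the density is obtained by standard Friedrichs mollification. Choose a family of smoothing operators $T_\epsilon\in\Psi^{-\infty}(\mc M;E\otimes\Lambda)$ with $T_\epsilon\to\Id$ strongly on $L^2$ as $\epsilon\to 0$ and uniformly bounded on $L^2$; for instance $T_\epsilon:=e^{-\epsilon\Delta}$ for a positive Laplacian. For $u$ in the $L^2$-graph-norm domain of $\tilde d_{\bf X}$, we have $T_\epsilon u\in C^\infty(\mc M;E\otimes\Lambda)$, $T_\epsilon u\to u$ in $L^2\Lambda$, and
\[
\tilde d_{\bf X}T_\epsilon u=T_\epsilon\tilde d_{\bf X}u+[\tilde d_{\bf X},T_\epsilon]u.
\]
The first term converges to $\tilde d_{\bf X}u$ in $L^2\Lambda$, while the commutators $[\tilde d_{\bf X},T_\epsilon]$ are uniformly bounded on $L^2\Lambda$ (belonging to a bounded family in $\Psi^0_{1-}$) and converge to $0$ strongly. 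Therefore $T_\epsilon u$ approximates $u$ in the graph norm of $\tilde d_{\bf X}$, which completes the proof after pulling back by $\hat{\mc A}_{NG}^{-1}$.

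The only real subtlety is the uniform $L^2$ control of the commutators $[\tilde d_{\bf X},T_\epsilon]$ inside the variable-order class $\Psi^{Nm}_{1-}$; this is a now-classical bookkeeping exercise in the calculus of \cite{FRS08}, and together with the conjugation computation above it constitutes a direct translation of \cite[Lemma~A.1]{FS11} to our setting.
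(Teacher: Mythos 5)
Your overall route is the same as the paper's: conjugate by $\hat{\mc A}_{NG}$ to reduce the statement to uniqueness of the closed extension of $P:=\hat{\mc A}_{NG}\,d_\X\,\hat{\mc A}_{NG}^{-1}$ on $L^2$, identify $P$ as a first-order pseudodifferential operator, and conclude by a Friedrichs-mollifier argument (which is exactly the content of \cite[Lemma A.1]{FS11}, cited as a black box in the paper). However, there is a concrete gap at the one point that actually requires care. The commutator $[d_\X,\hat{\mc A}_{NG}^{-1}]$ does not lie in $\Psi^{-Nm}_{1-}$: in the class $S^{m}_{\rho}$ with $\rho=1-\epsilon$ the gain on commutation is only $2\rho-1=1-2\epsilon$, so the commutator lies in $\Psi^{-Nm+0+}_{1-}$ and the difference $\tilde d_\X-d_\X$ lies in $\Psi^{0+}_{1-}$, not $\Psi^{0}_{1-}$. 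Calder\'on--Vaillancourt (Proposition~\ref{prop:l2-bound}) applies to order-zero operators and does not directly give $L^2$-boundedness of a $\Psi^{0+}_{1-}$ operator (think of the multiplier $\log\langle\xi\rangle$, which has order $0+$ but is unbounded on $L^2$). So the assertion that $\tilde d_\X-d_\X$ is $L^2$-bounded is unjustified as written.

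The same issue resurfaces, more seriously, in the step you defer as ``classical bookkeeping'': if one only knows $\tilde d_\X\in\Psi^1_{1-\epsilon}$, then $[\tilde d_\X,T_\epsilon]$ is a uniformly bounded family in $\Psi^{2\epsilon}_{1-\epsilon}$, which is \emph{not} uniformly $L^2$-bounded, and the Friedrichs argument does not close in that class. The paper's way out is precisely the observation that $S^{0+}_{1-}\subset S^{1}_{1}$, so that the full conjugated operator $P=d_\X+[\hat{\mc A}_{NG},d_\X]\hat{\mc A}_{NG}^{-1}$ lies in the fixed-order, type-$(1,0)$ class $\Psi^1_1(\M;E\otimes\Lambda)$; there the commutator with a mollifying family gains a full order, yielding a uniformly bounded family in $\Psi^0_1$, and \cite[Lemma A.1]{FS11} applies verbatim. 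Your proof becomes correct once you replace the claim ``$\tilde d_\X-d_\X\in\Psi^0_{1-}$ is $L^2$-bounded'' by this inclusion $S^{0+}_{1-}\subset S^1_1$ and run the mollifier argument in $\Psi^1_1$ rather than in the $(1-)$-classes.
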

\begin{proof}
 Let us consider the Taylor differential $d_\X$ as an unbounded operator on $\mc H_{NG}\Lambda$ with domain $C^\infty(\mc M; E\otimes\Lambda)$. Then, in the language of closed extensions, the desired equality \eqref{eq:unique_extension_cf} corresponds to the uniqueness of possible closed extensions. By unitary equivalence we can study the conjugate operator $P:=\hat{\mc{A}}_{NG}\, d_\X\hat{\mc{A}}_{NG}^{-1}$ acting as an unbounded operator on $L^2(\mc M; E\otimes\Lambda)$, instead.
 We want to apply \cite[Lemma A.1]{FS11} which states that any operator in $\Psi^1_1(\M;E\otimes \Lambda)$ has a unique closed extension as an unbounded operator on $L^2$ with domain $C^\infty$. By Proposition~\ref{prop:pseudo_product} and since $\hat{A}_{NG}$ has scalar principal symbol  we can write 
 $P=d_\mathbf X +[\hat{\mc{A}}_{NG}, d_\X]\hat{\mc{A}}_{NG}^{-1}$, where the first summand is obviously in $\Psi^1_1(\mc M; E\otimes \Lambda)$ and the second one in $\Psi^{0+}_{1-}(\mc M; E\otimes \Lambda)$.
 Now, by Definition~\ref{def:symbols} of symbol spaces, one checks that $S^{0+}_{1-}(\mc M; E\otimes \Lambda)\subset S^1_1(\mc M; E\otimes \Lambda)$. We conclude that $P\in \Psi^1_1(\mc M; E\otimes \Lambda)$ and are able to apply \cite[Lemma A.1]{FS11} which completes the proof.
\end{proof}

Let us now come to the existence of the escape function: 
\begin{prop}\label{prop:escape_fct_exist}
Fix an arbitrary $A_0\in \mc W\subset \a$, an open cone $\Gamma_{\tu{reg}}\subset T^*\M$ which is disjoint from $E_u^*$, and $\Gamma_0$ a small conic neighborhood of $E_0^*$ so that $\bbar{\Gamma}_0\cap (E_s^*\oplus E_u^*)=\{0\}$. Then there is a $c_{X}>0$, an open conic neighborhood $\Gamma_{E_0^*}\subset \Gamma_0$ of $E_0^*$, and $R>0$ such that there is an escape function $G$ for $A_0$ compatible with $c_X$ and $\Gamma_{E_0^*}$ with the additional property that the order function satisfies
\begin{equation}\label{mgeq1/2}
 m(x,\xi)\geq 1/2 \tu{ for }(x,\xi)\in \Gamma_\tu{reg}\tu{ and } |\xi|>R. 
\end{equation}
\end{prop}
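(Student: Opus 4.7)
The approach follows the Faure--Sj\"ostrand escape function construction \cite{FS11} adapted to the Anosov action setting. The key dynamical observation is that on the cosphere bundle $S^*\M = (T^*\M \setminus \{0\})/\R_+$, the flow induced by the symplectic lift $\Phi^{X_{A_0}}_t$ preserves the projectivization of the dual decomposition $T^*\M = E_0^* \oplus E_u^* \oplus E_s^*$ of \eqref{eq:invariant-splitting}; dualizing the transverse hyperbolicity of $X_{A_0}$ shows that $E_u^*/\R_+$ is a hyperbolic attractor and $E_s^*/\R_+$ a hyperbolic repeller of the projective flow, with $E_0^*/\R_+$ neutral. In particular, $S^*\M \setminus (E_s^*/\R_+)$ is the basin of attraction of $E_u^*/\R_+$.

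The plan is to first construct the order function $m$ on $S^*\M$ and then define $G = m\log(1+f)$ for a suitable homogeneous $f$ of degree $1$. For $m$, I would choose a small open conic neighborhood $U_u$ of $E_u^*$ that is forward invariant under $\Phi^{X_{A_0}}_t$ (available because $E_u^*/\R_+$ is an attractor), and a slightly larger conic neighborhood $\tilde U_u$ disjoint from $\overline{\Gamma}_{\tu{reg}}$ and disjoint from a chosen conic neighborhood $U_s$ of $E_s^*$. Pick a smooth degree-$0$ homogeneous $m_0 \colon T^*\M\setminus\{0\} \to [-1/2, 8]$ with $m_0 = -1/2$ on $U_u$ and $m_0 = 8$ on $S^*\M \setminus \tilde U_u$ (so in particular on $U_s$ and on $\Gamma_{\tu{reg}}$), with a smooth decreasing interpolation on $\tilde U_u \setminus U_u$. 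Then I would define
\[
m(x,\xi) := \frac{1}{T}\int_0^T m_0 \circ \Phi^{X_{A_0}}_t(x,\xi)\,dt
\]
for a suitable $T > 0$. Since $S^*\M \setminus U_s$ is compact and contained in the basin of $E_u^*/\R_+$, there is a uniform time $T_0$ after which every orbit starting there has entered $U_u$; for $T > T_0$ this gives $m_0\circ\Phi^{X_{A_0}}_T = -1/2$ on $S^*\M \setminus U_s$, while on $U_s$ the value $m_0 = 8$ is preserved along the orbit. In both cases $m_0\circ\Phi^{X_{A_0}}_T \leq m_0$ because $m_0 \geq -1/2$ everywhere and $m_0 = 8$ on $U_s$, yielding $X^H_{A_0} m = T^{-1}(m_0\circ\Phi^{X_{A_0}}_T - m_0) \leq 0$. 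The bound $m \leq -1/4$ holds on the forward-invariant set $U_u$; a quantitative estimate using the exponential repulsion rate $\nu$ from Definition~\ref{transhyp} shows that orbits starting within distance $\leq \delta_0 e^{-\alpha\nu T}$ of $E_s^*$ stay in $U_s$ for at least fraction $9/17$ of $[0,T]$, so $m \geq 4$ in a (smaller but nonempty) conic neighborhood of $E_s^*$; finally the uniform lower bound on the time for orbits in $\Gamma_{\tu{reg}}$ to enter $\tilde U_u$ (using $\overline{\Gamma}_{\tu{reg}} \cap E_u^* = \{0\}$) allows, for $T$ chosen not too large relative to this time, to arrange $m \geq 1/2$ on $\Gamma_{\tu{reg}}$.

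Once $m$ is built, I extend it smoothly into the low-frequency region to take the value $1$ on $|\xi| \leq R/2$, pick $f \in C^\infty(T^*\M,\R_+)$ positive homogeneous of degree $1$ for $|\xi| > R$ (for instance a smoothing of $|\xi|$), and set $G := m\log(1+f)$. The Leibniz rule yields
\[
X^H_{A_0} G = (X^H_{A_0} m)\log(1+f) + m \cdot \frac{X^H_{A_0} f}{1+f}.
\]
The first term is $\leq 0$. For the second, since $f$ is homogeneous of degree $1$ and $X_{A_0}$ is transversely hyperbolic, $X^H_{A_0} f /(1+f)$ extends as $|\xi|\to\infty$ to a degree-$0$ function on $T^*\M$ that is $\geq \nu/2$ on a conic neighborhood of $E_u^*$ (expansion) and $\leq -\nu/2$ on a conic neighborhood of $E_s^*$ (contraction), and whose magnitude is bounded below by a constant depending only on $\nu$ outside any fixed conic neighborhood $\Gamma_{E_0^*}$ of $E_0^*$. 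Combined with $m \leq -1/4$ near $E_u^*$ and $m \geq 4$ near $E_s^*$, the product is $\leq -c_X$ on $\{|\xi| > R\} \setminus \Gamma_{E_0^*}$ for some $c_X > 0$ after possibly shrinking $\Gamma_{E_0^*}$ inside $\Gamma_0$ so that the transition zone of $m$ between its positive and negative parts lies inside $\Gamma_{E_0^*}$.

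The principal technical obstacle is the simultaneous realization, after averaging, of the four constraints on $m$: the two pointwise sign bounds near $E_u^*$ and $E_s^*$, the lower bound on $\Gamma_{\tu{reg}}$, and the monotonicity $X^H_{A_0} m \leq 0$. The tension lies in tuning $T$: it must exceed the entry time $T_0$ into $U_u$ for monotonicity, while remaining small enough that orbits starting near $E_s^*$ or in $\Gamma_{\tu{reg}}$ spend a majority of $[0,T]$ in the high-value region of $m_0$. Balancing these competing requirements is done by first fixing $U_u$ very small (making $T_0$ moderate), then choosing $\tilde U_u$ and $\Gamma_{\tu{reg}}$ far enough apart that the escape time $\tau$ of orbits from $\Gamma_{\tu{reg}}$ into $\tilde U_u$ satisfies $\tau \gtrsim T_0$, so that an admissible $T \in (T_0, 17\tau/2)$ exists.
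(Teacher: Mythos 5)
Your overall strategy (average a step function $m_0$ along the lifted flow to get the order function $m$, then set $G=m\log(1+f)$) is the same one the paper uses, except that the paper outsources it entirely to \cite[Lemma 3.3]{DGRS18}; attempting the construction from scratch is legitimate, but two of your steps are genuinely wrong, and they are precisely the delicate points of the cited lemma. First, the basin of attraction of $E_u^*/\R_+$ under the projectivized lifted flow is \emph{not} $S^*\M\setminus(E_s^*/\R_+)$ but $S^*\M\setminus\bigl((E_0^*\oplus E_s^*)/\R_+\bigr)$: a covector in $E_0^*$ has no $E_u^*$-component, stays in the invariant subbundle $E_0^*$ for all time, and never enters $U_u$. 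Hence there is no uniform $T_0$ after which every orbit starting in $S^*\M\setminus U_s$ lies in $U_u$, and your derivation of $m_0\circ\Phi^{X_{A_0}}_T\le m_0$ collapses as stated. (It can be repaired: wherever $m_0=8$ the inequality is trivial since $8$ is the maximum, and the entry-time argument is only needed on $\overline{\tilde U_u}\cap S^*\M$, which \emph{can} be arranged to be a compact subset of the true basin; but this repair has to be made, and the same oversight about $E_0^*$ resurfaces below.)

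Second, and more seriously, you do not verify condition (3) of Definition~\ref{def:escape_fct} on all of $\{|\xi|>R\}\setminus\Gamma_{E_0^*}$. Your two mechanisms ($m\le-1/4$ with $X^H_{A_0}f>0$ near $E_u^*$, and $m\ge4$ with $X^H_{A_0}f<0$ near $E_s^*$) cover only conic neighborhoods of $E_u^*$ and $E_s^*$. Consider a covector $\xi=\xi_u+\xi_s$ with comparable components: it lies outside every conic neighborhood of $E_0^*$, yet in your construction $m(x,\xi)=8$ and $X^H_{A_0}m(x,\xi)=0$ whenever the orbit has not reached $\tilde U_u$ by time $T$, while for a generic homogeneous $f$ (e.g.\ a smoothing of $|\xi|$) the quantity $X^H_{A_0}f/(1+f)$ at such a point can be positive, negative or zero — the expansion of $\xi_u$ and the contraction of $\xi_s$ can cancel. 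So your asserted lower bound on $|X^H_{A_0}f|/(1+f)$ outside $\Gamma_{E_0^*}$ is false, and even a magnitude bound would not suffice without sign control: with $m=8>0$ and $X^H_{A_0}f>0$ the second term in the Leibniz formula is \emph{positive}. Closing this gap is the real content of the construction: one must (i) build $f$ dynamically (a finite-time average of $|e^{-tX^H_{A_0}}(x,\xi)|$, as in Lemma~\ref{unifG} of the paper, following \cite[Lemma C.1]{DZ16a}) so that $X^H_{A_0}f\le-c(1+f)$ on a definite conic neighborhood of $E_s^*$ and $X^H_{A_0}f\ge c$ near $E_u^*$, and (ii) check the geometric fact that every point of $\{|\xi|>R\}\setminus\Gamma_{E_0^*}$ lies either in one of these two neighborhoods (with the matching sign of $m$) or in the transition region of $m$, where $X^H_{A_0}m\le-\delta<0$ strictly and the term $(X^H_{A_0}m)\log(1+f)$ supplies the decay. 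A minor further point: $\Gamma_{\tu{reg}}$ is given in the statement, so you may only shrink $\tilde U_u$ (not move $\Gamma_{\tu{reg}}$) when balancing $T_0$ against the entry time $\tau$; and for the bound $m\ge1/2$ on $\Gamma_{\tu{reg}}$ to be compatible with $m\le-1/4$ near $E_u^*$ one needs $\overline{\Gamma}_{\tu{reg}}\cap E_u^*=\{0\}$, which you use implicitly.
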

\begin{proof}
The proof follows from \cite[Lemma 3.2]{DGRS18}: indeed, first we note that the proof there only uses the continuity of the decomposition $T^*\M=E_0^*\oplus E_u^*\oplus E_s^*$ and the contracting/expanding properties of $E_s^*$, $E_u^*$ but not the fact that $E_0^*$ is one dimensional. It suffices to take, in the notations of \cite{DGRS18}, $N_1=4$, $N_0=1/4$ and 
$\Gamma_{\rm reg}=T^*\M\setminus C^{uu}(\alpha_0)$ with $\alpha_0>0$ small enough. Although it is not explicitly written in the statement of \cite[Lemma 3.3]{DGRS18}, the order function $m$ constructed there satisfies $X_{A_0}^Hm(x,\xi)\leq 0$ for $|\xi|$ large enough and $\Gamma_{E_0^*}$ is arbitrarily small if $\alpha_0>0$ is small (see \cite[Section A.2]{DGRS18}).
\end{proof}

In the proof of the fact that the Ruelle-Taylor spectrum is discrete, we shall also need an escape function that works for all $A$ in a neighborhood $\mc{U}\subset \mc{W}$ of a fixed element $A_1\in \mc{W}$.
\begin{lemma}\label{unifG}
Let $A_1\in \W$ be fixed. Then there is an escape function $G$ for $A_1$, a conic neighborhood $\Gamma_{E_0}^*\subset T^*\M$ of $E_0^*$ such that $\bbar{\Gamma}_{E_0^*}\cap(E_u^*\oplus E_s^*)=\{0\}$, a constant $c_X>0$ and a neighborhood $\mc{U}\subset \W$ of $A_1$ such that 
$G$ is an escape function for all $A\in \mc{U}$ compatible with $c_X>0$ and $\Gamma_{E_0^*}$. Moreover, $G$ can be chosen to satisfy $X^H_{A} G\leq 0$ in $\{|\xi|\geq R\}$ for some $R\geq 1$.  
\end{lemma}
\begin{proof} In a first step we need to construct an order function $m$ that fulfills all properties of Definition~\ref{def:escape_fct}\ref{it:escfct_homog_cond} but additionally $X^H_{A}m\leq 0$ for $|\xi|\geq R$ for all $A$ close enough to $A_1$. To obtain this order function we can follow exactly the construction for Anosov flows given in \cite[Section 2]{Bon18}. It works mutatis mutandis in our case as the proof simply uses the continuity of the decomposition 
$T^*\M=E_0^*\oplus E_s^*\oplus E_u^*$ and the expanding/contracting properties of $E_s^*$ and $E_u^*$, but not the fact that $\dim E_0^*=1$.

We can then define the function $G$ as in \cite[Lemma 1.2]{FS11} by setting $G(x,\xi)=m(x,\xi)\log (1+f(x,\xi))$, where $f>0$ is positively homogeneous of degree $1$ in $\xi$ for $|\xi|>R$, satisfies $f(x,\xi)=|\xi(X_{A_1})|$ near $E_0^*\cap \{|\xi|\geq 1\}$, and 
\begin{equation}\label{XHAf}
X^H_{A}f<-c_1(1+f), \quad (\textrm{resp. }X^H_{A}f>c_1/(1+f))
\end{equation}
in a conic neighborhood of $E_s^*$ (resp. of $E_u^*$) for some $c_1>0$. To construct such $f$ near $E_s^*$, we can use the construction from
\cite[Lemma C.1]{DZ16a}: for $(x,\xi)$ in a conic neighborhood $N_s$ of $E_s^*$, set
\[ f(x,\xi):= \int_0^T |e^{-tX^H_{A_1}}(x,\xi)|dt, \quad T>0\]
so that, if $A=A_1+\eps A'$ with $|A'|_\a\leq 1$, one has $X^H_{A}=X^H_{A_1}+\eps X^{H}_{A'}$,
\[
X_A^Hf(x,\xi)= |\xi|-|e^{-TX^H_{A_1}}(x,\xi)|+\eps \int_0^T X^{H}_{A'}|e^{-tX^H_{A_1}}(x,\xi)| dt=  |\xi|-|e^{-TX^H_{A_1}}(x,\xi)|+ \mc{O}(\eps e^{CT}|\xi|)
\]
for some $C>0$ uniform with respect to $A'$ as above, the last term following from the classical estimate $\max_{|\alpha|+|\beta|\leq 1}\sup_{(x,\xi), |\xi|=1}\pl_x^{\alpha}\pl^\beta_\xi |e^{-tX^H_{A}}(x,\xi)|\leq Ce^{C|t|}$ and the homogeneity in $\xi$. Fix $T$ large enough so that we have 
$|\xi|-|e^{-TX^H_{A}}(x,\xi)|\leq -2|\xi|$ for all $|\xi|>1$ in $N_s$. Once $T$ has been fixed, one can choose $0<\eps<e^{-CT}$  so that $X_A^Hf(x,\xi)\leq -|\xi|$ in $N_s\cap \{|\xi|>1\}$. Since $(1+f(x,\xi))>c_1^{-1}|\xi|$ in $N_s\cap \{|\xi|>1\}$ for some $c_1>0$, 
we obtain \eqref{XHAf}. The same construction applies near $E_u^*$. We then extend $f$ in a positively homogeneous function of degree $1$ in $\{|\xi|\geq R\}$ in a smooth fashion (its value far from $E_u^*\cup E_s^*\cup E_0^*$ will not matter).
The proof of \cite[Lemma 1.2]{FS11} (using the fact that $X_{A}^H|\xi(X_{A_1})|=0$ as $[X_A,X_{A_1}]=0$) shows that $X^H_{A}G\leq 0$ for all $|\xi|\geq R$ if $R$ is large enough and that $G$ is an escape function for all $A\in \mc{U}:=A_1+\{\eps A'\in \a\, |\, |A'|_{\a}\leq 1\}$ 
compatible with some $c_X>0$ and some $\Gamma_{E_0^*}$.
\end{proof} 

\subsection{Parametrix construction}\label{sec:parametrix}
The goal of this section is to construct an operator $Q(\lambda)$ as in Lemma \ref{lem:critereFredholm} for the complex $d_{{\bf X}+\lambda}$, and so that $Q$ will be bounded on the anisotropic Sobolev spaces $\mc{H}_{NG}\Lambda$. The construction will be microlocal in the elliptic region and dynamical near the characteristic set. In Section \ref{sec:discrete_spec} we will provide an alternative construction of a $Q(\lambda)$ which is purely dynamical, i.e. which is a function of the operators ${\bf X}_{A_j}$. 

Recall the notation $E\otimes \Lambda = E\otimes \Lambda \a^*$. 
We will also freely identify operators $P: C^\infty(\mc M; E)\to C^{-\infty} (\mc M; E)$ with their $\Lambda$-scalar extension on sections of $E\otimes \Lambda$. 
\begin{lemma}\label{ellipticregion}
Let $P\in \Psi^0(\M; E)$ be such that ${\rm WF}({\rm Id}-P)$ does not intersect a conic neighborhood of $E_u^*\oplus E_s^*$, and we make it act as a $\Lambda$-scalar operator. 
There exists a holomorphic family of pseudo-differential operators $Q_{\Ell}(\lambda)\in \Psi^{-1}(\M;E\otimes \Lambda \a_\C^*)$ for $\lambda\in \a^*_\C$ such that  $Q_{\Ell}(\lambda): 
C^\infty(\mc M; E\otimes \Lambda^k\a_\C^*)\to C^\infty(\mc M; E\otimes \Lambda^{k-1}\a_\C^*)$
for all $k$ and
\begin{equation}\label{eq:elliptic_param} 
d_{\mathbf X+\lambda} Q_{\Ell}(\lambda)+Q_{\Ell}(\lambda)d_{\mathbf X+\lambda}=({\rm Id}-P)+S_1(\lambda)+S_2(\lambda) 
\end{equation}
with $S_1(\lambda)\in \Psi^{-1}(\M,E\otimes \Lambda)$ holomorphic in $\lambda$ satisfying $\WF(S_1(\lambda))\subset\WF(P)\cap \WF({\rm Id}-P)$ and $S_2(\lambda)\in \Psi^{-\infty}(\M;E\otimes \Lambda)$, also holomorphic in $\lambda$. 
\end{lemma}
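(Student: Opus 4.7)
The plan is to construct $Q_{\Ell}(\lambda)$ as a standard elliptic parametrix for the Koszul complex of $d_{\X+\lambda}$ on any conic region disjoint from the characteristic set $E_u^* \oplus E_s^*$. The starting observation is that the principal symbol of $d_{\X+\lambda}$ is $\lambda$-independent: by admissibility (Example~\ref{exmpl:principal_symbols}), the principal symbol of each $\X_A$ equals $i\xi(X_A(x))\Id_E$, so that of $d_{\X+\lambda}$ acts by exterior multiplication with the $\a_\C^*$-valued $1$-homogeneous covector $\alpha(x,\xi)(A) := i\xi(X_A(x))$ on $\Lambda\a_\C^*$, tensored with $\Id_E$. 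Its zero set $\{\alpha=0\}$ equals $E_u^*\oplus E_s^*$. After fixing an inner product on $\a$ extended $\C$-bilinearly and letting $\alpha^\flat \in \a_\C$ denote the dual of $\alpha$, the symbol
\[
q_{\rm sym}(x,\xi) := \frac{\iota_{\alpha^\flat(x,\xi)}}{|\alpha(x,\xi)|^2} \otimes \Id_E
\]
is homogeneous of degree $-1$, decreases the $\Lambda$-degree by one, and satisfies the pointwise Koszul identity $(\alpha\wedge)\circ q_{\rm sym} + q_{\rm sym}\circ(\alpha\wedge) = \Id$ wherever $\alpha\ne 0$.

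I then choose a cutoff $\chi \in S^0(\M)$ equal to $1$ on a conic neighborhood of $\WF(\Id-P)$ and supported in a slightly larger conic neighborhood still disjoint from a conic neighborhood of $E_u^*\oplus E_s^*$; this is possible by the hypothesis on $P$. The initial parametrix symbol is $q^{(0)} := \chi \, q_{\rm sym} \in S^{-1}(\M;\mathrm{End}(E\otimes\Lambda))$ (smoothed near the zero section), and I set $Q_{\Ell}^{(0)} := \Op(q^{(0)})$. By the pseudo-differential composition formula (Proposition~\ref{prop:pseudo_product}) and the Koszul identity at the symbol level,
\[
d_{\X+\lambda}Q_{\Ell}^{(0)} + Q_{\Ell}^{(0)}d_{\X+\lambda} = \Op(\chi)\,\Id + R^{(0)}(\lambda),
\]
with $R^{(0)}(\lambda)\in \Psi^{-1}$ depending polynomially, hence entirely, on $\lambda$, since $\lambda$ enters $d_{\X+\lambda}$ only through the $0$-order operator $d_\lambda$ of wedge-by-$\lambda$, linearly in $\lambda$.

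Next I iterate in the standard fashion: given a correction $q^{(k)}\in S^{-1-k}$ (polynomial in $\lambda$) with error $R^{(k)}(\lambda)\in \Psi^{-1-k}$, the Koszul homotopy algebraically produces $q^{(k+1)}$ lowering the error one order, while preserving $\WF(R^{(k+1)})\subset \WF(R^{(k)}) \cap \WF(Q_{\Ell}^{(0)})$. Borel summation in $k$ with parameter $\lambda$ yields a symbol $q(\lambda)\in S^{-1}$, holomorphic in $\lambda$, such that $Q_{\Ell}(\lambda) := \Op(q(\lambda))$ satisfies
\[
d_{\X+\lambda}Q_{\Ell}(\lambda) + Q_{\Ell}(\lambda)d_{\X+\lambda} = \Op(\chi)\,\Id + \tilde R(\lambda),
\]
with $\tilde R(\lambda)\in \Psi^{-1}$ smoothing off $\WF(P)$: on the complement of $\WF(P)$ one has $\sigma(P)\in S^{-\infty}$ and $\chi\equiv 1$, so $q_{\rm sym}$ is an exact symbolic inverse and the iteration kills the error modulo $\Psi^{-\infty}$ there. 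Combined with $\WF(\tilde R)\subset \WF(Q_{\Ell}) \subset \supp(\chi)$, which can be taken arbitrarily close to $\WF(\Id-P)$, this yields $\WF(\tilde R)\subset \WF(P)\cap \WF(\Id-P)$.

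Finally I compare $\Op(\chi)\,\Id$ with $\Id-P$: choosing $\chi$ from the outset to coincide with $1-\sigma(P)$ modulo $S^{-\infty}$ away from the characteristic set of $\X$, the difference $\Op(\chi)\,\Id - (\Id-P)$ lies in $\Psi^{-1}$ with wavefront in $\WF(P)\cap\WF(\Id-P)$. Setting $S_1(\lambda) := \tilde R(\lambda) + (\Op(\chi)\,\Id - (\Id-P))$ and letting $S_2$ absorb any $\Psi^{-\infty}$ remainder produces the stated decomposition. The main difficulty is the microlocal wavefront bookkeeping through the iteration: the error becomes smoothing off $\WF(P)$ precisely because on that region $P$ is microlocally trivial and $Q_{\Ell}(\lambda)$ is an exact microlocal inverse of $d_{\X+\lambda}$ up to the smoothing scale.
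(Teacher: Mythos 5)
Your construction is correct in outline but follows a genuinely different route from the paper. The paper never touches the symbol-level Koszul homotopy directly: it forms the $\Lambda$-scalar ``Laplacian'' $\Delta_{\X+\lambda}=d_{\X+\lambda}\delta_{\X+\lambda}+\delta_{\X+\lambda}d_{\X+\lambda}=-\big(\sum_k(\X_{A_k}+\lambda_k)^2\big)\otimes\Id$ via Lemma~\ref{trick}, a second-order operator with principal symbol $\|\xi_{E_0}\|^2$ elliptic off $E_u^*\oplus E_s^*$, builds a scalar elliptic parametrix $Q_\Delta(\lambda)$ with $\Delta_{\X+\lambda}Q_\Delta(\lambda)=P'+\Psi^{-\infty}$ for a cutoff $P'$ microlocally equal to $1$ on $\WF(\Id-P)$, and sets $Q_{\Ell}(\lambda):=\delta_{\X+\lambda}Q_\Delta(\lambda)(\Id-P)$. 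The homotopy identity is then exact at the operator level, and the only remaining work is controlling $\WF([Q_\Delta(\lambda),d_{\X+\lambda}])$ by elliptic wavefront estimates. Your $q_{\rm sym}$ is precisely the principal symbol of $\delta_{\X+\lambda}Q_\Delta(\lambda)$, so the two constructions agree to leading order; yours is more direct but pushes all the work into an iterative symbol calculus, while the paper's buys the full asymptotic expansion for free from a scalar elliptic inversion.

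The one step you cannot wave through is ``the Koszul homotopy algebraically produces $q^{(k+1)}$''. The map $q\mapsto(\alpha\wedge)q+q(\alpha\wedge)$ from degree $-1$ endomorphisms of $\Lambda\a_\C^*$ to degree-preserving ones is \emph{not} surjective (already for $\kappa=1$ its image consists only of multiples of the identity), so the correction equation $(\alpha\wedge)q^{(k+1)}+q^{(k+1)}(\alpha\wedge)=-r^{(k)}$ has no solution for an arbitrary error symbol. It is solvable exactly when $r^{(k)}$ commutes with $\alpha\wedge$, in which case $q^{(k+1)}=-\iota_{\alpha^\flat}\,r^{(k)}/\langle\alpha,\alpha\rangle$ works. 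You therefore need to record at each step that $R^{(k)}(\lambda)$ is a chain map up to acceptable errors: $d_{\X+\lambda}R^{(k)}-R^{(k)}d_{\X+\lambda}=-[d_{\X+\lambda},\Op(\chi)]$, which is of order $-1$ relative to $\Op(\chi)$ where $\chi$ is locally constant and has wavefront set in the transition region $\WF(P)\cap\WF(\Id-P)$. With that inserted the induction closes, and the rest of your bookkeeping is sound --- in particular the key point that $(\WF(P))^c\subset\WF(\Id-P)$, so that off $\WF(P)$ one is automatically away from the characteristic set and the error can be made smoothing there. Finally, note that the bilinear pairing gives $\langle\alpha,\alpha\rangle=-\|\xi_{E_0}\|^2$ rather than $|\alpha|^2$; the sign is harmless but should be stated, since the Hermitian norm does not satisfy the Koszul identity.
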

\begin{proof}
We will use an arbitrary choice of basis $A_1,\ldots, A_\kappa$ in $\a$ and consider the commuting differential operators $\mathbf X_{A_1},\ldots, \mathbf X_{A_\kappa}$.
Recall that the corresponding divergence operator $\delta_{\mathbf X+\lambda}$ on  $C^\infty(\M;E)\otimes\Lambda \a^*$ is defined by 
\[
\delta_{\mathbf X+\lambda}(u \otimes e_{i_1}\wedge \dots \wedge e_{i_\ell})=\sum_{j=1}^\ell (-1)^{j}(\mathbf X_{A_{i_j}}+\lambda_{i_j})u \otimes e_{i_1}\wedge\dots\wedge \widehat e_{i_j}\wedge \dots\wedge e_{i_\ell},
\] 
where $\lambda_j:=\lambda(A_j)\in \C$ (here $(e_j)_j$ is a dual basis to $A_j$ in $\a^*$).
Thus, using the commutations $[\mathbf X_{A_j}+\lambda_j,\mathbf X_{A_k}+\lambda_k]=0$ and Lemma \ref{trick} with 
$\mathbf Y_j=\mathbf X_{A_j}+\lambda_j$, 
we obtain that the operator 
$\Delta_{\mathbf X+\lambda}:=d_{\mathbf X+\lambda}\delta_{\mathbf X+\lambda}+\delta_{\mathbf X+\lambda}d_{\mathbf X+\lambda}$ is $\Lambda$-scalar and given for each $\omega\in \Lambda \a^*$ by the expression
\[
\Delta_{\mathbf X+\lambda}(u\otimes \omega)=-\Big(\sum_{k=1}^\kappa (\mathbf X_{A_k}+\lambda_k)^2u\Big)\otimes \omega.
\]
This shows that $\Delta_{\mathbf X+\lambda} \in \Psi^2(\mc M; E\otimes \Lambda)$ with principal symbol given by (see Example~\ref{exmpl:principal_symbols}) 
\[  
\sigma^2_p(\Delta_{\mathbf X+\lambda})(x,\xi)= \|\xi_{E_0}\|^2\Id_{E\otimes \Lambda} \textrm{ with }\|\xi_{E_0}\|^2:=\sum_{k=1}^\kappa \xi(X_{A_k})^2.
\]
It is an operator which is microlocally elliptic outside $E_u^*\oplus E_s^*$ (i.e. $\Ell^2(\Delta_{\mathbf X+\lambda})= T^*\mc M\setminus(E_u^*\oplus E_s^*)$). Thus, by Proposition~\ref{prop:elliptic_param}, if $P'\in \Psi^0(\M, E\otimes \Lambda)$ is $\Lambda$-scalar and has ${\rm WF}(P')$ contained in a conic open set of $T^*\M$ not intersecting $E_u^*\oplus E_s^*$, then there exists a $\Lambda$-scalar pseudo-differential operator $Q_\Delta(\lambda)\in \Psi^{-2}(\M; E\otimes \Lambda)$ holomorphic in 
$\lambda$ with $\WF(Q_\Delta(\lambda))\subset \WF(P')$ such that 
\[
\Delta_{\mathbf X+\lambda}Q_\Delta(\lambda)=P'+S'(\lambda)
\]
with $S'(\lambda)\in \Psi^{-\infty}(\M;E\otimes \Lambda)$ holomorphic in $\lambda$ and $\Lambda$-scalar. We now choose $P'$ so that ${\rm WF}(P')\cap (E_u^*\oplus E_s^*)=\emptyset$ and ${\rm WF}({\rm Id}-P')\cap {\rm WF}({\rm Id}-P)=\emptyset$; in other words, $P'={\rm Id}$ microlocally on ${\rm WF}({\rm Id}-P)$.
Note that $d_{\mathbf X+\lambda}\Delta_{\mathbf X+\lambda}=\Delta_{\mathbf X+\lambda}d_{\mathbf X+\lambda}$ implies that
\[
\Delta_{\mathbf X+\lambda}\left(Q_\Delta(\lambda)d_{\mathbf X+\lambda}-d_{\mathbf X+\lambda}Q_\Delta(\lambda)\right)=[P',d_{\mathbf X+\lambda}]+ [S'(\lambda),d_{\mathbf X+\lambda}].
\]
Using  microlocal ellipticity of $\Delta_{\mathbf X+\lambda}$ outside $E_u^*\oplus E_s^*$ and the fact that 
\[\WF([P',d_{\mathbf X+\lambda}])=\WF([{\rm Id}-P',d_{\mathbf X+\lambda}])\subset \WF(P')\cap \WF({\rm Id}-P'), \quad \WF([S'(\lambda),d_{\mathbf X+\lambda}])=\emptyset,\]
we deduce from \eqref{eq:ell_wf_estimate} that $\WF([Q_\Delta(\lambda),d_{\mathbf X+\lambda}])\subset \WF(P')\cap \WF({\rm Id}-P')$. In particular, since $P'={\rm Id}$ microlocally on $\WF({\rm Id}-P)$, 
this implies that 
$[Q_\Delta(\lambda),d_{\mathbf X+\lambda}]({\rm Id}-P)\in \Psi^{-\infty}(\M; E\otimes \Lambda)$.
Thus,  with $Q_{\Ell}(\lambda):=\delta_{\mathbf X+\lambda}Q_\Delta(\lambda)({\rm Id}-P)$ (mapping $C^\infty\Lambda^k$ to $C^{\infty}\Lambda^{k-1}$) we obtain
\[
\begin{split}
d_{\mathbf X+\lambda}Q_{\Ell}(\lambda)+Q_{\Ell}(\lambda)d_{\mathbf X+\lambda}=& 
\Delta_{\mathbf X+\lambda}Q_\Delta(\lambda)({\rm Id}-P)+
\delta_{\mathbf X+\lambda}[Q_\Delta(\lambda),d_{\mathbf X+\lambda}]({\rm Id}-P)\\
& +\delta_{\mathbf X+\lambda}Q_\Delta(\lambda)[d_{\mathbf X+\lambda},P]\\
=& ({\rm Id}-P)+S_1(\lambda) +S_2(\lambda)
\end{split}
\]
with $S_2(\lambda)\in \Psi^{-\infty}(\M; E\otimes \Lambda)$ and 
\[
S_1(\lambda):=\delta_{\mathbf X+\lambda}Q_\Delta(\lambda)[d_{\mathbf X+\lambda},P]=-\delta_{\mathbf X+\lambda}Q_\Delta(\lambda)[d_{\mathbf X+\lambda},{\rm Id}-P]\in \Psi^{-1}(\M;E\otimes \Lambda)
\]
has wavefront set contained in $\WF(P)\cap \WF({\rm Id}-P)$.
\end{proof}

A second ingredient for the construction of the parametrix will be the following 
estimates of the essential spectral radius of the propagator on the anisotropic Sobolev spaces.
We recall that if $Y$ is a bounded operator on a Hilbert space $\mc{H}$,
\[r_{\tu{ess}}(Y):= \max \{|\lambda| \,|\, \lambda\in \sigma_{\rm ess}(Y)\}. \]
The proof of the following Lemma is inspired by the argument in \cite{FRS08} for Anosov diffeomorphisms.
\begin{lemma}\label{smallnorm}
Let $P\in \Psi^{0}(\M; E)$ be such that $\WF(P)$ is disjoint from $E_0^*$, and choose an arbitrary constant $C_P'> C_P:=\limsup_{|\xi|\to\infty}\|\sigma^0_p(P)(x,\xi)\|$ and some $T>0$.
Let $A\in \mc W\subset \a$,  $\Gamma_{\tu{reg}}$ be an open cone disjoint from $E_u^*\subset T^*\M$, and $\Gamma_0\subset T^*\M$ be a small conic neighborhood of $E_0^*$. 
By Proposition \ref{prop:escape_fct_exist}, associated to $\Gamma_{\rm reg}$ there exists an escape function $G$ for $A_0:=A$ and 
an open conic set $\Gamma_{E_0^*}\subset \Gamma_0$ so that $G$ is 
compatible with $c_X$ and  $\Gamma_{E_0^*}$ in the sense of Definition \ref{def:escape_fct}.
If in addition $ \overline{\Gamma}_{E_0^*}\cap \Phi^{X_A}_t(\WF(P))=\emptyset$ for all $0\leq t \leq T$, then for all $0\leq t \leq T$ the operator
\[
 e^{-t\mathbf X_A}P: \mc H_{NG}\to\mc H_{NG}
\]
is bounded and can be decomposed under the form 
\[
e^{-t\mathbf X_A}P=R_{N,G}(t)+K_{N,G}(t)
\]
with $\|R_{N,G}(t)\|_{\mc{L}(\mc{H}_{NG})}\leq C'_P e^{-c_XNt}\|e^{-t\mathbf X_A}\|_{\mc{L}(L^2)}$ and $K_{N,G}(t)$ compact on $\mc{H}_{NG}$. Both $R_{N,G}(t),K_{N,G}(t)$ depend on $N,G$.
As a consequence the essential spectral radius of $e^{-t\mathbf X_A}P: \mc H_{NG}\to\mc H_{NG}$ is bounded by $ C'_P e^{-c_XNt}\|e^{-t\mathbf X_A}\|_{\mc{L}(L^2)}$.
\end{lemma}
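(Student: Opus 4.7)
The plan is to conjugate by the quantization $\hat{\mc A}_{NG}$ of the escape function, reducing the claim to an $L^2$ estimate on
\[
B(t) := \hat{\mc A}_{NG}\, e^{-t\X_A} P\, \hat{\mc A}_{NG}^{-1},
\]
and to extract the exponential decay $e^{-c_X N t}$ from Egorov's theorem combined with the defining property \eqref{it:escfct_G_decay} of the escape function. First I would use Egorov to commute $e^{-t\X_A}$ past both $P$ and $\hat{\mc A}_{NG}^{-1}$: setting $P_t := e^{-t\X_A} P e^{t\X_A}\in \Psi^0(\mc M; E)$ (pseudodifferential of order $0$ with $\WF(P_t) = \Phi^{X_A}_t(\WF(P))$) and $\tilde{\mc A}_{-t} := e^{-t\X_A}\hat{\mc A}_{NG}\, e^{t\X_A}$ (pseudodifferential with scalar principal symbol $e^{N G\circ \Phi^{X_A}_{-t}}\Id_E$), one obtains the factorization
\[
B(t) = \bigl[\hat{\mc A}_{NG}\, P_t\, \tilde{\mc A}_{-t}^{-1}\bigr]\, e^{-t\X_A} =: \mc B(t)\, e^{-t\X_A},
\]
where $\mc B(t)$ has principal symbol $\sigma^0_p(P)(\Phi^{X_A}_{-t}(x,\xi))\cdot \exp\!\bigl(N[G(x,\xi)-G(\Phi^{X_A}_{-t}(x,\xi))]\bigr)$ modulo lower order terms in the anisotropic class.

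The key step is to evaluate this exponential factor on $\WF(\mc B(t))\subset \WF(P_t) = \Phi^{X_A}_t(\WF(P))$. For $(x,\xi)=\Phi^{X_A}_t(y,\eta)$ with $(y,\eta)\in \WF(P)$, the hypothesis $\overline{\Gamma}_{E_0^*}\cap \Phi^{X_A}_s(\WF(P))=\emptyset$ for $0\leq s\leq T$ keeps the entire trajectory $\Phi^{X_A}_s(y,\eta)$, $0\leq s\leq t\leq T$, outside $\overline{\Gamma}_{E_0^*}$. Condition \eqref{it:escfct_G_decay} in Definition~\ref{def:escape_fct} then yields $X_A^H G \leq -c_X$ along this trajectory, and hence
\[
G(\Phi^{X_A}_t(y,\eta)) - G(y,\eta) = \int_0^t (X_A^H G)(\Phi^{X_A}_s(y,\eta))\,ds\; \leq\; -c_X t.
\]
Combined with $\|\sigma^0_p(P_t)(x,\xi)\| \leq C_P + o(1)$ as $|\xi|\to\infty$, the principal symbol of $\mc B(t)$ is dominated by $C_P\, e^{-c_X N t}$ at infinity. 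A Calderón–Vaillancourt style splitting (as in \cite{FS11, FRS08}) then produces a decomposition $\mc B(t) = \mc R(t) + \mc K(t)$ with $\|\mc R(t)\|_{\mc L(L^2)} \leq C'_P\, e^{-c_X N t}$ (for any prescribed $C'_P > C_P$, absorbing the $o(1)$ into a compactly supported remainder) and $\mc K(t)\in \Psi^{-\infty}$, hence $L^2$-compact. Pulling back via the isomorphism $\hat{\mc A}_{NG}:\mc H_{NG}\to L^2$, the operators $R_{N,G}(t):= \hat{\mc A}_{NG}^{-1}\mc R(t)\, e^{-t\X_A}\, \hat{\mc A}_{NG}$ and $K_{N,G}(t):= \hat{\mc A}_{NG}^{-1}\mc K(t)\, e^{-t\X_A}\, \hat{\mc A}_{NG}$ give the decomposition of the lemma, with $\|R_{N,G}(t)\|_{\mc L(\mc H_{NG})} = \|\mc R(t) e^{-t\X_A}\|_{\mc L(L^2)}\leq C'_P\, e^{-c_X N t}\|e^{-t\X_A}\|_{\mc L(L^2)}$ and $K_{N,G}(t)$ compact.

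The main technical obstacle will be controlling the full symbol expansion in the anisotropic class $S^{Nm}_{1-}$: the subprincipal and lower-order contributions produced by Egorov and by the composition calculus must not spoil the gain $e^{-c_X N t}$. The condition $X_A^H m\leq 0$ from Definition~\ref{def:escape_fct}\eqref{it:escfct_m_decay} is tailored exactly for this purpose, since it ensures $m\circ \Phi^{X_A}_{-t}\leq m$ at infinity and thus $\tilde{\mc A}_{-t}^{-1}$ lies in a symbol class no worse than $\hat{\mc A}_{NG}^{-1}$, with Egorov remainders strictly of lower order. The vector-bundle aspect of $\X_A$ introduces parallel-transport factors in the Egorov formula; these are uniformly bounded for $t\in[0,T]$ and are absorbed into $\|e^{-t\X_A}\|_{\mc L(L^2)}$ in the final estimate.
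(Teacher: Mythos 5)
Your argument is correct and follows essentially the same route as the paper: conjugate by $\hat{\mc{A}}_{NG}$, use Egorov's lemma to collect a $\Psi^0_{1-}$ factor whose principal symbol carries the damping $e^{N(G\circ\Phi^{X_A}_t-G)}\le e^{-c_XNt}$ along the flow-out of $\WF(P)$ (kept outside $\overline{\Gamma}_{E_0^*}$ by hypothesis), and then split that factor via Proposition~\ref{prop:l2-bound}; the paper simply leaves the propagator on the left, writing $\hat{\mc{A}}_{NG}e^{-t\X_A}P\hat{\mc{A}}_{NG}^{-1}=e^{-t\X_A}\,(e^{t\X_A}\hat{\mc{A}}_{NG}e^{-t\X_A})P\hat{\mc{A}}_{NG}^{-1}$ and evaluates the damping on $\WF(P)$, whereas you push it to the right and evaluate on $\Phi^{X_A}_t(\WF(P))$ --- the same estimate after a change of variables. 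One small slip in your final paragraph: $X_A^Hm\le 0$ yields $m\circ\Phi^{X_A}_{-t}\ge m$ (not $\le$) for $t\ge 0$, which is in fact the inequality you need so that $\tilde{\mc{A}}_{-t}^{-1}\in\Psi^{-Nm\circ\Phi^{X_A}_{-t}}_{1-}\subset\Psi^{-Nm}_{1-}$ and the product $\hat{\mc{A}}_{NG}P_t\tilde{\mc{A}}_{-t}^{-1}$ has order $N(m-m\circ\Phi^{X_A}_{-t})\le 0$; your conclusion is right, only the stated intermediate inequality is reversed.
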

\begin{proof}
Let $m\in C^\infty(T^*\M; \R)$ be the order function of the escape function $G$ (see Definition~\ref{def:escape_fct}(\ref{it:escfct_homog_cond})). Instead of studying $e^{-t\mathbf X_A}P$ on $\mc H_{NG}$ we consider the operator $\hat{\mc{A}}_{NG}e^{-t\mathbf X_A}P\hat{\mc{A}}_{NG}^{-1}$ on $L^2(\mc M; E)$ which is a Fourier integral operator. We write this operator as 
\begin{equation}\label{eq:proof_smallnorm_1}
 \hat{\mc{A}}_{NG}e^{-t\mathbf X_A}P\hat{\mc{A}}_{NG}^{-1} = e^{-t\mathbf X_A}\underbrace{e^{t\mathbf X_A}\hat{\mc{A}}_{NG}e^{-t\mathbf X_A}}_{=:B_t}P\hat{\mc{A}}_{NG}^{-1}.
\end{equation}
For the newly introduced operator $B_t$ we apply Egorov's Lemma (Lemma~\ref{prop:egorov}) and deduce that it is a pseudodifferential operator  $B_t\in \Psi^{N(m\circ\Phi_t^{X_A})}_{1-}(\mc M; E)$ with principal symbol
\[
 \sigma_p^{N(m\circ\Phi^{X_A}_t)}(B_t) = e^{N(G\circ \Phi_t^{X_A})} \mod S_{1-}^{N(m\circ\Phi_t^ {X_A}))-1+}. 
\]
Consequently, $B_tP\hat{\mc{A}}_{NG}^{-1} \in \Psi^{N(m\circ\Phi_t^{X_A} -m)}_{1-}$ and by Definition~\ref{def:escape_fct}(\ref{it:escfct_m_decay}) $m\circ\Phi_t^{X_A}(x,\xi) -m(x,\xi)\leq 0$ for $|\xi|$ large enough. Thus $B_tP\hat{\mc{A}}_{NG}^{-1}\in \Psi^0_{1-}(\mc M; E)$ is bounded on $L^2$, and we can apply Proposition~\ref{prop:l2-bound} to this operator. We calculate its principal symbol
\[
 \sigma^0_p(B_tP\hat{\mc{A}}_{NG}^{-1}) = e^{N(G\circ \Phi^{X_A}_t- G)}\sigma^0_p(P).
\]
Now, using Definition \ref{def:escape_fct}(\ref{it:escfct_G_decay}), our assumption that $\overline{\Gamma}_{E_0^*}\cap \Phi^{X_A}_t(\WF(P))=\emptyset$ for $0\leq t\leq T$ ensures that, for any $(x,\xi)\in \WF(P)$ and $|\xi|$ sufficiently large, $\partial_t(G\circ \Phi_t^{X_A})\leq -c_{X}$ for all $0\leq t\leq T$. Thus 
\[
 \limsup_{R\to\infty} \sup_{(x, \xi)\in \WF(P), |\xi|>R} \|e^{N(G\circ \Phi^{X_A}_t(x,\xi)- G(x,\xi))}\sigma^0_p(P)(x,\xi)\|\leq C_Pe^{-Nc_{X}t}. 
\]
By closedness of $\overline{\Gamma}_{E_0^*}$ and $\WF(P)$ this estimate can also be extended to a small conical neighborhood of $\tu{WF}(P)$. On the complement of this neighborhood, by the definition of the wavefront set, we deduce $\limsup_{|\xi|\to \infty}\|\sigma_p^0(P)(x,\xi)\|= 0$. We have seen above that $e^{N(G\circ\Phi_t^{X_A} - G)}\in S^0_{1-}$. In particular this factor is uniformly bounded. Putting everything together we get 
\[
 \limsup_{|\xi|\to\infty} \|\sigma^0_p(B_tP\hat{\mc{A}}_{NG}^{-1})(x,\xi)\|\leq C_Pe^{-Nc_Xt}.
\]
Using Proposition~\ref{prop:l2-bound} we can write $B_tP\hat{\mc{A}}_{NG}^{-1} = \widetilde{R}_N(t)+\widetilde{K}_N(t)$ with $\widetilde{K}_N(t)\in \Psi^{-\infty}(\mc M; E)$ and $\|\widetilde{R}_N(t)\|_{\mc{L}(L^2)}\leq C_P'e^{-Nc_{X}t}$. Now, by \eqref{eq:proof_smallnorm_1}, our operator of interest can be written as
\[
 \hat{\mc{A}}_{NG} e^{-t\mathbf X_A}P\hat{\mc{A}}_{NG}^{-1} = e^{-t\mathbf X_A}(\widetilde{R}_N(t)+\widetilde{K}_N(t)), 
\]
and we get the desired property by setting $R_{N,G}(t):=e^{-t\mathbf X_A}\widetilde{R}_N(t)$ and $K_{N,G}(t):=e^{-t\mathbf X_A}\widetilde{K}_N(t)$. 
\end{proof}

Recall that $C_{L^2}(A)$ was defined in Formula \eqref{eq:def-CL^2(A)}. We can now come to the construction of our full parametrix for the Taylor complex:

\begin{prop}\label{prop:Fredholm}
For any $A_0\in \mathcal W$, any open cone $\Gamma_{0}\subset T^*\mc M$ containing $E_0^*$ and satisfying $\overline{\Gamma_0}\cap (E_u^*\oplus E_s^*) = \{0\}$, there are families of operators
$Q(\lambda),F(\lambda):C^\infty(\M;E\otimes \Lambda )\to C^{-\infty}(\M;E\otimes \Lambda)$ depending holomorphically on $\lambda\in\a_{\C}^*$ such that 
\[ 
Q(\lambda)d_{\mathbf X+\lambda}+d_{\mathbf X+\lambda} Q(\lambda)=F(\lambda).
\]
Furthermore, for any escape function $G$ for $A_0$ compatible with $c_{X}>0$, 
and $\Gamma_{E_0^*}\subset \Gamma_0$, one has for any $N>0$ and $\delta>0$ that:
\begin{enumerate}
 \item $Q(\lambda): \mc H_{NG}\Lambda^j\to \mc H_{NG}\Lambda^{j-1}$ is bounded for any $\lambda \in \a^*_\C$ and $0\leq j\leq\kappa$,
 \item $F(\lambda)$ can be decomposed as $F(\lambda)={\rm Id}+R_{N,G}(\lambda)+K_{N,G}(\lambda)$ where $K_{N,G}(\lambda)$ is a compact operator on $\mc{H}_{NG}\Lambda$,
and $R_{N,G}(\lambda): \mc H_{NG}\Lambda\to \mc H_{NG}\Lambda$ is bounded with norm $\|R_{N,G}(\lambda)\|_{\mc{L}(\mc{H}_{NG})}<1/2$ for 
 \[
 \lambda \in \mc F_{NG, A_0, \delta} :=\{\lambda\in \a_\C^*\, |\,\Re(\lambda(A_0))>-N c_{X} + C_{L^2}(A_0)+\delta\}\subset \a_\C^*.
 \]
 Both operators $R_{N,G}(\lambda),K_{N,G}(\lambda)$ depend on $N,G$, while $Q(\lambda)$ and $F(\lambda)$ do not. 
\end{enumerate}
\end{prop}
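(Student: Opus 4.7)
The plan is to combine the elliptic parametrix from Lemma~\ref{ellipticregion} with a dynamical parametrix built from propagator integrals of $\mathbf X_{A_0}$ that handle the hyperbolic microlocal region. Explicitly, I will construct
\[ Q(\lambda):=Q_\Ell(\lambda)+\delta_{\mathbf Y_u(\lambda)}+\delta_{\mathbf Y_s(\lambda)}, \]
where the divergence operators $\delta_{\mathbf Y_*}$ are defined as in~\eqref{divergencedef} from Lie algebra morphisms $\mathbf Y_*:\a\to\mc L(C^\infty)$ concentrated on the $A_0$ direction. The cohomological identity then emerges from a Koszul-type computation in the spirit of Lemma~\ref{trick}.

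First, I apply Lemma~\ref{ellipticregion} with a $\Psi^0$ cutoff $P$ microlocally equal to the identity on a small conic neighborhood of $E_u^*\oplus E_s^*$, obtaining $Q_\Ell$ and the remainder equation $d_{\mathbf X+\lambda}Q_\Ell+Q_\Ell d_{\mathbf X+\lambda}=\Id-P+S_1+S_2$ with $S_1\in\Psi^{-1}$, $S_2\in\Psi^{-\infty}$. I then split $P=P_u+P_s$ modulo smoothing using a microlocal partition $P_*=\Op(\chi_*)P$, so that $\WF(P_u)$ and $\WF(P_s)$ lie in small conic neighborhoods of $E_u^*,E_s^*$ respectively, chosen invariant under the forward Hamilton flow of $X_{A_0}$ and disjoint from $\overline{\Gamma}_{E_0^*}$, which is precisely the setting of Lemma~\ref{smallnorm}. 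Fixing a basis $A_1=A_0,A_2,\dots,A_\kappa$ of $\a$ orthonormal for an auxiliary inner product, I set
\[ Y_*(\lambda):=-\int_0^\infty e^{-t(\mathbf X_{A_0}+\lambda(A_0))}P_*\,dt,\qquad *\in\{u,s\}, \]
and let $\mathbf Y_{*,A_0}=Y_*$, $\mathbf Y_{*,A}=0$ for $A\perp A_0$ (commutativity is automatic since $\a$ is abelian and only one direction is populated). For $\lambda\in\mc F_{NG,A_0,\delta}$, Lemma~\ref{smallnorm} together with the growth bound $\|e^{-t\mathbf X_{A_0}}\|_{\mc{L}(L^2)}\lesssim e^{tC_{L^2}(A_0)}$ guarantees operator-norm convergence of the integrals on $\mc H_{NG}$, and a telescoping computation gives the key identity $(\mathbf X_{A_0}+\lambda(A_0))Y_*=-P_*$.

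A direct computation following the proof of Lemma~\ref{trick}, but retaining the nonzero commutators $[\mathbf X_{A_k},Y_*]=\int_0^\infty e^{-t(\mathbf X_{A_0}+\lambda(A_0))}[\mathbf X_{A_k},P_*]\,dt$, then yields
\[ F(\lambda):= Qd_{\mathbf X+\lambda}+d_{\mathbf X+\lambda}Q=\Id+(P_u+P_s-P)+S_1(\lambda)+S_2(\lambda)+E(\lambda), \]
where the first three correction pieces are smoothing or of negative pseudodifferential order, hence compact on $\mc H_{NG}$, and $E(\lambda)$ collects the commutator integrals coupled to wedge/insertion operations. Each integrand $[\mathbf X_{A_k},P_*]\in\Psi^0$ has wavefront in the appropriate cone around $E_u^*$ or $E_s^*$, so applying Lemma~\ref{smallnorm} once more to it yields a splitting into a bounded part of operator norm at most $C\|\sigma_p^0([\mathbf X_{A_k},P_*])\|_\infty/(c_XN-C_{L^2}(A_0)+\Re\lambda(A_0))$ and a compact part. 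Assembling the bounded pieces into $R_{N,G}(\lambda)$ and the compact pieces into $K_{N,G}(\lambda)$ gives the required decomposition of $F(\lambda)$, and holomorphicity of $Q(\lambda),F(\lambda)$ on $\a_\C^*$ is immediate from their explicit integral form as operators $C^\infty\to C^{-\infty}$. The main obstacle is ensuring that $\|R_{N,G}(\lambda)\|<1/2$ on $\mc F_{NG,A_0,\delta}$: this will be handled by choosing the partition symbols $\chi_u,\chi_s$ so that the Poisson brackets $\{H_{X_{A_k}},\chi_*\}$ are small in $L^\infty$, exploiting that $E_u^*$ and $E_s^*$ are invariant under every Hamilton flow $\Phi_t^{X_{A_k}}$, so that these brackets vanish on the stable/unstable bundles themselves---a delicate point because the commutators sit at order zero rather than at negative order.
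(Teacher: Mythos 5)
Your overall architecture (elliptic parametrix near $E_0^*$ plus a dynamical divergence-type parametrix in the $A_0$ direction, glued by a Koszul computation) matches the paper's, but the specific dynamical piece you propose has gaps that I do not see how to repair. The central problem is the infinite-time integral $Y_*(\lambda)=-\int_0^\infty e^{-t(\mathbf X_{A_0}+\lambda(A_0))}P_*\,dt$. Lemma~\ref{smallnorm} does \emph{not} give operator-norm decay of $e^{-t\mathbf X_{A_0}(\lambda)}P_*$ on $\mc H_{NG}$: it splits this operator into a piece $R_{N,G}(t)$ with the exponential bound and a \emph{compact} piece $K_{N,G}(t)=e^{-t\mathbf X_{A_0}}\widetilde K_t$ for which no decay is available; for $\Re\lambda(A_0)$ in the range $(-Nc_X+C_{L^2}(A_0)+\delta,\,C_{L^2}(A_0))$ the compact part can grow exponentially in $t$, so the integral need not converge. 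Moreover the statement requires $Q(\lambda)$ and $F(\lambda)$ to be holomorphic on \emph{all} of $\a_\C^*$ and independent of $N,G$ (this independence is what makes the resonances intrinsic in Section~4.3); an integral over $[0,\infty)$ can at best converge on a half-space of $\lambda$ and on a space-dependent domain, so it cannot furnish the required entire, space-independent family. The same convergence issue invalidates the identity $(\mathbf X_{A_0}+\lambda(A_0))Y_*=-P_*$, whose telescoping requires $e^{-t\mathbf X_{A_0}(\lambda)}P_*\to0$.

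The second gap is the treatment of the commutator terms $[\mathbf X_{A_k},P_*]\in\Psi^0$. These sit at order zero and must be absorbed into a remainder of norm $<1/2$ for \emph{every} $N>0$, so you cannot buy smallness from a large $N$; and you cannot make $\{\sigma_p^1(X_{A_k}),\chi_*\}$ uniformly small in $L^\infty$ while $\chi_*$ transitions from $1$ to $0$ across a conic shell, since the transversal derivative is inversely proportional to the width of the shell, and this must hold simultaneously for all $\kappa$ directions. The paper sidesteps both problems at once: its dynamical piece is $Q_T'(\lambda)=\int_0^T e^{-t\mathbf X_{A_0}(\lambda)}dt$ with \emph{no} microlocal cutoff inside, a finite-time integral that is entire in $\lambda$, commutes exactly with every $\mathbf X_A$ (hence produces no commutator terms), and yields the exact identity $d_{\mathbf X+\lambda}Q_T+Q_Td_{\mathbf X+\lambda}=\mathrm{Id}-e^{-T\mathbf X_{A_0}(\lambda)}$. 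The cutoff $P$ enters only through the single composition $e^{-T\mathbf X_{A_0}(\lambda)}P$ (and through $e^{-T\mathbf X_{A_0}(\lambda)}Q_\Ell(\lambda)$), to which Lemma~\ref{smallnorm} applies at the one time $T$, chosen large enough that the $R$-part has norm $<1/2$ on $\mc F_{NG,A_0,\delta}$. You should restructure your dynamical parametrix along these lines rather than trying to invert $\mathbf X_{A_0}(\lambda)$ microlocally near $E_u^*\oplus E_s^*$ by an infinite Duhamel integral.
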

\smallskip

\begin{rem}
~
\begin{enumerate}
 \item If there is a smooth volume density $\mu$ preserved by the Anosov action (e.g. the Haar measure for Weyl chamber flows), and if we consider the scalar case $\mathbf X_A = X_A$, then $e^{tX_A}$ is unitary on $L^2(\mc M,\mu)$ and the constant $C_{L^2}(A)$ vanishes. 
 \item For proving that the Ruelle-Taylor spectrum is independent of the choice of $\mc H_{NG}$ it will be essential that the operators $Q(\lambda), F(\lambda)$ will only depend on the choice of $A_0$ and $\Gamma_{E_0^*}$ but not on the choice of the anisotropic Sobolev space $\mc H_{NG}$ as long as the escape function $G$ satisfies the required compatibility conditions. 
\end{enumerate}

\end{rem}
\begin{proof}[Proof of Proposition~\ref{prop:Fredholm}]
From the definition of $C_{L^2}(A)$, we deduce that there exists $T_0>0$ so that for $T\geq T_0$, $\| e^{-T\mathbf{X}_{A_0}}\| \leq e^{T( C_{L^2}(A_0) + \delta/2)}$; we choose $T$ so that both $T>T_0$ and $T\geq 2\ln(3)/\delta$. For $\lambda\in \a_\C^*$ we define the operators $\mathbf X_{A_0}(\lambda):=\mathbf X_{A_0}+\lambda(A_0)$ and let
\[
Q'_T(\lambda):=\int_0^T e^{-t\mathbf X_{A_0}(\lambda)}dt : C^\infty(\M; E)\to C^\infty(\M; E).
\]
We have the relations 
\begin{equation}\label{relationQ_T} 
\mathbf X_{A_0}(\lambda) Q'_T(\lambda)=Q'_T(\lambda)\mathbf X_{A_0}(\lambda)=1- e^{-T\mathbf X_{A_0}(\lambda)} , \quad  [\mathbf X_A , Q'_T(\lambda)]=0 \tu{ for all }A\in\a.
\end{equation}
Now we extend $Q'_T(\lambda)$ to an operator on $C^\infty(\mc M; E)\otimes\Lambda^\ell \a^*\to C^\infty(\mc M; E)\otimes\Lambda^{\ell-1}\a^*$ for each $\ell$ as follows: define the linear map ${\bf Q}_T(\la): \a\to \mc{L}(C^\infty(\mc{M};E))$ by ${\bf Q}_T(\la)A_0
=Q_T'(\la)$ and ${\bf Q}'_T(\la)A=0$ if $\cjg A,A_0\cjd=0$ (recall $\cjg \cdot,\cdot\cjd$ is a fixed scalar product on $\a$), and let
\[
Q_T(\lambda)(u\otimes \omega):=-\delta_{{\bf Q}_T(\lambda)}(u\otimes \omega)= (Q'_T(\lambda)u)\otimes \iota_{A_0}\omega
\]
for $u\in C^\infty(\M; E)$ and $\omega\in\Lambda^\ell\a^*$.
Using the relations of \eqref{relationQ_T} and Lemma \ref{trick} we get
\begin{equation}\label{Qd+dQ}
\begin{split} 
(Q_T(\lambda)d_{\mathbf X+\lambda}+d_{\mathbf X+\lambda} Q_T(\lambda))(u\otimes \omega)=& \big((1 - e^{-T\mathbf X_{A_0}(\lambda)})u\big)\otimes\omega.
\end{split}\end{equation}
We observe that by the commutativity of the Anosov action  
$[\mathbf X_A,e^{-T\mathbf X_{A_0}(\lambda)}]=0$, and therefore on $C^\infty(\M;E\otimes \Lambda)$ we have 
\begin{equation}\label{eq:commut_dX_eT}
[d_{\mathbf X+\lambda},e^{-T\mathbf X_{A_0}(\lambda)}]=0.
\end{equation}
Next, we use the microlocal parametrix in the elliptic region from Lemma~\ref{ellipticregion} with a carefully chosen microlocal cutoff function: 
By our assumption that $\overline{\Gamma}_{0}\cap (E_u^*\oplus E_s^*)=\{0\}$ and the fact that $E_u^*\oplus E_s^*$ is a $\Phi_t^{X_{A_0}}$-invariant subset, there exists a conic neighborhood $\Gamma_1\subset T^*\mc M$ of $E_u^*\oplus E_s^*$ such that $\Phi_t^{X_{A_0}}(\Gamma_1) \cap\overline{\Gamma_0}=\emptyset$ of $0\leq t\leq T$. 
Let us choose a second, smaller conical neighborhood $E_u^*\oplus E_s^*\subset \Gamma_2\Subset\Gamma_1$. Now we fix a microlocal cutoff $P=\Op(p)\in \Psi^0(\M, \C)$ which is microsupported in $\Gamma_1$ (i.e. $\WF(P)\subset \Gamma_1$) and microlocally equal to one on $\Gamma_2$ (i.e. $\WF({\rm Id}-P)\cap \Gamma_2=\emptyset$) and which furthermore has globally bounded symbol $\sup_{(x,\xi)}|p(x,\xi)|\leq 1$.
We apply Lemma~\ref{ellipticregion} with this choice of $P$ and multiply \eqref{eq:elliptic_param} from the left with $e^{-T\mathbf X_{A_0}(\lambda)}$. Using \eqref{eq:commut_dX_eT}, we get
\begin{equation}\label{Qelld+dQell}
d_{\mathbf X+\lambda} e^{-T\mathbf X_{A_0}(\lambda)}Q_{\Ell}(\lambda)+e^{-T\mathbf X_{A_0}(\lambda)}Q_{\Ell}(\lambda)d_{\mathbf X+\lambda} = e^{-T\mathbf X_{A_0}(\lambda)}\big({\rm Id}-P+S_1(\lambda)+S_2(\lambda)\big).
\end{equation}
We define $Q(\lambda):= Q_T(\lambda)+e^{-T\mathbf X_{A_0}(\lambda)}Q_{\Ell}(\lambda)$ and obtain by adding up \eqref{Qd+dQ} and \eqref{Qelld+dQell}
\[
d_{\mathbf X+\lambda} Q(\lambda)+Q(\lambda)d_{\mathbf X+\lambda} = F(\lambda)\tu{ with }F(\lambda):={\rm Id}-e^{-T\mathbf X_{A_0}(\lambda)}\big(P-S_1(\lambda)-S_2(\lambda)\big).
\]
Let us now show that $Q(\lambda)$ and $F(\lambda)$ have the required properties. 
By precisely the same argument as in Lemma~\ref{smallnorm} (using that $X_{A_0}^Hm(x,\xi) \leq 0$ for $|\xi|$ large enough) we deduce that  
$e^{-t\mathbf X_{A_0}}$ is bounded on $\mc H_{NG}$ uniformly for $t\in [0,T]$ for any escape function $G$ associated to $A_0$ compatible with $c_X>0$ and $\Gamma_{E_0^*}\subset \Gamma_0$.
Consequently $Q_T(\lambda)$ and $e^{-T\mathbf X_{A_0}(\lambda)}$ are bounded operators on $\mc H_{NG}\Lambda$. As $\hat{\mc{A}}_{NG}Q_{\Ell}(\lambda)\hat{\mc{A}}_{NG}^{-1}\in \Psi^{-2}(\mc M; E\otimes \Lambda)$, this operator is a bounded operator on $L^2$, thus $Q_{\Ell}(\lambda)$ is bounded on $\mc H_{NG}\Lambda$ as well. 
Putting everything together we deduce that $Q(\lambda)$ is bounded on $\mc H_{NG}\Lambda$ for any $\lambda\in \a_\C^*$. 
As $Q_T(\lambda)$ and $Q_{\Ell}(\lambda)$ decrease the order in $\Lambda\a^*$ by one, $Q(\lambda)$ has this property as well.

Let us deal with $F(\lambda)$: by our choice of $\Gamma_1$ we can apply Lemma~\ref{smallnorm} to 
$e^{-T\mathbf X_{A_0}(\lambda)}P = e^{-T\lambda(A_0)}e^{-T\mathbf X_{A_0}}P $ and deduce that the $e^{-T\mathbf X_{A_0}(\lambda)}P=R_N'(\lambda)+K_N'(\lambda)$ for some $R_N'(\lambda)$ bounded on
$\mc H_{NG}$ and $K_N'(\lambda)$ compact on that space, with bound 
\[
\|R_N'(\lambda)\|_{\mc{L}(\mc{H}_{NG})} \leq (1+\varepsilon)e^{T\left(-Nc_X - \Re(\lambda(A_0))+ C_{L^2}(A_0) + \delta/2\right)},
\]
for some $\varepsilon>0$. Consequently, by our choice of $T> 2 \ln(3)/\delta$ and for $\lambda \in \mathcal F_{NG, A_0,\delta}$  we get $\|R_N'(\lambda)\|_{\mc{L}(\mc{H}_{NG})}\leq (1+\varepsilon)/3$. Note that 
$S_1(\lambda)+S_2(\lambda)\in \Psi^{-1}(\mc M; E\otimes \Lambda)$ is compact on $\mc{H}_{NG}$ (this can be easily checked by conjugating it with $\hat{\mc{A}}_{NG}$ to obtain an operator in $\Psi^{-1}(\mc M; E\otimes \Lambda)$, thus compact on $L^2$). This completes the proof of Proposition~\ref{prop:Fredholm}
by setting $R_N(\lambda):=-R'_N(\lambda)$ and $K_N(\lambda):=-K'_N(\lambda)+e^{-T\mathbf X_{A_0}(\lambda)}(S_1(\lambda)+S_2(\lambda))$.
\end{proof}

As a consequence  we get:
\begin{prop}\label{prop:fredholm_spec}
 For $A_0\in \mathcal W$ there exists an escape function $G$ such that
 for any $N>0$ the operator $d_{\mathbf X+\lambda}$ on $\mc H_{NG}\Lambda$ defines a Fredholm complex for $\lambda\in\mathcal F_{NG, A_0, 0}$, i.e. we have 
  \[
  \sigma^{\tu{ess}}_{\T, \mc H_{NG}}(-\mathbf X)\cap \mc F_{NG, A_0,0}=\emptyset.
  \]
\end{prop}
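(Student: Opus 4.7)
The plan is to derive this as a direct corollary of Proposition~\ref{prop:Fredholm} combined with the abstract Fredholmness criterion of Lemma~\ref{lem:critereFredholm}. First, fix the given $A_0 \in \mc W$ and pick any open cone $\Gamma_0 \subset T^*\M$ with $E_0^* \subset \Gamma_0$ and $\overline{\Gamma_0} \cap (E_u^* \oplus E_s^*) = \{0\}$. Applying Proposition~\ref{prop:escape_fct_exist} (say with $\Gamma_{\tu{reg}}$ any fixed open cone disjoint from $E_u^*$) produces an escape function $G$ for $A_0$ together with a constant $c_X > 0$ and a conic neighborhood $\Gamma_{E_0^*} \subset \Gamma_0$ which make $G$ compatible in the sense of Definition~\ref{def:escape_fct}. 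This $G$ is fixed once and for all, and crucially does not depend on $N$.

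Now let $N > 0$ be arbitrary and fix any $\lambda \in \mc F_{NG, A_0, 0}$, i.e. $\Re \lambda(A_0) > -N c_X + C_{L^2}(A_0)$. Choose $\delta = \delta(\lambda) > 0$ small enough that $\lambda \in \mc F_{NG, A_0, \delta}$. Proposition~\ref{prop:Fredholm} then provides operators $Q(\lambda), F(\lambda) : C^\infty(\M; E\otimes\Lambda) \to C^{-\infty}(\M; E\otimes\Lambda)$ with
\[
Q(\lambda) d_{\mathbf X + \lambda} + d_{\mathbf X + \lambda} Q(\lambda) = F(\lambda),
\]
such that $Q(\lambda)$ is bounded on $\mc H_{NG}\Lambda$ and $F(\lambda) = \Id + R_{N,G}(\lambda) + K_{N,G}(\lambda)$ with $K_{N,G}(\lambda)$ compact on $\mc H_{NG}\Lambda$ and $\|R_{N,G}(\lambda)\|_{\mc L(\mc H_{NG}\Lambda)} < 1/2$. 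The continuity of $Q(\lambda)$, $R_{N,G}(\lambda)$ and $K_{N,G}(\lambda)$ on $C^{-\infty}(\M; E)\Lambda$ is automatic from the construction, since $Q(\lambda) = Q_T(\lambda) + e^{-T\mathbf X_{A_0}(\lambda)} Q_{\Ell}(\lambda)$ is assembled from propagator integrals and pseudodifferential operators (both acting continuously on distributions), and the remainder inherits this property from the same building blocks.

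All hypotheses of Lemma~\ref{lem:critereFredholm} are therefore satisfied, which yields Fredholmness of the Taylor complex of $d_{\mathbf X + \lambda}$ on $\mc H_{NG}\Lambda$, i.e. closedness of $\ran d_{\mathbf X + \lambda}$ together with finite dimensionality of all cohomologies. Because the kernels and ranges of $d_{-\mathbf X - \lambda}$ and $d_{\mathbf X + \lambda}$ coincide, this is precisely the statement $\lambda \notin \sigma_{\T, \mc H_{NG}}^{\tu{ess}}(-\mathbf X)$. Since $\lambda \in \mc F_{NG, A_0, 0}$ was arbitrary the proposition follows. The only minor point of attention is the gap between the open region $\mc F_{NG, A_0, 0}$ appearing in the statement and the slightly shrunken regions $\mc F_{NG, A_0, \delta}$ on which Proposition~\ref{prop:Fredholm} is formulated; this is resolved by observing that Fredholmness is pointwise and $\mc F_{NG, A_0, 0} = \bigcup_{\delta > 0} \mc F_{NG, A_0, \delta}$, so $\delta$ may be chosen to depend on the particular $\lambda$ at hand. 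No additional analytic difficulty arises.
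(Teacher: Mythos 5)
Your proposal is correct and follows exactly the paper's argument: fix the escape function via Proposition~\ref{prop:escape_fct_exist}, invoke the parametrix of Proposition~\ref{prop:Fredholm} (shrinking to some $\mc F_{NG,A_0,\delta}$ containing the given $\lambda$), and conclude with Lemma~\ref{lem:critereFredholm}. The two points you flag explicitly — the continuity of $Q(\lambda)$, $R_{N,G}(\lambda)$, $K_{N,G}(\lambda)$ on distributions and the harmless sign in passing from $d_{\X+\lambda}$ to $d_{-\X-\lambda}$ — are exactly the details the paper leaves implicit, and you handle them correctly.
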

\begin{proof} By Proposition~\ref{prop:escape_fct_exist}, there is an escape function $G$ that allows to obtain 
Proposition~\ref{prop:Fredholm}. Then we can use Lemma~\ref{lem:critereFredholm} applied to $\X+\lambda$ to deduce Fredholmness.
\end{proof}

\subsection{Ruelle-Taylor resonances are intrinsic}\label{sec:intrinsic_spec}
So far we have shown that the admissible lift of an Anosov action $X$ acting as differential operators on $\mc H_{NG}$ has a Fredholm Taylor spectrum on $\mathcal F_{NG, A}:=\mathcal F_{NG, A,0}\subset \a^*_\C$, where $A\in \W$ and $G$ is an escape function associated to $A$. Further, we have seen that $\mathcal F_{NG, A}$ can be made arbitrarily large by letting $N\to\infty$. 
However, it is not yet clear if this Fredholm spectrum is intrinsic to $\mathbf X$ or if it depends on the choice of the anisotropic Sobolev spaces $\mathcal H_{NG}$, i.e. in particular on the choices of $N$ or $G$. 

Let us denote by $C^{-\infty}_{E_u^*}(\mc M; E)$ the space of distributions in $C^{-\infty}(\mc M; E)$ with wavefront set contained in $E_u^*$. Equipped with a suitable topology this space becomes a topological vector space \cite[Chapter 8]{Hoe03}, and the lift $\mathbf X$ acts continuously on $C^{-\infty}_{E_u^*}(\mc M; E)$. In particular, it makes sense to consider the complex generated by the operator $d_{\mathbf X+\lambda}$ on $C^{-\infty}_{E_u^*}(\mc M; E\otimes \Lambda)$. The main result of this section is the following:
\begin{prop}\label{prop:D_Eu}
 Let $A_0\in \mc W$, $N\geq0$ and $G$ be an escape function for $A_0$. 
 Then for any $\lambda \in \mc F_{NG,A_0}$ one has vector space isomorphisms 
 \[
  \ker_{\mc H_{NG}\Lambda^j} d_{\mathbf X+\lambda}/\ran_{\mc H_{NG}\Lambda^j} d_{\mathbf X+\lambda} \cong \ker_{C^{-\infty}_{E_u^*}\Lambda^j} d_{\mathbf X+\lambda}/\ran_{C^{-\infty}_{E_u^*}\Lambda^j} d_{\mathbf X+\lambda}.
 \]
\end{prop}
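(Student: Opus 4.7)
The strategy is to identify both cohomologies with the cohomology of $d_{\mathbf X+\lambda}$ restricted to the finite-dimensional generalized eigenspace $\ran \Pi_0$ of $F(\lambda)$ at zero, where $\Pi_0$ denotes the spectral projector produced in the proof of Lemma~\ref{lem:critereFredholm} when applied to the parametrix of Proposition~\ref{prop:Fredholm}. The desired isomorphism then follows by transitivity.

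A crucial initial observation is that the parametrix operators $Q(\lambda)$ and
\[
F(\lambda) = \Id - e^{-T\mathbf X_{A_0}(\lambda)}\big(P - S_1(\lambda) - S_2(\lambda)\big)
\]
are constructed intrinsically and act continuously both on $\mc H_{NG}\Lambda$ and on $C^{-\infty}_{E_u^*}\Lambda$: $P$, $S_1(\lambda)$, $S_2(\lambda)$ and $Q_{\Ell}(\lambda)$ are pseudodifferential, hence pseudolocal, while $e^{-T\mathbf X_{A_0}(\lambda)}$ is a Fourier integral operator whose canonical transformation $\Phi_T^{X_{A_0}}$ preserves $E_u^*$. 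Consequently the parametrix identity $Q d_{\mathbf X+\lambda} + d_{\mathbf X+\lambda} Q = F$ and the homotopy $d_{\mathbf X+\lambda} \tilde F^{-1} Q + \tilde F^{-1} Q\, d_{\mathbf X+\lambda} = \Id - \Pi_0$ from \eqref{Id-Pi0} continue to hold on $C^{-\infty}_{E_u^*}\Lambda$ once $\tilde F^{-1}$ and $\Pi_0$ have been suitably extended there.

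The first key step is to show $\ran \Pi_0 \subset C^{-\infty}_{E_u^*}\Lambda$. Setting $G := \Id - F$, the finite-dimensional subspace $\ran \Pi_0$ is $G$-invariant and $G|_{\ran \Pi_0} = \Id + N$ with $N$ nilpotent, hence invertible. For $u \in \ran \Pi_0$ and any $n \geq 0$ one has $u = (G|_{\ran \Pi_0})^{-n} G^n u$, so $\WF(u) \subset \WF(G^n u)$. By pseudolocality $\WF((P-S_1-S_2) v) \subset \Gamma_1 \cap \WF(v)$ and $\WF(e^{-T\mathbf X_{A_0}} v) = \Phi_T^{X_{A_0}}(\WF(v))$, so by induction $\WF(u) \subset \bigcap_{n\geq 1} \Phi_{nT}^{X_{A_0}}(\Gamma_1)$. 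The hyperbolic splitting of the cotangent flow expels any direction in $\Gamma_1$ off $E_u^* \cup E_s^*$; the $E_s^*$ piece, though $\Phi_t$-invariant, is ruled out by the anisotropic regularity built into $\mc H_{NG}$ (the order function $m$ forces $H^{4N}$-regularity of $u$ near $E_s^*$) combined with a bootstrap across the whole scale of admissible escape functions: running the Fredholm argument of Proposition~\ref{prop:Fredholm} with any other escape function $G'$ for the same $A_0$ produces the same intrinsic operator $F(\lambda)$, so $u$ belongs to $\mc H_{N'G'}\Lambda$ for arbitrarily large regularity near $E_s^*$. This step is the main technical obstacle, because one has to track compatibility of the various Fredholm decompositions across the family of spaces.

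With Step~1 in hand, one exploits the directed-union structure $C^{-\infty}_{E_u^*}\Lambda = \bigcup_{N'} \mc H_{N'G}\Lambda$, which holds because any $v \in C^{-\infty}_{E_u^*}\Lambda$ is smooth off $E_u^*$ (so automatically meets the $H^{4N'}$ requirement near $E_s^*$) and of some finite global Sobolev order (so meets the $H^{-N'/4}$ requirement near $E_u^*$ once $N'$ is large). The generalized eigenspace $\ran \Pi_0$ is intrinsically $\{u : F(\lambda)^k u = 0 \text{ for some } k\}$, so the projectors on the various $\mc H_{N'G}\Lambda$ are mutually compatible and glue to a continuous extension of $\Pi_0$ and $\tilde F^{-1}$ to $C^{-\infty}_{E_u^*}\Lambda$. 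With these extensions, for every $u \in \ker_{C^{-\infty}_{E_u^*}} d_{\mathbf X+\lambda}$ one has $u = \Pi_0 u + d_{\mathbf X+\lambda}(\tilde F^{-1} Q u)$, so the assignment $[u] \mapsto [\Pi_0 u]$ yields a well-defined map into the cohomology on $\ran \Pi_0$. Surjectivity is immediate since $\ran \Pi_0 \subset C^{-\infty}_{E_u^*}\Lambda$, and injectivity follows by repeating verbatim the sandwich argument at the end of Lemma~\ref{lem:critereFredholm} using the continuity of $\tilde F^{-1} Q$ on $C^{-\infty}_{E_u^*}\Lambda$. Combined with the analogous isomorphism on $\mc H_{NG}\Lambda$ furnished directly by Lemma~\ref{lem:critereFredholm}, this yields the stated identification.
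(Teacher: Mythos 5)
Your proposal is correct and follows essentially the same route as the paper: both cohomologies are identified with that on $\ran\Pi_0(\lambda)$ via Lemma~\ref{lem:critereFredholm}, the inclusion $\ran\Pi_0(\lambda)\subset C^{-\infty}_{E_u^*}\Lambda$ comes from the independence of $F(\lambda)$ (hence of $\Pi_0(\lambda)$) from $N$ and $G$ together with the flexibility in choosing the escape function, and the reverse inclusion $C^{-\infty}_{E_u^*}\Lambda\subset\bigcup_{N',G'}\mc H_{N'G'}\Lambda$ feeds the sandwich argument for injectivity. The only deviation is your preliminary wavefront-propagation step, which is superfluous and slightly imprecise --- $\bigcap_{n}\Phi_{nT}^{X_{A_0}}(\Gamma_1)$ contains the whole invariant subbundle $E_u^*\oplus E_s^*$, not merely $E_u^*\cup E_s^*$ --- but harmless, since the escape-function bootstrap you then invoke (exactly the paper's mechanism) removes everything off $E_u^*$ on its own.
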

Using this result, we see that the Ruelle-Taylor spectrum is independent of $A_0$ and of the anisotropic space $\mc{H}_{NG}\Lambda$ in the region $\mc{F}_{NG,A_0}$ of $\lambda \in \a^*_\C$ where the Taylor complex $d_{\mathbf X+\lambda}$ is Fredholm on $\mc{H}_{NG}\Lambda$. We can then define the notion of a Ruelle-Taylor resonance as follows:
\begin{Def}\label{def:ruelle-taylor-res}
We define the \emph{Ruelle-Taylor resonances} of $\mathbf X$ to be the set
\[ {\rm Res}_\X:=\{ \lambda\in \a^*_\C\, |\, \exists j, \, \ker_{C^{-\infty}_{E_u^*}\Lambda^j} d_{\mathbf X+\lambda}/\ran_{C^{-\infty}_{E_u^*}\Lambda^j} d_{\mathbf X+\lambda}\not=0\}\]
and the \emph{Ruelle-Taylor resonant cohomology space} of degree $j$ of $\lambda\in {\rm Res}_\X$ to be 
\[{\rm Res}_{\X,\Lambda^j}(\lambda):=\ker_{C^{-\infty}_{E_u^*}\Lambda^j} d_{\mathbf X+\lambda}/\ran_{C^{-\infty}_{E_u^*}\Lambda^j} d_{\mathbf X+\lambda}.\]  
\end{Def}

Another consequence of Proposition~\ref{prop:D_Eu} is:
\begin{cor}[Location of Ruelle-Taylor resonances]\label{cor:location}
 One has 
 \[
  \mathrm{Res}_\X \subset \bigcap_{A\in \mathcal W} \{\lambda\in\a_\C^*~|~\Re(\lambda(A))\leq C_{L^2}(A)\}
 \]
\end{cor}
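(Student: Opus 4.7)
My approach is by contradiction: fix $A_0 \in \W$ arbitrary and suppose there is a Ruelle-Taylor resonance $\lambda_0$ with $\Re(\lambda_0(A_0)) > C_{L^2}(A_0)$. The plan is to produce an $\mathcal{H}_{NG}$-element on which $e^{-t\mathbf{X}_{A_0}}$ acts by multiplication by $e^{t\lambda_0(A_0)}$, and then to contradict the bound one expects on the operator norm of $e^{-t\mathbf{X}_{A_0}}$ in terms of the $L^2$ growth rate. For the reduction to degree zero I would invoke (the later) Proposition~\ref{prop:zero_cohomology_nonempty}, which says that nontriviality of the $j$-th Ruelle--Taylor cohomology for any $j > 0$ forces nontriviality of the $0$-th one. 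This gives a nonzero joint Ruelle resonant state $u \in C^{-\infty}_{E_u^*}(\mc M; E)$ with $(\mathbf{X}_A + \lambda_0(A))u = 0$ for every $A \in \a$.

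Choose an escape function $G$ for $A_0$ via Proposition~\ref{prop:escape_fct_exist} and any $N > 0$. The assumption $\Re(\lambda_0(A_0)) > C_{L^2}(A_0)$ places $\lambda_0$ in $\mc F_{NG, A_0}$, so Proposition~\ref{prop:D_Eu} identifies the $0$-th cohomologies on $\mathcal H_{NG}$ and on $C^{-\infty}_{E_u^*}$; since no range needs to be quotiented out at degree $0$, we may assume $u \in \mathcal H_{NG}$. The eigenvalue equation $\mathbf X_{A_0} u = -\lambda_0(A_0) u$ integrates to $e^{-t\mathbf X_{A_0}} u = e^{t\lambda_0(A_0)} u$, hence
\[
e^{t\Re(\lambda_0(A_0))}\, \|u\|_{\mathcal H_{NG}} \le \|e^{-t\mathbf X_{A_0}}\|_{\mathcal L(\mathcal H_{NG})}\, \|u\|_{\mathcal H_{NG}}\qquad(t\ge 0).
\]
Taking logarithms and dividing by $t$, a contradiction will follow from the semigroup growth estimate
\begin{equation}\label{eq:semigroup_target}
\limsup_{t\to\infty}\, \tfrac{1}{t}\log\|e^{-t\mathbf X_{A_0}}\|_{\mathcal L(\mathcal H_{NG})} \le C_{L^2}(A_0).
\end{equation}

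The main work lies in proving \eqref{eq:semigroup_target}, and this is the step I expect to be the real obstacle. The strategy is a microlocal split $\Id = P + (\Id - P)$ with $P \in \Psi^0(\mc M; E)$ having $\WF(P) \cap E_0^* = \emptyset$ and $\Id - P$ microsupported in a small conic neighborhood of $E_0^*$. For the piece away from $E_0^*$, Lemma~\ref{smallnorm} (applied after a conic refinement of $P$) provides a decomposition $e^{-t\mathbf X_{A_0}} P = R_{N,G}(t)+K_{N,G}(t)$ with $\|R_{N,G}(t)\|_{\mathcal L(\mathcal H_{NG})} \le C e^{-N c_X t}\|e^{-t\mathbf X_{A_0}}\|_{L^2}$; an essential spectral radius / semigroup iteration argument upgrades this pointwise bound to a growth-rate bound $\le C_{L^2}(A_0) - Nc_X$, which is more than enough. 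For the piece near $E_0^*$, one uses that $\Phi^{X_{A_0}}_t$ fixes $E_0^*$ pointwise, so the symbol $e^{N(G \circ \Phi^{X_{A_0}}_t - G)}$ arising from Egorov-conjugating $\hat{\mc A}_{NG}\, e^{-t\mathbf X_{A_0}}(\Id - P)\, \hat{\mc A}_{NG}^{-1}$ grows only subexponentially in $t$ on the microsupport of $\Id - P$; combined with the $L^2$ bound on $e^{-t\mathbf X_{A_0}}$, this contributes at most $C_{L^2}(A_0)$ to the growth rate of $\|e^{-t\mathbf X_{A_0}}(\Id - P)\|_{\mathcal L(\mathcal H_{NG})}$. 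Summing the two pieces yields \eqref{eq:semigroup_target}. The delicate point is precisely the subexponential control near $E_0^*$: the escape function cannot provide decay there, and a naive uniform $L^\infty$ bound on $G \circ \Phi^{X_{A_0}}_t - G$ fails for points hovering close to $E_0^*$ for a long time; one must exploit the pointwise fixing of $E_0^*$ by $\Phi^{X_{A_0}}_t$ together with the smoothness of $G$ to rule out exponential growth of the Egorov symbol.
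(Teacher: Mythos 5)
Your overall strategy (reduce to a joint eigenvector and compare the eigenvalue against a semigroup growth rate) is sound, but the proof as written has a genuine gap at exactly the step you flag as the obstacle: the bound $\limsup_{t\to\infty}\tfrac{1}{t}\log\|e^{-t\X_{A_0}}\|_{\mathcal{L}(\mathcal{H}_{NG})}\le C_{L^2}(A_0)$ for $N>0$ is not established, and your sketch does not close it. Near $E_0^*$ the escape function only satisfies $X_{A_0}^H m\le 0$; there is no sign condition on $X_{A_0}^H G$ inside $\Gamma_{E_0^*}$ (Definition~\ref{def:escape_fct}), so $G\circ\Phi_t^{X_{A_0}}-G$ can a priori grow linearly in $t$ along trajectories that hover near $\Gamma_{E_0^*}$, producing an extra exponential factor $e^{CNt}$. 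Even if one arranges $X_{A_0}^H G\le 0$ globally for $|\xi|\ge R$ (as in Lemma~\ref{unifG}), the $L^2$-bound of Proposition~\ref{prop:l2-bound} applied to the Egorov-conjugated operator controls only the principal symbol up to a remainder whose norm depends on finitely many derivatives of $G\circ\Phi_t^{X_{A_0}}$, and these blow up exponentially in $t$. This is precisely why the paper never estimates $\|e^{-t\X_{A_0}}\|_{\mathcal{L}(\mathcal{H}_{NG})}$ for large $t$, but only for $t$ in a fixed compact interval, and then iterates a bounded operator through the parametrix.

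The difficulty is, however, self-inflicted: take $N=0$. Then $\mathcal{H}_{0G}=L^2$, the hypothesis $\Re(\lambda(A))>C_{L^2}(A)$ is exactly the condition $\lambda\in\mathcal{F}_{0G,A,\delta}$ for some $\delta>0$, and your target semigroup estimate is the definition of $C_{L^2}(A)$. This is essentially what the paper does, except that it also avoids the reduction to degree $0$ (and hence any appeal to the later Proposition~\ref{prop:zero_cohomology_nonempty}, which is logically harmless here but stylistically backwards): it observes that the homotopy identity \eqref{Qd+dQ} gives $d_{\X+\lambda}Q_T(\lambda)+Q_T(\lambda)d_{\X+\lambda}=\Id-e^{-T\X_A}e^{-T\lambda(A)}$ on $L^2\Lambda$, that the right-hand side is invertible for $T$ large because $\|e^{-T\X_A}e^{-T\lambda(A)}\|_{\mathcal{L}(L^2)}<1$, and hence that the whole complex is exact on $L^2\Lambda$; Proposition~\ref{prop:D_Eu} with $N=0$ then kills all the cohomologies on $C^{-\infty}_{E_u^*}\Lambda$. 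If you replace your step involving $\mathcal{H}_{NG}$ for $N>0$ by the $N=0$ observation, your argument becomes correct and is a degree-$0$ variant of the paper's.
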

\begin{proof}
 Assume that there exists an $A\in \mathcal W$ such that $\tu {Re}(\lambda(A))>C_{L^2}(A)$. Then for some $\delta>0$, $\lambda\in \mathcal F_{0G, A,\delta}$ and consequently $\lambda\in \mathrm{Res}_\X$ iff $\ker_{L^2\Lambda}d_{\mathbf X+\lambda}/\ran_{L^2\Lambda}d_{\mathbf X+\lambda} \neq 0$.
 However, by \eqref{Qd+dQ} there is a bounded operator $Q_T(\lambda): L^2(\mc M; E\otimes \Lambda) \to L^2(\mc M; E\otimes \Lambda)$ such that 
 \[
  d_{\mathbf X+\lambda}Q_T(\lambda) + Q_T(\lambda) d_{\mathbf X+\lambda} = {\rm Id} + e^{-T\mathbf X_A}e^{-T\lambda(A)}.
 \]
Since $\Re(\lambda(A))>C_{L^2}(A)$, the right hand side is invertible on $L^2(\mc M; E\otimes \Lambda)$ provided $T>0$ is large enough. As furthermore ${\rm Id}+e^{-T\mathbf X_A}e^{-T\lambda(A)}$ and its inverse commute with $d_{\mathbf X+ \lambda}$ we conclude that
$\ker_{L^2\Lambda}d_{\mathbf X+\lambda}/\ran_{L^2\Lambda}d_{\mathbf X+\lambda} = 0$.
\end{proof}
The strategy to prove Proposition~\ref{prop:D_Eu} is to show that in each cohomology class in $\ker_{\mc H_{NG}\Lambda} d_{\mathbf X+\lambda}/\ran_{\mc H_{NG}\Lambda} d_{\mathbf X+\lambda}$ one can find a representative that lies already in $\ker_{C^{-\infty}_{E_u^*}} d_{\mathbf X+\lambda}$. To this end we will construct for fixed $\lambda$ a projector $\Pi_0(\lambda)$ of finite rank such that we can find in each cohomology class a representative in the range of $\Pi_0(\lambda)$. The fact that the range of $\Pi_0(\lambda)$ is independent of the anisotropic Sobolev spaces and contained in $C^{-\infty}_{E_u^*}$ then follows very similarly to the corresponding characterization of Anosov flows \cite[Theorem 1.7]{FS11} by the flexibility in the choice of the escape function. 
\begin{proof}[Proof of Proposition~\ref{prop:D_Eu}]
Given $A_0, N, G$ and $\lambda\in\mc F_{NG,A_0}$ let us first fix $\delta>0$ such that $\lambda\in\mc F_{NG, A_0, \delta}$ and an open cone $\Gamma_0 \subset T^*\mc M$ containing $\Gamma_{E_0^*}$ (the conic set in Proposition \ref{prop:escape_fct_exist})  and such that $\overline{\Gamma}_0\cap (E_s^*\oplus E_u^*)=\{0\}$. Then Proposition~\ref{prop:Fredholm} provides  operators $Q(\lambda),F(\lambda):C^{-\infty}(\mc M; E\otimes \Lambda) \to  C^{-\infty}(\mc M; E\otimes \Lambda)$ which only depend on $\delta,\lambda, A_0, \Gamma_0$ and satisfy
\begin{equation}\label{eq:intrinsic_dQ+Qd}
 d_{\mathbf X+\lambda}Q(\lambda)+Q(\lambda)d_{\mathbf X+\lambda}= F(\lambda).
\end{equation}
We can thus apply Lemma \ref{lem:critereFredholm}, and deduce that if $\Pi_0(\lambda)$ is the spectral projector of $F(\lambda)$ on its kernel,  
\begin{equation}\label{eq:isom_P_0}
\Pi_0(\lambda): \ker_{\mc H_{NG}\Lambda} d_{\mathbf X+\lambda}/\ran_{\mc H_{NG}\Lambda}d_{\mathbf X+\lambda}\to \ker_{\ran \Pi_0(\lambda)} d_{\mathbf X+\lambda}/\ran_{\ran \Pi_0(\lambda)}d_{\mathbf X+\lambda}
\end{equation}
is an isomorphism. Here, $\ran(\Pi_0(\lambda)) = \Pi_0(\lambda)\mathcal{H}_{NG}$. But since $C^\infty$ is dense in $\mathcal{H}_{NG}$, and $\Pi_0(\lambda)$ has finite rank, we deduce that it is equal to $\Pi_0(\lambda)C^\infty(\mathcal{M}; E\otimes \Lambda)$. We now need the following lemma:
\begin{lemma}\label{lem:P1_properties}~
The projector $\Pi_0(\lambda)$ satisfies 
 $\Pi_0(\lambda)(C^\infty(\M;E\otimes \Lambda)) \subset C^{-\infty}_{E_u^*}(\M;E\otimes \Lambda)$. Additionally, it has a continuous extension to $C^{-\infty}_{E_u^*}(\M;E\otimes \Lambda)$.
\end{lemma}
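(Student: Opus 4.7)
The strategy rests on a simple observation: by Proposition~\ref{prop:Fredholm}, the parametrix $Q(\lambda)$ and the operator $F(\lambda)$ depend only on $A_0,\Gamma_0$ and $\lambda$, not on the escape function $G$. Consequently the spectral projector $\Pi_0(\lambda)$ onto the generalized kernel of $F(\lambda)$ should be intrinsic in the following sense: for two escape functions $G,G'$ for $A_0$ with $\Gamma_{E_0^*}, \Gamma_{E_0^*}'\subset \Gamma_0$ and $\lambda\in\mc F_{NG,A_0}\cap\mc F_{N'G',A_0}$, one should have $\Pi_0^{NG}(\lambda)v=\Pi_0^{N'G'}(\lambda)v$ for all $v\in C^\infty(\M;E\otimes \Lambda)$. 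The plan is to first establish this intrinsicality and then use the flexibility in the choice of $G'$ provided by Proposition~\ref{prop:escape_fct_exist} to pin down the wavefront set of $\Pi_0(\lambda)v$.

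For part (1), fix $v\in C^\infty$ and $(x_0,\xi_0)\notin E_u^*$. By Proposition~\ref{prop:escape_fct_exist} applied with $\Gamma_{\tu{reg}}$ a small conic neighborhood of $(x_0,\xi_0)$ disjoint from $E_u^*$, there exists a second escape function $G'$ for $A_0$ whose order function $m'$ satisfies $m'\geq 1/2$ on $\Gamma_{\tu{reg}}$; shrinking $\Gamma_{E_0^*}'$ if necessary, we may arrange $\Gamma_{E_0^*}'\subset \Gamma_0$. Picking $N'$ large enough that $\lambda\in\mc F_{N'G',A_0}$, the intrinsicality gives $\Pi_0(\lambda)v\in\mc H_{N'G'}\Lambda$, and by the definition of the anisotropic space this forces Sobolev regularity of order $\geq N'/2$ on $\Gamma_{\tu{reg}}$; letting $N'\to\infty$ we conclude $(x_0,\xi_0)\notin\WF(\Pi_0(\lambda)v)$. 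The intrinsicality itself will follow from the contour representation $\Pi_0(\lambda)=\frac{1}{2\pi i}\oint_\gamma (z-F(\lambda))^{-1}\, dz$ combined with a continuous interpolation via the convex family $G_t:=(1-t)G+tG'$ of escape functions (the conditions of Definition~\ref{def:escape_fct} are preserved under convex combinations): standard perturbation theory of Fredholm complexes applies along this path, ensuring that $(z-F(\lambda))^{-1}v$ varies continuously as a distribution, so that the contour integral produces the same element of $C^{-\infty}(\M;E\otimes \Lambda)$ at both endpoints.

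For part (2), write the finite-rank projector as $\Pi_0(\lambda)u=\sum_j \phi_j\langle u,\psi_j\rangle_{L^2}$ with $\phi_j$ a basis of $\ran \Pi_0(\lambda)\subset C^{-\infty}_{E_u^*}\Lambda$ (by (1)) and $\psi_j$ in the generalized kernel of the $L^2$-adjoint $F(\lambda)^*$. This adjoint is governed by the admissible lift of the reverse-time Anosov action, for which the stable and unstable bundles are swapped. Applying the analysis of part (1) to this adjoint Fredholm problem therefore yields $\psi_j\in C^{-\infty}_{E_s^*}(\M;E\otimes\Lambda)$. Since $E_u^*\cap E_s^*=\{0\}$, the pairings $u\mapsto \langle u,\psi_j\rangle_{L^2}$ extend continuously from $C^\infty$ to $C^{-\infty}_{E_u^*}(\M;E\otimes\Lambda)$ via the H\"ormander product of distributions \cite[Theorem~8.2.10]{Hoe03}, yielding the required continuous extension of $\Pi_0(\lambda)$. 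The main obstacle in this program is the intrinsicality in part (1): it ultimately reduces to a uniqueness statement for $(z-F(\lambda))^{-1}v$ in the sum $\mc H_{NG}\Lambda+\mc H_{N'G'}\Lambda$ for $z$ on the resolvent contour, which must be established carefully by tracking the Fredholm property along the interpolation $G_t$ or, alternatively, by exploiting Egorov's Lemma to show directly that any $\phi\in\mc H_{NG}\Lambda$ with $F(\lambda)^m\phi=0$ distributionally must have its wavefront driven into $E_u^*$ by the iterated forward cotangent flow.
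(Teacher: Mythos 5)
Your overall strategy for part (1) is the paper's: the operators $Q(\lambda),F(\lambda)$ of Proposition~\ref{prop:Fredholm} are fixed maps $C^\infty\to C^{-\infty}$ independent of $N,G$, hence the spectral projector at $0$ is intrinsic, and then Proposition~\ref{prop:escape_fct_exist} with $\Gamma_{\rm reg}$ an arbitrary cone disjoint from $E_u^*$ forces $\WF(u)\subset E_u^*$ for every $u$ in the (common) range of $\Pi_0(\lambda)$. However, the mechanism you propose for the intrinsicality does not work as described, and you yourself flag it as the unresolved point. First, the convex combination $G_t=(1-t)G+tG'$ need not satisfy Definition~\ref{def:escape_fct}(1): the decay conditions (2),(3) are convex, but the structural requirement $G_t=m_t\log(1+f_t)$ with $m_t$ homogeneous of degree $0$ and $f_t$ homogeneous of degree $1$ is not preserved unless $f=f'$. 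Second, and more importantly, ``perturbation theory of Fredholm complexes along the path'' is the wrong tool here: nothing is being perturbed, since $F(\lambda)$ is one and the same operator on distributions; only the Hilbert space changes. The correct (and standard) argument is: for $|z|$ large the Neumann series $z^{-1}\sum_k z^{-k}F(\lambda)^k v$ converges in both $\mc{H}_{NG}\Lambda$ and $\mc{H}_{N'G'}\Lambda$, hence in $C^{-\infty}$, to the same distribution; the two resolvents applied to a fixed $v\in C^\infty$ are therefore $C^{-\infty}$-valued meromorphic functions of $z$ agreeing on an open set, so they agree (with the same poles) on the overlap of their domains, and the contour integral defining $\Pi_0(\lambda)v$ gives the same distribution. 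With this substitution your part (1) is complete and coincides with the paper's proof.

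For part (2) you take a genuinely different route. The paper simply observes that $C^{-\infty}_{E_u^*}(\M;E\otimes\Lambda)$ is contained in the \emph{union} of the spaces $\mc{H}_{N'G'}\Lambda$ with $\lambda\in\mc{F}_{N'G',A_0}$ (because the order function satisfies $m\leq -1/4$ in a conic neighborhood of $E_u^*$, so distributions with wavefront set in $E_u^*$ lie in every such space for suitable $G'$), whence $\Pi_0(\lambda)$ is already defined there and is continuous by finite rank. Your route --- writing $\Pi_0(\lambda)=\sum_j\phi_j\langle\cdot,\psi_j\rangle_{L^2}$, showing $\psi_j\in C^{-\infty}_{E_s^*}$ via the adjoint (time-reversed) Fredholm problem, and extending the pairing by H\"ormander's wavefront criterion using $E_u^*\cap E_s^*=\{0\}$ --- is valid and in fact gives a cleaner continuity statement, but it costs you a full second run of the parametrix machinery for $\X^*$ (which the paper only sets up later, in Section~\ref{sec:ia_non_volume}). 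For the purposes of Lemma~\ref{lem:P1_properties} the union argument is shorter and avoids any discussion of the dual states.
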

\begin{proof}
Recall that $\Pi_0(\lambda):\mc H_{NG}\Lambda \to\mc H_{NG}\Lambda$ has been defined as the spectral projector at $z=0$ of $F(\lambda):\mc H_{NG}\Lambda \to\mc H_{NG}\Lambda$, it has finite rank. Since $F(\lambda)$ and its Fredholmness do not depend on the choice of $N$, $G$, as long as $\lambda\in \mathcal{F}_{NG,A_0}$, neither does its spectral projector at $0$. The image of $\Pi_0(\lambda)$ is thus contained in the intersection of the $\mathcal{H}_{N'G'}\Lambda$ such that $\lambda\in \mathcal{F}_{N'G',A_0}$. 

Let us show that this intersection is contained in $C^{-\infty}_{E_u^*}(\M;E\otimes \Lambda)$. We thus take $u$ in all the $\mathcal{H}_{N'G'}$ such that $\lambda\in \mathcal{F}_{N'G',A_0}$. By Proposition~\ref{prop:escape_fct_exist} for an arbitrary cone $\Gamma'_\tu{reg}$ disjoint from $E^\ast_u$, there exists an escape function $G'$ for $A_0$ compatible with $c'_X$ and $\Gamma'_{E_0^*}\subset \Gamma_0$ such that
 microlocally on $\Gamma'_\tu{reg}$, $\mc H_{N'G'}$ is contained in the standard Sobolev space $H^{N'/2}(\mc M; E)$. In particular, taking $N'$ arbitrarily large, $\lambda\in \mathcal{F}_{N'G',A_0}$ and $\WF(u) \cap \Gamma'_{\tu{reg}} = \emptyset$. Since $\Gamma'_{\tu{reg}}$ was arbitrary, $\WF(u)\subset E^\ast_u$.

To prove that $\Pi_0(\lambda)$ has a continuous extension to $C^{-\infty}_{E_u^*}(\M;E\otimes \Lambda)$, it suffices to observe that $C^{-\infty}_{E_u^*}(\M;E\otimes \Lambda)$ is also contained in the union of the all the $\mathcal{H}_{N'G'}$ such that 
$\lambda\in\mathcal{F}_{N'G',A_0}$. This follows from Definition~\ref{def:escape_fct},(\ref{it:escfct_homog_cond}), since we know that in a conic neighborhood around $E_u^*$ we have $m(x,\xi) \leq -1/4$. As a consequence, $\Pi_0(\lambda)$ is a linear operator from $C^{-\infty}_{E_u^*}(\M;E\otimes \Lambda)$ to $C^{-\infty}(\mathcal{M},E\otimes \Lambda)$. It is continuous as it has finite rank. 
\end{proof}

To finish the proof of Proposition \ref{prop:D_Eu}, it suffices to apply a variation of the sandwiching trick presented in the proof of Lemma \ref{lemma:discrete}. Indeed, since $\Pi_0(\lambda)$ is a bounded projector on $C^{-\infty}_{E_u^*}(\M;E\otimes \Lambda)$, commuting with $d_{\X+\lambda}$, $u\mapsto \Pi_0(\lambda)u$ factors to a surjective map
\begin{equation}\label{eq:intrinsic_isom2}
\ker_{C^{-\infty}_{E_u^*}\Lambda} d_{\mathbf X+\lambda}/\ran_{C^{-\infty}_{E_u^*}\Lambda}d_{\mathbf X+\lambda} \to \ker_{\ran \Pi_0(\lambda)} d_{\mathbf X+\lambda}/\ran_{\ran \Pi_0(\lambda)}d_{\mathbf X+\lambda}
\end{equation}
We need to show the injectivity of this map. This will follow from the fact that $C^{-\infty}_{E_u^*}(\M;E\otimes \Lambda)$ is contained in the union of the $\mathcal{H}_{N'G'}\Lambda$ such that $\lambda\in\mathcal{F}_{N'G',A_0}$. We consider $u\in C^{-\infty}_{E_u^*}(\M;E\otimes \Lambda)$ such that $d_{\X+\lambda}u=0$, and $[\Pi_0(\lambda)u]=0$, i.e, $\Pi_0(\lambda)u = d_{\X + \lambda} \Pi_0(\lambda) v$ for some $v\in C^{-\infty}_{E_u^*}(\M;E\otimes \Lambda)$. Since $u$ belongs to some $\mathcal{H}_{N'G'}\Lambda$, we then write $\tilde{F}(\lambda) = F(\lambda) + \Pi_0(\lambda)$, and observe, just as in \eqref{Id-Pi0}, that
\[
\tilde{F}(\lambda)^{-1}Q(\lambda) d_{\X+\lambda} + d_{\X + \lambda} \tilde{F}(\lambda)^{-1} Q(\lambda) = {\rm Id} - \Pi_0(\lambda),
\] 
so that 
\[
u= d_{\X + \lambda} \big(\tilde{F}(\lambda)^{-1} Q(\lambda)u + \Pi_0(\lambda)v\big).
\]
It remains to check that $\tilde{F}^{-1}(\lambda) Q(\lambda)u\in C^{-\infty}_{E_u^*}(\M;E\otimes \Lambda)$. But, since $Q(\lambda)$ and $\tilde{F}^{-1}(\lambda)$ are bounded on each $\mathcal{H}_{N'G'}\Lambda$ such that $\lambda\in \mathcal{F}_{N'G',A_0}$, this is an element of each such  $\mathcal{H}_{N'G'}\Lambda$, so it is contained in the intersection thereof. We have seen in the proof of Lemma \ref{lem:P1_properties} that this intersection is contained in $C^{-\infty}_{E_u^*}(\M;E\otimes \Lambda)$.

Finally, note that the operator $F(\lambda): \mc H_{NG}\Lambda \to \mc H_{NG}\Lambda$ preserves the order in the Koszul complex, i.e. $F(\lambda): \mc H_{NG}\Lambda^j \to \mc H_{NG}\Lambda^j$, and all the subsequent constructions such as $\Pi_0(\lambda)$ do as well. The isomorphism $\Pi_0(\lambda)$ can thus be restricted to the individual cohomology $\ker_{C^{-\infty}_{E_u^*}\Lambda^j} d_{\mathbf X+\lambda}/\ran_{C^{-\infty}_{E_u^*}\Lambda^j}d_{\mathbf X+\lambda}$ and we have completed the proof of Proposition~\ref{prop:D_Eu}.
\end{proof}

\subsection{Discrete Ruelle-Taylor spectrum}\label{sec:discrete_spec}
In this section we show that the Ruelle-Taylor resonance spectrum of the admissible lift $\X:\a\mapsto {\rm Diff}^1(\M;E)$ of the Anosov action, for $E$ a Riemannian vector bundle,  
 is discrete in $\a_\C^*$. Our goal is to use  Lemma \ref{lemma:discrete}. In contrast to just obtaining the Fredholm property of the Taylor complex, this section requires to use a parametrix $Q(\lambda)$ in Proposition \ref{prop:Fredholm} that is more intrinsically related to the $\X$ action, in particular we shall construct $Q(\lambda)$ as a function of $(X_1,\dots,X_\kappa)=({\bf X}_{A_1},\dots,{\bf X}_{A_\kappa})$ if $A_j\in \mc{W}$ is an orthonormal basis for some scalar product $\cjg\cdot,\cdot\cjd$ on $\a$. This requires to use the slightly better escape function of Lemma \ref{unifG} that provides decay not only in a fixed direction $A_1\in \mathcal W$, but also for all other elements in a small neighborhood $\mc{U}$ of $A_1$.
Let us now fix an orthonormal basis $A_1,\dots,A_\kappa\in \mc{U}\subset \W$ of $\a$ in the positive Weyl chamber, and we denote the associated scalar product in $\a$ by $\cjg\cdot,\cdot \cjd$. In order to be able to use  Lemma \ref{lemma:discrete}, we will prove the following:
\begin{lemma}\label{lemmafordiscreteness}
For each  fixed $\lambda \in \mc F_{NG, A_0, \delta}$ there is a Lie algebra morphism ${\bf Q}(\lambda):\a\to \mc{L}(\mc{H}_{NG})\cap \mc{L}(C^\infty(\M;E))$ and  commuting with $\X(\lambda):={\bf X}+\lambda$ in the sense that $[\X_{A_j}(\lambda),{\bf Q}_{A_k}(\lambda)]=0$ for all 
$j,k$, such that 
\[d_{\X+\lambda}\delta_{{\bf Q}(\lambda)}+\delta_{{\bf Q}(\lambda)}d_{\X+\lambda}={\rm Id}+R(\lambda)+K(\lambda)\]
with $R(\lambda),K(\lambda)\in \mc{L}(\mc{H}_{NG}\Lambda)$, $\|R(\lambda)\|_{\mc{L}(\mc{H}_{NG})}<1/2$ and $K(\lambda)$ compact on $\mc{H}_{NG}\Lambda$. Moreover $R(\lambda),K(\lambda)$ are $\Lambda$-scalar. 
\end{lemma}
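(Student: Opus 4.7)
The strategy is to build the Lie algebra morphism ${\bf Q}(\lambda)$ entirely out of the commuting family $\{\X_A(\lambda)\}_{A\in\a}$ by integrating propagators, so that the required commutations with $\X$ will be automatic. By Lemma~\ref{unifG} I fix an escape function $G$ for $A_1\in\W$ together with an open neighborhood $\mc{O}\subset \a$ of $A_1$ on which $G$ remains an escape function with common constants $c_X>0$ and $\Gamma_{E_0^*}$; after rescaling I may assume the orthonormal basis $A_1,\dots,A_\kappa$ lies in $\mc{O}\cap\W$. By the same escape-function argument as in the proof of Proposition~\ref{prop:Fredholm}, for each $A\in \mc{O}\cap\W$ the propagator $e^{-s\X_A(\lambda)}$ is uniformly bounded on $\mc{H}_{NG}$ for $s\in[0,T]$.

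\smallskip

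Choose a nonnegative smooth density $\mu$ on $\a$ with $\int\mu\,dA=1$ and $\supp\mu\Subset \mc{O}\cap\W$, and a large time $T$. I define
\[{\bf Q}_{A_k}(\lambda):=-\int_\a a_k(A)\,\mu(A)\int_0^T e^{-s\X_A(\lambda)}\,ds\,dA,\]
where $a_k(A)$ denotes the $k$-th coordinate of $A$ in the basis, and extend by linearity to ${\bf Q}:\a\to \mc{L}(\mc{H}_{NG})\cap \mc{L}(C^\infty(\M;E))$. Because every ${\bf Q}_B$ is an integral of members of the abelian family $\{e^{-s\X_A(\lambda)}\}$, the ${\bf Q}_B$'s pairwise commute and each commutes with every $\X_{A_j}(\lambda)$, so ${\bf Q}$ is a Lie algebra morphism satisfying the stated commutation relations. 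Applying Lemma~\ref{trick} together with $\sum_k a_k(A)\X_{A_k}(\lambda)=\X_A(\lambda)$ and the telescoping $\X_A(\lambda)\int_0^T e^{-s\X_A(\lambda)}\,ds=\Id-e^{-T\X_A(\lambda)}$, I obtain
\[d_{\X+\lambda}\delta_{{\bf Q}(\lambda)}+\delta_{{\bf Q}(\lambda)}d_{\X+\lambda}=\Bigl(\Id-\int_\a\mu(A)\,e^{-T\X_A(\lambda)}\,dA\Bigr)\otimes\Id,\]
which is $\Lambda$-scalar. It remains to decompose $E(\lambda):=-\int_\a\mu(A)\,e^{-T\X_A(\lambda)}\,dA$ as $R(\lambda)+K(\lambda)$ with $\|R(\lambda)\|_{\mc{L}(\mc{H}_{NG})}<1/2$ and $K(\lambda)$ compact on $\mc{H}_{NG}$.

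\smallskip

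To this end I split $E(\lambda)=E(\lambda)P+E(\lambda)(\Id-P)$ with a $\Lambda$-scalar cutoff $P\in\Psi^0(\M;E)$ satisfying $\WF(P)\cap\Gamma_{E_0^*}=\emptyset$ and $\WF(\Id-P)$ contained in a small conic neighborhood of $E_0^*$. Applying the argument of Lemma~\ref{smallnorm} uniformly in $A\in\supp\mu$ to $e^{-T\X_A(\lambda)}P$ yields the bound $\|E(\lambda)P\|_{\mc{L}(\mc{H}_{NG})}\leq Ce^{T(C_{L^2}(A_0)-\Re\lambda(A_0)-Nc_X)}$ modulo a compact term, which can be made $<1/4$ on $\mc{F}_{NG,A_0,\delta}$ by choosing $T$ sufficiently large.

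\smallskip

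The delicate step, and the main obstacle, is establishing compactness of $E(\lambda)(\Id-P)$ on $\mc{H}_{NG}$: here the escape function gives no decay, so one cannot invoke Lemma~\ref{smallnorm}. Microlocally near $E_0^*$ the cotangent flow $\Phi^{X_A}_t$ fixes each $\xi\in E_0^*$ (as $d\varphi^{X_A}_t$ restricts to the identity on $E_0$ by commutativity of the $\R^\kappa$-action) and only translates the base point along the $\kappa$-dimensional $\R^\kappa$-orbit. Averaging the corresponding Fourier integral operators against the smooth density $\mu$, supported on the full-dimensional set $\supp\mu\subset\mc{O}\cap\W$, regularizes the Schwartz kernel of $E(\lambda)(\Id-P)$ in the orbit directions via a stationary-phase analysis; combined with the regularity $m\geq 1/2$ near $E_0^*$ encoded in $\mc{H}_{NG}$ through Proposition~\ref{prop:escape_fct_exist}, this yields compactness of $E(\lambda)(\Id-P)$ on $\mc{H}_{NG}$. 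Once this compactness is verified, combining the two pieces produces the desired $R+K$ decomposition and completes the proof.
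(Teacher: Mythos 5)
Your construction of ${\bf Q}(\lambda)$ and the homotopy identity are fine: averaging $-\int_0^T e^{-s\X_A(\lambda)}\,ds$ against $a_k(A)\mu(A)\,dA$ does produce a commuting Lie algebra morphism, and Lemma~\ref{trick} together with $\X_A(\lambda)\int_0^Te^{-s\X_A(\lambda)}ds=\Id-e^{-T\X_A(\lambda)}$ gives $F(\lambda)=\Id-\int_\a\mu(A)e^{-T\X_A(\lambda)}dA$, which is $\Lambda$-scalar. This is in substance the same operator as in the paper's proof, where $F(\lambda)=\Id-\int e^{-\sum_jt_j\X_{A_j}(\lambda)}\prod_j(-\chi_j'(t_j))\,dt$ is likewise an average of propagators $e^{-\X_B(\lambda)}$ over $B$ ranging in a solid $\kappa$-dimensional block of the Weyl chamber. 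The entire difficulty therefore sits in the $R+K$ decomposition, and that is where your argument has genuine gaps.

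The splitting by a cutoff $P$ with $\WF(P)\cap\Gamma_{E_0^*}=\emptyset$ is the wrong one, and both halves fail as written. For the small-norm piece, Lemma~\ref{smallnorm} requires $\Phi_t^{X_A}(\WF(P))\cap\overline{\Gamma}_{E_0^*}=\emptyset$ for all $0\le t\le T$; in the proof of Proposition~\ref{prop:Fredholm} this is arranged by taking $\WF(P)$ inside a small conic neighborhood $\Gamma_1$ of the \emph{invariant} set $E_u^*\oplus E_s^*$, chosen \emph{after} $T$. Your $\WF(P)$ is essentially the whole complement of a neighborhood of $E_0^*$, hence contains covectors $\xi=\xi_0+\xi_s$ with $\xi_0\in E_0^*$ and $\xi_s\in E_s^*$ both nonzero; under $\Phi^{X_A}_t$ with $t>0$ the $E_s^*$ component contracts while the $E_0^*$ component is preserved, so such $\xi$ enter $\Gamma_{E_0^*}$ in finite forward time, after which $G$ no longer decreases. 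The total decrease of $G$ along such an orbit over $[0,T]$ is then bounded independently of $T$, so $\|E(\lambda)P\|_{\mc{L}(\mc{H}_{NG})}$ cannot be made small by taking $T$ large; the claimed bound is unjustified and in fact false for this $P$. Second, the compactness near $E_0^*$ that you single out as the main obstacle is actually the easy, elliptic part: the joint characteristic set of $(\X_{A_1},\dots,\X_{A_\kappa})$ is exactly $E_u^*\oplus E_s^*$ (the annihilator of $E_0$), so on a conic neighborhood of $E_0^*$ away from the zero section at least one $\X_{A_k}$ is elliptic. The paper obtains compactness there via a microlocal partition $\Id-P=\sum_kP_k$ with each $\WF(P_k)$ in the elliptic set of $\X_{A_k}$, an integration by parts in the time variable showing $R_k(\lambda)P_k\X_{A_k}(\lambda)$ is bounded, and an order $-1$ elliptic parametrix $Z_k(\lambda)$ (which is compact on $\mc{H}_{NG}$); an analogous integration by parts in the $A$-variable, using $\X_{A_k}(\lambda)e^{-T\X_A(\lambda)}=-T^{-1}\partial_{a_k}e^{-T\X_A(\lambda)}$, would work for your averaged operator. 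Your stationary-phase sketch is not carried out, and the correct architecture is the reverse of yours: escape-function smallness on a small neighborhood of $E_u^*\oplus E_s^*$, elliptic compactness on its complement (which contains the neighborhood of $E_0^*$).
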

\begin{proof}
Let $T_j>0$ for $j=1,\dots,\kappa$, and consider $\chi_{j}\in C_c^\infty([0,\infty[; [0,1])$ non-increasing with $\chi_j=1$ in $[0,T_j]$ and $\supp \chi_j\in [0,T_j+1] $. Then we set
\[
Q_j'(\lambda) := \int_0^\infty e^{-t_j\X_{A_j}(\lambda)}\chi_{j}(t_j)dt_j
\]
and we make it act on $C^\infty(\M;E)\otimes \Lambda\a^*$ 
by $\widetilde{Q}_j(\lambda):u\otimes w\mapsto (Q_j'(\lambda)u)\otimes \iota_{A_j}\omega$.
As in Proposition \ref{prop:Fredholm}, we compute 
\[
\ d_{\X(\lambda)}\widetilde{Q}_j(\lambda)+\widetilde{Q}_j(\lambda)d_{\X(\lambda)}= {\rm Id}+R_j(\lambda), \quad R_j(\lambda)(u\otimes \omega):=\left(\int_0^\infty e^{-t_jX_{j}(\lambda)}u\chi'_j(t_j)dt_j \right)\otimes w.
\]
and note that $R_j(\lambda)=R_j'(\lambda)\otimes {\rm Id}$ is scalar. We thus have
\begin{equation}\label{dXQ+QdX0} 
d_{\X(\lambda)}Q(\lambda)+Q(\lambda)d_{\X(\lambda)}= F(\lambda), \quad F(\lambda):={\rm Id} -(-1)^{\kappa}\prod_{j=1}^\kappa R_j(\lambda).
\end{equation}
with $Q(\lambda):= \sum_{j=1}^\kappa (-1)^{j-1}\widetilde{Q}_j(\lambda) \prod_{k=1}^{j-1} R_k(\lambda)$.
First we observe that $Q(\lambda)=\delta_{{\bf Q}(\lambda)}$ is the divergence associated to  the Lie algebra morphism 
${\bf Q}(\lambda):\a \to \mc{L}(C^\infty(\M;E))$ defined by 
\[ {\bf Q}_{A_j}(\lambda)=(-1)^jQ'_j(\lambda)\prod_{k=1}^{j-1} R_k(\lambda).\]
We notice that ${\bf Q}_{A_j}(\lambda)$ commutes with ${\bf X}_{A_i}(\lambda)$ for each $i,j$. 
As in the proof of Proposition~\ref{prop:Fredholm}, we have that ${\bf Q}(\lambda)$ maps to $\mc{L}(\mc{H}_{NG})$  and $Q(\lambda)$ is bounded on $\mc{H}_{NG}\Lambda$: notice that here we use Lemma \ref{unifG} as it is important that the order function $m$ satisfies $X_{A_j}^Hm\leq 0$ for $|\xi|$ large enough and for all $j=1,\dots,\kappa$. We take $P$ microsupported in a neighbourhood of $E^\ast_u\oplus E^\ast_s$ and $\WF(P)$ in a sufficiently close conical neighbourhood of $E_u^*\oplus E_s^*$, as in the proof of Proposition~\ref{prop:Fredholm}, and follow the arguments given there, which were based on Lemma \ref{smallnorm}: if $T_j:=T$ is chosen large enough (as in proof of Proposition~\ref{prop:Fredholm}) 
\[
\begin{split}
\prod_{k=1}^\kappa R_k(\lambda)P=& \int_{[T,T+1]^\kappa} e^{-\sum_{j=1}^\kappa t_j\X_{A_j}(\lambda)}P\prod_{j=1}^\kappa \chi'_j(t_j)dt\\
=&  \int_{[T,T+1]^\kappa} (R(t,\lambda)+K(t,\lambda))\prod_{j=1}^\kappa \chi'_j(t_j)dt,
\end{split}
\] 
where $\|R(t,\lambda)\|_{\mc{L}(\mc{H}_{NG})}\prod_j \|\chi_j'\|_{L^\infty}\leq 1/2 $ and $K(t,\lambda)$ is compact on $\mc{H}_{NG}$ for all $t\in [T,T+1]^\kappa$ (both depend on $N,G$). 
This shows that the operator $(F(\lambda)-{\Id})P$ decomposes as $(F(\lambda)-{\Id})P=R(\lambda)+K_1(\lambda)$ 
with $\|R(\lambda)\|_{\mc{L}(\mc{H}_{NG}\Lambda)}<1/2$ and $K_1(\lambda)$ compact on $\mc{H}_{NG}\Lambda$.
Next, we claim that using that  $P\in \Psi^0(\M)$ is scalar with ${\rm WF}({\rm Id}-P)$ not intersecting a conic neighborhood of $E_u^*\oplus E_s^*$, we see that $K_2(\lambda):=(F(\lambda)-{\rm Id})({\rm Id}-P)$ is a compact operator on $\mc{H}_{NG}\Lambda$. Indeed,  let us first take a microlocal partition of $({\rm Id}-P)$ so that $({\rm Id}-P)-\sum_{k=1}^\kappa P_k\in \Psi^{-\infty}(\M)$ 
with $P_k\in \Psi^{0}(\M)$ and ${\rm WF}(P_k)$ not intersecting a conic neighborhood of the characteristic set $\{(x,\xi)\in T^*\M\, |\, \xi(X_{A_k})=0\}$.
Let us show that $R_k(\lambda)P_k$ is compact on $\mc{H}_{NG}$: first,  
\begin{equation}\label{Rkpkcomp}
R_k(\lambda)P_k\X_{A_k}(\lambda)=\int_{T}^{T+1} e^{-t_k{\bf X}_{A_k}(\lambda)}P_k \chi''_k(t_k)dt_k+
R_k(\lambda)[P_k,\X_{A_k}] \in \mc{L}(\mc{H}_{NG}),\end{equation}
where we used that $[P_k,\X_{A_k}]\in \Psi^0(\M)$ and that $e^{-t_k{\bf X}_{A_k}(\lambda)}$ is bounded on $\mc{H}_{NG}$.
Since $\X_k(\lambda)$ is elliptic near ${\rm WF}(P_k)$, we can construct a parametrix 
$Z_k(\lambda)\in \Psi^{-1}(\M)$ so that $\X_{A_k}(\lambda)Z_k(\lambda)-P'_k\in\Psi^{-\infty}(\M)$
for some $P_k'\in \Psi^0(\M)$ with $P_k'P_k-P_k\in \Psi^{-\infty}(\M)$. 
We thus obtain that
\[R_k(\lambda)P_k\X_{A_k}(\lambda)Z_k(\lambda)-R_k(\lambda)P_k\in \Psi^{-\infty}(\M),\] 
but $Z_k(\lambda)$ being compact on $\mc{H}_{NG}$, we get that $R_k(\lambda)P_k$ is compact on $\mc{H}_{NG}$  using \eqref{Rkpkcomp}.
Next, we write 
 \[
 \Big(\prod_{k=1}^\kappa R_k(\lambda)\Big)({\rm Id}-P)-\sum_{j=1}^\kappa  \Big(\prod_{k=1}^\kappa R_k(\lambda)\Big) P_j \in \Psi^{-\infty}(\mc M).
 \]
The former operator is compact since all the $R_k(\lambda)$ are bounded on $\mc{H}_{NG}$ and commute with each other and $R_k(\lambda)P_k$ is compact.
Putting everything together we deduce that $F(\lambda)$ has the desired properties by setting $K(\lambda):=K_1(\lambda)+K_2(\lambda)$. 
 \end{proof}
 \textbf{Remark}: we notice that in the proof above, it is sufficient to take only one of the $T_j$ to be large while the others can be small, as this is sufficient to get the norm estimate $\|R(\lambda)\|_{\mc{L}(\mc{H}_{NG})}<1/2$.
 
As a corollary, using Lemma \ref{lemma:discrete} and Lemma \ref{lem:spectrum-and-eigenvalues}, we deduce the following: 
\begin{prop}\label{prop:zero_cohomology_nonempty}
For an admissible lift of an Anosov action $\X$, the Ruelle-Taylor resonance spectrum is a discrete subset of $\a_\C^*$. Moreover, $\lambda\in \mc F_{NG, A_0}\cap\sigma_{\T, \mc H_{NG}}(-\mathbf X)$ if and only if there is $u\in \mc{H}_{NG}$ such that 
\[
(\X +\lambda)u=0.
\]
\end{prop}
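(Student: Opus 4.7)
The plan is to assemble the three preceding technical ingredients: Lemma~\ref{lemmafordiscreteness} gives a divergence-form parametrix at any single point $\lambda_0\in\mc F_{NG,A_0}$, Lemma~\ref{lemma:discrete} converts such a parametrix into discreteness of the Taylor spectrum in a neighbourhood, and Lemma~\ref{lem:spectrum-and-eigenvalues} turns the membership in the spectrum into an eigenfunction statement. Together with the identification of cohomologies proved in Proposition~\ref{prop:D_Eu}, these deliver both assertions.

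For the discreteness, I would fix an arbitrary $\lambda_0\in\mc F_{NG,A_0}$ and consider the shifted admissible lift $\X(\lambda_0):=\X+\lambda_0$, which is still a Lie algebra morphism into $\OpDiff^1(\M;E)$ with $d_{\X(\lambda_0)}=d_{\X+\lambda_0}$. Applying Lemma~\ref{lemmafordiscreteness} to $\X(\lambda_0)$ produces a Lie algebra morphism ${\bf Q}(\lambda_0):\a\to\mc{L}(\mc H_{NG})\cap\mc{L}(C^\infty(\M;E))$ commuting with $\X(\lambda_0)$ and satisfying
\[
d_{\X(\lambda_0)}\delta_{{\bf Q}(\lambda_0)}+\delta_{{\bf Q}(\lambda_0)}d_{\X(\lambda_0)}=\Id+R(\lambda_0)+K(\lambda_0),
\]
with $\|R(\lambda_0)\|_{\mc L(\mc H_{NG}\Lambda)}<1/2$, $K(\lambda_0)$ compact, and the right hand side $\Lambda$-scalar. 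This is precisely the set-up of Lemma~\ref{lemma:discrete}, whose conclusion is that the Taylor spectrum of $\X(\lambda_0)$ on $\mc H_{NG}$ is discrete in a neighbourhood of $0$. Translating back via the shift, this means that $\sigma_{\T,\mc H_{NG}}(-\X)$ is discrete in a neighbourhood of $\lambda_0$. Since $\lambda_0\in\mc F_{NG,A_0}$ was arbitrary and $\bigcup_{N>0}\mc F_{NG,A_0}=\a_\C^*$, we obtain a discrete subset of $\a_\C^*$; combining this with Proposition~\ref{prop:D_Eu}, the set of Ruelle-Taylor resonances ${\rm Res}_\X$ is discrete in $\a_\C^*$.

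For the ``if and only if'' statement, the key observation is that the operator $F(\lambda_0):=\Id+R(\lambda_0)+K(\lambda_0)$ produced by Lemma~\ref{lemmafordiscreteness} is $\Lambda$-scalar. Hence Lemma~\ref{lem:spectrum-and-eigenvalues} applies to $\X(\lambda_0)=\X+\lambda_0$ and asserts that $0\in\sigma_{\T,\mc H_{NG}}(\X(\lambda_0))$ if and only if there exists a nonzero $u\in\mc D(d_{\X(\lambda_0)})\subset\mc H_{NG}$ with $(\X+\lambda_0)u=0$. Since the complexes of $-\X$ and $\X+\lambda_0$ only differ by a sign convention on the differential, one checks that $\lambda_0\in\sigma_{\T,\mc H_{NG}}(-\X)$ exactly when $0\in\sigma_{\T,\mc H_{NG}}(\X+\lambda_0)$, which finishes the equivalence.

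There is no serious new obstacle beyond what has already been overcome in Lemma~\ref{lemmafordiscreteness}: the whole point of building ${\bf Q}(\lambda)$ as a Lie algebra morphism commuting with $\X(\lambda)$, rather than the more geometric/microlocal $Q(\lambda)$ of Proposition~\ref{prop:Fredholm}, was to make the hypotheses of Lemma~\ref{lemma:discrete} available and the remainder $F(\lambda)$ $\Lambda$-scalar. The only point requiring a small verification is that all the constructions (anisotropic spaces, parametrix, resulting projector $\Pi_0$) are genuinely compatible with the shift $\X\rightsquigarrow\X+\lambda_0$, i.e.\ that the escape function $G$ chosen for $A_0$ still works uniformly as $\lambda_0$ varies in a small disk; this is transparent from the construction since $\lambda_0$ enters only through bounded zeroth-order perturbations and affects $\mc F_{NG,A_0}$ only by a shift.
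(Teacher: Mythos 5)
Your proposal is correct and follows exactly the route the paper intends: the paper states this proposition as a direct corollary of Lemma~\ref{lemmafordiscreteness} (the divergence-form parametrix $Q(\lambda)=\delta_{\mathbf{Q}(\lambda)}$ commuting with $\X(\lambda)$), combined with Lemma~\ref{lemma:discrete} for local discreteness and Lemma~\ref{lem:spectrum-and-eigenvalues} for the eigenfunction characterization, with Proposition~\ref{prop:D_Eu} identifying the Hilbert-space Taylor spectrum with the Ruelle-Taylor resonances. Your translation between $0\in\sigma_{\T,\mc H_{NG}}(\X+\lambda_0)$ and $\lambda_0\in\sigma_{\T,\mc H_{NG}}(-\X)$, and your remark that the $\lambda$-dependence near $\lambda_0$ is absorbed by Lemma~\ref{lemma:discrete} itself, are exactly the points the paper leaves implicit.
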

This completes the proof of Theorem~\ref{theoHn}. In the scalar case (i.e. $E$ is the trivial bundle)  we will show in Corollary~\ref{noresonancesRe>0} below that part 3) of Theorem~\ref{theoHn} can be sharpened using the dynamical parametrix $Q(\lambda)$ in Lemma~\ref{lemmafordiscreteness}  (the same argument also works for admissible lifts under the condition $\|e^{-t\X_{A}}f\|_{\mc{L}(L^\infty)}\leq C$ for all $t\in \R$): 
\begin{cor}\label{noresonancesRe>0}
Let $X$ be an Anosov action. Then one has 
\[
  \mathrm{Res}_X \subset \bigcap_{A\in \mathcal W} \{\lambda\in\a_\C^*~|~\Re(\lambda(A))\leq 0\}.
 \]
\end{cor}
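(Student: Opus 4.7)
The plan is to sharpen the proof of Corollary~\ref{cor:location} by replacing the $L^2$-based estimate of the propagator with an $L^\infty$-based one. The key observation is that in the scalar case, $e^{-tX_A}f = f\circ \varphi_t^{X_A}$ is the pull-back by a diffeomorphism of $\mc M$, so $\|e^{-tX_A}\|_{\mc L(L^\infty)} = 1$ for every $t\in\R$. The same argument applies more generally to admissible lifts satisfying the uniform bound $\sup_{t\in\R}\|e^{-t\X_A}\|_{\mc L(L^\infty)}<\infty$ mentioned in the parenthetical remark of the statement.

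Concretely, fix $A\in\mc W$ with $\Re(\lambda(A))>0$. The dynamical parametrix $\widetilde Q_T(\lambda)$ built from $Q'_T(\lambda)=\int_0^T e^{-tX_A(\lambda)}\,dt$ in the proof of Proposition~\ref{prop:Fredholm} satisfies the identity
\[
d_{X+\lambda}\widetilde Q_T(\lambda)+\widetilde Q_T(\lambda)d_{X+\lambda}=(\Id-e^{-TX_A(\lambda)})\otimes\Id
\]
on $C^{-\infty}(\mc M)\otimes\Lambda\a^*_\C$. Choosing $T$ large so that $\|e^{-TX_A(\lambda)}\|_{\mc L(L^\infty)}=e^{-T\Re(\lambda(A))}<1$, the right-hand side becomes invertible on $L^\infty(\mc M)\otimes\Lambda\a^*_\C$ by a Neumann series, and the argument of Corollary~\ref{cor:location} then yields vanishing of the Taylor cohomology of $d_{X+\lambda}$ on $L^\infty$.

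The remaining step, and the main obstacle, is to transfer this $L^\infty$-cohomology vanishing to the $C^{-\infty}_{E_u^*}(\mc M)$-cohomology that defines Ruelle-Taylor resonances, since $L^\infty$ and $C^{-\infty}_{E_u^*}$ are not directly comparable. I plan to revisit Lemma~\ref{smallnorm}: the factor $\|e^{-tX_A}\|_{\mc L(L^2)}\leq e^{tC_{L^2}(A)}$ appearing there enters only through one final composition with $e^{-tX_A}$, and in the scalar case it can be replaced by the sharper $L^\infty$-bound (essentially $1$). This drops $C_{L^2}(A)$ out of the Fredholm region of Proposition~\ref{prop:Fredholm}, enlarging it to $\{\Re\lambda(A_0)>-Nc_X\}$; its union over $N>0$ covers the half-space $\{\Re\lambda(A)>0\}$, and Proposition~\ref{prop:D_Eu} then identifies the anisotropic and $C^{-\infty}_{E_u^*}$ cohomologies, which must vanish by the $L^\infty$-argument above.

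A clean way to carry out this improvement, avoiding any interpolation between $L^2$ (in which the anisotropic weight $\hat{\mc A}_{NG}$ lives) and $L^\infty$ (in which the propagator is bounded), is to use the purely dynamical parametrix of Lemma~\ref{lemmafordiscreteness}: pick a basis $A_1,\dots,A_\kappa\in\mc W$ with $\Re(\lambda(A_j))>0$ for every $j$, which is possible because $\mc W\cap\{A\in\a\,|\,\Re(\lambda(A))>0\}$ is a nonempty open convex cone in $\a\cong\R^\kappa$ and is therefore $\kappa$-dimensional. In the scalar (or $L^\infty$-bounded) setting each operator $R_k(\lambda)=\int_{T_k}^{T_k+1}e^{-t_kX_{A_k}(\lambda)}\chi_k'(t_k)\,dt_k$ satisfies $\|R_k(\lambda)\|_{\mc L(L^\infty)}\leq e^{-T_k\Re(\lambda(A_k))}$, so taking each $T_k$ large makes $\|\prod_k R_k(\lambda)\|_{\mc L(L^\infty)}<1$; the associated $F(\lambda)=\Id-(-1)^\kappa\prod_k R_k(\lambda)$ is then invertible on $L^\infty$, and feeding this into the cohomological identification above gives the desired improvement $\Re(\lambda(A))\leq 0$.
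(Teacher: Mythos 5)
Your final route is exactly the paper's proof: take the dynamical parametrix of Lemma~\ref{lemmafordiscreteness} with a basis $A_1,\dots,A_\kappa\in\mc W$ on which $\Re\lambda>0$, observe that $\|{\rm Id}-F(\lambda)\|_{\mc L(L^\infty)}<1$ once the $T_j$ are large, and conclude. The only step you leave implicit — how $L^\infty$-invertibility of $F(\lambda)$ kills the $\mc H_{NG}$-cohomology — is handled in the paper by noting that the spectral projector $\Pi_0(\lambda)$ is given by a contour integral valid as a map $C^\infty(\M)\to C^{-\infty}(\M)$, hence vanishes because the resolvent is holomorphic near $z=0$ on $L^\infty$; this is routine and your proposal otherwise matches the paper's argument.
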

\begin{proof} Let $A\in \W$ and assume that $\lambda\in \a^*_\C$ satisfies ${\rm Re}(\lambda(A))>0$, then we will show that $\lambda$ can not be a Ruelle-Taylor resonance.
 We use the parametrix $Q(\lambda)$ of Lemma \ref{lemmafordiscreteness} 
with $A_1:=A$ and $(A_j)_j \in \W^\kappa$ forming a basis of $\a$ with $A_j$ in an arbitrarily small neighborhood of $A_1$ so that ${\rm Re}(\lambda(A_j))>0$ for all $j$. We get that \eqref{dXQ+QdX0} holds with $F(\lambda)$ having discrete spectrum near $z=0$. Let $\Pi_0(\lambda)$ be the spectral projector of the Fredholm operator $F(\lambda)$ at $z=0$, which can be written 
\begin{equation}\label{projector} 
\Pi_0(\lambda) = \frac{1}{2\pi i}\int_{|z|=\eps}(z{\rm Id}-F(\lambda))^{-1}dz
\end{equation}
for some small enough $\eps>0$. We notice that for $f\in L^\infty(\M)$, we have    
\[
\begin{split}
\|({\rm Id}-F(\lambda))f\|_{L^\infty}\leq & \int_{(\R^+)^\kappa} \|e^{-\sum_{j=1}^\kappa t_j\X_{A_j}(\lambda)}f\|_{L^\infty}\prod_{j=1}^\kappa(-\chi'_j(t_j))dt_1\dots dt_\kappa\\
\leq & \|f\|_{L^\infty}e^{-\sum_{j=1}^\kappa T_j\lambda(A_j)} \int_{(\R^+)^\kappa} \prod_{j=1}^\kappa(-\chi'_j(t_j))dt_1\dots dt_\kappa\\
= & \|f\|_{L^\infty}e^{-\sum_{j=1}^\kappa T_j\lambda(A_j)}.
\end{split}
\]
This shows that by choosing the $T_j>0$ (that had been introduced in Lemma~\ref{lemmafordiscreteness}) large enough, $\|({\rm Id}-F(\lambda))\|_{\mc{L}(L^\infty)}<1/2$. In particular $F(\lambda)$ is invertible on $L^\infty$ and therefore $\Pi_0(\lambda)=0$ since the expression \eqref{projector} holds
also as a map $C^\infty(\M)\to C^{-\infty}(\M)$. This ends the proof. 
\end{proof}
Let us end the section with a statement about joint Jordan blocks for an admissible lifts $\mathbf{X}$: Therefore given $\alpha \in \N^\kappa$ we define $\mathbf X^\alpha(\lambda):= \prod_{j=1}^\kappa (\mathbf X_{A_j}+\lambda_j)^{\alpha_j}$.
\begin{prop}\label{prop:jordan_blocks}
 For any Ruelle-Taylor resonance $\lambda\in \Res_{\mathbf X}$ there is $J\in \N^*$ which is the minimal integer such that, whenever for some $u\in C^{-\infty}_{E_u^*}(\mc M)$ and $k\in \N^*$ one has $\mathbf X^\beta(\lambda) u =0$ for all $|\beta|=k$ then $\mathbf X^\alpha(\lambda) u =0$ for all $|\alpha|=J$. Moreover the space of \emph{generalized joint resonant states} is the finite dimensional space given by 
\begin{equation}\label{generalizedjointresonant}
\{u\in C^{-\infty}_{E_u^*}(\mc M)\, |\, \mathbf X^\alpha(\lambda)u=0 \tu{ for all }|\alpha|=J\} \subset \ran \Pi_0(\lambda)
\end{equation}
where $\Pi_0(\lambda)$ is the spectral projector of $F(\lambda)$ at $z=0$, defined in \eqref{projector}.
\end{prop}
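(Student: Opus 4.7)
The plan is to exploit the very special form of the parametrix $Q(\lambda)=\delta_{\mathbf Q(\lambda)}$ built in Lemma~\ref{lemmafordiscreteness}. Writing $X_k:=\mathbf X_{A_k}(\lambda)$ and $Q_k:=\mathbf Q_{A_k}(\lambda)$ in the orthonormal basis $(A_j)_j$ of $\a$, these $2\kappa$ operators all pairwise commute: $[X_j,X_k]=0$ since $\mathbf X$ is an admissible lift, $[X_j,Q_k]=0$ is assumed in Lemma~\ref{lemmafordiscreteness}, and $[Q_j,Q_k]=0$ because $\mathbf Q(\lambda)$ is a Lie algebra morphism out of the abelian $\a$. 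Applying Lemma~\ref{trick} with $\mathbf Y=\mathbf Q(\lambda)$ shows that $F(\lambda)=d_{\mathbf X+\lambda}Q(\lambda)+Q(\lambda)d_{\mathbf X+\lambda}$ is $\Lambda$-scalar, namely $F(\lambda)=F'(\lambda)\otimes\Id$ with $F'(\lambda)=-\sum_{k=1}^\kappa X_kQ_k$ acting on sections of $E$. Consequently the spectral projector $\Pi_0(\lambda)$ in~\eqref{projector} splits as $\Pi_0(\lambda)=\Pi_0'(\lambda)\otimes\Id$, and the finite dimensional space $\ran \Pi_0'(\lambda)$ is contained in $C^{-\infty}_{E_u^*}(\mc M;E)$ and independent of the auxiliary choices of $N,G$, by the intrinsicness argument used in Proposition~\ref{prop:D_Eu} together with Lemma~\ref{lem:P1_properties}.

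The core computation is then the multinomial expansion, valid precisely because all $X_j,Q_k$ commute:
\[
F'(\lambda)^n = (-1)^n\sum_{|\alpha|=n}\binom{n}{\alpha}X^\alpha Q^\alpha = (-1)^n\sum_{|\alpha|=n}\binom{n}{\alpha}Q^\alpha X^\alpha.
\]
If $u\in C^{-\infty}_{E_u^*}(\mc M;E)$ satisfies $\mathbf X^\beta(\lambda)u=0$ for all $|\beta|=k$, then the same identity holds for all $|\beta|\geq k$ (any larger multi-index factors as $\beta=\gamma+\beta'$ with $|\beta'|=k$), so $F'(\lambda)^n u=0$ for every $n\geq k$. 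Picking $N,G$ large enough that $\lambda\in\mc F_{NG,A_0}$ and $u\in\mc H_{NG}$ (possible since $C^{-\infty}_{E_u^*}(\mc M;E)$ is exhausted by such spaces, as in the proof of Lemma~\ref{lem:P1_properties}), the generalized kernel of the Fredholm operator $F'(\lambda)$ on $\mc H_{NG}$ equals $\ran\Pi_0'(\lambda)$, so $u\in\ran\Pi_0'(\lambda)$.

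Now set $W:=\{u\in C^{-\infty}_{E_u^*}(\mc M;E)\,|\,\exists k\in\N,\ \mathbf X^\beta(\lambda)u=0\ \forall |\beta|=k\}$. The preceding step gives $W\subset\ran\Pi_0'(\lambda)$, so $W$ is finite dimensional. It is invariant under each $X_j$: if $u\in W$ with exponent $k$, then $\mathbf X^\beta(\lambda)(X_ju)=X_j\mathbf X^\beta(\lambda)u=0$ for $|\beta|=k$. On the finite dimensional space $W$ each $X_j$ is nilpotent and the family $(X_1,\dots,X_\kappa)$ is commuting, so it can be simultaneously put in strictly upper triangular form. Hence there exists a minimal $J\in\N$ with $X^\alpha|_W=0$ for every $|\alpha|=J$. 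The equality $W=\{u\in C^{-\infty}_{E_u^*}(\mc M;E)\,|\,\mathbf X^\alpha(\lambda)u=0\ \forall|\alpha|=J\}$ is then immediate: $\supseteq$ comes from taking $k=J$ in the definition of $W$, while $\subseteq$ is the defining property of $J$. Finally, the containment in $\ran\Pi_0(\lambda)$ is exactly the identification of degree-zero elements of $\ran\Pi_0(\lambda)=\ran\Pi_0'(\lambda)\otimes\Lambda\a_\C^*$ with $\ran\Pi_0'(\lambda)$.

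The main subtle point is not the Jordan-block analysis on $W$, which is elementary, but rather securing its two inputs: the $\Lambda$-scalar structure of $F(\lambda)$ delivered by Lemma~\ref{trick}, and the intrinsic realization of $\ran\Pi_0'(\lambda)$ as a fixed finite dimensional subspace of $C^{-\infty}_{E_u^*}(\mc M;E)$ that does not depend on the anisotropic space $\mc H_{NG}$; once these are in place, the argument is purely algebraic.
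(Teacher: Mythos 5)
Your proof is correct, and for the central containment $W\subset\ran\Pi_0(\lambda)$ it takes a genuinely different route from the paper. The paper proves this containment by induction on $k$: the base case is the identification of joint resonant states with elements of $\ran\Pi_0(\lambda)$ from the proof of Proposition~\ref{prop:D_Eu}, and the inductive step applies that identification to $\mathbf X^\beta(\lambda)u$ for $|\beta|=k$ and then uses $[\Pi_0(\lambda),\mathbf X^\beta(\lambda)]=0$ to conclude $\mathbf X^\beta(\lambda)(\Pi_0(\lambda)u-u)=0$, hence $u\in\ran\Pi_0(\lambda)$ by the induction hypothesis. You instead exploit the full algebraic structure of the dynamical parametrix: since all the $\mathbf X_{A_j}(\lambda)$ and $\mathbf Q_{A_k}(\lambda)$ of Lemma~\ref{lemmafordiscreteness} pairwise commute, Lemma~\ref{trick} gives $F'(\lambda)=-\sum_k\mathbf X_{A_k}(\lambda)\mathbf Q_{A_k}(\lambda)$ and the multinomial identity $F'(\lambda)^n=(-1)^n\sum_{|\alpha|=n}\binom{n}{\alpha}Q^\alpha X^\alpha$ kills $u$ outright for $n\geq k$, placing $u$ directly in the generalized kernel of the Fredholm operator $F'(\lambda)$, which is $\ran\Pi_0'(\lambda)$. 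Both arguments then finish with the same finite-dimensional linear algebra on the $\mathbf X_{A_j}(\lambda)$-invariant space $\ran\Pi_0(\lambda)$ (the paper leaves this step implicit; your simultaneous strict triangularization of commuting nilpotents makes the existence and characterization of the minimal $J$ explicit). Your version is more computational but arguably more transparent, identifying the generalized resonant states at $\lambda$ in one stroke with the generalized $0$-eigenspace of $F'(\lambda)$; the paper's induction is softer and reuses Proposition~\ref{prop:D_Eu} as a black box without needing the commutativity of the $\mathbf Q_{A_k}(\lambda)$ among themselves. The only points worth flagging (both benign) are that you should note the $\mathbf Q_{A_k}(\lambda)$ also act continuously on distributions, so that $F'(\lambda)^nu=0$ makes sense before choosing $N,G$, and that the independence of $\ran\Pi_0'(\lambda)$ from $N,G$ (Lemma~\ref{lem:P1_properties}) is indeed what lets you place all of $W$ in one fixed finite-dimensional space.
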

\begin{proof}
 Let $\mathcal H_{NG}$ be an anisotropic Sobolev space such that $\lambda\in \mathcal F_{NG,A_0}$. We construct the parametrix from \eqref{dXQ+QdX0} 
 \[
Q(i\lambda) d_{\mathbf X+\lambda} + d_{\mathbf X+\lambda} Q(\lambda) = {\rm Id} - R(\lambda)\otimes {\rm Id},
\]
for some appropriate choice of basis $A_1,\ldots,A_j\in\W\subset \a$, and writing $\psi_j:=-\chi_j'\in C_c^\infty((0,\infty))$, $\mathbf X_j := \mathbf X_{A_j}$ and $\lambda_j:=\lambda(A_j)$ we write
\begin{equation}\label{eq:Eq-Rlambda}
R(\lambda) = \prod_{j=1}^\kappa \int e^{-t_j (\mathbf X_j+\lambda_j)} \psi_j(t_j) dt_j.
\end{equation}
We denote by $\Pi_0(\lambda):\mc H_{NG}\to\mc H_{NG}$ the spectral projector on the generalized eigenspace of $R(\lambda)$ for the eigenvalue 1. Note that it commutes with ${\bf X}_j$ for all $j$, since $R(\lambda)$ does.

We now show by induction that for any $u\in C^{-\infty}_{E_u^*}(\mc M)$ with $\mathbf X^\alpha(\lambda) u = 0$ for all $|\alpha|=k$ we have $u\in \ran(\Pi_0(\lambda))\subset \mc H_{NG}$.  
The start of the induction is simply deduced from \eqref{eq:Eq-Rlambda}: for $u\in \mathcal H_{NG}$ with $(X_j+\lambda_j)u=0$ we deduce 
\[
 R(\lambda) u = \prod_{j=1}^\kappa\int \psi_j(t_j) e^{-t_j(X_j+\lambda_j)} u\, dt_j = u, 
\]
thus $u\in \ran(\Pi_0(\lambda))$.  Next we show that if the property is satisfied at step $k$ then it is at step $k+1$: if for $u \in C^{-\infty}_{E_u^*}(\mc M)$, $\mathbf X^\alpha(\lambda) u=0$ for all $|\alpha|=k+1$ then for any $|\beta|=k$ we have $\mathbf X_{A_j}(\lambda)(\mathbf X^\beta(\lambda) u) =0$ for $j=1,\ldots,\kappa$.
 Thus, by the the same argument as above we know $\mathbf X^\beta(\lambda) u \in \ran(\Pi_0(\lambda))$. As $[\Pi_0(\lambda),\mathbf X^\beta(\lambda)]=0$ we conclude $\mathbf X^\beta(\lambda)(\Pi_0(\lambda) u -u)= 0$. 
 Consequently, by induction hypothesis, $\Pi_0(\lambda) u -u \in \ran(\Pi_0(\lambda))$ and the claim follows. 
The statement of the proposition follows because $\ran(\Pi_0(\lambda))$ is a finite dimensional $\mathbf X_{A_i}(\lambda)$ invariant subspace. 
\end{proof}

We also notice that the non-triviality of the space \eqref{generalizedjointresonant} (with $J$ minimal) implies that $\lambda$ is a Ruelle-Taylor resonance, since for $u$ in this space, there is an $\alpha$ with $|\alpha|=J-1$ such that $v:=\X^\alpha(\lambda)u\not=0$ satisfies $v\in {\rm Res}_{\X,\Lambda^0}(\lambda)$.
We also note that equality in \eqref{generalizedjointresonant} does in general not hold. One rather has:
\begin{prop}\label{prop:ranPi}
 If $\Pi_0(\lambda)$ is the spectral projector of $R(\lambda)$ from $\eqref{eq:Eq-Rlambda}$ then 
 \[
  \ran \Pi_0(\lambda) = \bigoplus_{\eta\in \Res_\mathbf X,~~ \prod_j \hat\psi_j(-i(\lambda_j-\eta_j))=1} \{u\in C^{-\infty}_{E_u^*}(\mc M)\, |\, \mathbf X^\alpha(\eta)u=0 \tu{ for all }|\alpha|=J\}
 \]
 where $J\in \N$ is the integer from Proposition \ref{prop:jordan_blocks}.
\end{prop}

\begin{proof}
 First note that $\ran \Pi_0(\lambda)$ is finite dimensional and $\mathbf X_j$ invariant, thus we can decompose the space into joint generalized eigenstates. If $\eta$ is such a joint eigenvalue then Proposition~\ref{prop:zero_cohomology_nonempty} implies that $\eta$ is also a Ruelle-Taylor resonance. Now let $u\in \ran \Pi_0(\lambda)$ be a joint eigenstate of ${\bf X}$ with eigenvalue $\eta$ then by \eqref{eq:Eq-Rlambda}
 \[
  R(\lambda) u = \prod_{j=1}^\kappa \int e^{-t_j (\lambda_j-\eta_j)} \psi_j(t_j) dt_j  = \prod_{j=1}^\kappa \hat\psi_j(-i(\lambda_j-\eta_j))u.
 \]
Thus $u$ is an eigenstate of $R(\lambda)$ but as $u$ is also required to be in the generalized eigenspace of eigenvalue $1$, we deduce that
$\prod_j\hat\psi_j(-i(\lambda_j-\eta_j))=1$. This shows, that the left hand side is contained in the right hand side. 

For the converse inclusion we note that any joint resonant state $(X_j+\eta_j)u=0$ whose joint resonance fulfills $\prod_j \hat\psi_j(-i(\lambda_j-\eta_j))=1$ is an eigenstate of $R(\lambda)$ with eigenvalue $1$ and thus contained in $\ran(\Pi_0(\lambda))$. For the generalized eigenstates of higher order we argue as above in Proposition~\ref{generalizedjointresonant} by induction. 
\end{proof}

\section{The leading resonance spectrum}\label{sec:resonance-at-zero}
In this section we study the leading resonance spectrum, i.e. those resonances with vanishing real part and show that they give rise to particular measures and are related to mixing properties of the Anosov action. In this section the bundle $E$ will be trivial.

\subsection{Imaginary Ruelle-Taylor resonances in the non-volume preserving case.}\label{sec:ia_non_volume}

In this section, we investigate the purely imaginary Ruelle-Taylor 
resonances and in particular the resonance at $0$ for the action on functions. We assume that the Anosov action $X$ does not necessarily preserve a smooth invariant measure. We choose a basis $A_1,\dots,A_\kappa$ of $\a$, with dual basis $(e_j)_j$ in $\a^*$, and set $X_j:=X_{A_j}$, and we use $dv_g$ the smooth Riemannian probability measure on $\M$. 
Let us choose $i\lambda \in i \a^\ast$ purely imaginary and fix non-negative functions $\chi_j\in C_c^\infty(\R^+)$, equal to $1$ on a large interval $[0,T_j]$, with $\chi_j'\leq 0$ and use the parametrix $Q(i\lambda)$ in the divergence form from Lemma \ref{lemmafordiscreteness} so that
by \eqref{dXQ+QdX0} 
\[
Q(i\lambda) d_{X+i\lambda} + d_{X+i\lambda} Q(i\lambda) = {\rm Id} - R(i\lambda)\otimes {\rm Id},
\]
and writing $\psi_j:=-\chi_j'\in C_c^\infty((0,\infty))$ and $\lambda_j:=\lambda(A_j)$
\begin{equation}\label{eq:Eq-R}
R(i\lambda) = \prod_{j=1}^\kappa \int e^{-t_j (X_j+i \lambda_j)} \psi_j(t_j) dt_j.
\end{equation}
We proved that $R(i\lambda)$ has essential spectral radius $<1$ in the anisotropic space $\mc{H}_{NG}$, and the resolvent $(R(i\lambda)-z)^{-1}$ is meromorphic outside $|z|<1-\eps$ for some $\eps$, and the poles in $|z|>1-\eps$ are the eigenvalues of $R(i\lambda)$.
Moreover, for $f\in L^\infty$, one has 
\begin{equation}\label{Linfty_estimate} 
\|R(i\lambda)f\|_{L^\infty}\leq \|f\|_{L^\infty}\prod_{j=1}^\kappa \int_{\R} \psi_j(t_j) dt_j= \|f\|_{L^\infty}.
\end{equation}
Since $R(i\lambda)$ is bounded, for $|z|$ large enough one has on $\mc{H}_{NG}$
\begin{equation}\label{resolventofR}
(z-R(i\lambda))^{-1} = z^{-1}\sum_{k\geq 0} z^{-k}R(i\lambda)^k
\end{equation}
but the $L^\infty$ estimate \eqref{Linfty_estimate} shows that this series 
converges in $\mc{L}(L^\infty)$ and is analytic for $|z|>1$. 
Therefore, using the density of $C^\infty(\M)$ in $\mc{H}_{NG}$, we deduce that $R(i\lambda)$ has no eigenvalues in $|z|>1$. We will use the notation $\cjg u,v\cjd$ for the distributional pairing associated to the Riemannian measure $dv_g$ fixed on $\M$ that also extends to a complex bilinear pairing $\mc{H}_{NG}\times \mc{H}_{-NG}\to \C$; in particular if $u,v\in L^2(\M)$, this is simply $\int_{\M} uv \, dv_g$. Accordingly, we also write $\cjg u,v\cjd_{L^2}$ for the 
pairing  $\int_{\M}u\bar{v}\, dv_g$ and its sesquilinear extension to the pairing $\mc{H}_{NG}\times \mc{H}_{-NG}\to \C$.

The next three lemmas (Lemma~\ref{lem:nojordan}) characterize the spectral projector of $R(i\lambda)$ onto the possible eigenvalue 1. Keep in mind that by Lemma~\ref{lem:critereFredholm} this spectral projector is closely related to the Ruelle Taylor resonant states. Finally in Proposition~\ref{prop:SRBmeasures} we will use the knowledge about this spectral projector to characterize the leading resonance spectrum and to define physical measures.
\begin{lemma}\label{lem:nojordan}
Let $\lambda\in i\a^*$. If $\tau$ is an eigenvalue of $R(i\lambda)$ with modulus $1$, it has no associated Jordan block, i.e. $(z-R(i\lambda))^{-1}$ has at most a pole of order $1$ at $z=\tau$.
\end{lemma}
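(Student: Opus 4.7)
The strategy is a proof by contradiction. Assume $\tau$ with $|\tau|=1$ is an eigenvalue of $R(i\lambda)$ on $\mc{H}_{NG}$ admitting a nontrivial Jordan block; then one finds $u,w\in\mc H_{NG}$ with $w\neq 0$, $R(i\lambda)w=\tau w$ and $R(i\lambda)u=\tau u+w$, and iterating yields
\[
R(i\lambda)^n u = \tau^n u + n\tau^{n-1}w,\qquad n\ge 0.
\]
In particular the orbit $\{R(i\lambda)^n u\}$ grows linearly in $\mc H_{NG}$. The plan is to obstruct this growth by transporting the $L^\infty$ contraction estimate \eqref{Linfty_estimate} to a uniform bound on a suitable dual pairing.

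First I would upgrade \eqref{Linfty_estimate} to a dual contraction statement. On the Banach space of complex Radon measures on $\M$, the transpose acts as
\[
R(i\lambda)^\ast\mu = \prod_{j=1}^\kappa\int\psi_j(t_j)\,e^{-it_j\lambda_j}\,(\varphi_{-t_j}^{X_j})_\ast\mu\,dt_j,
\]
and, since push-forwards by diffeomorphisms preserve total variation, $\psi_j\ge 0$ integrates to $1$, and the phase factors have modulus $1$, one obtains $\|(R(i\lambda)^\ast)^n\mu\|_{\mathrm{TV}}\le \|\mu\|_{\mathrm{TV}}$ for all $n$: hence $R(i\lambda)^\ast$ is power-bounded on Radon measures. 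Next, using the finite rank of the spectral projector $\Pi_\tau$ on $\mc H_{NG}$ together with the density of $C^\infty(\M)$ in $\mc H_{NG}$, I would choose $\phi\in C^\infty(\M)$ such that $\Pi_\tau\phi=u$, so that by commutativity $\Pi_\tau R(i\lambda)^n\phi = R(i\lambda)^n u = \tau^n u + n\tau^{n-1}w$.

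Then I would pair this identity with a finite complex measure $\mu$ whose wavefront set is contained in $E_s^\ast$. By Proposition~\ref{prop:D_Eu} the generalized eigenvectors $u,w$ lie in $C^{-\infty}_{E_u^\ast}(\M)$, so $\WF(u)\cup\WF(w)$ is disjoint from $E_s^\ast\supset\WF(\mu)$ and the pairings $\langle u,\mu\rangle$ and $\langle w,\mu\rangle$ are well defined by H\"ormander's criterion; moreover such a $\mu$ extends to a continuous linear functional on $\mc H_{NG}$. The duality identity
\[
\big\langle R(i\lambda)^n\Pi_\tau\phi,\mu\big\rangle = \big\langle \phi,(R(i\lambda)^\ast)^n\Pi_\tau^\ast\mu\big\rangle
\]
then provides the required contradiction: its right-hand side is bounded by $\|\phi\|_{L^\infty}\|\Pi_\tau^\ast\mu\|_{\mathrm{TV}}$ uniformly in $n$, while the left-hand side equals $\tau^n\langle u,\mu\rangle+n\tau^{n-1}\langle w,\mu\rangle$ and grows linearly whenever $\langle w,\mu\rangle\neq 0$.

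The main obstacle is the last ingredient: producing a \emph{finite complex measure} $\mu$ with $\WF(\mu)\subset E_s^\ast$ and $\langle w,\mu\rangle\neq 0$. This is a non-degeneracy statement for the duality between the generalized eigenspace of $R(i\lambda)$ on $\mc H_{NG}$ and that of $R(i\lambda)^\ast$ on the dual anisotropic space built from the escape function $-G$, which accommodates distributions with wavefront in $E_s^\ast$ by symmetry of Propositions~\ref{prop:escape_fct_exist} and \ref{prop:D_Eu}. The improvement from a distribution with wavefront in $E_s^\ast$ to a genuine bounded measure (which is what allows the $L^\infty$--$\mathrm{TV}$ duality used above) comes from the contraction of $R(i\lambda)^\ast$ on Radon measures and a bootstrap on the finite-dimensional adjoint generalized eigenspace at $\tau$.
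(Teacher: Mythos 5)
Your core mechanism --- weak power-boundedness of $R(i\lambda)$ against the linear growth $R(i\lambda)^n u=\tau^n u+n\tau^{n-1}w$ forced by a Jordan block --- is the same as the paper's, and your total-variation contraction for $R(i\lambda)^*$ on Radon measures is a correct dualization of \eqref{Linfty_estimate}. But there is a genuine gap at exactly the point you flag: you need a \emph{finite complex measure} $\mu$ with $\WF(\mu)\subset E_s^*$ and $\langle w,\mu\rangle\neq 0$. A priori the dual generalized eigenvectors of $R(i\lambda)^*$ at $\tau$ are only distributions in $\mc H_{-NG}$ with wavefront set in $E_s^*$, and upgrading them to bounded measures is precisely the content of the later results (Lemma~\ref{lem:form_of_Pi}, Proposition~\ref{prop:SRBmeasures}), whose proofs rest on \eqref{eq:projector_as_lim_Rk}, i.e.\ $R(i\lambda)^k\to\Pi(i\lambda)$; that convergence in turn requires the absence of Jordan blocks and of peripheral spectrum other than $\tau=1$ --- the very statements being established here and in Lemma~\ref{lem:tau=1}. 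The ``bootstrap on the adjoint generalized eigenspace'' is therefore circular as sketched, and the TV-contraction of $R(i\lambda)^*$ by itself does not show that an eigendistribution is a measure.

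The paper avoids this entirely by never evaluating $R(i\lambda)^n$ on the non-smooth generalized eigenvector: for $u,v\in C^\infty(\M)$ one gets $|\langle R(i\lambda)^k u,v\rangle|\le \|u\|_{L^\infty}\|v\|_{L^\infty}$ directly from positivity of the kernel and $R(0)^k1=1$, hence the Neumann series \eqref{resolventofR} yields $|\langle z(z-R(i\lambda))^{-1}u,v\rangle|\le C(|z|-1)^{-1}$ for $|z|>1$; a pole of order $\ge 2$ at some $\tau$ with $|\tau|=1$ would force faster blow-up of this pairing as $z\to\tau$ radially for suitable smooth $u$ and $v$ (using density of $C^\infty$ in $\mc H_{NG}$ and the fact that smooth densities separate points of $C^{-\infty}(\M)$). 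To repair your argument along your own lines, replace the pairing with a dual eigenmeasure by a pairing with an arbitrary smooth density $v\,dv_g$ and argue through the resolvent's Laurent expansion rather than through an explicit Jordan chain.
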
 
\begin{proof}
We take $u\in \mc{H}_{NG}$ such that $(R(i\lambda)-z)^{-1} u$ has a pole of order $> 1$ at $z=\tau$. 
By density of $C^\infty$ in $\mc{H}_{NG}$, we can always assume that $u$ is smooth. 
Denoting by $\psi^{(k)}=\psi*\dots*\psi$ ($k$-th convolution power), we can write
\[
R(i\lambda)^k  = \prod_{j=1}^\kappa \int_{\R}e^{-t_j X_j(i\lambda) } \psi_j^{(k)} (t_j) dt_j .
\]
Note that $R(0)1=1$.
We take $v$ another smooth function, then 
\[\begin{split}
|\cjg R(i\lambda)^k u, v\cjd|  =& \Big| \int_{\R^\kappa} \prod_{j=1}^\kappa \psi_j^{(k)}(t_j)e^{-i\sum t_j\lambda_j} 
\Big(\int_{\M}  v  e^{-\sum t_j X_j} u \, dv_g\Big)\, dt_1 \dots dt_{\kappa} \Big| \\
\leq &  |v|_{L^\infty} |u|_{L^\infty} R(0)^k1=|v|_{L^\infty} |u|_{L^\infty}.
\end{split}
\]
We deduce that for $|z|>1$
\[
|\cjg z (z-R(i\lambda))^{-1} u, v\cjd| \leq \sum_{k=0}^\infty |z|^{-k} |v|_{L^\infty} |u|_{L^\infty} = |u|_{L^\infty}|v|_{L^\infty}(1-|z|^{-1})^{-1}.
\]
 This is in contradiction with the assumption that $\tau$ is a pole is of order $>1$.
\end{proof}
Then we can prove the following 
\begin{lemma}\label{lem:tau=1}
For $\lambda=\sum_{j=1}^\kappa \lambda_je_j\in \a^*$, $R(i\lambda)$ has an eigenvalue of modulus $1$ on $\mc{H}_{NG}$ if and only if $i\lambda$ is a Ruelle-Taylor resonance. In that case, 
the only eigenvalue of modulus $1$ of $R(i\lambda)$ in $\mc{H}_{NG}$ is $\tau=1$ and the 
eigenfunctions of $R(i\lambda)$ at $\tau=1$ are the joint Ruelle resonant states of $X$ at $\lambda$. Moreover, if $\Pi(i\lambda)$ is the spectral projector  of $R(i\lambda)$ at $\tau=1$, one has, as bounded operators in $\mc{H}_{NG}$,
\begin{equation}\label{eq:projector_as_lim_Rk}
\lim_{k\to \infty} R(i\lambda)^k =\Pi(i\lambda).
\end{equation}
\end{lemma}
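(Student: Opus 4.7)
My plan combines the joint eigenspace decomposition of the commuting family $(X_j)$ on the finite-dimensional modulus-one eigenspaces of $R(i\lambda)$ with two key inputs: the location of Ruelle-Taylor resonances (Corollary \ref{noresonancesRe>0}) and the $L^\infty$ bound \eqref{Linfty_estimate}. The easy direction ``$i\lambda \in \mathrm{Res}_X \Rightarrow 1 \in \sigma(R(i\lambda))$'' is immediate: any joint resonant state $u$ satisfies $e^{-t_j(X_j + i\lambda_j)}u = u$, hence $R(i\lambda)u = u$ because each $\psi_j$ integrates to $1$.

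For the converse and the identification $\tau = 1$, I would start from $R(i\lambda)u = \tau u$ with $|\tau| = 1$ and $u \neq 0$. By Lemma \ref{lem:nojordan} combined with the Fredholm property, the generalized eigenspace $V_\tau$ reduces to the true eigenspace and is finite-dimensional. Since $R(i\lambda)$ commutes with each $X_j$, $V_\tau$ is invariant under the commuting family $X_1, \dots, X_\kappa$, so I can pick a joint true eigenvector $v \in V_\tau$ satisfying $X_j v = \alpha_j v$. An intrinsicness argument modelled on Lemma \ref{lem:P1_properties} shows that $V_\tau \subset C^{-\infty}_{E_u^*}(\mc M)$, so $v$ is a joint Ruelle resonant state for the resonance $\mu$ defined by $\mu(A_j) := -\alpha_j$. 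Applying Corollary \ref{noresonancesRe>0} with each basis element $A_j \in \mc W$ yields $\Re \alpha_j \geq 0$ for every $j$. The eigenvalue equation then reads
\[
\tau = \prod_{j=1}^\kappa \int_\R e^{-t_j(\alpha_j + i\lambda_j)}\psi_j(t_j)\, dt_j,
\]
and the chain of inequalities $|\tau| \leq \prod_j \int e^{-\Re(\alpha_j)t_j}\psi_j(t_j)\,dt_j \leq 1$ forces first $\Re \alpha_j = 0$ (since $\Re \alpha_j > 0$ would give strict inequality because $\psi_j \geq 0$ has support of positive measure in $(0,\infty)$) and then $\alpha_j + i\lambda_j = 0$ (since the Fourier transform of a smooth non-Dirac unit-mass density has modulus $1$ only at the origin). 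Hence $\alpha_j = -i\lambda_j$ and $\tau = 1$.

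The main obstacle of the proof is to show that every element of $V_1 := \ker(R(i\lambda)-\mathrm{Id})$ is a true joint resonant state and not merely a generalized one, i.e., that there are no non-trivial Jordan blocks for $X_j + i\lambda_j$ on $V_1$. My plan here is a finite-dimensional contradiction: set $N_j := (X_j + i\lambda_j)|_{V_1}$, a commuting family of nilpotent operators on the finite-dimensional invariant subspace $V_1$. Were some $N_j v \neq 0$, the vector-valued polynomial $e^{-tN_j}v = v - t N_j v + \tfrac{t^2}{2}N_j^2 v - \dots$ would grow polynomially in $|t|$ in the (unique up to equivalence) norm topology of $V_1$. This has to be balanced against the isometry identity $\|e^{-t(X_j+i\lambda_j)}f\|_{L^\infty} = \|f\|_{L^\infty}$ valid for all $t \in \R$: exploiting the inclusion of $V_1$ into the intersection of the $\mc H_{N'G'}$ for appropriate escape functions (gaining Sobolev regularity until the elements are continuous) embeds $V_1$ into $L^\infty$, so the two norms on $V_1$ are equivalent and polynomial growth contradicts the uniform bound, forcing $N_j = 0$. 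Hence $V_1$ coincides with the joint Ruelle resonant state space at $i\lambda$.

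Finally, for the limit \eqref{eq:projector_as_lim_Rk}: $R(i\lambda)$ has essential spectral radius strictly less than $1$ (Lemma \ref{lemmafordiscreteness}), its only modulus-one eigenvalue is $\tau = 1$ (just proved), and that eigenvalue carries no Jordan block (Lemma \ref{lem:nojordan}). The decomposition $\mc H_{NG} = \ran \Pi(i\lambda) \oplus \ker \Pi(i\lambda)$ yields $R(i\lambda)|_{\ran\Pi(i\lambda)} = \mathrm{Id}$ and $R(i\lambda)|_{\ker\Pi(i\lambda)}$ of spectral radius $<1$, so its iterates decay geometrically in operator norm, confirming $R(i\lambda)^k \to \Pi(i\lambda)$ in $\mc L(\mc H_{NG})$.
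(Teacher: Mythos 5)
Most of your argument coincides with the paper's: the easy direction via $e^{-t_j(X_j+i\lambda_j)}u=u$, the selection of a joint eigenvector in the (Jordan-block-free, by Lemma~\ref{lem:nojordan}) modulus-one eigenspace, the use of Corollary~\ref{noresonancesRe>0} to get $\Re\alpha_j\geq 0$, the Fourier-transform rigidity $|\hat\psi_j|=1\Rightarrow$ argument forcing $\alpha_j=-i\lambda_j$ and $\tau=1$, and the standard spectral decomposition $R=\Pi+K$ for the limit \eqref{eq:projector_as_lim_Rk}. All of that is sound.

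The genuine gap is in your treatment of the ``main obstacle'', i.e.\ showing that the $N_j:=(X_j+i\lambda_j)|_{V_1}$ vanish. You claim that $V_1$ embeds into $L^\infty$ by ``gaining Sobolev regularity until the elements are continuous'' through the intersection of the spaces $\mc H_{N'G'}$. This is false: every admissible escape function is required to have order function $m\leq -1/4$ in a conic neighborhood of $E_u^*$ (Definition~\ref{def:escape_fct}), so the intersection of the $\mc H_{N'G'}$ is contained in $C^{-\infty}_{E_u^*}$, not in $C^0$; elements of $\ran\Pi(i\lambda)$ are genuine distributions along $E_u^*$ and there is no mechanism to upgrade their regularity there. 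Without the $L^\infty$ embedding your ``polynomial growth versus uniform bound'' contradiction has no norm to run in: on $\mc H_{NG}$ the propagator $e^{-tX_j}$ is only bounded uniformly on compact time intervals (with $L^2$-type growth governed by $C_{L^2}$), and the distributional elements of $V_1$ cannot be paired directly with the sup-norm isometry. The paper closes this step by a different mechanism: by Proposition~\ref{prop:jordan_blocks}, $\ran\Pi(i\lambda)$ equals the space of generalized joint resonant states and is therefore \emph{independent of the cutoffs} $\psi_j$; hence the identity $\int M(t)\prod_j\psi_j(t_j)\,dt=\Id$ for the matrix $M(t)$ of $e^{-\sum_j t_j(X_j+i\lambda_j)}$ on $\ran\Pi(i\lambda)$ holds for every admissible choice of $\psi_j$, and letting $\psi_j\to\delta_{T_j}$ gives $M(T)=\Id$ for all $T$, i.e.\ $(X_j+i\lambda_j)\Pi(i\lambda)=0$. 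You need either this variation-of-cutoff argument or some substitute for it; as written, your step does not go through.
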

\begin{proof}First, if $i\lambda$ is a Ruelle-Taylor resonances, Proposition~\ref{prop:jordan_blocks} implies that $R(i\lambda)$ has 1 as an eigenvalue and the resonant states are included in the range of the spectral projector of $R(i\lambda)$ at 1. 

Conversely, let $\Pi(i\lambda)$ be the spectral projector of $R(i\lambda)$ at $\tau\in \mathbb{S}^1:$ it commutes with the $X_j$, so we can use Lemma \ref{lem:taylor_finite_dim} to decompose $\ran \Pi(i\lambda)$ in terms of joint eigenspaces for $X_j$. 
Let $u$ be a joint-eigenfunction of $X_j$ in $\ran \Pi(i\lambda)$, with $X_ju=\zeta_ju$. By Lemma \ref{lem:nojordan}, $R(i\lambda)$ has no Jordan block at $\tau$, thus $u\in \mc{H}_{NG}$ is a non-zero eigenfunction of $R(i\lambda)$ with eigenvalue $\tau\in \mathbb{S}^1$. Then
\[
\tau u=R(i\lambda)u  = u \int_{\R^\kappa} \prod_{j=1}^\kappa e^{-t_j( \zeta_j + i \lambda_j)}\psi_j(t_j) dt_j = u \prod_{j=1}^\kappa \hat{\psi}_j(\lambda_j - i \zeta_j ).
\]
For $\tau$ to have modulus $1$, we need $\prod_{j=1}^\kappa |\hat{\psi}_j(\lambda_j - i \zeta_j)|=1$. But since $\int_{\R} \psi_j=1$ and the  $\zeta_j$'s have non-negative real part, 
\[\begin{split} 
|\hat{\psi}_j(\lambda_j - i \zeta_j)|\leq \int_{\R}e^{-t{\rm Re}(\zeta_j)}\psi_j(t)dt\leq 1
 \end{split}\]
so ${\rm Re}(\zeta_j)=0$ and $|\hat{\psi}_j(\lambda_j -i \zeta_j)|=1$ for all $j$.
But then there is $\alpha\in\R$ so that
$1=\int_\R \psi_j(t)=\int_{\R}\cos(t(\lambda_j+{\rm Im}(\zeta_j))+\alpha)\psi_j(t)dt$
and thus $\cos(t(\lambda_j+{\rm Im}(\zeta_j))+\alpha)=1$ on $\supp(\psi_j)$ since $\psi_j\geq 0$, but this implies that $\zeta_j = -i \lambda_j$ and $\alpha\in 2\pi\mathbb Z$. Then we get $\tau = 1$. In particular, 
\[R(i\lambda)=\Pi(i\lambda)+K(i\lambda)\]
with $K(i\lambda)\Pi(i\lambda)=\Pi(i\lambda)K(i\lambda)=0$, and $K(i\lambda)$ having spectral radius $r<1$ on $\mc{H}_{NG}$, thus satisfying that for all $\eps>0$, there is $n_0$  large so that for all $n\geq n_0$ 
\[ \|K(i\lambda)^n\|_{\mc{L}(\mc{H}_{NG})}\leq (r+\eps)^n.\]
We can chose $r+\eps<1$, which implies that 
\begin{equation}\label{boundRila}
\forall n\geq n_0, \,\, R(i\lambda)^n=\Pi(i\lambda)+K(i\lambda)^n \to \Pi(i\lambda) \textrm{ in }\mc{L}(\mc{H}_{NG})
\end{equation} 
proving \eqref{eq:projector_as_lim_Rk}.

To conclude the proof, we want to prove that $(X_j+i\lambda_j)\Pi(i\lambda)=0$ for all $j=1,\dots,\kappa$. By the discussion above, $0$ is the only joint eigenvalue 
of $(X_1+i\lambda_1,\dots,X_\kappa+i\lambda_\kappa)$ on $\ran \Pi(i\lambda)$, i.e. there is $J>0$ such that $\prod_{j=1}^\kappa (X_j+i\lambda_j)^{\alpha_j}\Pi(i\lambda)=0$ for all multi-index $\alpha\in\N^\kappa$ with length $|\alpha|=J$. We already know that $R$ has no Jordan block, and we want to deduce that this is also true for the $X_j$'s. By Proposition \ref{prop:jordan_blocks}, we get
\[ \ran \Pi(i\lambda)=\{u\in C^{-\infty}_{E_u^*}(\M)\, |\, \prod_{j=1}^\kappa (X_j+i\lambda_j)^{\alpha_j}u=0,\,\forall \alpha\in \N^\kappa, |\alpha|=J \}.\]
In particular this space does not depend on the choice of $\chi_j$ (and thus $\psi_j$).
The operator $e^{-\sum_jt_j(X_j+i\lambda_j)}: \ran \Pi(i\lambda)\to \ran \Pi(i\lambda)$ is represented by a finite dimension matrix $M(t)$ with $t=(t_1,\dots,t_\kappa$), and $R(i\lambda)|_{\ran \Pi(i\lambda)}={\rm Id}$ (since $R(i\lambda)$ has no Jordan block), thus
\[ {\rm Id}= \int_{\R^\kappa} M(t)\psi_j(t_j)dt_1\dots dt_\kappa \]
for all choices of $\chi_j$ (and $\psi_j=-\chi_j'$). We can thus take, for $T=(T_1,\dots,T_\kappa)$, the family $\psi_j$ converging to the Dirac mass $\delta_{T_j}$ and we obtain $M(T)={\rm Id}$. This shows that $M(t)={\rm Id}$ for all $t\in \R_+^\kappa$ large enough such that Lemma~\ref{lemmafordiscreteness} can be applied and therefore $(X_j+i\lambda_j)\Pi(i\lambda)=0$ for all $j$. This implies that $\ran \Pi(i\lambda)$ is exactly the space of Ruelle resonant states for $X$ at $i\lambda$. 
\end{proof}
From what we have shown in Lemma~\ref{lem:tau=1}, we deduce that we can write the spectral projector as $\Pi(i\lambda)f=\sum_{k=1}^J v_k\cjg f,w_k\cjd_{L^2}$ with $v_k\in \mc{H}_{NG}$ spanning the space of joint Ruelle resonant states of the resonance $i\lambda$ and $w_k\in \mc{H}_{NG}^*\simeq \mc{H}_{-NG}$. Recall that we have shown that the space of joint Ruelle resonant states (i.e. the range of $\Pi(i\lambda)$) is intrinsic, i.e. does not depend on the precise form of the parametrix.
But surely the operator $R(i\lambda)$ depends on the choice of the cutoff functions $\psi_j$ (see \eqref{eq:Eq-R}) and thus also $\Pi(i\lambda)$ might depend on that choice.
In order to see that this is not the case, let us consider $X_j^* = -X_j +\textup{div}_{v_g}(X_j)$ which are the adjoints with respect to the fixed measures $v_g$. Note that by the commutativity of the $X_j$, the operators $X_j^*$ also commute and are  admissible operators (in the sense of Definition~\ref{def:admissible_lift}) for the inverted Anosov action $\tau^-(a):=\tau(-a)$ which is obviously again an Anosov action (with the same positive Weyl chamber after swaping the stable and unstable bundles). Therefore we can apply the results of Section~\ref{sec:Taylor-pseudors} to the admissible operators $X_j^*$, in particular they have discrete joint spectrum on the spaces $\mathcal H_{-NG}$. 
Using $(X_j+i\lambda_j)\Pi(i\lambda)=0$ and the fact that $[X_j,\Pi(i\lambda)]=0$ we deduce that $(X_j^*-i\lambda_j)w_k=0$ and thus all $w_k$, $k=1,\ldots, J$ are joint resonant states of the $X_j^*$. 
We can even see that they span the space of joint resonant states: one can perform the same parametrix construction Lemma~\ref{lemmafordiscreteness} to $X_j^*$
\[
 Q_{X^*}(i\lambda) d_{X^*+i\lambda} + d_{X^*+i\lambda} Q_{X^*}(i\lambda) = {\rm Id} - R_{X^*}(i\lambda)\otimes {\rm Id},
\]
and if we choose the same cutoff functions as in the parametrix for $X_j$ at the beginning of this section we find
\begin{equation}\label{eq:Eq-R'}
R_{X^*}(i\lambda) = \prod_{j=1}^\kappa \int e^{-t_j (X_j^*+i \lambda_j)} \psi_j(t_j) dt_j.
\end{equation}
In particular $R_{X^*}(-i\lambda) = (R_{X}(i\lambda))^*$ as bounded operators on $\mc{H}_{-NG}$,  where the adjoint is defined by 
$\cjg R_{X}(i\lambda)f,f'\cjd_{L^2}=\cjg f,(R_{X}(i\lambda))^*f'\cjd_{L^2}$ for all $f\in \mc{H}_{NG}$, $f'\in \mc{H}_{-NG}$. 
If $\Pi_{X^*}(i\lambda)$ is the spectral projector of $R_{X^*}(i\lambda)$ onto the eigenvalue 1 then we obtain $\Pi_{X^*}(-i\lambda)f =\Pi_X(i\lambda)^*f= \sum_{k=1}^Jw_k\cjg f,v_k\cjd_{L^2}$ with adjoint defined as above.
By Lemma~\ref{lem:critereFredholm} the space of joint resonant states $(X_j^*-i\lambda)w=0$,  $w\in\mathcal H_{-NG}$ is in the range of $\Pi_{X^*}(-i\lambda)$, consequently the $w_j$ span the space of joint resonant states of $X^*$ with joint resonance $-i\lambda$ and the $w_j$ span the space of joint resonant states of the resonance $i\lambda$. Putting everything together, we have:\begin{lemma}\label{lem:form_of_Pi}
 Let $\lambda\in \a^*$ such that $i\lambda$ is a Ruelle-Taylor resonance of $X$. Then $-i\lambda$ is also a Ruelle-Taylor resonance of $X^*$ and spaces of joint resonant states have the same dimension. If $v_1,\ldots,v_J\in C^{-\infty}_{E_u^*}(\mc M)$ and $w_1,\ldots,w_J\in C^{-\infty}_{E_s^*}(\mc M)$ are such that they span the space of joint resonant states of $X$ at $i\lambda$ and $X^*$ at $-i\lambda$ respectively and fulfill $\langle v_j,w_k\rangle_{L^2}=\delta_{jk}$, then we can write $\Pi(i\lambda)=\sum_{k=0}^Jv_k \cjg \cdot,w_k\cjd_{L^2}$. In particular $\Pi(i\lambda)$ depends only on the $X_j$ but not on the choice of $R(i\lambda)$.
\end{lemma}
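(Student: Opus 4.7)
The plan is to exploit duality by applying the entire machinery of Section~\ref{sec:Taylor-pseudors} and Section~\ref{sec:ia_non_volume} to the formal adjoint family $X^{*}=(X_{1}^{*},\ldots,X_{\kappa}^{*})$ with respect to $dv_{g}$. These operators commute, and they form an admissible lift of the inverted Anosov action $\tau^{-}(a):=\tau(-a)$, which has the same positive Weyl chamber but with the stable and unstable bundles interchanged. Consequently the discreteness of the Ruelle--Taylor spectrum and the parametrix construction apply verbatim, except that resonant states now live in $C^{-\infty}_{E_{s}^{*}}(\mc M)$ and the natural Hilbert scale is $\mc{H}_{-NG}$, which is canonically identified with the dual of $\mc{H}_{NG}$ via the $L^{2}(v_{g})$--pairing.

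The first step is to choose in the parametrix \eqref{eq:Eq-R'} for $X^{*}$ the same real cutoffs $\psi_{j}$ as in \eqref{eq:Eq-R}. A direct calculation using $(e^{-tX_{j}})^{*}=e^{-tX_{j}^{*}}$ and the commutativity of the flows gives the adjoint identity
\[
R_{X^{*}}(-i\lambda) \;=\; R(i\lambda)^{*}
\]
as bounded operators $\mc{H}_{-NG}\to\mc{H}_{-NG}$. Taking adjoints in the Riesz formula \eqref{projector} for the spectral projector at $z=1$ immediately gives $\Pi_{X^{*}}(-i\lambda)=\Pi(i\lambda)^{*}$, so both projectors have the same finite rank $J$. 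Applying Lemma~\ref{lem:tau=1} to $X^{*}$, the range of $\Pi_{X^{*}}(-i\lambda)$ is precisely the space of joint Ruelle resonant states of $X^{*}$ at $-i\lambda$, proving the first two assertions of the lemma.

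Pick now any basis $v_{1},\ldots,v_{J}$ of $\ran\Pi(i\lambda)$ in $C^{-\infty}_{E_{u}^{*}}(\mc M)$; since $\Pi(i\lambda)$ has finite rank, there exist $\widetilde{w}_{1},\ldots,\widetilde{w}_{J}\in\mc{H}_{-NG}$ such that $\Pi(i\lambda)f=\sum_{k}v_{k}\cjg f,\widetilde{w}_{k}\cjd_{L^{2}}$. Taking the $L^{2}$--adjoint and using $\Pi(i\lambda)^{*}=\Pi_{X^{*}}(-i\lambda)$ shows that the $\widetilde{w}_{k}$ form a basis of the space of joint resonant states of $X^{*}$ at $-i\lambda$, and in particular lie in $C^{-\infty}_{E_{s}^{*}}(\mc M)$. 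The projector identity $\Pi(i\lambda)^{2}=\Pi(i\lambda)$ applied to each $v_{j}$ gives $\sum_{k}v_{k}\cjg v_{j},\widetilde{w}_{k}\cjd_{L^{2}}=v_{j}$, hence $\cjg v_{j},\widetilde{w}_{k}\cjd_{L^{2}}=\delta_{jk}$ by linear independence; a linear change of basis replaces $\widetilde{w}_{k}$ by any prescribed biorthogonal family $w_{k}$. Finally, since the ranges of $\Pi(i\lambda)$ and of $\Pi(i\lambda)^{*}$ are the intrinsic spaces of joint Ruelle resonant states of $X$ at $i\lambda$ and of $X^{*}$ at $-i\lambda$ respectively, the biorthogonal pair $(v_{k},w_{k})$ determines $\Pi(i\lambda)$ uniquely through the finite-rank formula, yielding the claimed independence from the particular choice of the cutoffs $\psi_{j}$ defining $R(i\lambda)$. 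The main point to verify carefully is the duality pairing between the scales $\mc{H}_{NG}$ and $\mc{H}_{-NG}$ together with the adjoint relation above; everything else is an application of standard finite-dimensional linear algebra to the range of the already-constructed projector.
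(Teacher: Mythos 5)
Your proposal is correct and follows essentially the same route as the paper: pass to the adjoint family $X^*$ viewed as an admissible lift of the inverted Anosov action, use the same cutoffs to get $R_{X^*}(-i\lambda)=R(i\lambda)^*$ and hence $\Pi_{X^*}(-i\lambda)=\Pi(i\lambda)^*$, and then read off the biorthogonal finite-rank formula.

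One step deserves more care than you give it: you invoke Lemma~\ref{lem:tau=1} \emph{for $X^*$} to conclude that $\ran \Pi_{X^*}(-i\lambda)$ is exactly the space of joint resonant states of $X^*$. The proofs of Lemmas~\ref{lem:nojordan} and \ref{lem:tau=1} rest on the $L^\infty$ contraction \eqref{Linfty_estimate}, which holds because $e^{-tX_j}$ is a composition (Koopman) operator; the propagators $e^{-tX_j^*}$ are transfer operators carrying a Jacobian weight and do not contract $L^\infty$, so the lemma does not apply verbatim to $X^*$. The conclusion you need is nevertheless true and can be obtained by duality: the inclusion of the joint resonant states of $X^*$ into $\ran\Pi_{X^*}(-i\lambda)$ is Lemma~\ref{lem:critereFredholm} applied to $X^*$ (which only needs the Fredholm machinery of Section~\ref{sec:Taylor-pseudors}, valid for any admissible lift), while the reverse inclusion follows by taking adjoints of $\Pi(i\lambda)(X_j+i\lambda_j)=0$ (a consequence of $[X_j,\Pi(i\lambda)]=0$ and $(X_j+i\lambda_j)\Pi(i\lambda)=0$), which yields $(X_j^*-i\lambda_j)\Pi_{X^*}(-i\lambda)=0$. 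This is exactly how the paper argues; with that substitution your proof is complete.
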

We can now identify resonant states on the imaginary axis with some particular invariant measures. 
\begin{prop}\label{prop:SRBmeasures}~
\begin{enumerate}
	\item For each $v\in C^\infty(\M,\R^+)$, the map
\[
\mu_v :  C^\infty(\M)\ni u\mapsto \cjg\Pi(0)u,v\cjd 
\]
is a non-negative Radon measure with mass $\mu_v(\M)=\int_{\M}v\, dv_g$, invariant by $X_j$ for all $j=1,\dots,\kappa$ in the sense $\mu_v(X_j u)=0$ for all $u\in C^\infty(\mc{M})$. 
\item\label{it:space_ofSRB} The space 
\[{\rm span}\{\mu_v \,|\, v\in C^\infty(\M,\R^+)\}= \Pi(0)^*(C^\infty(\M))\]
is a finite dimensional subspace of $C^{-\infty}_{E_s^*}(\M)$ and it is precisely the space spanned by all finite measures $\mu$ with $\WF({\mu})\subset E_s^*$ that are invariant under the Anosov action. Here $\Pi(0)^*:\mc{H}_{-NG}\to \mc{H}_{-NG}$ is a bounded projector for all $N\gg 1$.

\item Let $f\in L^1(\mc{W};[0,1])$ with compact support contained in $\bbar{\mc{W}}$ and $\int_{\mc{W}}f >0$. Then for any $u,v\in C^\infty(\M)$  
\begin{equation}\label{Birkhoff_sum}
\mu_v(u)= \lim_{T\to \infty} \frac{1}{T^\kappa \int_{\mc{W}} f} \int_{A\in \mc{W}} f(\frac{A}{T})\cjg e^{-X_A}u,v\cjd dA
\end{equation}
where $dA$ is the Lebesgue-Haar measure on $\a$.

\item Similarly, for $\lambda\in \a^*$, $v\in C^\infty(\mc M)$ the map
\[
\mu^{\lambda}_v : C^\infty(\M)\ni u\mapsto \cjg\Pi(i\lambda)u,v\cjd 
\]
is a complex valued measure. The measures are flow equivariant in the sense that $\mu_v^\lambda(X_ju) = -i\lambda_j\mu_v^\lambda(u)$ and the set $\{\mu_v^{\lambda} \,|\, v\in C^\infty(\M)\}$ is finite dimensional and coincides with the space of finite complex measures $\mu$ with $\WF(\mu)\subset E^*_s$ which are equivariant in the above sense. 
\item \label{it:ac} Let $v_1,v_2\in C^\infty(\mc M, \R^+)$ with $v_1\leq Cv_2$ for some $C>0$ and $i\lambda \in i\a^*$ a Ruelle-Taylor resonance. Then $\mu_{v_1}^\lambda$ is absolutely continuous with bounded density with respect to $\mu_{v_2}=\mu_{v_2}^0$. In particular any $\mu_v^\lambda$ is absolutely continuous with respect to $\mu_1$.
\end{enumerate}
\end{prop}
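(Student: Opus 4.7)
My plan is built around Lemma~\ref{lem:form_of_Pi}, which expresses $\Pi(i\lambda) = \sum_{k=1}^J v_k\langle\cdot,w_k\rangle_{L^2}$ with joint Ruelle resonant states $v_k\in C^{-\infty}_{E_u^*}(\mc M)$ for $X$ and $w_k\in C^{-\infty}_{E_s^*}(\mc M)$ for the reversed-action lift $X^*$, combined with the operator-norm limit $\Pi(i\lambda)=\lim_k R(i\lambda)^k$ recorded in \eqref{eq:projector_as_lim_Rk}. For part (1), if $u\geq 0$ is smooth, each $e^{-t_jX_j}u(x) = u(\varphi_{-t_j}^{X_j}(x))\geq 0$, so combined with $\psi_j^{(k)}\geq 0$ and $\int\psi_j=1$ one gets $R(0)^k u\geq 0$ pointwise; passing to the limit in $\langle R(0)^ku, v\rangle_{L^2}\geq 0$ for smooth $v\geq 0$ identifies $\mu_v$ as a non-negative distribution, hence a Radon measure. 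The mass computation reduces to $\Pi(0)1=1$ (because $R(0)1=1$), and invariance to $\mu_v(X_ju) = \langle X_j\Pi(0)u,v\rangle = 0$. For (2), $v\mapsto\mu_v$ is literally the adjoint $\Pi(0)^*$, whose image is $\mathrm{span}(w_1,\ldots,w_J)\subset C^{-\infty}_{E_s^*}(\mc M)$. Conversely, any finite invariant measure $\mu$ with $\WF(\mu)\subset E_s^*$ can be written $d\mu = f\,dv_g$ with $f\in C^{-\infty}_{E_s^*}(\mc M)$ solving $X_j^*f=0$; since $X^*$ is an admissible lift of the time-reversed Anosov action sharing the same Weyl chamber, Theorem~\ref{theoHn} and Lemma~\ref{lem:form_of_Pi} applied to $X^*$ place $f$ in $\mathrm{span}(w_k)$.

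The main obstacle is part (3). Setting $M_T := \Vol(\mc C_T)^{-1}\int_{\mc C_T}e^{-X_A}\,dA$, since $X_j$ annihilates $\ran\Pi(0)$ the propagator $e^{-X_A}$ fixes it, so $M_T\Pi(0)=\Pi(0)$ and the issue is to show $M_T(I-\Pi(0))u\to 0$ in $\mc H_{NG}$ for smooth $u$. Using the proper inclusion $\overline{\mc C}\setminus\{0\}\subset\mc W$ and Lemma~\ref{unifG}, I first choose an escape function $G$ and an associated $\mc H_{NG}$ on which $e^{-tX_B}$ stays uniformly bounded for $B$ in a fixed compact subset of $\overline{\mc C}\setminus\{0\}$. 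I then pick a basis $A_1,\ldots,A_\kappa\in\mc C$ of $\a$ and plan to establish a one-direction mean ergodic theorem $T^{-1}\int_0^T e^{-tX_{A_j}}\,dt\to\Pi^{\ker}_{A_j}$ on $\mc H_{NG}$; the necessary no-Jordan-block property at the unit circle for each $e^{-tX_{A_j}}$ follows from a one-variable analogue of Lemma~\ref{lem:nojordan} via the same $L^\infty$-argument. Commutativity of the $X_{A_j}$'s makes the $\Pi^{\ker}_{A_j}$'s commute, and their composition projects onto $\bigcap_j \ker X_{A_j}$, which coincides with $\ran\Pi(0)$ by Lemma~\ref{lem:tau=1}. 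To pass from a product of interval averages to the cone average on $\mc C_T$, I will decompose $\mc C_1$ into a finite union of parallelepipeds with edge vectors in $\mc C$ and exploit $\mc C_T = T\mc C_1$; the boundary error is controlled because $\{\mc C_T\}$ is a tempered F\o lner sequence in $\a$ by the proper-cone hypothesis. The most technical point will be this F\o lner reduction together with the careful verification of the one-direction mean ergodic theorem on $\mc H_{NG}$.

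Parts (4) and (5) should be relatively direct. For (4), the relation $X_j\Pi(i\lambda) = -i\lambda_j\Pi(i\lambda)$ from Lemma~\ref{lem:tau=1} yields the equivariance $\mu_v^\lambda(X_ju) = -i\lambda_j\mu_v^\lambda(u)$; the image of $v\mapsto\mu_v^\lambda = \Pi(i\lambda)^*v$ is the span of the joint Ruelle resonant states of $X^*$ at $-i\lambda$, a finite-dimensional subspace of $C^{-\infty}_{E_s^*}(\mc M)$; the reverse inclusion for equivariant complex measures with $\WF\subset E_s^*$ follows exactly as in (2) from $(X_j^*-i\lambda_j)f = 0$. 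For (5), I will use the pointwise domination
\begin{equation*}
|R(i\lambda)^k u(x)| \leq R(0)^k|u|(x),
\end{equation*}
which is immediate from $|e^{-t_jX_j}u| = e^{-t_jX_j}|u|$ and $\psi_j^{(k)}\geq 0$. Pairing with $v_1\geq 0$ and letting $k\to\infty$ yields $|\mu_{v_1}^\lambda(u)|\leq\mu_{v_1}(|u|)\leq C\mu_{v_2}(|u|)$ for smooth $u$; density in the total-variation norm upgrades this to $|\mu_{v_1}^\lambda|\leq C\mu_{v_2}$ as Radon measures, and Radon--Nikodym supplies absolute continuity with density bounded by $C$. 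The final assertion that $\mu_v^\lambda\ll\mu_1$ for any smooth complex $v$ follows by splitting $v$ into its non-negative real and imaginary parts and comparing with $v_2 = \|v\|_{L^\infty}\cdot 1$.
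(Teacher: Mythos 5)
Parts (1), (2), (4) and (5) of your proposal follow the paper's route essentially verbatim (your total-variation upgrade in (5) is a harmless variant of the paper's Borel-set approximation argument for the bound $|f|\leq C$ on the density), and they are correct.

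The genuine gap is in part (3). Your plan rests on a one-direction mean ergodic theorem $T^{-1}\int_0^T e^{-tX_{A_j}}\,dt\to\Pi^{\ker}_{A_j}$ on $\mc H_{NG}$, aggregated over the cone by a F\o lner argument. Two obstructions. First, any form of the mean ergodic theorem requires the semigroup $\{e^{-tX_{A_j}}\}_{t\geq0}$, or at least its Ces\`aro averages, to be uniformly bounded on $\mc H_{NG}$. The proposition is stated without assuming an invariant smooth volume, and in general $\|e^{-tX_A}\|_{\mc{L}(L^2)}$ --- hence $\|e^{-tX_A}\|_{\mc{L}(\mc H_{NG})}$ --- grows exponentially; this is exactly why the constant $C_{L^2}(A)$ appears throughout Section 4. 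The only uniform bound available is the $L^\infty$ bound $\|e^{-\sum t_jX_j}u\|_{L^\infty}\leq\|u\|_{L^\infty}$, and that is precisely what the paper exploits: it writes $\mu_v(u)$ as a limit of $\langle R_\sigma(0)^k u,v\rangle$ locally uniformly in the shift $\sigma$ over a section of the cone, and reduces \eqref{Birkhoff_sum} to an $L^1(\R^\kappa)$ convergence of the averaged convolution powers $\psi_\sigma^{(k)}$ to the normalized indicator of the cone (Lemma \ref{approximationRtoBirkhoff}, a CLT/Fourier computation). Second, the ``one-variable analogue of Lemma \ref{lem:nojordan}'' you invoke is not available: a single $X_{A_j}$ is only partially hyperbolic (its characteristic set is much larger than $E_u^*\oplus E_s^*$), so $\int e^{-tX_{A_j}}\psi(t)\,dt$ does \emph{not} have essential spectral radius $<1$ on $\mc H_{NG}$ --- the small-plus-compact decomposition in Lemma \ref{lemmafordiscreteness} crucially uses the product over all $\kappa$ directions so that the elliptic sets of the $X_{A_k}$ cover $T^*\M\setminus(E_u^*\oplus E_s^*)$. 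Hence there is no isolated unimodular spectrum for one direction, and the no-Jordan-block argument has nothing to bite on. (A smaller issue: a cone section is not a finite union of parallelepipeds, so passing from products of interval averages to the cone average also needs an approximation step.) To repair part (3) you should abandon the Hilbert-space ergodic theorem and argue as the paper does, through the $L^\infty$-controlled kernels $\psi_\sigma^{(k)}$.
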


\begin{proof}
First $R(0)1=1$ is clear and $X$ has a Taylor-Ruelle resonance at $\lambda=0$ by Lemma~\ref{lem:spectrum-and-eigenvalues}. If $u,v\in C^\infty(\M)$ are non-negative, we have 
$a_k:=\cjg R(0)^ku,v\cjd\geq 0$ and
\[
\lim_{k\to \infty} \cjg R(0)^ku,v\cjd = \cjg\Pi(0)u,v\cjd\geq 0.
\]
Note also that for each $k$, and each $u\in C^\infty(\mc{M})$ non-negative   
\[
\forall x\in \M, \quad 0\leq (R(0)^ku)(x)\leq (R(0)^k1)\|u\|_{C^0}\leq \|u\|_{C^0}.
\]
This implies that for each $v\in C^\infty$ with $v\geq 0$, $\mu^k_v: u\mapsto \cjg R(0)^ku,v\cjd$ is a Radon measure with finite mass $\mu^k_v(\M)=\int_{\M} v \, dv_g$ and thus 
$\mu_v$ is as well. The invariance of $\mu_v$ is a direct consequence of Lemma~\ref{lem:form_of_Pi}. The same holds for property (\ref{it:space_ofSRB}). The invariance of the space spanned by these measures with respect to $X_j$ follows from $\Pi(0)X_j=X_j\Pi(0)=0$, obtained  by Lemma \ref{lem:tau=1}.

Let us next show that for an arbitrary Ruelle-Taylor resonance $i\lambda\in i\a^*$ we get complex measures $\mu_v^\lambda$ and in the same turn prove the absolute continuity statement (\ref{it:ac}).
We consider $u\in C^\infty(\mc M)$, $v_1, v_2\in C^\infty(\mc M, \R^+)$ with $v_1\leq v_2$ and get for all $k$
\[
|\cjg R(i\lambda)^ku,v_1\cjd|\leq \cjg R(0)^k|u|, v_1\cjd \leq \cjg R(0)^k|u|, v_2\cjd
\]
thus $|\mu_{v_1}^\lambda(u)|\leq \mu_{v_2}(|u|)$. This proves that $\mu_{v_1}^\lambda$ is a complex measure. A priori, $\mu_{v_1}^\lambda$ is absolutely continuous with respect to $\mu_{v_2}$, so it has a $L^1$ density $f$ with respect to $\mu_{v_2}$. This density is actually bounded by $1$ or equivalently,
\begin{equation}\label{eq:borel-set}
|\mu_{v_1}^\lambda(\mc{A})| \leq \mu_{v_2}(\mc{A})
\end{equation}
 for every Borel set $\mc{A}$. If $\mc{A}$ is a closed set, we can find a sequence of smooth functions $g_n$, valued in $[0,1]$, which converges simply to the characteristic function of $\mc{A}$. By dominated convergence $\mu_{v_1}^\lambda (g_n) \to \mu_{v_1}^\lambda(\mc{A})$, and likewise $\mu_{v_2}(g_n)\to \mu_{v_2}(\mc{A})$. Taking products of sequences of functions, or sequences $1-g_n$, and a diagonal argument, we see that the set of Borel sets $\mc{A}$ for which \eqref{eq:borel-set} holds contains closed sets, and is stable by countable intersection, and complement. It is thus equal to the whole tribe of Borel sets, and the proof of $\|f\|_{L^\infty}\leq 1$ is complete.

Let us finally show \eqref{Birkhoff_sum}. For each $f\in L^1(\a;[0,1])$ with
$\supp(f)\subset \bbar{\mc{W}}$ being compact and $\int f>0$, we want to prove that 
\begin{equation}\label{limittoshow}
C_f(T):= \frac{1}{T^\kappa \int_{\mc{W}} f} \int_{A\in \mc{C}_T}f(\frac{A}{T})\cjg e^{-X_A}u,v\cjd dA \to \mu_v(u) \textrm{ as }T\to +\infty.
\end{equation} 
Assume that \eqref{limittoshow} is satisfied for a dense set in $L^1(\mc{W})$ of compactly supported functions $F:=(f_i)_{i\in I}
\in C_c(\mc{W})$. Then, for all $\delta>0$ small, one can find a sequence $f_{i(n)}\in F$ so that 
$\int_{\mc{W}} |f_{i(n)}/(\int_{\mc{W}} f_{i(n)})-f/\int_{\mc{W}}f|<\delta$. Then 
\[ |C_f(T)-C_{f_{i(n)}}(T)|\leq \int_{\mc{W}}|\cjg e^{-TX_A}u,v\cjd|. \Big|\frac{f(A)}{\int f}-\frac{f_{i(n)}}{\int f_{i(n)}}\Big| dA \leq \delta \|u\|_{L^\infty} \|v\|_{L^\infty}.\]
which implies \eqref{limittoshow} for $f$ by our assumption on $f_{i}$ and since $\delta$ is arbitrarily small. 
We shall then show \eqref{limittoshow} for functions of the form $\omega(t_1)q(\bar{t}/t_1)$ 
if $(t_1,\bar{t})\in \R_+\times \R^{\kappa-1}$ are coordinates associated to bases of vectors in small cones $\mc{C}$ with closure 
contained in $\mc{W}\cup \{0\}$, and $\omega\in C_c^\infty(0,1)$, $q\in C_c^\infty(\R^{\kappa-1})$ such that 
$\supp(\omega(t_1)q(\bar{t}/t_1))\subset \mc{C}$.

We fix a small open cone $\mc{C}\subset \W$ with arbitrarily 
small conic section with closure contained in $\mc{W}$ and  choose a basis $(A_j)_{j=1}^\kappa$ of $\a$ so that $A_j\in \mc{C}$. Up to rescaling $A_j$ by some fixed large $T>0$, we can assume that Lemma \ref{lemmafordiscreteness} applies with $T_j=1/2$ in the construction of ${\bf Q}(\la)$ and $R(\la)$.
We then identify $\a\simeq \R^\kappa$ by identifying the canonical basis $(e_j)_j$ of $\R^\kappa$ with $(A_j)_j$, and we define a scalar product on $\a$ by deciding that $A_j$ are orthonormal. 
We let  $\Sigma=\mc{C}\cap \{A_1+\sum_{j=2}^\kappa t_jA_j |\, t_j\in \R\}$ be a hyperplane section of the cone $\mc{C}$.
Choose $\psi\in C_c^\infty((-1/2,1/2))$ non-negative  even with $\int_\R\psi=1$, and for each $\sigma\in \R^\kappa$, 
define $\psi_\sigma(t):=\prod_{j=1}^\kappa\psi(t_j-\sigma_j)$. The operators $Q(0),R(0)$ constructed in Lemma \ref{lemmafordiscreteness} can be defined, for $\sigma$ close to $e_1$, with the cutoff function $\chi_j$ so that $-\chi'_j(t_j)=\psi(t_j-\sigma_j)$, we then denote $Q_\sigma,R_\sigma$ the corresponding operators, which in turn are locally uniform in $\sigma$. 
Then $\mu_v$ is given by $\mu_v(u)=\lim_{k\to \infty}\cjg R_\sigma(0)^ku,v\cjd$ locally uniformly in $\sigma$. This means that  viewing $\Sigma$ as an open subset of $e_1+\R^{\kappa-1}$ containing $e_1$, taking any $q\in C_c^\infty(\Sigma)$ with $\int_{\R^{\kappa-1}}q(\bar{t})d\bar{t}=1$  and any $\omega\in C_c^\infty((0,1))$ with $\int_0^1 \omega=1$, we have for $\sigma(\bar{t}):=(1,\bar{t})\in \Sigma$
\begin{equation}\label{formulesurmu_v(u)} 
\mu_v(u)=\lim_{N\to \infty}\int_{\R^{\kappa-1}}\frac{1}{N}\sum_{k=1}^{N}\omega\big(\frac{k}{N}\big)\cjg R_{\sigma(\bar{t})}^ku,v\cjd  q(\bar{t})d\bar{t}.\end{equation}
Indeed, one can write, if $\supp(\omega)\subset (\eps,(1-\eps))$ for some $\eps>0$ 
\[\begin{split} 
& \int_{\R^{\kappa-1}}\frac{1}{N}\sum_{k=1}^{N}\omega\big(\frac{k}{N}\big)\cjg R_{\sigma(\bar{t})}^ku,v\cjd  q(\bar{t})d\bar{t}-\mu_v(u)\\  
& = \int_{\R^{\kappa-1}}q(\bar{t})\Big(\frac{1}{N}\sum_{k\in (\eps N,(1-\eps)N)}\omega\big(\frac{k}{N}\big)(\cjg R_{\sigma(\bar{t})}^ku,v\cjd-\mu_v(u)) -\mu_v(u)\Big(1- \frac{1}{N}\sum_{k=1}^{N}\omega\big(\frac{k}{N}\big)\Big)\Big)d\bar{t}
\end{split}\]
and we use $\frac{1}{N}\sum_{k=1}^{N}\omega\big(\frac{k}{N}\big)\to \int_0^1 \omega =1$ as $N\to \infty$, and $\cjg R_\sigma^ku,v\cjd \to \mu_v(u)$ uniformly in $\sigma$ as $k\to \infty$, so that we get the result by dominated convergence.
 
Recall that $R_\sigma^k$ is given by the expression
\[
R_\sigma^ku  =  \int_{\R^\kappa}(e^{-\sum_{j=1}^\kappa t_j X_j}u) \psi_{\sigma}^{(k)}(t)dt.
\]
where $\psi_\sigma^{(k)}$ is the $k$-th convolution of $\psi_{\sigma}$. To prove \eqref{limittoshow} for a dense subset of 
$L_{\rm comp}^1(\mc{W})$ it suffices to combine \eqref{formulesurmu_v(u)} and the following Lemma (since the space of 
finite sums of functions of the form $\tilde{\omega}(t_1)q(\bar{t}/t_1)$ is dense in $L^1_{\rm comp}(\mc{W})$):
\begin{lemma}\label{approximationRtoBirkhoff} 
With $\omega,q$ as above, and let $\widetilde{\omega}(r):=r^{1-\kappa}\omega(r)$ the following limit holds as $N\to \infty$
\[ \int_{\R^{\kappa-1}}\frac{1}{N}\sum_{k=1}^{N}\omega\big(\frac{k}{N}\big)\cjg R_{\sigma(\bar{t})}^ku,v\cjd  q(\bar{t})d\bar{t}-\frac{1}{N^\kappa}\int_{0}^N
\int_{\R^{\kappa-1}}\cjg e^{-\sum_{j}t_jX_j}u,v\cjd  \widetilde{\omega}\big(\frac{t_1}{N}\big)q\Big(
\frac{\bar{t}}{t_1}\Big)dt_1d\bar{t}\to 0.
\]
\end{lemma}
\end{proof} 
\begin{proof}[Proof of Lemma~\ref{approximationRtoBirkhoff}]
Since for $u\in C^\infty(\M)$, $\|e^{-\sum_{j=1}^\kappa t_j X_j}u\|_{L^\infty}\leq \|u\|_{L^\infty}$, it suffices to show that 
\[ 
\R_+\times \R^{\kappa-1}\ni t=(t_1,\bar{t}\, )\mapsto \int_{\R^{\kappa-1}}\frac{1}{N}\sum_{k=1}^{N}\omega(\tfrac{k}{N}) \psi_{\sigma(\theta)}^{(k)}(t)q(\theta)d\theta-\frac{1}{N^{\kappa}}\widetilde{\omega}(\tfrac{t_1}{N})q(\tfrac{\bar{t}}{t_1})
\]
converges for $N\to\infty$ towards $0$ in $L^1(\R^\kappa)$. Let $\eps>0$ small so that $\supp(\omega)\subset (\eps,1-\eps)$. 
Scaling $t\to tN$, the above convergence statement is equivalent to show that 
\[
f_N(t):=N^{\kappa-1}\sum_{k\in \mathbb \Z\cap (\epsilon N, (1-\epsilon)N)}\omega\big(\frac{k}{N}\big) \int_{\R^{\kappa-1}}\psi_{\sigma(\theta)}^{(k)}(tN)q(\theta)d\theta
\] 
is such that $\lim_{N\to \infty}\|f_N-h\|_{L^1(\R^\kappa)}=0$ if $h(t):=\frac{1}{T}\widetilde{\omega}(\frac{t_1}{T})q\big(\frac{T\bar{t}}{t_1}\big)$. First, 
notice that $\supp(\psi_{\sigma(\theta)}^{(k)}(N\cdot))\subset B(0,2)$ for each $k\leq N$, and $\int \psi^{(k)}_{\sigma(\theta)}(t)dt=1$. 
It then suffices to prove that  $f_{N}$
converges in $L^2(\R^\kappa)$ to $h$. We proceed using the Fourier transform, writing $\xi=(\xi_1,\bar{\xi})$
\[
\hat{f}_{N}(\xi)=\frac{1}{N}\sum_{k=\eps N}^{(1-\eps)N}\omega\big(\frac{k}{N}\big)\Big(\hat{\psi}_0\big(\frac{\xi}{N}\big)\Big)^k e^{-i\frac{k}{N}\xi_1} \hat{q}\big(\frac{k}{N}\bar{\xi}\big).
\]
First, for $\xi$ fixed, since $\hat{\psi}_0(\xi)=1+\mc{O}(|\xi|^2)$ for small $\xi$ (since $\int \psi=1$ and $\int t\psi(t)dt=0$ by assumption on $\psi$), one has the following pointwise convergence (using Riemann sums)
\begin{equation}\label{eq:fnhxi}
\lim_{N\to \infty}\hat{f}_{N}(\xi)=\int_{\R}\omega(t_1)\hat{q}(t_1\bar{\xi})e^{-it_1\xi_1}dt_1=\hat{h}(\xi).
\end{equation}
To prove $L^2$ convergence, we use that, since $\psi\geq 0$ is smooth and satisfies $\int \psi=1$ and $\int t\psi(t)dt=0$, there is $c_0>0$ such that
\begin{equation}\label{boundpsi_0}
|\hat{\psi}_0(\xi)|\leq (1+c_0|\xi|^2)^{-1}.
\end{equation} 
Indeed, it suffices to prove $|\hat{\psi}(s)|\leq (1+c_0s^2)^{-1}$ for some $c_0>0$: but  there is $\eps>0$ so that this is true for some $c_0=c_0(\eps)$ when $|s|<\eps$ small (by Taylor expansion at $s=0$ and since  $|\hat{\psi}(s)|<1$ for $s\not=0$) and for $|s|>1/\eps$ large by integration by parts ($\psi\in C_c^\infty$), while for $|s|\in [\eps,1/\eps]$ there is $c_0'(\eps)>0$ so that $|\hat{\psi}(s)|\leq (1+c'_0(\eps)s^2)^{-1}$ by the fact that $|\hat{\psi}|\leq 1-\delta_\eps$ on  $[\eps,1/\eps]$ for some $\delta_\eps>0$.
Thus for $\delta>0$ arbitrarily small, there is $C>0$ depending only on $\|\hat{q}\|_{L^\infty}, \|\omega\|_{L^\infty}$ such that
\[
\int_{|\xi|\geq  N^{1/2+\delta}}|\hat{f}_{N}(\xi)|^2d\xi \leq CN^\kappa \int_{|\xi|\geq N^{-1/2+\delta}}|\hat{\psi}_0(\xi)|^{2\eps N}d\xi \leq CN^\kappa e^{-c_0\eps N^{2\delta}}\int|\hat{\psi}_0(\xi)|d\xi\to 0 
\]
where the second inequality holds for large $N$ and the limit is for $N\to\infty$.
Next, we will show that for $\delta>0$ small and $\ell\in\N$, there is $N_\ell,C_\ell$ such
that for all $N\geq N_\ell$
\begin{equation}\label{hatFnbound} 
\forall \xi, |\xi|\in [1,N^{1/2+\delta}],\quad  |\hat{f}_N(\xi)|\leq C_\ell(|\xi|^{-\ell}+ N^{-\ell(\frac{1}{2}-\delta)}).
\end{equation}
This will prove the convergence of $\hat{f}_N$ to $\hat{h}$ in $L^2$, since for all $n>0$, there is $T_n$ and $N_n>0$  such that for $N\geq N_n$ 
\[ \int_{|\xi|\geq T_n}|\hat{f}_N(\xi)-\hat{h}(\xi)|^2\, d\xi\leq 1/n\]
and, using dominated convergence and \eqref{eq:fnhxi}, 
\[ 
\lim_{N\to\infty}\int_{\R^\kappa}|\hat{f}_N(\xi)-\hat{h}(\xi)|^2\textbf{1}_{[0,T_n]}(|\xi|)d\xi=0.
\]
We next show \eqref{hatFnbound}. We will use a discrete integration by parts to gain decay in $\hat{f}_N(\xi)$ in the $\xi_1$ variable. For $\rho\in C_c^\infty((0,1))$, we can define some sequences 
$a_k^m$, $b_k^m$ for $k\in\Z$ and $m\in\N$ by induction. First, $b_k^0:=e^{-i\frac{k}{N}\xi_1}$, $a_k^0=\rho(k/N)$ for $k\in \Z$. Next, for $m\geq 1$, $k\in \Z$,
 \[
b_k^m:= b_k^{m-1}\frac{e^{-i\frac{\xi_1 }{N}}}{1-e^{-i\frac{\xi_1}{N}}}=e^{-i\frac{k}{N}\xi_1}\Big(\frac{e^{-i\frac{\xi_1}{N}}}{1-e^{-i\frac{\xi_1}{N}}}\Big)^{m}
, \quad a^m_k:= a^{m-1}_k-a^{m-1}_{k+1}.
\]
Note also that $a_k^m=0$ for $k<\eps N-m$ and $k>(1-\eps)N$ and remark that $b_k^m=b^{m+1}_{k-1}-b_k^{m+1}$. Thus, 
\[
\sum_{k\in \Z} a^m_k b_k^m = \sum_{k\in\Z} a^m_k (b^{m+1}_{k-1} -b^{m+1}_{k}) = -\sum_{k\in\Z} b^{m+1}_k (a^m_k - a^m_{k+1}) = -\sum_{k\in\Z} b^{m+1}_k a^{m+1}_k.
\]
Since $\rho$ is smooth, a Taylor expansion gives for each $m$ a constant $C_m>0$ such that for $N\geq 1$, $|a_k^m|\leq C_m\|\rho\|_{C^m}N^{-m}$.
Up to increasing the value of $C_m$, we also have that $|b_k^m|\leq C_m N^m/|\xi_1|^m$ for $|\xi_1|/N\leq \pi/2$. We deduce that for each 
$m$, there exists $C_m>0$ such that for all $N$ large enough, 
\begin{equation}\label{bounddiscreteIPP}
\Big| \sum_{k=1}^N\rho(k/N)e^{-i\xi_1\frac{k}{N}}\Big|\leq N \min(C_m\|\rho\|_{C^m}|\xi_1|^{-m}, \|\rho\|_{L^\infty}).
\end{equation}  
Now, take $\rho(x):=\hat{q}(x\bar{\xi})\omega(x)e^{x N\log(\hat{\psi}_0(\xi/N))}$, which exists since $|\hat{\psi}_0(\xi/N)-1|\leq 1/2$ for $N$ large enough (since $|\xi|/N$ is assumed small).
Using that $q\in C_c^\infty(\R^{\kappa-1})$, for all $\ell\in\N$, there is $C_{\ell,m}>0$ such that for all $x\in \supp(\omega)$
\begin{equation}\label{boundplmrho}
|\partial_x^{m}(\hat{q}(x\bar{\xi})\omega(x))|\leq C_{\ell,m}(1+|\bar{\xi}|^2)^{-\ell+m}.
\end{equation}
Since $|N\log(\hat{\psi}_0(\xi/N))|\leq C|\xi|^2/N$ for some uniform $C>0$  if $|\xi|/N$ is small (by using \eqref{boundpsi_0} 
and $\hat{\psi}_0(\xi)=1+\mc{O}(|\xi|^2)$), there is $C,C'$ so that for all $n$ and $|\xi|/N$ small enough
\[|\pl_x^n e^{xN\log(\hat{\psi}_0(\xi/N))}|\leq (C\frac{|\xi|^2}{N})^n e^{Cx\frac{|\xi|^2}{N}}\leq C'(C\frac{|\xi|^2}{N})^n \]
and we finally obtain, by combining this bound with \eqref{boundplmrho} (choosing $\ell=2m-n+j$ for the $m-n$ derivatives), the following bound : for all $m,j$ there is $C_{m,j},C'_{m,j}>0$ so that for $N$ large enough 
\begin{equation}\label{rhoCm}
\|\rho\|_{C^m}\leq C_{m,j}\sum_{n=0}^m(1+|\bar{\xi}|^2)^{-j} (C\frac{|\xi|^2}{N})^{n}\leq C'_{m,j}(1+|\bar{\xi}|^2)^{-j}(1+\frac{|\xi|^{2m}}{N^m}).
\end{equation}  
Combining this bound (by taking $j=m$) with the bound \eqref{bounddiscreteIPP}, this implies that for all $m$, there is $C_m,C_m'>0$ so that for all $N$ large enough and $|\xi|\in [1,N^{1/2+\delta}]$ and $|\xi_1|\geq 1/2$
\[ |\hat{f}_N(\xi)|\leq C_{m}\frac{1+|\xi|^{2m}N^{-m}}{
(1+|\bar{\xi}|^2)^{m}(1+|\xi_1|)^m}\leq C'_{m}\Big(\frac{1}{|\xi|^m}+\frac{|\xi|^m}{N^m}\Big)\leq C'_m\Big(\frac{1}{|\xi|^m}+\frac{1}{N^{m(\frac{1}{2}-\delta)}}\Big) \]
which shows \eqref{hatFnbound} in the case $|\xi_1|\geq 1/2$. In the case $|\xi|\in [1,N^{1/2+\delta}]$ and $|\xi_1|\leq 1/2$, one has $|\bar{\xi}|\geq |\xi|-1/2$ and thus the $\|\rho\|_{L^\infty}$ bound in \eqref{bounddiscreteIPP} together with the bound \eqref{rhoCm} for $m=0$ and $j$ large give
\[ |\hat{f}_N(\xi)|\leq C_{j}(1+|\bar{\xi}|^2)^{-j} \leq C'_{j} (1+|\xi|^2)^{-j}\]
which again shows \eqref{hatFnbound} in the case $|\xi_1|\leq 1/2$.
\end{proof}
As noted in the introduction, we will call \emph{physical measures} the measures $\mu_v$, and $\mu_1$ will be called the \emph{full physical measure}. 
 
\subsection{Imaginary Ruelle-Taylor resonances for volume preserving actions}\label{sec:Ruelle_at_zero}
In this section, we are going to study the dimensions of the Ruelle-Taylor resonance at $\lambda=0$ in the case where there is a smooth measure preserved by the action.
First, we want to prove 
\begin{prop}\label{smoothnessat0}
Assume that there is a smooth invariant measure $\mu$ for the action, i.e. $\mc{L}_{X_{A}}\mu=0$ for each $A\in\W$. Then, for each $\lambda\in i\a^*$ imaginary, there is an injective map
\begin{equation}\label{isomreson-smooth}
 \ker_{C^{-\infty}_{E_u^*}\Lambda^j} d_{X+\lambda}/\ran_{C^{-\infty}_{E_u^*}\Lambda^{j}} d_{X+\lambda}\to 
\ker_{C^{\infty}\Lambda^j} d_{X+\lambda}/\ran_{C^{\infty}\Lambda^{j}} d_{X+\lambda}.
\end{equation}
\end{prop}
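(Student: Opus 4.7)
The plan is to apply the microlocal Fredholm theory of Section~\ref{sec:Taylor-pseudors} simultaneously to $X$ and to its time-reverse $-X$ (also Anosov, with Weyl chamber $-\mc W$ and stable/unstable bundles interchanged), and to use the duality induced by the invariant measure $\mu$, together with $C^{-\infty}_{E_u^*}(\M)\cap C^{-\infty}_{E_s^*}(\M)=C^\infty(\M)$, to locate smooth representatives of distributional cohomology classes.

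First, invariance of the smooth probability measure $\mu$ gives $X_A^*=-X_A$ on $L^2(\M,\mu)$ and hence $C_{L^2}(A)=0$ for all $A\in\a$. By Theorem~\ref{theoHn} the entire imaginary axis $i\a^*$ therefore lies in the Fredholm region for any $N>0$, and Proposition~\ref{prop:D_Eu} produces, for each $\lambda\in i\a^*$, a finite-rank spectral projector $\Pi_0(\lambda)$ whose range $V_u(\lambda)\subset C^{-\infty}_{E_u^*}(\M)\otimes\Lambda\a_\C^*$ computes the distributional cohomology of $d_{X+\lambda}$. The same theory applied to $-X$ gives a projector $\Pi_0'(\lambda)$ whose range $V_s(\lambda)\subset C^{-\infty}_{E_s^*}(\M)\otimes\Lambda\a_\C^*$ computes the cohomology of $d_{-X-\lambda}$.

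Next I would set up the $\mu$-duality between the two complexes. After fixing an orientation on $\a^*$, the pairing
\[
\cjg u,v\cjd_\mu := \int_\M u\wedge v\,d\mu
\]
extends continuously to $C^{-\infty}_{E_u^*}(\M)\otimes\Lambda^j\a_\C^*\times C^{-\infty}_{E_s^*}(\M)\otimes\Lambda^{\kappa-j}\a_\C^*\to\C$ thanks to the transversality $E_u^*\cap E_s^*=\{0\}$. Using $\bar\lambda=-\lambda$ and $X_A^*=-X_A$ on $L^2(\mu)$, integration by parts gives the transposition rule $\cjg d_{X+\lambda}u,v\cjd_\mu=(-1)^{j+1}\cjg u,d_{-X-\lambda}v\cjd_\mu$, so the pairing descends to a non-degenerate pairing of the two finite-dimensional cohomologies; in particular their dimensions coincide. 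To construct the injection I would then use that both $\Pi_0(\lambda)$ and $\Pi_0'(\lambda)$ commute with every $X_A$ (hence with $d_{X+\lambda}$ and with each other), and that under the $\mu$-pairing $\Pi_0'(\lambda)$ is the formal transpose of $\Pi_0(\lambda)$ modulo the Hodge identification $\Lambda^j\a_\C^*\cong\Lambda^{\kappa-j}\a_\C^*$. The composite $\Pi_0'(\lambda)\Pi_0(\lambda)$ then has range in $V_u(\lambda)\cap V_s(\lambda)\subset C^\infty(\M)\otimes\Lambda^j\a_\C^*$ by the intersection of wavefront conditions, and the induced map $[u]\mapsto[\Pi_0'(\lambda)\Pi_0(\lambda)u]$ on cohomology is injective by non-degeneracy of the pairing, providing the desired embedding into the smooth cohomology.

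The hard part will be to establish the adjointness $\Pi_0'(\lambda)=\Pi_0(\lambda)^*$ under the $\mu$-pairing. The two projectors arise from different parametrices on different anisotropic Sobolev spaces, so this is not apparent from their construction; I would extract it from the intrinsic characterization of their ranges in Proposition~\ref{prop:D_Eu} as generalized joint eigenspaces of $(X_A)_{A\in\a}$ at $-i\lambda$, combined with a Lemma~\ref{lem:form_of_Pi}-type argument showing that joint resonant states of $X$ at $i\lambda$ and of $-X$ at $-i\lambda$ pair non-trivially via $\mu$. Volume preservation plays its decisive role precisely here, ensuring that the same $\mu$-pairing is compatible with both the forward and backward spectral theories.
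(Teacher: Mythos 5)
Your overall strategy (dualize via the invariant measure and intersect the forward and backward resonant spaces) does not close, and the obstruction sits exactly at the step you defer. The composite $\Pi_0'(\lambda)\Pi_0(\lambda)$ is not a well-defined operator on the objects you need to apply it to: $\Pi_0(\lambda)$ has range in $C^{-\infty}_{E_u^*}\Lambda$, while $\Pi_0'(\lambda)$, being the finite-rank spectral projector for the reversed action, has the form $\sum_k w_k'\,\cjg \cdot,v_k'\cjd_\mu$ where the dual states $v_k'$ are resonant states of $((-X))^*=X$ and hence themselves have wavefront set in $E_u^*$. Since $E_u^*$ is a linear subbundle, it is invariant under $\xi\mapsto-\xi$, so H\"ormander's criterion for pairing two distributions fails and $\cjg \Pi_0(\lambda)f,v_k'\cjd_\mu$ is undefined --- unless the resonant states are already smooth, which is precisely the content of the proposition. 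The same circularity reappears when you ask for injectivity of $\Pi_0'(\lambda)|_{V_u(\lambda)}$: it amounts to non-degeneracy of a pairing of $W_u$ with itself, which again is not even defined at the distributional level. More structurally, the $\mu$-duality between the $d_{X+\lambda}$-complex on $C^{-\infty}_{E_u^*}\Lambda^j$ and the $d_{-X-\lambda}$-complex on $C^{-\infty}_{E_s^*}\Lambda^{\kappa-j}$ can at best give equality of dimensions of the two cohomologies; it produces no smooth representatives, and there is no a priori reason for $V_u(\lambda)\cap V_s(\lambda)$ to be nonzero (for a single Anosov flow, resonant and co-resonant states at a given resonance are genuinely singular and "intersect" trivially in general).

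What is actually needed, and what the paper proves, is a regularity statement: any $u\in\mc H_{NG}$ with $F(\lambda)u=0$ satisfies $\WF(u)\cap E_u^*=\emptyset$, hence $u\in C^\infty$, so that $\ran\Pi_0(\lambda)\subset C^\infty\Lambda$ and the map \eqref{injectsmooth} into the smooth cohomology is defined and injective by the usual commutation trick. The proof of this regularity is where the smooth invariant measure enters decisively, and in a way your proposal does not exploit: unitarity of $e^{-tX_A}$ on $L^2(\mu)$ first rules out Jordan blocks of $F(\lambda)$ at $z=1$ (so $F(\lambda)u=0$ rather than merely $u\in\ran\Pi_0(\lambda)$), and then, via Cauchy--Schwarz applied to $u=(\Id-F(\lambda))u$ and a semiclassical positive-commutator/Egorov argument with a symbol $b$ supported away from a forward-flowed neighborhood of $E_u^*$, yields $\|\Op_h(f)u\|_{L^2}=\mc O(h^\infty)$ for $f$ elliptic on $E_u^*\cap\{|\xi|\in[c_1,c_2]\}$. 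You would need to supply an argument of this type (or some other mechanism forcing $W_u\subset C^\infty$); the duality formalism alone cannot replace it.
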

\begin{proof}
We shall use an argument inspired by \cite[Section 6]{FRS08} in rank $1$. As in  \cite[Section 6]{FRS08}, it will be be technically convenient to use a semiclassical Weyl quantization ${\rm Op}_h$ and semiclassical wavefront set, the semiclassical parameter  being denoted $h>0$. The interested reader can consult the book \cite{Zwo12} 
for the details on semiclassical calculus, and \cite[Appendix E]{DZ19} that sumarizes all the necessary results used here. For a shorter summary 
see also the appendix of the article \cite{DZ16a}. We will use the classes of semiclassical operator $\Psi_h^{k}(\mc{M})$ (see \cite[Section 14.2]{Zwo12}), the semiclassical wavefront set ${\rm WF}_h(u)$ (resp. ${\rm WF}_h(A)$) of a distribution $u$ (resp. of an operator $A\in \Psi_h^k(\mc{M})$) depending on a small parameter $h>0$ (see \cite[Section 8.4.2]{Zwo12} or \cite[Sections E.2.1, E.2.3]{DZ19}). The semiclassical wavefront set is a subset of the fiber radial compactification $\bbar{T}^*\mc{M}$ of the cotangent bundle $T^*M$, see \cite[Section E.1.3]{DZ19}.

Fix a basis $A_1,\dots,A_\kappa \in\W$ close to $A_1$ and write $\lambda_j:=\lambda(A_j)$ and $X_{A_j}(\lambda):=X_{A_j}+\lambda_j$. Let $T_j>0$ for $j=1,\dots,\kappa$, let $\varepsilon>0$ be small and consider $\chi_{j}\in C_c^\infty([0,\infty[ ; [0,1])$ non-increasing with $\chi_j=1$ in $[0,T_j]$ and $\supp \chi_j\in [0,T_j+\varepsilon] $.
We use the  parametrix $Q(\lambda)=\delta_{{\bf Q}(\lambda)}$ of the proof of Lemma \ref{lemmafordiscreteness} and get \eqref{dXQ+QdX0} with those $\chi_j$. 
 As in the proof of Lemma \ref{lemmafordiscreteness}, $F(\lambda)-{\rm Id}=R(\lambda)+K(\lambda)$ with $K(\lambda)$ compact on $\mc{H}_{NG}$ and $\|R(\lambda)\|<1/2$, and by the Remark following Lemma \ref{lemmafordiscreteness}, we can choose $T_1>0$ large and $T_j>0$ small for $j=2,\dots \kappa$ so that this still holds. Using  Lemma \ref{lem:P1_properties}, 
 we deduce $\ran(\Pi_0(\lambda))\subset C^{-\infty}_{E_u^*}(\M;\Lambda\a^*)$ if $\Pi_0(\lambda)$ is the spectral projector of $F(\lambda)$ at $z=0$. 
We will show that the range of the spectral projector $\Pi_0(\lambda)$ at $z=0$ of $F(\lambda)$ actually satisfies
\begin{equation}\label{ranP1smooth}
\ran \Pi_0(\lambda) \subset C^\infty(\M;\Lambda \a^*).
\end{equation}
Since $F(\lambda)$ is a scalar operator, we can work on scalar valued distributions, and we shall then identify $F(\lambda)$ with an operator $\mc{H}_{NG}\to \mc{H}_{NG}$ for some $N>0$ large enough, and fixed.

Using Lemma~\ref{lem:nojordan}, $z=1$ is at most a pole of order $1$ of $({\rm Id}-F(\lambda)-z)^{-1}$, so that each $u\in \ran(\Pi_0(\lambda))$ satisfies $F(\lambda)u=0$.
Then let $u\in \mc{H}_{NG}$ such that $F(\lambda)u=0$. 

Recall from \cite[eq (2.6)]{DZ16a} that ${\rm WF}(u)={\rm WF}_h(u)\cap T^*\M\setminus \{0\}$.
We are now going to show that ${\rm WF}_h(u)\cap \{(x,\xi)\in E_u^*\, |\, |\xi|\in [c_1,c_2]\}=\emptyset$ for some $0<c_1<c_2$ by using the equation $F(\lambda)u=0$, the propagation of semiclassical wavefront sets (Egorov theorem \cite[Theorem 11.12]{Zwo12}) and the explicit expression of $F(\lambda)$ in terms of the propagators $e^{-tX_{A_j}(\lambda)}$.

\begin{figure}
\centering
\def\svgwidth{0.7\linewidth}
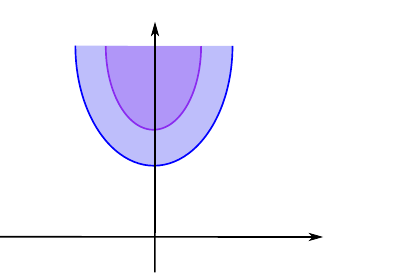
\caption{\label{fig:PS_regions} Schematic sketch of the phase space regions appearing in the proof of Proposition~\ref{smoothnessat0}.}
\end{figure}

For $T_1>0$ large enough but fixed and $T_2,\dots,T_\kappa$ small enough, one can find a closed neighborhood $W_u$ of 
$E_u^*\cap \partial \bbar{T}^*\M$ in the fiber radial compactification of $T^*\M$, which is conic for $|\xi|$ large, $0<c_1<c_2$ such that for all $t_1\in [T_1/2,T_1+\varepsilon]$ and all $t_j\in[0,T_j+\varepsilon]$ when $j\geq 2$ we have
\[   W_u\subset e^{-\sum_{j=1}^\kappa t_jX_{A_j}^H}(W_u) \textrm{ and } \{(x,\xi)\in E_u^*\, |\, |\xi|\in [c_1,c_2]\}\subset e^{-\sum_j t_jX_{A_j}^H}(W_u)\setminus W_u.\]
First we choose $b_0\in S^0(\M)$ with the following properties: first,
\[ 
b_0\geq 0, \quad b_0(x,\xi)=1 \textrm{ in } T^*\M\setminus W_u, \quad b_0(x,\xi)=0 \textrm{ in } e^{\frac{T_1}{2}X^H_{A_1}}(W_u),
\]
then, for each $t=(t_1,\dots,t_\kappa)$ with $t_j\in [T_j,T_j+\varepsilon]$ the symbol
\[
0\leq b_0(x,\xi)-b_0(e^{\sum_{j=1}^\kappa t_jX^H_{A_j}}(x,\xi))
\]
is equal to $1$ on $\{(x,\xi)\in E_u^*\, |\, |\xi|\in [c_1,c_2]\}$,
and third, by a partition of unity we can insure that there is $c_0\in S^0(\M)$ such that $b_0^2+c_0^2=1$. If $B_0=\Op_h(b_0)$ and $C_0=\Op_h(c_0)$ are the corresponding operators then 
$B_0B_0^* + C_0C_0^* ={\rm Id} + hR$ with $R \in\Psi_h^{-1}(\mc M)$. As in the construction of a parametrix, one can modify the symbols by lower order terms and get symbols $b, c$ such that for $B=\Op_h(b)$ and $C=\Op_h(c)$ we have ${\rm Id}-B^*B-C^*C\in h^\infty\Psi_h^0(\M)$. Indeed, for $S\in \Psi_h^{-1}(\mc M)$ with real principal symbol (using $\Op_h(q)-\Op_h(q)^*\in h\Psi_h^0(\mc{M})$ if $q\in S^0(\mc{M})$ 
is real-valued), we have
\[(B_0+hS)(B_0+hS)^* + (C_0+hS)(C_0+hS)^* - {\rm Id} - h(R + 2S(B_0+C_0)) \in h^2\Psi_h^{-2}(\M)\] 
and, as $B_0+C_0$ is semiclassically elliptic, we can invert it microlocally and find $s\in S^0(\mc{M})$ so that $S=\Op_h(s)$ satisfies 
$R + 2S(B_0+C_0) \in h\Psi^0(\mc{M})$ and we gain a power of $h$ if we correct $B_0,C_0$ by $hS$. This argument then can be iterated. Note furthermore that the regions where $b_0=0$ respectively $b_0=1$ are still valid for $b$.

Note that the escape function $G$ can be chosen so that the order function
$m\geq 0$ in the region $T^*\M\setminus W_u$ for $|\xi|$ large enough. 
Since $u\in \mc{H}_{NG}$, we thus have $Bu\in L^2$. Let $\widetilde{\chi}\in C^\infty(\R^\kappa)$
be given by $\widetilde{\chi}(t)=(-1)^\kappa\prod_{j=1}^\kappa \chi_j'(t_j)\geq 0$ for $t\in \R^\kappa$. Recalling that $F(\la)$ is the operator introduced in \eqref{dXQ+QdX0}, we can write, using the semiclassical Egorov Lemma  in its simple form of a coordinate change \cite[Proposition E19]{DZ19}, 
\[
\begin{split} 
Bu=B({\rm Id}-F(\lambda))u=& \int_{(\R^+)^\kappa} Be^{-\sum_{j=1}^\kappa t_jX_{A_j}(\lambda)}u\widetilde{\chi}(t)dt_1\dots dt_\kappa\\
=& 
\int_{(\R^+)^\kappa} e^{-\sum_{j=1}^\kappa t_jX_{A_j}(\lambda)}B_{t}u\widetilde{\chi}(t)dt_1\dots dt_\kappa
\end{split}
\]
with $B_{t}-\Op_h(b\circ e^{\sum_j t_jX^H_{A_j}})\in h\Psi_h^{-1}(\M)$ and ${\rm WF}_h(B_{t})\subset e^{-\sum_j t_jX^H_{A_j}}({\rm WF}_h(B))$. This gives, using $\|e^{-t_jX_{j}(\lambda)}\|_{\mc{L}(L^2)}=1$ by the fact that $\mu$ is invariant, and using Cauchy-Schwarz
\[
\|Bu\|^2_{L^2}=\|B({\rm Id}-F(\lambda))u\|^2_{L^2}\leq \int_{(\R^+)^\kappa} \|B_tu\|^2_{L^2}\widetilde{\chi}(t)dt \int_{(\R^+)^\kappa}\widetilde{\chi}(t)dt
\]
We can then write, since $\int_{(\R^+)^\kappa}\widetilde{\chi}(t)dt=1$,
\begin{equation}\label{ineqBtBu} 
\int_{(\R^+)^\kappa} (\|Bu\|^2_{L^2}-\|B_tu\|^2_{L^2})\widetilde{\chi}(t) dt\leq 0. 
\end{equation}
Next, recall that $ \supp \widetilde{\chi}(t) \subset 
\prod_{j\geq 1} [T_j,T_j+\eps]$. We claim that for $t\in \supp\tilde\chi$ there is $e_t\in S^0(\M;[0,1])$ such that 
we have $B^*B-(B_t^*B_t+E_t^*E_t)\in h^\infty \Psi_h^0(\M)$ for $E_t:=\Op_h(e_t)$ and such that 
$e_t(x,\xi) = 1+ \mathcal O(h)$ in the region $\{(x,\xi)\in E_u^*\, |\, |\xi|\in [c_1,c_2]\}$. Indeed, $E_t$ is microlocally equal to $C_t:=e^{\sum_{j=1}^\kappa t_jX_{A_j}(\lambda)}Ce^{-\sum_{j=1}^\kappa t_jX_{A_j}(\lambda)}$ on $\WF_h(B_t)$ and to $B$ on the complement of ${\rm WF}_h(B_t)$.
 This implies, thanks to \eqref{ineqBtBu},  
\[  \int_{(\R^+)^\kappa}\|E_tu\|^2_{L^2}\widetilde{\chi}(t)dt=\mc{O}(h^\infty). \]
There is $f,g_t\in S^0(\M;[0,1])$ with $f=1+\mathcal O(h)$ on $\{(x,\xi)\in E_u^*\, |\, |\xi|\in [c_1,c_2]\}$ with $f$ independent of $t$, such that for $t\in \supp\tilde\chi$
$E_t^*E_t-(F^*F+G_t^*G_t)\in h^\infty \Psi_h^0(\M)$, where $F=\Op_h(f)$ and 
$G_t= \Op_h(g_t)$. We thus obtain 
\[  \|Fu\|^2_{L^2} \leq \int_{(\R^+)^\kappa}\|E_tu\|^2_{L^2}\widetilde{\chi}(t)dt+\mc{O}(h^\infty)=\mc{O}(h^\infty), \]
which implies that ${\rm WF}_h(u)\cap \{(x,\xi)\in E_u^*\, |\, |\xi|\in [c_1,c_2]\}=\emptyset$. We then conclude that ${\rm WF}(u)\cap E_u^*=\emptyset$, which also shows that $u\in C^\infty$ and \eqref{ranP1smooth}. 

Then we define the following map
\begin{equation}\label{injectsmooth}
 \mathcal I: \Abb
 {\ker_{\ran \Pi_0(\lambda)} d_{X(\lambda)}/\ran_{\ran \Pi_0(\lambda)}d_{X(\lambda)}}
 {\ker_{C^\infty\Lambda} d_{X(\lambda)}/\ran_{C^\infty\Lambda}d_{X(\lambda)}}
 {u + \ran_{\ran \Pi_0(\lambda)}d_{X(\lambda)}}{ u + \ran_{C^\infty \Lambda}d_{X(\lambda)} }
\end{equation}
which is well defined since $\ran \Pi_0(\lambda)\subset C^\infty\Lambda$. We claim that this map is injective:  let $u= d_{X(\lambda)}v\in \ran \Pi_0(\lambda)$ with $v\in C^\infty \Lambda^j$, then we need to show that 
$u= d_{X(\lambda)}w$ for some $w\in \ran \Pi_0(\lambda)$. But is suffices to use $[d_{X(\lambda)},\Pi_0(\lambda)]=0$ to see that $u=\Pi_0(\lambda)u=d_{X(\lambda)}\Pi_0(\lambda)v$.
This proves the claim and concludes the proof of the lemma by using also the isomorphism \eqref{eq:intrinsic_isom2}.
\end{proof}
\begin{lemma}
Assume that there is a smooth invariant measure $\mu$ for the action, i.e. $\mc{L}_{X_{A}}\mu=0$ for each $A\in\a$, and that ${\rm supp}(\mu)=\mc{M}$. Then the periodic tori are dense in $\mathcal{M}$.
\end{lemma}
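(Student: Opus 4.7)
The plan is to combine Poincar\'e recurrence for $\mu$ with a multi-parameter Anosov closing lemma. Fix an arbitrary nonempty open set $U\subset \mc{M}$; since $\supp(\mu)=\mc{M}$ we have $\mu(U)>0$, and it suffices to produce a periodic torus meeting $U$. As $\mc{W}$ is an open convex cone in $\a$, we may fix a basis $A_1,\dots,A_\kappa\in\mc{W}$ of $\a$ and write $T_j:=\tau(\exp(A_j))$ for the corresponding commuting $\mu$-preserving diffeomorphisms.

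First I would produce a point of $U$ that recurs to $U$ under all $T_j$ simultaneously. Set $U_0:=U$; inductively, given a nonempty open set $U_{j-1}$ with $\mu(U_{j-1})>0$, Poincar\'e recurrence applied to $T_j$ yields an integer $n_j$, arbitrarily large, such that the nonempty open set $U_j:=U_{j-1}\cap T_j^{-n_j}(U_{j-1})$ still has positive measure. Any $x\in U_\kappa$ then satisfies $x\in U$ and $\tau(\exp(n_jA_j))(x)\in U$ for every $j=1,\dots,\kappa$. Fixing $\mathrm{diam}(U)<\eps$, this yields $x\in U$ whose image under each $\tau(\exp(n_jA_j))$ is $\eps$-close to $x$, where the vectors $n_jA_j$ form a basis of $\a$ of arbitrarily large norm, all lying in $\mc{W}$.

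Next I would close these approximate returns to a genuine periodic orbit. For $n_1$ large enough, $d\tau(\exp(n_1A_1))-\mathrm{Id}$ is uniformly invertible on $E_u\oplus E_s$ by transverse hyperbolicity, so the classical Anosov closing lemma, applied on a transverse section to the $\mathbb{A}$-orbit through $x$, furnishes $x'\in\mc{M}$ close to $x$ and $b_1'\in\a$ close to $n_1A_1$ with $\tau(\exp(b_1'))(x')=x'$. For each $j\geq 2$, commutativity of the action shows that $\tau(\exp(n_jA_j))(x')$ is again a fixed point of $\tau(\exp(b_1'))$ close to $x'$. By the implicit function theorem, the local fixed set of $\tau(\exp(b_1'))$ near $x'$ is exactly the $\kappa$-dimensional submanifold tangent to $E_0$, i.e.\ a neighbourhood of $x'$ in its $\mathbb{A}$-orbit. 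Hence $\tau(\exp(n_jA_j))(x')=\tau(\exp(a_j))(x')$ for some small $a_j\in\a$, and setting $b_j':=n_jA_j-a_j$ gives $\tau(\exp(b_j'))(x')=x'$. Being small perturbations of the linearly independent $n_jA_j$, the elements $b_1',\dots,b_\kappa'$ remain linearly independent, so the stabilizer $\Gamma_{x'}\subset\a$ of $x'$ contains a rank-$\kappa$ discrete subgroup and is thus a cocompact lattice. Consequently $\mathbb{A}\cdot x'\cong\a/\Gamma_{x'}$ is a periodic torus meeting $U$.

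The main obstacle is the rank-one Anosov closing lemma applied to $\tau(\exp(n_1A_1))$ in a partially hyperbolic setting, where the center foliation has dimension $\kappa>1$ (the $\mathbb{A}$-orbit foliation itself) rather than the one-dimensional flow direction of the classical Anosov case. The usual graph-transform proof still goes through because this center foliation is smooth and globally preserved by the commuting action, reducing the closing to a contraction on a local section transverse to the orbit; once a single periodic element is obtained, commutativity of the action together with the implicit function theorem automatically upgrades one periodic direction to $\kappa$ linearly independent ones, giving the full periodic torus rather than merely a periodic orbit.
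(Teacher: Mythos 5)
Your overall strategy --- Poincar\'e recurrence to produce an approximate return, followed by a closing lemma to produce a periodic torus --- is exactly the paper's, but the paper simply quotes the higher-rank closing lemma of Katok--Spatzier (\cite[Theorem 2.4]{KaSp94}), which takes a single point $x$ with $d(\tau(\exp(t))x,x)$ small for \emph{one} $t\in\mc{W}$ and directly outputs a periodic torus near $x$. You instead try to re-derive this from the rank-one closing lemma plus commutativity, and it is in that upgrade step that your argument breaks down.

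The gap is the assertion that, for $j\geq 2$, the point $\tau(\exp(n_jA_j))(x')$ is a fixed point of $\tau(\exp(b_1'))$ \emph{close to} $x'$. Commutativity gives only that it is a fixed point; closeness to $x'$ is what you need in order to invoke the local structure of the fixed-point set, and it does not follow from your setup. Indeed, the natural estimate is
\[
d\big(\tau(\exp(n_jA_j))x',\,x'\big)\;\leq\; \mathrm{Lip}\big(\tau(\exp(n_jA_j))\big)\,d(x',x)\;+\;d\big(\tau(\exp(n_jA_j))x,\,x\big)\;+\;d(x,x'),
\]
and $\mathrm{Lip}(\tau(\exp(n_jA_j)))$ grows like $e^{Cn_j}$ because $\tau(\exp(n_jA_j))$ expands $E_u$; the closing lemma only gives $d(x,x')\lesssim \delta^{-1}\mathrm{diam}(U)$, which cannot beat $e^{Cn_j}$ since the recurrence times $n_j$ produced by Poincar\'e's theorem depend on $U$ and cannot be bounded. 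Without this closeness, $\tau(\exp(n_jA_j))(x')$ may land in a far-away piece of the fixed-point set and you get no small $a_j$, hence no second independent period. Note also that your multi-recurrence construction, while correct, is then pointless: recurrence in one Weyl-chamber direction suffices.

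The standard repair (and essentially the content of the Katok--Spatzier statement) requires no further recurrence: once you have one transversely hyperbolic periodic element, i.e.\ $\tau(\exp(b_1'))x'=x'$ with $b_1'\in\mc{W}$ large, the entire $\mathbb A$-orbit of $x'$ consists of fixed points of $\tau(\exp(b_1'))$ by commutativity, and the global fixed-point set $F=\mathrm{Fix}(\tau(\exp(b_1')))$ is a compact $\kappa$-dimensional submanifold which, near each of its points, coincides with a local orbit piece (your implicit-function-theorem/hyperbolicity argument, applied at every point of $F$). Hence each connected component of $F$ is open in the single orbit containing it, and being compact it is also closed in the intrinsic orbit topology; since orbits are connected, the component through $x'$ \emph{is} the orbit of $x'$, which is therefore compact, so the stabilizer is a cocompact lattice and the orbit is a periodic torus meeting $U$. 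With this replacement your proof becomes correct and is a reasonable unpacking of the lemma the paper cites as a black box.
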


\begin{proof}
Since $\mathcal{M}$ is compact, the measure is finite, so we can apply Poincar\'e's recurrence theorem: almost every point $x$ of $\mathcal{M}$ is recurrent, i.e. its orbit comes back infinitely close to $x$ infinitely many times (and this for each direction of the action).  Katok-Spatzier \cite[Theorem 2.4]{KaSp94} proved a closing lemma for Anosov actions: there is $C,\delta>0$ such that whenever there is $x\in\mathcal{M}$ and $t\in\mc W$ with $d(\tau(t)x,x)<\delta$, $\|t\|>C$, then there is a periodic torus for the action at distance at most $\frac{1}{\delta}d(\tau(t)x,x)$ from $x$.
\end{proof}

\begin{prop}\label{resonanceat0}
Assume that there is a smooth invariant measure $\mu$ for the action, with ${\rm supp}(\mu)=\mc{M}$. Then 
\[ 
\dim \left(\ker_{C^{-\infty}_{E_u^*}\Lambda^j} d_{X}/\ran_{C^{-\infty}_{E_u^*}\Lambda^{j}} d_{X}\right)= \dim \Lambda^j\a^*={\binom{\kappa}{j}} 
\]
and the cohomology space is generated by the constant forms $e_{i_1}'\wedge \dots\wedge e_{i_\kappa}'$ if $(e_j')_j$ is a basis of $\a^*$. 
\end{prop}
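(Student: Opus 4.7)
The plan is to identify the distributional cohomology at degree $j$ with a finite-dimensional model of the form $\ran\Pi(0)\otimes\Lambda^j\a^*$, where $\Pi(0)$ is the spectral projector of $R(0)$ at $\tau=1$ from Section~\ref{sec:ia_non_volume}, and then to exploit the volume-preserving hypothesis with full support to prove that $\ran\Pi(0)=\C\cdot 1$.

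For the identification step, I would apply Lemma~\ref{lemmafordiscreteness} at $\lambda=0$ to get the divergence-form parametrix $Q(0)=\delta_{\mathbf{Q}(0)}$ satisfying
\[
d_X\,Q(0)+Q(0)\,d_X = \Id - R(0)\otimes\Id =: F(0).
\]
Since $F(0)$ is $\Lambda$-scalar, its spectral projector at $0$ factorizes as $\Pi_0(0)=\Pi(0)\otimes\Id_{\Lambda\a^*}$. Combining Lemma~\ref{lem:critereFredholm} with Proposition~\ref{prop:D_Eu} then gives an isomorphism between the distributional cohomology at degree $j$ and the cohomology of $d_X|_{\ran\Pi_0(0)}$. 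By Lemma~\ref{lem:tau=1} applied at $\lambda=0$, $\ran\Pi(0)$ coincides with the joint kernel $\{u : X_j u=0, \, \forall j\}$, and since the hypothesis of a smooth invariant measure is in force, the argument of Proposition~\ref{smoothnessat0} at $\lambda=0$ yields $\ran\Pi(0)\subset C^\infty(\M)$. Since each $X_j$ annihilates $\ran\Pi(0)$, the Koszul differential $d_X$ vanishes identically on $\ran\Pi_0(0)=\ran\Pi(0)\otimes\Lambda\a^*$, so the cohomology at degree $j$ equals $\ran\Pi(0)\otimes\Lambda^j\a^*$, of dimension $\dim(\ran\Pi(0))\cdot\binom{\kappa}{j}$.

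The remaining task is to show $\dim\ran\Pi(0)=1$, i.e., any $f\in C^\infty(\M)$ with $X_j f=0$ for all $j$ must be constant. Pick a transversely hyperbolic $A\in\W$; then $\varphi_t^{X_A}$ is partially hyperbolic with center foliation given by the $\mathbb{A}$-orbits, and the stable/unstable bundles $E_s,E_u$ provide a local product structure transverse to these orbits. Since $\supp\mu\subset\Omega(\varphi_t^{X_A})$ and $\supp\mu=\M$, the non-wandering set of $\varphi_t^{X_A}$ is all of $\M$. The Smale-type spectral decomposition (valid in the transversely hyperbolic setting via the local product structure and classical shadowing arguments) partitions $\Omega(\varphi_t^{X_A})$ into a finite disjoint union of compact invariant basic sets; the connectedness of $\M$ then forces a single basic set, so $\varphi_t^{X_A}$ is topologically transitive on $\M$. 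Picking $x_0\in\M$ with $\varphi^{X_A}_{\R}(x_0)$ dense, $X_A$-invariance gives $f\equiv f(x_0)$ on this orbit, and continuity propagates the equality to all of $\M$. Hence $\ran\Pi(0)=\C\cdot 1$, and the cohomology at degree $j$ has dimension $\binom{\kappa}{j}$, generated by the classes of the constant forms $1\otimes e'_{i_1}\wedge\dots\wedge e'_{i_j}$.

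The main obstacle is the ergodicity step: adapting the Smale spectral decomposition from genuine Anosov flows to the partially hyperbolic flow $\varphi_t^{X_A}$ whose center direction is $\kappa$-dimensional. The hyperbolic behaviour transverse to the $\mathbb{A}$-orbits still provides the local product structure needed, but the basic sets may now contain entire compact center leaves (periodic tori) rather than single periodic orbits, so some care is required. An alternative route to the same conclusion is to invoke the density of periodic tori established in the lemma preceding the proposition, combined with a Hopf-type argument along stable/unstable leaves using absolute continuity of these foliations, to directly show that any smooth $\mathbb{A}$-invariant function takes the same value on every periodic torus and hence is constant.
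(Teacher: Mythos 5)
Your first half --- reducing the statement to $\dim\ran\Pi(0)=1$ by identifying the degree-$j$ cohomology on $C^{-\infty}_{E_u^*}\Lambda^j$ with $\ran\Pi(0)\otimes\Lambda^j\a^*$ via the $\Lambda$-scalar parametrix of Lemma~\ref{lemmafordiscreteness}, Lemma~\ref{lem:critereFredholm}, Proposition~\ref{prop:D_Eu}, and the smoothness statement \eqref{ranP1smooth} --- is correct and is exactly the paper's reduction. The gap is in the second half, and it is genuine. You propose to prove that a smooth function $f$ with $X_Af=0$ for all $A$ is constant by establishing topological transitivity of a single flow $\varphi_t^{X_A}$ through a ``Smale-type spectral decomposition valid in the transversely hyperbolic setting.'' No such decomposition is available off the shelf: the spectral decomposition theorem applies to hyperbolic sets, and here the flow is only partially hyperbolic with a $\kappa$-dimensional center. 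The local product structure is transverse to $E_0$ only, so the putative basic sets are saturated by whole $\mathbb{A}$-orbits and are not open in $\Omega$ by the usual argument; at best one could hope for chain components of the \emph{action}, not of a single flow direction, and transitivity of general Anosov actions (let alone of one direction $X_A$) is precisely the kind of statement the authors treat as an additional hypothesis elsewhere (cf.\ the discussion of \cite{BGW21}). Your fallback (Hopf argument with absolute continuity of the foliations) is both vague and unnecessary machinery for a \emph{smooth} invariant function.

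The step you are missing is elementary and is what the paper does: once $f$ is smooth and invariant under the whole action, pick $A\in\W$ and use $f=f\circ\varphi^{X_A}_t$ to write $df_x(v)=df_{\varphi^{X_A}_t(x)}\bigl(d\varphi^{X_A}_t(x)v\bigr)$ for $v\in E_s(x)$; the uniform bound $\|d\varphi^{X_A}_t v\|\le Ce^{-\nu t}\|v\|$ and $\|df\|_{L^\infty}<\infty$ give $df|_{E_s}=0$ as $t\to+\infty$, and likewise $df|_{E_u}=0$ with $t\to-\infty$. Since $df|_{E_0}=0$ by hypothesis, $df=0$ and $f$ is locally constant; connectedness (implicit here, explicit in Theorem~\ref{Theo3intro}(\ref{it:res_at_zero})) finishes. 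Note also a structural difference: the paper does not pass through Lemma~\ref{lem:tau=1} but works directly with the averaged equation $F(0)u=u-R(0)u=0$, and uses Fourier analysis on periodic tori together with their density (this is where $\supp\mu=\mc M$ enters, via Poincar\'e recurrence and the Katok--Spatzier closing lemma) to upgrade $F(0)u=0$ to $d_Xu=0$ before running the derivative estimate. Your shortcut through Lemma~\ref{lem:tau=1} to get $X_ju=0$ directly is legitimate, but you should then notice that it makes the transitivity detour pointless: the derivative estimate closes the argument immediately.
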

\begin{proof}
In the proof of Proposition~\ref{smoothnessat0} with $\lambda=0$, we have defined an operator $F(0)$ that is Fredholm on $\mc{H}_{NG}$ and $\Pi_0(0)$ is its spectral projector at $z=0$, with ${\rm ran}(\Pi_0(0))\subset C^\infty(\M)$. Recall also that $F(0)$ is scalar and can thus be considered as an operator on functions. Let us show that ${\rm ran}(\Pi_0(0))=\R$ consists only of constants under our assumptions. Pick $u\in C^\infty(\M)$ such that $F(0)u=0$. Let $x\in \M$ belong to a closed orbit in the Weyl chamber, i.e. $\varphi_{t_0}^{X_A}(x)=x$ for some $A\in \W$ and $t_0>0$. Then it is a classical result that the orbit $T_x:=\{\varphi_s^{X_{\tilde A}}(x)\, |\, s\in \R, \tilde{A}\in \a\}$ is a closed $\kappa$-dimensional torus  (a proof compatible with the present notation can e.g. be found in \cite[Lemma 3.1]{BGW21}). It is isomorphic to $\R^\kappa/\Z^\kappa$ by the map
\[
\psi_x: t \in\R^\kappa\mapsto \tau\Big(\sum_{j=1}^\kappa t_j A'_j\Big)(x)
\]
for some basis $A'_i\in\a$.  Note that $\psi_x^*( e^{\sum_\ell s_\ell X_{A_\ell'}} u)(t) = \psi_x^*u(t+s)$.
Let us restrict the identity $F(0) u= 0$ or equivalently $R(0)u=u$ on $T_x$. We can decompose $v:=\psi_x^*u$ into Fourier series
\[
t\in \R^\kappa, \quad v(t)=\sum_{k\in\Z^\kappa}e^{2i\pi k.t}v_k.
\]
Recall that the basis for which $R(0)$ was constructed in Proposition~\ref{smoothnessat0} was denoted by $A_1,\ldots,A_\kappa\in \mathfrak a$. We can express this basis in terms of the basis $A'_j$ of the periodic torus via some base change matrix $A_j=\sum_{i}M_{ij}A_i'$ (using 
$\sum_{\ell=1}^\kappa s_\ell A_{\ell}=\sum_{\ell,i} s_\ell M_{i \ell}A'_{i}$) the identity $Ru(x)=u(x)$ implies
\[
\begin{split}
\sum_{k\in\Z^\kappa}e^{2i\pi k.t}v_k(\psi_x^*R(0)u)(t) =&  \int_{(\R^+)^\kappa}(\psi_x^*u)(t-Ms)\widetilde \chi(s)ds \\
= & \sum_{k\in\Z^\kappa}v_ke^{2i\pi k.t}\int_{(\R^+)^\kappa}e^{-2i\pi k.Ms}\widetilde{\chi}(s)ds
\end{split}\]
with $M=(M_{ij})_{ij}$ real valued and $\tilde{\chi}$ was defined in the proof of Proposition \ref{smoothnessat0}. 
This shows that for each $k\in\Z^\kappa$, 
\[
v_k=0 \textrm{ or } \int_{(\R^+)^\kappa}(e^{-2i\pi k.Ms}-1)\widetilde{\chi}(s)ds=0.
\]
Using that $\widetilde{\chi}\geq 0$ and $\widetilde{\chi}(s)>0$ in some open set and that $M$ is invertible, we see that 
either $v_k=0$ or $k=0$, i.e. $v(t)=v(0)$ is constant. Therefore $u$ is constant on each periodic torus. Since $u$ is smooth and the periodic tori are dense, this implies that $d_Xu=0$ and $u(\varphi_t^{X_A}(x))=u(x)$ for each $x\in \M$, $t\in \R$ and $A\in\a$. Taking $A\in \W$, there is $\nu>0$ such that for each $t>0$ large enough so that $|d\varphi_t^{X_A}v|\leq e^{-\nu t}|v|$ for each $v\in E_s$. Thus 
\[
|du_x(v)|=|du_{\varphi_t^{X_A}(x)}d\varphi_t^{X_A}(x)v|\leq \|du\| e^{-\nu t}|v|.
\]
Letting $t\to \infty$, we conclude that $du|_{E_s}=0$. The same argument with $t<0$ shows that 
$du|_{E_u}=0$ and therefore $du=0$. Since $F(0)1=0$, this shows that, when viewed as an operator on $\Lambda\a^*$, $\ran \Pi_0(0)$ is exactly the space of constant forms. We can then use the isomorphism \eqref{eq:intrinsic_isom2} to conclude the proof since it is direct to see that constant forms $e_{i_1}'\wedge\dots\wedge e_{i_j}'$ form a basis of $\ker d_{X}/{\rm Im}\, d_X$ on ${\rm Im}(\Pi_0(0))$ (as $d_X|_{\ran \Pi_0(0)}=0$). 
\end{proof}

Note that in their paper \cite{KaSp94} Katok-Spatzier study the first cohomology group of dynamical systems and show that any smooth cocycle is smoothly conjugated to a constant function \cite[Theorem 2.9 a)]{KaSp94}. In our language of Taylor complexes, this result implies that for standard Anosov actions $\dim ( \ker_{C^{\infty}\Lambda^1} d_{X}/\ran_{C^{\infty}\Lambda^1} d_{X}) = \kappa$ and is spanned by the constant forms. Combining this fact with Proposition \ref{resonanceat0}, we obtain:
\begin{cor}
If the Anosov $\R^\kappa$-action is standard in the sense of \cite{KaSp94}, then  the map \eqref{isomreson-smooth} is an isomorphism for $j=1$.
\end{cor}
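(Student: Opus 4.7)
The plan is to combine Proposition~\ref{smoothnessat0}, Proposition~\ref{resonanceat0}, and the known cohomological result of Katok--Spatzier to conclude by a dimension count.

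First I would record that standard Anosov $\R^\kappa$-actions in the sense of \cite{KaSp94} always preserve a smooth invariant measure $\mu$ (the normalized Haar measure in the homogeneous models and its analogues in the general standard construction) with full support $\supp(\mu)=\mc M$. Thus the hypotheses of Proposition~\ref{resonanceat0} are satisfied, and we obtain
\[
\dim\left(\ker_{C^{-\infty}_{E_u^*}\Lambda^1} d_X/\ran_{C^{-\infty}_{E_u^*}\Lambda^1} d_X\right)=\binom{\kappa}{1}=\kappa.
\]

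Next I would invoke the smooth cohomology computation of Katok--Spatzier \cite{KaSp94} which, for any standard Anosov action, gives
\[
\ker d_X|_{C^\infty(\M)\otimes \Lambda^1\a^*}/\ran\, d_X|_{C^\infty(\M)}=\C^\kappa,
\]
so the target of the comparison map \eqref{isomreson-smooth} at $j=1$ also has dimension $\kappa$.

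Finally, by Proposition~\ref{smoothnessat0} applied with $\lambda=0$ and $j=1$, the natural map
\[
\ker_{C^{-\infty}_{E_u^*}\Lambda^1} d_X/\ran_{C^{-\infty}_{E_u^*}\Lambda^1} d_X\longrightarrow \ker_{C^\infty\Lambda^1} d_X/\ran_{C^\infty\Lambda^1} d_X
\]
is injective. A linear injection between two complex vector spaces of the same finite dimension $\kappa$ is automatically an isomorphism, which proves the corollary. There is no genuine obstacle here since the two nontrivial inputs (smoothness/dimension of the distributional cohomology and dimension of the smooth cohomology) have already been established; the only thing to verify carefully is that ``standard'' in the sense of \cite{KaSp94} really supplies a smooth invariant measure of full support, which is immediate from the homogeneous construction underlying the definition.
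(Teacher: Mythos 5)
Your argument is correct and is exactly the route the paper intends: the corollary is stated there as an immediate consequence of Proposition~\ref{resonanceat0} (dimension $\kappa$ for the distributional cohomology, using the smooth fully supported invariant measure that standard actions carry), the Katok--Spatzier computation of the smooth first cohomology ($\C^\kappa$), and the injectivity from Proposition~\ref{smoothnessat0}, so equality of finite dimensions forces the injection to be an isomorphism. Nothing to add.
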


\subsection{Ruelle-Taylor resonances and mixing properties}
In this section we do not assume anymore that a volume measure is preserved, and want to establish the following relation of Ruelle-Taylor 
resonances and mixing properties.
\begin{prop}\label{prop:mixing}
 Let $X$ be an Anosov action on $\mc M$ then the following are equivalent:
 \begin{enumerate}
  \item \label{it:weak} There is a direction $A_0\in \a$ 
  such that $\varphi_t^{X_{A_0}}$ is weakly mixing with respect to the full physical measure $\mu_1$.
  \item \label{it:res} $0$ is the only Ruelle-Taylor resonance on $i\a^*$ and there is a unique normalized physical measure $\mu_1$.
  \item \label{it:strong} For each $A\in \mc W$,
  $\varphi_t^{X_A}$ is strongly mixing with respect to the full physical measure $\mu_1$. 
  \end{enumerate}
\end{prop}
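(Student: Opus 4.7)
The plan is to establish the cyclic chain (\ref{it:strong}) $\Rightarrow$ (\ref{it:weak}) $\Rightarrow$ (\ref{it:res}) $\Rightarrow$ (\ref{it:strong}). The first implication is trivial, since strong mixing implies weak mixing and any $A_0\in\W$ is permitted.

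For (\ref{it:weak}) $\Rightarrow$ (\ref{it:res}), weak mixing of $\varphi_t^{X_{A_0}}$ implies ergodicity. By Proposition~\ref{prop:SRBmeasures}(5), any physical measure $\mu_v$ with $v\in C^\infty(\M,\R^+)$ is absolutely continuous with respect to $\mu_1$ with bounded density; this density is invariant and hence constant by ergodicity, so $\mu_v\in\C\mu_1$, giving uniqueness of $\mu_1$. Suppose next that $i\lambda\in i\a^*$ with $\lambda\neq 0$ were a Ruelle--Taylor resonance. Proposition~\ref{prop:SRBmeasures}(4) and (5) produce a non-zero $h\in L^\infty(\mu_1)$ with $h\circ\varphi_t^{X_A}=e^{-i\lambda(A)t}h$ in $L^2(\mu_1)$ for every $A\in\W$ and $t\in\R$. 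If $\lambda(A_0)\neq 0$, then $h$ is a non-constant Koopman eigenfunction in direction $A_0$, contradicting weak mixing; otherwise $\lambda(A_0)=0$ forces $h$ to be a non-zero constant by ergodicity, and the equivariance then gives $e^{-i\lambda(A)t}=1$ for every $A\in\W$, $t\in\R$, whence $\lambda=0$ by linearity, a contradiction.

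For (\ref{it:res}) $\Rightarrow$ (\ref{it:strong}), fix $A\in\W$. Reversing the previous argument (with $A$ in place of $A_0$), assumption (\ref{it:res}) forces $U_t=e^{tX_A}$ on $L^2(\mu_1)$ to have no eigenvalue on $S^1\setminus\{1\}$ and the eigenvalue $1$ to be simple, so $\varphi_t^{X_A}$ is at least weakly mixing with respect to $\mu_1$. Upgrading this to strong mixing is the main obstacle. I would choose an escape function $G$ adapted to $A$ and invoke the spectral theory of Section~\ref{sec:Taylor-pseudors}: by Theorem~\ref{theoHn} the spectrum of $-X_A$ on $\mc H_{NG}$ in $\{\Re s>-Nc_X+C_{L^2}(A)\}$ is discrete and, under (\ref{it:res}), meets $i\R$ only at the simple eigenvalue $s=0$, whose rank-one Riesz projector is $\Pi_0^A u=\mu_1(u)\mathbf 1$. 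For $f,g\in C^\infty(\M)$ with $\mu_1(f)=0$, the correlation is rewritten as
\[
\int (f\circ\varphi_t^{X_A})\,g\,d\mu_1 - \mu_1(f)\mu_1(g) = \bigl\langle (e^{-tX_A}-\Pi_0^A)g,\,f\mu_1\bigr\rangle,
\]
with $f\mu_1\in C^{-\infty}_{E_s^*}(\M)\subset \mc H_{-NG}$, so the task reduces to strong convergence of $e^{-tX_A}g$ to $\Pi_0^A g$ when tested against such distributions.

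The delicate step is this last convergence, for which we do not have an \emph{a priori} spectral gap. My plan is to use the parametrix $R(i\zeta)$ of Lemma~\ref{lemmafordiscreteness} specialized to $\lambda=i\zeta A$ for $\zeta\in\R$: Lemma~\ref{lem:tau=1} combined with (\ref{it:res}) shows that $1$ is an eigenvalue of $R(i\zeta)$ on $\mc H_{NG}$ only when $\zeta=0$ (with projector $\Pi_0^A$), and $(R(i\zeta)-\mathrm{Id})|_{\ker\Pi_0^A}$ is invertible for every $\zeta\in\R$. Representing the correlation as a contour integral of this resolvent over $i\R$ and applying a Riemann--Lebesgue argument to the regular part on the spectral side yields the decay. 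The most delicate ingredient is uniform resolvent control as $|\zeta|\to\infty$, which I expect to obtain by exploiting that the parametrix construction and the anisotropic space $\mc H_{NG}$ are independent of $\zeta$; together with the discreteness from Proposition~\ref{prop:zero_cohomology_nonempty} this should rule out resonance accumulation on $i\R$ and provide the integrability on the spectral side needed to conclude.
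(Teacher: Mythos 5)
Your implications (3)$\Rightarrow$(1) and (1)$\Rightarrow$(2) are essentially the paper's argument: the paper also invokes Proposition~\ref{prop:SRBmeasures}(\ref{it:ac}) to produce a non-constant bounded density $f$ with $X_Af=i\lambda(A)f$ and then exhibits a correlation $C_{f,g}(t;A_0)=e^{-i\lambda(A_0)t}\int fg\,d\mu_1$ that does not Ces\`aro-decay. The problem is (2)$\Rightarrow$(3), where your strategy has a genuine gap and is not the route the paper takes.

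First, a secondary issue: to conclude from (\ref{it:res}) that $U_t=e^{tX_A}$ has no eigenvalue on $S^1\setminus\{1\}$ you need the implication ``$L^2(\mu_1)$ Koopman eigenfunction $\Rightarrow$ Ruelle--Taylor resonance on $i\a^*$''. Proposition~\ref{prop:SRBmeasures} gives only the converse (resonance $\Rightarrow$ bounded equivariant density), and an $L^\infty$ eigenfunction of a single flow $e^{tX_A}$ is neither obviously a joint eigenfunction nor obviously a distribution with wavefront set in $E_s^*$ after multiplication by $\mu_1$. More seriously, your plan to upgrade to strong mixing by a contour integral of the resolvent over $i\R$ plus Riemann--Lebesgue requires resolvent bounds as $|\zeta|\to\infty$ that you do not have: discreteness of the Ruelle--Taylor spectrum does not prevent resonances from accumulating onto $i\a^*$ from $\{\Re\lambda(A)<0\}$, so $(\mathrm{Id}-R(i\zeta))^{-1}$ on $\ker\Pi_0^A$ need not be bounded uniformly (let alone integrably) in $\zeta$. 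What you are asking for is in effect a Dolgopyat-type spectral gap estimate, which is precisely what this proposition avoids assuming. The paper circumvents the high-frequency problem by working on $L^2(\mc M,\mu_1)$, where the $X_A$ are skew-adjoint and strongly commuting: the joint spectral theorem yields a \emph{finite} projector-valued measure $\nu$ on $\a^*$, and Lemma~\ref{lem:spec_meas_WF} shows that the scalar spectral measures $\nu_{f,g}$ have empty analytic wavefront set in all Weyl-chamber directions. This is a purely \emph{local} statement in $\vartheta\in\a^*$, obtained from the locally uniform spectral radius bound of $R_\sigma(i\lambda)$ on $\mc H_{NG,0}$ together with an FBI-transform characterization; the tail $|\vartheta|\to\infty$ is then controlled not by resolvent estimates but by the finiteness of the total variation of $\nu_{f,g}$, after which $C_{f,g}(t;A_0)=\int e^{-i\vartheta(A_0)t}d\nu_{f,\bar g}(\vartheta)\to 0$. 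To repair your proof you should replace the contour-integral step by this spectral-measure regularity argument (or supply an actual uniform bound on $(\mathrm{Id}-R(i\zeta))^{-1}$, which is a much harder and independent result).
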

\begin{proof}
 Obviously (\ref{it:strong}) $\Rightarrow$ (\ref{it:weak}). So let us prove (\ref{it:weak}) $\Rightarrow$ (\ref{it:res}): Assume that there is either a non-zero Ruelle-Taylor resonance $i\lambda\in i\a^*$ or a non-unique normalized physical measure. Then by Proposition~\ref{prop:SRBmeasures}(\ref{it:ac}) there is a non-constant bounded density $f\in L^\infty(\mc M, \mu_1)$ with $X_Af= i\lambda(A)f$ for all $A\in \a$ (setting $\lambda=0$ if the density comes from the non-uniqueness of the physical measure). As $f$ is non-constant there exists $g\in L^\infty(M,\mu_1)$ with $\int g\,d\mu_1 =0$ but $\int gf\,d\mu_1\neq 0$. With these two functions, the correlation function
 \[
  C_{f,g}(t;A_0):=\int_{\mc M} g (\varphi_{-t}^{X_{A_0}})^*f\,d\mu_1 - \int_{\mc M}g\, d\mu_1\int_{\mc M}f\,d\mu_1 = e^{-i\lambda(A_0)t}\int_{\mc M} gf\,d\mu_1
 \]
 so $\varphi_t^{X_{A_0}}$ is not weakly mixing.
 
 We will now prove (\ref{it:res})$\Rightarrow$(\ref{it:strong}) using the regularity of a joint spectral measure: Let us first introduce these measures. We consider the space $L^2(M,\mu_1)$. Since the measure $\mu_1$ is flow-invariant, the flow acts as unitary operators on $L^2(M,\mu_1)$. In particular, for each $A\in \a$, $X_A$ is skew-adjoint when acting on $L^2(M,\mu_1)$ with domain
\[
\mathcal{D}(X_A) = \left\{ u\in L^2(M,\mu_1)\ \middle|\ \lim_t \frac{1}{t}(e^{tX_A}u-u) \text{ exists}\right\} = \{u \in L^2(M,\mu_1)\ |\ X_A u \in L^2(M,\mu_1)\}.
\]
Additionally, since the flows commute, the $X_A$ are \emph{strongly commuting}, so that we can apply the joint spectral theorem -- see Theorem 5.21 in \cite{Schmudgen}. There exists a Borel, $L^2(M,\mu_1)$-projector valued, measure $\nu$ on $\a^*$ such that for $u\in L^2(M,\mu_1)$,
\[
u = \int_{\a^*} d\nu(\vartheta)u,\qquad  X_A u = \int_{\a^*} i\vartheta(A) d\nu(\vartheta)u \text{ for all }A\in \mathfrak a.
\]
We will prove the following regularity result of these measures below:
\begin{lemma}\label{lem:spec_meas_WF}
Let $X$ be an Anosov action. Assume that there is no non-zero purely imaginary Ruelle-Taylor resonance and a unique normalized physical measure. Then for any $f,g\in C^\infty(\mc M)$ with $\int_{\mc M} f\,d\mu_1 = \int_{\mc M} g\, d\mu_1 = 0$ we consider $\nu_{f,g}(\theta):= \langle \nu(\theta)f,g\rangle_{L^2(\mc M, \mu_1)}$ which are finite complex valued measures on $\a^*$. Then the analytic wavefront set
\footnote{See \cite[§3.3]{Fol89} for the definition and basic properties of the analytic wavefront set. In our proof, we need to use a non-quadratic phase, and only the quadratic case is treated in Folland; however this is just a slight technical hurdle, as mentionned by Folland at the start of p. 160. For completeness, however, we will refer to \cite{Sjo82} (in French).} \footnote{The usual $C^\infty$ wavefront set is contained in the analytic wavefront set, i.e $\WF\subset\WF_a$; see Theorem 3.22 in Folland.}
$\WF_a(\nu_{f,g})\subset \a^*\times \a$ fulfills
\[
\WF_a(\nu_{f,g}) \cap (\a^*\times \mathcal W) = \emptyset.
\]
\end{lemma}
Before proving this Lemma let us show that it implies (\ref{it:strong}). Take $A_0\in \mc{W}$, $f,g$ as in the above Lemma, then the spectral theorem yields
\[
 C_{f,g}(t;A_0) = \int_{\mc M} g(\varphi_{-t}^{X_{A_0}})^*fd\mu_1 = \int_{\a^*} e^{-i\vartheta(A_0)t} d\nu_{f,\overline g}(\vartheta).
\]
Given any $\varepsilon>0$, using the fact that $\nu_{f,\overline g}$ is finite, there is a cutoff function $\chi_K \in C_c^\infty(\a^*,[0,1])$ equal to 1 on a sufficiently large compact set $K\subset \a^*$ such that $|\int_{\a^*} e^{-i\vartheta(A_0)t} (1-\chi_K) d\nu_{f,\overline g}(\vartheta)|\leq \varepsilon/2$ uniformly in $t$. Furthermore by the fact that the wavefront set is empty in the direction of the Weyl chamber $\mathcal W$ we deduce that there is $T$ such that $|\int_{\a^*} e^{-i\vartheta(A_0)t} \chi_K d\nu_{f,\overline g}(\vartheta)|\leq \varepsilon/2$ for any $t>T$ thus $\lim_{t\to\infty} C_{f,g}(t,A_0)=0$. The passage to arbitrary $L^2(\mc M, \mu_1)$ functions follows by the density of the smooth functions.
\end{proof}

\begin{proof}[Proof of Lemma~\ref{lem:spec_meas_WF}]
Let us pick any $A_0\in \mc W$ and a basis $A_1,\ldots,A_\kappa\in \mc W$ such that these elements span an open cone around $A_0$. With this basis we identify the joint spectral measure with a measure on $\R^\kappa$. Recall the definition of $R_\sigma(i\lambda)$ from the proof of Proposition~\ref{prop:SRBmeasures} which was based on the choice of an even, positive $\psi\in C^\infty((-1/2,1/2))$ with $\int\psi=1$ and some $\sigma\in \R_+^\kappa$. Using the spectral theorem we calculate for any $f,g\in L^2(\mc M, \mu_1)$
\begin{equation}\label{eq:Rk_spec}
 \langle R_\sigma(i\lambda)^k f, g\rangle_{L^2(\mc M, \mu_1)} = 
 \int_{\R^\kappa} \hat\Psi(\vartheta+\lambda)^k e^{-ik\sigma(\vartheta+\lambda)} d\nu_{f,g}(\vartheta)
\end{equation}
where $\Psi(t):=\prod_{j=1}^\kappa\psi(t_j)$. Now let us define the following closed subspaces $\mc H_{NG, 0}:=\{u\in \mc H_{NG}| \int ud\mu_1=0\}\subset \mc H_{NG}$. Note that these are well defined for sufficiently large $N$ because $\mu_1\in \mc H_{-NG}$. Furthermore from the invariance of $\mu_1$ under the Anosov actions the spaces $\mc H_{NG,0}$ are preserved under $R_\sigma(i\lambda)$. Now the assumption that there is no imaginary Ruelle-Taylor resonance except zero and that there is a unique normalized physical measure imply (combining the findings of Section~\ref{sec:ia_non_volume}) that $R_\sigma(i\lambda)$ has a spectral radius $<1$ on $H_{NG, 0}$
for any $\lambda\in \R^\kappa$, and $\sigma\in \R_+^\kappa$ sufficiently large. Thus there are $C_{\sigma,\lambda}, \varepsilon_{\sigma,\lambda}>0$, locally uniformly in $\sigma,\lambda$ such that $\|R_\sigma(i\lambda)^k\|_{\mc H_{NG,0}}\leq C_{\sigma,\lambda}e^{-\varepsilon_{\sigma,\lambda} k}$. Now let $f,g$ be as in the assumption of our Lemma, then we can estimate
\[
 \langle R_\sigma(i\lambda)^k f, g\rangle_{L^2(\mc M,\mu_1)} \leq 
 \|R_\sigma(i\lambda)^kf\|_{\mc H_{NG,0}}\|g\mu_1\|_{\mc H_{-NG}} \leq C_{f,g,\sigma,\lambda}e^{-\varepsilon_{\sigma,\lambda}k}.
\]
Let us come back to the expression \eqref{eq:Rk_spec} involving the spectral measures. By the properties of $\psi$ we deduce that near zero $\hat \Psi(\xi)   = \exp(-S(\xi))$ with some analytic function $S(\xi) = a|\xi|^2 + \mathcal O(|\xi|^4)$. Furthermore for any $\delta>0$, there is $\varepsilon_2>0$ such that $\hat\Psi(\xi)<e^{-\varepsilon_2}$ for $|\xi|>\delta$. Choosing a cutoff function $\chi\in C_c^\infty((-3\delta,3\delta)^\kappa)$ with $\chi(\xi)=1$ for $|\xi|<2\delta$ we get by the boundedness of $\nu_{f,g}$ for an arbitrary fixed $\lambda_0\in \R^\kappa$
\[
 \left|\langle R_\sigma(i\lambda)^k f,g\rangle_{L^2(\mc M,\mu_1)} - \int_{\R^\kappa} \hat\Psi(\vartheta+\lambda)^k e^{-ik\sigma(\vartheta+\lambda)} \chi(\vartheta+\lambda_0) d\nu_{f,g}(\vartheta)\right|\leq Ce^{-\varepsilon_2k}
\]
uniformly for $\sigma\in \R_+^\kappa$, $|\lambda-\lambda_0|<\delta$. Putting everything together we get
\[
 \left|\int_{\R^\kappa} e^{-kS(\vartheta+\lambda)-ik\sigma(\vartheta+\lambda)} \chi(\vartheta+\lambda_0) d\nu_{f,g}(\vartheta)\right|\leq \tilde Ce^{-\tilde \varepsilon k}
\]
with $\tilde C,\tilde \varepsilon>0$ locally uniform in $|\lambda-\lambda_0|<\delta$ and $\sigma\in \R_+^\kappa$. In the expression on the left hand sice, we recognize a semi-classical Fourier-Bros-Iagolnitzer (FBI) transform of the distribution $d\nu_{f,g}$ at parameters $(\lambda,\sigma)$, with $h=1/k$. That it decays exponentially as $h\to 0$, uniformly in $\lambda$ close to $\lambda_0$ and uniformly in $\sigma\in\R_+^\kappa$ is the definition that
\[
\WF_a(d\nu_{f,g}) \cap \{\lambda_0\}\times \R_+^\kappa = \emptyset.
\]
Here $\WF_a$ is the \emph{analytic} wavefront set, see the \cite[Définition 6.1, Proposition 6.2]{Sjo82}.

Recall furthermore that we had identified $\a^*\cong\R^\kappa$ by the above choice of the basis $A_j$, thus our result implies that there is no analytic wavefront set in $\a^* \times \{\sum c_jA_j|c_j\in \R_+\}$, but as the $A_j$ span an arbitrary subcone of $\mathcal W$ we also get the absence of analytic wavefront set in $\a^*\times\mathcal W$ and we have completed the proof of Lemma~\ref{lem:spec_meas_WF}.
\end{proof}

\appendix

\section{Tools from microlocal analysis}\label{sec:microlocal}

We recall here some essentials of microlocal analysis. In the paper, we are working with pseudodifferential operators acting on 
$C^\infty(\mc M; E)\otimes \Lambda \a_\C^* \cong C^\infty( \mc M; E\otimes \Lambda \a_\C^*)$. 
Note that by fixing an arbitrary scalar product on $\a^*$ the bundle $E\otimes \Lambda:=E\otimes \Lambda \a_\C^* \to \mc M$ is again a Riemannian bundle. We will therefore introduce notations for pseudodifferential operators on general Riemannian bundles $E\to \mc M$ over a compact Riemannian manifold $\mc M$. Only when we want to exploit some specific structures of $E\otimes \Lambda$, will we refer to this particular bundle. 

For more details we refer to standard references such as \cite{GS94}. For the details concerning the anisotropic calculus we refer to \cite{FRS08}.
\begin{Def}\label{def:symbols}
 Let $k\in\R$, $1/2<\rho\leq 1$. Then the \emph{standard symbol space} $S^k_\rho(\mc M; E)$ is the space of
 functions $a\in C^\infty(T^*\mc M; \tu{End}(E))$, for which in any local chart $U\subset \R^n$ of $\mc M$ and any local trivialization of the bundle, for any compact set $K\subset U$ and any two multiindices $\alpha,\beta\in \N^n$
 \[
  \sup_{(x,\xi)\in T^*U,x\in K} \|\partial_x^\alpha \partial_\xi^\beta a(x,\xi)\|\langle\xi\rangle^{-(k - \rho|\beta|+(1-\rho)|\alpha|)} <\infty. 
 \]
 Given a zeroth order symbol $m(x,\xi) \in S^0_1(\M)$ then the \emph{anisotropic symbol space} $S^{m(x,\xi)}_\rho(\mc M; E)$ is the space of
 functions $a\in C^\infty(T^*\mc M; \tu{End}(E))$ for which in any local chart $U\subset \R^n$ for any compact set $K\subset U$ and any two multiindices $\alpha,\beta\in \N^n$
 \[
  \sup_{(x,\xi)\in T^*U,x\in K} \|\partial_x^\alpha \partial_\xi^\beta a(x,\xi)\|\langle\xi\rangle^{-(m(x,\xi) - \rho|\beta|+(1-\rho)|\alpha|)} <\infty. 
 \]
We furthermore set\footnote{Note that $\cap_{k>0}S_\rho^{-k}(\mc M; E)$ is independent of $1/2<\rho \leq 1$ and we therefore drop the index in the notation of $S^{-\infty}(\mc M; E)$.}
\[ \begin{gathered}
  S^{-\infty}(\mc M;E) := \cap_{k>0}S_\rho^{-k}(\mc M; E),\quad S^\infty(\mc M;E):=\cup_{k>0}S_\rho^{k}(\mc M; E),\\
  S^{m+}_\rho(\mc M; E) := \cap_{\varepsilon>0}S^{m+\varepsilon}_\rho(\mc M; E),\quad S^{m}_{\rho-}(\mc M; E) := \cup_{\varepsilon>0}S^{m}_{\rho-\varepsilon}(\mc M; E).
 \end{gathered}\]
 \end{Def}
Note that by setting $m(x,\xi) = k\in \R$ the standard symbols are a special case of anisotropic symbols. We will therefore mostly introduce the notation in the anisotropic setting as it comprises the standard symbols as a special case. Furthermore, note that $x\mapsto \Id_{E_x}$ is a global smooth section of $\tu{End}(E)\to\mc M$ and multiplication with this section yields a canonical embedding 
$S_\rho^{\infty}(\mc M) \hookrightarrow S_\rho^{\infty}(\mc M; E)$. We will denote symbols in the image of this embedding as the space of \emph{scalar symbols}. 
 
After fixing a finite atlas and a suitable partition of unity of $\mc M$ one can define a quantization  (see e.g. \cite[E.1.7]{DZ19}) that associates to any $a\in S_\rho^\infty(\mc M; E)$ a continuous operator $\Op(a): C^\infty(\mc M; E)\to C^\infty(\mc M; E)$ which extends to a continuous operator $\Op(a): C^{-\infty}(\mc M; E)\to C^{-\infty}(\mc M; E)$. We denote by $\Psi^{-\infty}(\mc M;E)$ the space of smoothing operators $A:C^{-\infty}(\mc M; E)\to C^\infty(\M; E)$. The quantization has the property that $\Op(S^{-\infty}(\mc M; E))\subset \Psi^{-\infty}(\mc M; E)$. We say that $A\in \Psi^{m}_\rho(\mc M; E)$
iff there is $a\in S^{m}_\rho(\mc M; E)$ such that $A-\Op(a)\in\Psi^{-\infty}(\mc M; E)$. 
When $\rho=1$, we will drop the $\rho$ index and write $S^m(\M;E)$ and 
$\Psi^m(\M;E)$ instead of $S_1^m(\M;E)$ and $\Psi_1^m(\M;E)$.

With any $A\in \Psi_\rho^{m}(\mc M; E)$ one can associate its principal symbol 
\[
\sigma^{m}_p(A)\in S_\rho^{m}(\mc M; E) /S_\rho^{m-2\rho +1}(\mc M; E).
\]
The principal symbol is an inverse to $\Op$ in the sense that 
\[
 \sigma^{m}_p\circ \Op :S_\rho^{m}\to S_\rho^{m} /S_\rho^{m-2\rho +1} \tu{ and } \Op\circ \, \sigma^{m}_p :\Psi_\rho^{m}\to \Psi_\rho^{m} /\Psi_\rho^{m-2\rho +1}
\]
are simply the projections on the respective quotients. 
\begin{example}\label{exmpl:principal_symbols}
 Any $k$-th order differential operator $P$ with smooth coefficients  on the bundle $E\to \mc M$ is
 in $\Psi^k_1(\mc M; E)$ and a representative of its principal symbol $\sigma_p^k(P)$ can be calculated by
 \[
  \left[\sigma_p^k(P)(x,\xi)\right] u(x) =\lim_{t\to\infty}t^{-k} \left[e^{-it\phi} P(e^{it\phi} u)\right](x),
 \]
where $u\in C^\infty(\mc M; E)$ and $\phi\in C^\infty(\mc M)$ is a phase function with $d\phi(x) =\xi$ (see e.g. \cite[(6.4.6')]{Hoe03}).
As a direct consequence we get:
\begin{enumerate}
 \item For any vector field $X\in C^\infty(\mc M; T^*\M)\subset \Psi^1_1(\mc M)$ we have $\sigma_p^1(X)(x,\xi) = i\xi(X(x))$. 
 \item If $\mathbf X:\a\to \OpDiff^1(\mc M; E) \subset \Psi^1_1(\mc M, E)$ is an admissible lift of an Anosov action, then for all $A\in \a$ one finds that the principal symbol
 $\sigma_p^1(\mathbf X_A)(x,\xi) = i\xi(X_A(x)) \Id_{E_x}$ is scalar.
 \item In order to express the principal symbol of the exterior derivative
 $d_\mathbf X \in \Psi^1_1(\mc M, E\otimes\Lambda\a_\C^*)$ of $X$, let us consider the smooth map 
 $T^*\mc M \ni (x,\xi) \mapsto \xi(X_\bullet(x)) \in \Lambda^1\a^*$. With its help we calculate for $v\in E_x, \omega\in \Lambda \a^*$
 \[
  \sigma_p^1(d_\X)(x,\xi) (v\otimes \omega) = i v\otimes \left(\xi(X_\bullet(x))\wedge\omega\right).
 \]
(Thus $\sigma_p^1(d_\X)$ is scalar on the $E$-component but not on the $\Lambda \a^*$-component as it increases the order of differential forms.) 
\end{enumerate}
\end{example}

\begin{prop}\label{prop:pseudo_product}
 Let $A\in \Psi^{m_1(x,\xi)}_{\rho}(\mc M;  E)$ and $B\in \Psi^{m_2(x,\xi)}_{\rho}(\mc M; E)$, then $AB \in \Psi^{m_1+m_2}_{\rho}(\mc M; E)$ and 
 $\sigma_p^{m_1+m_2}(AB) = \sigma_p^{m_1}(A)\sigma_p^{m_2}(B) \mod S^{m_1+m_2-2\rho+1}_\rho(\M;E)$.
\end{prop}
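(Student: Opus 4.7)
The plan is to reduce the statement to the local composition formula for pseudodifferential operators in an Euclidean chart and then carefully track the behaviour of the anisotropic order functions $m_1,m_2\in S^0_1(\mc M)$ through the asymptotic expansion. Since the bundle $E$ is only auxiliary (the symbols take values in $\tu{End}(E_x)$ and composition of endomorphisms is bilinear), it suffices to trivialize $E$ locally and reason with matrix--valued symbols on an open set of $\R^n$, using the fixed finite atlas and partition of unity employed in the construction of $\Op$. The Schwartz kernel of a compactly supported element in $\Psi^{-\infty}$ is smoothing, so all cross terms between different charts are negligible and can be absorbed into $\Psi^{-\infty}(\mc M;E)$.

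First I would recall the standard composition formula in an Euclidean chart: if $a\in S^{m_1}_\rho$ and $b\in S^{m_2}_\rho$ with $\rho>1/2$ are properly supported representatives, then $\Op(a)\Op(b)=\Op(c)$ with
\[
c(x,\xi)\sim \sum_{\alpha\in\N^n}\frac{(-i)^{|\alpha|}}{\alpha!}\,\partial_\xi^\alpha a(x,\xi)\,\partial_x^\alpha b(x,\xi).
\]
The key point in the anisotropic setting is then a purely symbolic estimate: by the very Definition~\ref{def:symbols}, $\partial_\xi^\alpha a\in S^{m_1-\rho|\alpha|}_\rho$ and $\partial_x^\alpha b\in S^{m_2+(1-\rho)|\alpha|}_\rho$, so term by term
\[
\partial_\xi^\alpha a\cdot\partial_x^\alpha b\in S^{m_1+m_2-(2\rho-1)|\alpha|}_\rho(\mc M;E).
\]
The gain of $2\rho-1>0$ per additional derivative is exactly what guarantees that the asymptotic sum makes sense in $S^{m_1+m_2}_\rho(\mc M;E)$ in the usual Borel sense, and that the remainder obtained by truncating the sum at order $|\alpha|<K$ lies in $S^{m_1+m_2-K(2\rho-1)}_\rho$. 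Isolating the term $\alpha=0$ gives $c=ab\mod S^{m_1+m_2-(2\rho-1)}_\rho\subset S^{m_1+m_2-2\rho+1}_\rho$, which is exactly the claimed principal symbol identity.

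The one step that requires a little care, and which I would treat as the main subtlety, is the proof of the symbolic estimate above when the order function $m_i$ itself depends on $(x,\xi)$. Since $m_i\in S^0_1(\mc M)$, differentiating the weight $\langle\xi\rangle^{m_i(x,\xi)}$ produces logarithmic factors $(\log\langle\xi\rangle)^{|\alpha|+|\beta|}$ times $\partial^{\alpha+\beta}m_i$, but each $\partial_{x,\xi}m_i$ is itself in $S^{-1}_1+S^0_1$, so these logarithmic corrections are absorbed into $S^{m_i+0}_\rho$ (one uses $\log\langle\xi\rangle\in S^{0+}_\rho$ and $\rho>1/2$). This purely technical verification is what allows the standard composition calculus to extend without change from constant-order to anisotropic-order symbols; after it is established, the rest of the argument is the classical stationary phase/oscillatory integral derivation of the composition formula, which produces both the membership $AB\in\Psi^{m_1+m_2}_\rho(\mc M;E)$ and the principal symbol identity modulo $S^{m_1+m_2-2\rho+1}_\rho$.
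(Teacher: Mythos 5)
Your plan is correct and is essentially the standard composition argument for anisotropic symbol classes; the paper itself gives no proof of this proposition but defers the anisotropic calculus to [FRS08], where the argument is exactly the one you outline (localization, the asymptotic expansion, and the gain of $2\rho-1$ per term). One small remark: the term-by-term estimate $\partial_\xi^\alpha a\cdot\partial_x^\alpha b\in S^{m_1+m_2-(2\rho-1)|\alpha|}_\rho$ needs no log-factor analysis at all --- it follows directly from Definition~\ref{def:symbols} via the Leibniz rule, since one never differentiates the weight $\langle\xi\rangle^{m_i(x,\xi)}$ itself; the logarithmic corrections you describe are relevant rather for checking that explicit weights such as $e^{NG}$ lie in $S^{Nm}_{1-}$, while the genuine technical point in the composition remainder is the temperance (slow variation) of $\langle\xi\rangle^{m(x,\xi)}$, i.e.\ the bound $\langle\eta\rangle^{m(x,\eta)}\leq C\langle\xi\rangle^{m(x,\xi)}\langle\xi-\eta\rangle^{N_0}$, which is guaranteed by $m\in S^0_1$ and is what makes the standard remainder estimates go through.
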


\begin{Def}
 Given $a\in S^{m(x,\xi)}_\rho(\mc M; E)$, we define its \emph{elliptic set} to be the open cone
 $\Ell^{m(x,\xi)}(a)\subset T^*\mc M\setminus\{0\}$ which consists of all $(x_0,\xi_0)\in T^*\mc M\setminus\{0\}$ for which there is a $C>0$ and a function $\chi\in C^\infty(T^*\mc M)$, positively homogeneous of degree zero for $|\xi|\geq C$, and $\chi(x_0,C\xi_0/|\xi_0|)>0$, such that $a(x,\xi)\in \tu{End}(E_x)$ is invertible for all $(x,\xi)\in \tu{supp}(\chi)$ and  $\chi a^{-1}\in S_\rho^{-m(x,\xi)}(\mc M; E)$. 
 We call $a\in S_\rho^{m(x,\xi)}(\mc M, E)$ \emph{elliptic} if $\Ell^{m(x,\xi)}(a)= T^*\mc M\setminus\{0\}$.
\end{Def}

As a direct consequence of the chain rule for derivatives and the symbol estimates we get
\begin{lemma}
If $a\in S_\rho^{m(x,\xi)}(\mc M; E)$ is a scalar symbol, then $(x_0,\xi_0) \in \Ell^{m(x,\xi)}(a)$ if there exists  an open cone $\Gamma\subset T^*\mc M$ containing $(x_0,\xi_0)$ and $C>0$ such that 
\[
 |a(x,\xi)|\geq\frac{1}{C} \langle\xi\rangle^{m(x,\xi)} \tu{ for all } (x,\xi)\in \Gamma\cap\{|\xi|>C\}.
\]
\end{lemma}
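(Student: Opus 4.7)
The plan is to exhibit a cutoff $\chi$ meeting the requirements of the definition of $\Ell^{m(x,\xi)}(a)$, then verify by a direct computation that $\chi a^{-1}\in S_\rho^{-m(x,\xi)}(\mc M; E)$. First I would shrink $\Gamma$ to a slightly smaller open cone $\Gamma'\Subset\Gamma$ still containing $(x_0,\xi_0)$, and pick $\chi\in C^\infty(T^*\mc M)$ nonnegative, supported in $\Gamma'\cap \{|\xi|\geq C\}$, positively homogeneous of degree $0$ in $\xi$ for $|\xi|\geq 2C$, with $\chi(x_0,2C\xi_0/|\xi_0|)>0$. The lower bound $|a|\geq C^{-1}\langle\xi\rangle^{m(x,\xi)}$ ensures that the scalar function $a$ does not vanish on $\tu{supp}(\chi)$, so $b:=\chi/a$ (extended by zero outside $\tu{supp}\chi$) is well defined and smooth on $T^*\mc M$.

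The core step is to estimate the symbol-seminorms of $b$ in a local chart and trivialization. Leibniz together with the homogeneity $|\partial_x^\gamma\partial_\xi^\delta\chi|\leq C_{\gamma\delta}\langle\xi\rangle^{-|\delta|}$ reduces the task to bounding the derivatives of $1/a$ on $\tu{supp}(\chi)$. The Fa\`a di Bruno formula yields
\[
\partial_x^{\alpha}\partial_\xi^{\beta}(1/a)=\sum_{k=1}^{|\alpha|+|\beta|}\sum_{\substack{\gamma_1+\cdots+\gamma_k=(\alpha,\beta)\\|\gamma_i|\geq 1}} c_{k,\gamma}\,\frac{\prod_{i=1}^k \partial^{\gamma_i}a}{a^{k+1}},
\]
and applying the symbol estimate on each $\partial^{\gamma_i}a$ together with $|a|^{-(k+1)}\leq C^{k+1}\langle\xi\rangle^{-(k+1)m(x,\xi)}$ yields
\[
|\partial_x^{\alpha}\partial_\xi^{\beta}(1/a)|\leq C_{\alpha\beta}\,\langle\xi\rangle^{-m(x,\xi)-\rho|\beta|+(1-\rho)|\alpha|}
\]
on $\tu{supp}(\chi)$. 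Combining this with the bounds on the derivatives of $\chi$ gives $b\in S_\rho^{-m(x,\xi)}(\mc M; E)$, which is exactly the condition for $(x_0,\xi_0)$ to lie in $\Ell^{m(x,\xi)}(a)$.

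The only mildly delicate point is that $m$ is itself a zeroth-order symbol, not a constant: derivatives of $\langle\xi\rangle^{m(x,\xi)}$ produce logarithmic factors $\log\langle\xi\rangle$ coming from $\partial m$. These are harmless because $\log\langle\xi\rangle=o(\langle\xi\rangle^{1-\rho})$ for $\rho<1$, and are absorbed into the $(1-\rho)|\alpha|$ slack built into the anisotropic symbol class. Apart from this bookkeeping the computation is a routine application of the chain rule, which is why the authors relegate it to a one-line comment.
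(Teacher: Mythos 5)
Your proposal is correct and is exactly the argument the paper has in mind: the lemma is stated there without proof, as ``a direct consequence of the chain rule for derivatives and the symbol estimates,'' and your Fa\`a di Bruno computation for $\partial^{\alpha}_x\partial^{\beta}_\xi(1/a)$ combined with the cutoff construction is precisely that omitted verification. (Two cosmetic remarks: take $\supp\chi\subset\{|\xi|>C\}$ rather than $\{|\xi|\geq C\}$ so the lower bound on $|a|$ applies on all of the support, and note that no logarithmic factors actually arise here, since one only ever invokes the symbol bounds on $\partial^{\gamma}a$ and the pointwise bound $|a|^{-1}\leq C\langle\xi\rangle^{-m(x,\xi)}$, never derivatives of $\langle\xi\rangle^{m(x,\xi)}$ itself.)
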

One checks that for $a\in S_\rho^{m(x,\xi)}(\mc M; E)$ and $r\in S_\rho^{m(x,\xi)-\varepsilon}(\mc M; E)$ one has $\Ell^{m(x,\xi)}(a)=\Ell^{m(x,\xi)}(a+r)$ which allows to define the elliptic set of an operator
$A\in \Psi^{m(x,\xi)}(\mc M; E)$ via its principal symbol $\Ell^{m(x,\xi)}(A) := \Ell^{m(x,\xi)}(\sigma^{m(x,\xi)}_p(A))$.
\begin{Def}
 Given $A=\Op(a)\, {\rm mod}\, \Psi^{-\infty}(\mc M; E)$, we define its \emph{wavefront set} to be the closed cone 
 $\WF(A)\subset T^*\mc M\setminus\{0\}$ which is the complement of all $(x_0,\xi_0) \in T^*\mc M\setminus \{0\}$ for which there is an open cone $\Gamma\subset T^*\mc M$ around $(x_0,\xi_0)$ such that for all $N>0,\alpha,\beta\in\N^n$ there is $C_{N,\alpha,\beta}$ such that
 \[
  \|\partial_x^\alpha \partial_\xi^\beta a(x,\xi)\| \leq C_{N, \alpha,\beta}\langle\xi\rangle^{-N} \tu{ for all }(x,\xi)\in \Gamma.
 \]
\end{Def}

The wavefront set has the following property for the product of two pseudodifferential operators $A, B\in \Psi_\rho^\infty(\mc M; E)$:
\[
 \WF(AB) \subset \WF(A)\cap \WF(B).
\]
We will crucially use the following constructions of microlocal parametrices. 
\begin{lemma}\label{prop:elliptic_param}
 If $A\in \Psi^{m_1(x,\xi)}_\rho(\mc M; E), B\in \Psi^{m_2(x,\xi)}_\rho(\mc M; E)$ and $\WF(B)\subset \Ell^{m_1(x,\xi)}(A)$, then there is $Q\in \Psi_\rho^{m_2(x,\xi)-m_1(x,\xi)}$ with $\WF(Q)\subset \WF(B)$ such that
 \[
  AQ -B\in \Psi^{-\infty}(\mc M; E).
 \]
If furthermore $A$ and $B$ are holomorphic families of operators, then $Q$ can be chosen to be holomorphic as well.
\end{lemma}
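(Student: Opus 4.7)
The plan is to construct $Q$ as the quantization of an asymptotic sum of symbols obtained by the standard iterative elliptic parametrix procedure, adapted to the anisotropic calculus and to holomorphic families.

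First, I would use the hypothesis $\WF(B)\subset \Ell^{m_1(x,\xi)}(A)$ together with the compactness of the cosphere bundle of $\mc M$ to cover $\WF(B)$ by finitely many open conic sets $\Gamma_j$ on which the principal symbol $a=\sigma_p^{m_1(x,\xi)}(A)$ is pointwise invertible with $\chi_j a^{-1}\in S_\rho^{-m_1(x,\xi)}(\mc M;E)$ for homogeneous cutoffs $\chi_j$. Choosing a homogeneous microlocal cutoff $\chi\in S^0(\mc M)$ that equals $1$ on a conic neighborhood of $\WF(B)$ and is supported in $\cup_j \Gamma_j$, I set
\[
 q_0 := \chi\, a^{-1} b\in S^{m_2(x,\xi)-m_1(x,\xi)}_\rho(\mc M;E),\qquad b=\sigma_p^{m_2(x,\xi)}(B),
\]
so that $\sigma_p^{m_1+m_2-m_1}(A\,\Op(q_0)-B)\in S^{m_2-2\rho+1}_\rho$ microlocally near $\WF(B)$, by Proposition~\ref{prop:pseudo_product}. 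Modifying $q_0$ by an element of $S^{-\infty}$ off $\WF(B)$ I may arrange $\WF(\Op(q_0))\subset\WF(B)$.

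Next, I would iterate: having found $q_0,\dots,q_{N-1}$ with $q_j\in S_\rho^{m_2-m_1-j(2\rho-1)}$, $\WF(\Op(q_j))\subset\WF(B)$, and $A\sum_{j<N}\Op(q_j)-B=:R_N\in \Psi_\rho^{m_2-N(2\rho-1)}$ with $\WF(R_N)\subset\WF(B)$, I define $q_N:=-\chi\, a^{-1}\sigma_p^{m_2-N(2\rho-1)}(R_N)$, which again lies in the correct symbol space because $\chi a^{-1}\in S_\rho^{-m_1}$. The gain at each step is $2\rho-1>0$ since $\rho>1/2$; this is the key place where the restriction $1/2<\rho\leq 1$ enters. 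A Borel summation (asymptotic series) yields a symbol $q\sim\sum_j q_j$ in $S_\rho^{m_2-m_1}(\mc M;E)$, supported microlocally in an arbitrarily small conic neighborhood of $\WF(B)$, and $Q:=\Op(q)$ satisfies $AQ-B\in\Psi^{-\infty}(\mc M;E)$ with $\WF(Q)\subset\WF(B)$.

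For the holomorphic case, I would note that every operation above is holomorphic in the parameter: the principal symbols of $A$ and $B$ depend holomorphically, pointwise matrix inversion of $a$ is holomorphic on the elliptic set (since $a$ stays invertible there for each parameter value), and the cutoffs $\chi,\chi_j$ can be chosen independent of the parameter. The only nontrivial point is the Borel summation: I would perform it with weights depending on seminorms of the $q_j$ that are locally uniform in the holomorphic parameter, so that the resulting asymptotic sum is itself a holomorphic symbol-valued function. The main obstacle (and really the only one beyond bookkeeping) is to ensure that the Borel summation can be carried out in a parameter-uniform way on compacta of the parameter space; this is handled by choosing the truncation constants in the Borel construction to dominate the local suprema of the seminorms of $q_j$ over a compact exhaustion, exactly as in the standard proof that elliptic parametrices depend holomorphically on parameters.
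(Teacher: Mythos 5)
The paper does not prove this lemma: it is stated in Appendix~\ref{sec:microlocal} as a classical fact recalled from standard references on the (anisotropic) pseudodifferential calculus. Your proposal is the standard iterative parametrix construction that those references use, and it is correct: the leading-term cancellation $q_0=\chi a^{-1}b$ on a conic neighborhood of $\WF(B)$, the gain of $2\rho-1>0$ at each step, the Borel summation, and the observation that each step is holomorphic in the parameter with locally uniform seminorm bounds are exactly the ingredients needed, so there is nothing to add.
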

As a consequence of Lemma~\ref{prop:elliptic_param}, if $A\in \Psi_\rho^{m_1}(\mc M; E)$ and $B\in \Psi_\rho^{m_2}(\mc M; E)$, then
\begin{equation}
 \label{eq:ell_wf_estimate}
 \Ell^{m_1}(A)\cap\WF(B) \subset \WF(AB).
\end{equation}

We also have the following particular case of Egorov's lemma. 
\begin{lemma}
 \label{prop:egorov}
 Let $F\in \Diffeo(\M)$ be a smooth diffeomorphism 
and let $\widetilde{F}\in \Diffeo(E)$ be a lift of $F$, i.e. $\widetilde{F}$ acts linearly in the fibers and $\pi\circ\widetilde{F}=F\circ\pi$ for $\pi:E\to \M$ the fiber projection. Define the \emph{transfer operator}
\[
L_F:C^\infty(\M;E)\to C^\infty(\M;E),\quad (L_Fu)(x):=\widetilde{F}^{-1}(F(x),u(F(x))).
\] 
Then for each $A\in \Psi_\rho^m(\M;E)$, we have $L_FAL_F^{-1}\in \Psi_\rho^{m\circ \Phi}(\M;E)$  
with $\Phi(x,\xi):=(F(x),(dF^{-1})^T\xi)$ and 
\[
\sigma_{p}^{m\circ \Phi}(L_FAL_{F}^{-1})(x,\xi)=\widetilde{F}^{-1}(F(x),\cdot)\circ\sigma_p^{m}(A)(\Phi(x,\xi))\circ\widetilde{F}(x,\cdot).
\]
\end{lemma}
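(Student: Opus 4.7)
The plan is to apply the classical Egorov procedure, adapted to handle both the bundle lift and the anisotropic symbol order. The first step is to localize via a partition of unity subordinate to chart domains over which $E$ trivializes, reducing to the local picture where $\widetilde F(x,v) = (F(x), M(x)v)$ for a smooth family $M(x) \in \mathrm{GL}(\C^r)$, and where the transfer operator becomes $(L_F u)(x) = M(x)^{-1} u(F(x))$. The pointwise multiplications by $M$ and $M^{-1}$ are in $\Psi^0_1(\M;E)$ with scalar (in $\xi$) symbol; by the composition rule of Proposition~\ref{prop:pseudo_product}, these two factors only modify the principal symbol by the pointwise conjugation $M(x)^{-1}(\,\cdot\,)M(x)$, which upon rewriting with $\widetilde F$ yields exactly the stated formula $\widetilde F^{-1}(F(x),\cdot)\circ\sigma_p^m(A)(\Phi(x,\xi))\circ\widetilde F(x,\cdot)$. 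So the main task is to analyze the pullback conjugation $F^* A (F^*)^{-1}$ for a scalar-valued pseudodifferential operator.

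For this core step, I would write $A = \Op(a)$ as an oscillatory integral with Schwartz kernel $(2\pi)^{-n}\int e^{i(x-y)\cdot \xi} a(x,\xi)\, d\xi$, so that the kernel of $F^*A(F^*)^{-1}$ becomes, after a change of variables in $y$, an oscillatory integral with phase $(F(x)-F(y))\cdot \xi$ and density involving the Jacobian of $F^{-1}$. Introducing $\eta = \left(\int_0^1 dF(y+t(x-y))^T dt\right)\xi$ linearizes the phase to $(x-y)\cdot\eta$, and the standard non-stationary / stationary phase asymptotic expansion (see e.g.\ \cite[Thm~18.1.17]{Hoe03}) yields $F^* A (F^*)^{-1} = \Op(b)$ with an asymptotic sum $b \sim \sum_{j\geq 0} b_j$ where $b_0(x,\eta) = a(F(x),(dF(x)^{-1})^T\eta)$ and each subsequent $b_j$ involves $\xi$-derivatives of $a$ paired with $x$-derivatives of $F$, gaining a factor $\langle \eta\rangle^{-(2\rho-1)}$ per step. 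Since $\rho > 1/2$, the series stabilizes as a genuine asymptotic expansion and identifies $\sigma_p^{m\circ\Phi}(F^*A(F^*)^{-1})$ with $b_0$ modulo lower-order terms.

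The technical heart, and expected main obstacle, is verifying that $b \in S^{m\circ\Phi}_\rho$ with the claimed anisotropic order function. This requires two observations: first, since $m \in S^0_1(\M)$ and $\Phi$ is a smooth diffeomorphism of $T^*\M\setminus\{0\}$ positively homogeneous of degree $1$ in $\xi$, the composition $m\circ\Phi$ again belongs to $S^0_1(\M)$, as one verifies by applying the chain rule and noting that $\partial_\xi^\alpha \Phi$ is homogeneous of degree $1-|\alpha|$; second, the symbol estimates for $b_0(x,\eta) = a(F(x),(dF(x)^{-1})^T\eta)$ reduce, via the chain rule, to symbol estimates for $a$ at the point $\Phi(x,\eta)$ with order $m(\Phi(x,\eta))=(m\circ\Phi)(x,\eta)$, which yields membership in $S^{m\circ\Phi}_\rho$ provided one tracks that the $x$-derivatives of $a(F(x),\cdot)$ pick up at most factors of $\langle\eta\rangle^{1-\rho}$ (consistent with the class since $1-\rho < \rho$). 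The bookkeeping in the asymptotic expansion, where each $b_j$ must be shown to lie in $S^{m\circ\Phi-j(2\rho-1)}_\rho$, is the only delicate point, but it follows from the same argument applied inductively. The principal symbol identity is then the leading-order term $b_0$, completing the proof.
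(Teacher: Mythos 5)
The paper offers no proof of this lemma: it is stated in the appendix as a standard variant of the diffeomorphism-invariance of the (anisotropic) pseudodifferential calculus, with the details deferred to the references \cite{GS94, FRS08}. Your outline is a correct rendering of that standard argument — writing $L_F=C^{-1}\circ F^*$ with $C$ the multiplication by $M(x)=\widetilde F(x,\cdot)$, so that $L_FAL_F^{-1}=C^{-1}\bigl(F^*A(F^*)^{-1}\bigr)C$ and the conjugation comes out in the right order $M(x)^{-1}\,\sigma_p^m(A)(\Phi(x,\xi))\,M(x)$, then handling the pullback by the Kuranishi trick — and you correctly isolate the only point that is not in the textbook case, namely that $m\circ\Phi\in S^0_1$ and that the transformed symbol and all terms of the expansion land in $S^{m\circ\Phi}_\rho$ (which works precisely because $m\in S^0_1$, $\Phi$ is fiberwise linear and homogeneous of degree one, and $\rho>1/2$ so the class $S^{\cdot}_{\rho,1-\rho}$ is coordinate-invariant). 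No gaps.
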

\begin{prop}[$L^2$-boundedness]
 \label{prop:l2-bound}
 Let $A\in \Psi^0_\rho(\mc M; E)$, then $A$ can be extended from an operator on $C^\infty(\mc M; E)$ to a bounded operator on $L^2(\mc M; E)$. Furthermore, for any 
\[
C>\limsup_{|\xi|\to\infty}\|\sigma_p^0(A)(x,\xi)\|,
\]
there exists a decomposition $A=K+R$, where $K\in \Psi^{-\infty}(\mc M; E)$ is a smoothing and hence $L^2$-compact operator and $\|R\|_{L^2\to L^2}\leq C$. If $A_t$ is a smooth family in $\Psi^0_\rho(\mc M; E)$ for $t\in [t_1,t_2]$, the decomposition $A_t=R_t+K_t$ can be chosen so that $t\mapsto R_t$ and $t\mapsto K_t$ are continuous in $t$.
\end{prop}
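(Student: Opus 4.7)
The plan is to combine the classical Calderón–Vaillancourt theorem with a sharp Gårding–type square-root argument. First, writing $A = \Op(a)$ modulo $\Psi^{-\infty}(\mc M; E)$ for some $a \in S^0_\rho(\mc M; E)$, the basic $L^2$-boundedness reduces to bounding $\Op(a)$, which in local charts with a finite partition of unity is exactly the Calderón–Vaillancourt theorem, valid for $\rho > 1/2$. This theorem also provides a quantitative estimate $\|\Op(a)\|_{L^2\to L^2} \leq C_{\mc M, \rho}\mathcal{N}(a)$ in terms of finitely many $S^0_\rho$-seminorms of $a$; this quantitative version will propagate to the continuity statement for $A_t$ at the end.

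For the decomposition, fix $C > C' > \sigma := \limsup_{|\xi|\to\infty}\|\sigma_p^0(A)(x,\xi)\|$, choose $R_0 > 0$ so that $\|\sigma_p^0(A)(x,\xi)\| \leq C'$ for all $|\xi|\geq R_0$, and pick $\chi \in C^\infty(T^*\mc M; [0,1])$ equal to $0$ on $\{|\xi|\leq R_0\}$ and to $1$ on $\{|\xi|\geq 2R_0\}$. With $a := \sigma_p^0(A)$, I would write $A = K_0 + A_0$ where $A_0 := \Op(\chi a)$ and $K_0 := A - A_0$; since the symbol of $K_0$ is compactly supported in $\xi$, one has $K_0 \in \Psi^{-\infty}(\mc M; E)$. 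By construction $\|\chi a\| \leq C'$ pointwise, so the endomorphism-valued symbol $b(x,\xi) := C^2\Id_E - (\chi a)^*(\chi a)$ satisfies $b \geq (C^2 - C'^2)\Id_E > 0$ uniformly; its fiberwise positive operator square root $q := b^{1/2}$ is therefore again in $S^0_\rho(\mc M; E)$, and Proposition~\ref{prop:pseudo_product} gives
\[
\Op(q)^*\Op(q) + A_0^*A_0 = \Op\bigl(q^*q + (\chi a)^*(\chi a)\bigr) + S' = C^2\Id + S',
\]
with $S' \in \Psi^{-(2\rho-1)}_\rho(\mc M; E)$. The hypothesis $\rho > 1/2$ makes the order of $S'$ strictly negative, hence $S'$ is $L^2$-compact.

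Since $\Op(q)^*\Op(q)$ is non-negative, the identity above yields $A_0^*A_0 \leq C^2 \Id + S'$, so the essential norm of $A_0$ on $L^2$ is at most $C$. By Atkinson's theorem I then obtain a decomposition $A_0 = T + R$ with $T$ compact and $\|R\|_{L^2\to L^2} \leq C$; using that smoothing operators are norm-dense in the compact operators on $L^2(\mc M; E)$ (via smooth approximation of Schwartz kernels of finite-rank operators), I further write $T = K_1 + J$ with $K_1 \in \Psi^{-\infty}(\mc M; E)$ and $\|J\|_{L^2\to L^2}$ arbitrarily small. By choosing $C' < C$ and $\|J\|$ small at the outset so that $\|R + J\|_{L^2\to L^2} \leq C$, the decomposition $A = (K_0+K_1) + (R+J)$ is the required one. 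The main technical step is the square-root construction of $q$ and the compactness of the composition remainder $S'$, both of which rest essentially on $\rho > 1/2$. For the family $A_t$, every ingredient — the cutoff $\chi$, the pointwise square root of $b_t$, the Atkinson extraction of $T_t$, the smoothing approximation of $T_t$, and the quantitative Calderón–Vaillancourt bound applied to the various error symbols — depends continuously on $\sigma_p^0(A_t)$ in the $S^0_\rho$-topology, producing the required continuity of $R_t$ and $K_t$ in operator norm.
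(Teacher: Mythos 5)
Your argument is the standard one and is, in essence, the proof of the result the paper actually cites here (\cite[Lemma 14]{FRS08}): Calder\'on--Vaillancourt for boundedness, a frequency cutoff to split off a smoothing piece, the square-root/G\aa rding trick to bound the essential norm of the high-frequency piece by $C$, and finally the norm-density of smoothing operators in the compacts to upgrade ``compact'' to ``smoothing'' in the decomposition. The structure is sound, and the key analytic points (the fiberwise square root $q=b^{1/2}$ staying in $S^0_\rho$ thanks to the uniform lower bound on $b$, and the compactness of the composition remainder $S'\in\Psi^{-(2\rho-1)}_\rho$ because $\rho>1/2$) are correctly identified.

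Three repairs are needed, all local. First, you must take $a$ to be a \emph{full} symbol of $A$, i.e.\ $A-\Op(a)\in\Psi^{-\infty}(\mc M;E)$, not a representative of $\sigma_p^0(A)$: with only the principal symbol, $K_0=A-\Op(\chi a)$ lies in $\Psi^{-(2\rho-1)}_\rho$ rather than $\Psi^{-\infty}$. This costs nothing, since a full symbol differs from the principal one by an element of $S^{-(2\rho-1)}_\rho$, which tends to $0$ as $|\xi|\to\infty$, so the bound $\limsup\|a\|<C'$ persists. Second, defining $b=C^2\Id_E-(\chi a)^*(\chi a)$ only yields $\|A_0\|_{\mathrm{ess}}\leq C$, which is not enough slack to absorb $J$ and keep $\|R+J\|\leq C$; use $b=(C'')^2\Id_E-(\chi a)^*(\chi a)$ for some intermediate $C'<C''<C$. (Also, the fact you invoke is not Atkinson's theorem but simply the identification of the essential norm with the distance to the compacts, i.e.\ the Calkin quotient norm.) Third, the continuity in $t$ of the compact operator extracted from the essential-norm bound is asserted but not automatic, since that extraction is non-constructive; a clean fix is to take $R_t:=A_{0,t}(\Id-\phi(\Delta/n))+$ the small error terms, with $\phi\in C_c^\infty([0,\infty);[0,1])$ and $n$ fixed large (uniformly over the compact interval $[t_1,t_2]$), which is manifestly continuous in $t$ and satisfies $\limsup_{n\to\infty}\|A_{0,t}(\Id-\phi(\Delta/n))\|_{L^2\to L^2}\leq \|A_{0,t}\|_{\mathrm{ess}}$.
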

\begin{proof}
See \cite[Lemma 14]{FRS08} for the proof. The dependence in $t$ is straightforward from the proof.
\end{proof}

We conclude this appendix by mentioning that one can use a small semiclassical parameter
$h>0$ in the quantization, in which case we shall write $\Op_h$, by using the expression in a local chart 
\[
\Op_h(a)f(x)= \frac{1}{(2\pi h)^n}\int e^{\frac{i(x-x')\xi}{h}}a(x,\xi)f(x')d\xi dx'
\] 
if $a$ is supported in a chart. We do not use this semiclassical quantization except in the subsection \ref{sec:Ruelle_at_zero} and we refer to \cite[Appendix E]{DZ19} for the results on semiclassical pseudodifferential operators that we will use. The class 
One of the advantages is that one can get the estimate 
$\|\Op_h(a)\|_{L^2\to L^2}\leq \sup_{x,\xi} |a(x,\xi)|+\mc{O}(h)$ for small $h>0$ and if $a\in S^0(\M;E)$.

\providecommand{\href}[2]{#2}

 %\bibliographystyle{amsalpha}
 %\bibliography{JRbib}

\end{document}